\let\oldbibliography\thebibliography
\renewcommand{\thebibliography}[1]{%
	\oldbibliography{#1}%
	\setlength{\itemsep}{4pt}
}
\setlist[enumerate]{itemsep=0.3em, partopsep=0.3em}
\setlist[itemize]{itemsep=0.3em, partopsep=0.3em}
\newenvironment{myitemize}
{ \begin{itemize}
		\setlength{\itemsep}{0pt}
		\setlength{\partopsep}{0pt}     }
	{ \end{itemize}                  }
\numberwithin{equation}{section}      
\newcommand{\eq}{\,=\,}
\newcommand\eqdef{\mathrel{\overset{\makebox[0pt]{\mbox{\normalfont\tiny\sffamily def}}}{\,=\,}}}
\newcommand\eqabove[1]{\mathrel{\overset{\makebox[0pt]{\mbox{\normalfont\tiny #1}}}{=}}}
\newcommand\congabove[1]{\mathrel{\overset{\makebox[0pt]{\mbox{\normalfont\tiny #1}}}{\;\cong\;}}}
\newcommand{\highoverline}[1]{\overline{\vphantom{#1^{\rule{0pt}{0.9ex}}}#1}}
\newcommand{\higheroverline}[1]{\overline{\vphantom{#1^{\rule{0pt}{0.3ex}}}#1}}
\newcommand{\id}{{\operatorname{id}}}
\newcommand{\nid}{\noindent}
\newcommand{\idC}{{\operatorname{id}_{\cC}}}
\newcommand{\tim}{\times}
\newcommand{\rD}{D^{\prime}}
\newcommand{\ot}{\otimes}
\newcommand{\tikztriangleright}[1][gray,fill=gray]{
	\mathbin{
		\scalerel*{
			\tikz \draw[rounded corners=0.1pt,#1]
			(0,-2.5pt)--++(0,5pt)--++(-30:5pt)--cycle;
		}{\triangleright}
	}
}
\newcommand{\rotmultimap}[2]{\mathbin{\rotatebox[origin=c]{180}{$\multimap$}}}
\newcommand*{\multimapinv}{\mathrel{\mathpalette\multimap@inv\relax}}
\newcommand*{\multimap@inv}[2]{\reflectbox{$#1\multimap$}}
\newcommand\Eev[1] {{{}^{\vee\!}}\!{#1}}
\newcommand{\parll}{{\mathpalette\rotamp\relax}}
\newcommand{\rotamp}[2]{\rotatebox[origin=c]{180}{$#1\&$}}
\DeclareMathOperator{\parLL}{\parll}
\newcommand{\distl}{\delta^l}
\newcommand{\distr}{\delta^r}
\newcommand{\ao}{\alpha}
\newcommand{\aoi}{\alpha^{-1}}
\newcommand{\ap}{\higheroverline{\alpha}} 
\newcommand{\api}{\higheroverline{\alpha}^{-1}}
\newcommand{\lou}{\lambda} 
\newcommand{\rou}{\rho} 
\newcommand{\lpu}{\higheroverline{\lambda}} 
\newcommand{\rpu}{\higheroverline{\rho}}
\newcommand{\ra}{\rightarrow}
\newcommand{\sxlongrightarrow}[1]{\,\xlongrightarrow{#1}\,}
\def\cC{\mathcal C}
\def\cD{\mathcal D}
\def\cE{\mathcal E}
\theoremstyle{plain}
\newtheorem{theorem}{Theorem}[section]
\newtheorem{lemma}[theorem]{Lemma}
\newtheorem{corollary}[theorem]{Corollary}          
\newtheorem{proposition}[theorem]{Proposition}              
\newtheorem{prop}[theorem]{Proposition}      
\newtheorem*{liftingthm}{Theorem A}
\newtheorem*{sndthm}{Theorem B}
\newtheorem*{thdthm}{Theorem C}
\newtheorem{theorem*}[]{Theorem}
\theoremstyle{definition} 
\newtheorem{definition}[theorem]{Definition}
\theoremstyle{remark}
\newtheorem{remark}[theorem]{Remark}
\newtheorem{example}[theorem]{Example}
\newtheorem{construction}[theorem]{Construction}
\author{Max Demirdilek} 
\title{Grothendieck--Verdier functors}
\begin{document}

\begin{abstract}
We introduce Grothendieck--Verdier functors between Grothendieck--Verdier, or $\ast$-autonomous, categories. Such functors are lax monoidal functors equipped with a morphism expressing compatibility with Grothendieck--Verdier duality. We show that the \mbox{resulting} \mbox{$2$-category} is $2$-equivalent to that of linearly distributive categories with negation and Frobenius linearly distributive functors. We further extend this $2$-equivalence to the braided setting.

We then establish a lifting theorem for Grothendieck--Verdier functors: given a conservative lax monoidal functor from a closed monoidal category $\cC$ to a Grothendieck--Verdier category $\cD$, we identify additional structure such that the Grothendieck--Verdier structure of $\cD$ lifts to $\cC$. This structure turns the functor into a Grothendieck--Verdier functor. As applications, we recover and extend conditions under which modules over Hopf monads and Hopf algebroids inherit Grothendieck--Verdier structures. We also characterize when categories of bimodules, modules, and local modules over (commutative) algebras internal to a Grothendieck--Verdier category admit such structures. Our results apply to quantales, smash product algebras, skew group algebras, and enveloping algebras of Lie--Rinehart algebras.
\end{abstract}

\maketitle

\vspace{-0.9cm}
\tableofcontents
\vspace{-0.9cm}

\section{Introduction}
The monoidal category of finite-dimensional modules over a Hopf algebra with invertible antipode has rigid duals. This is an instance of a lifting principle: if \(U\colon \cC \to \cD\) is a strong monoidal functor between closed monoidal categories that reflects isomorphisms and preserves internal homs, then the rigidity of \(\cD\) implies that of \(\cC\); see \cite{BLV}. In particular, taking \(U\) to be the fiber functor to finite-dimensional vector spaces, one recovers the rigidity of finite-dimensional modules over a Hopf algebra with invertible antipode.
 
 \medskip
 
 \textit{\textbf{Grothendieck--Verdier categories.}}
Rigid duality is often too restrictive; for example, it forces the tensor product to be exact. \emph{Grothendieck--Verdier (GV) categories}, also known as \(\ast\)-autonomous categories, provide a more flexible notion. Like rigid categories, they are necessarily closed (Remark~\ref{GV implies closed}). However, unlike in the rigid case, left and right duals in a GV-category are generally not determined by the monoidal structure alone; rather, they are defined relative to a choice of \emph{dualizing object} (Definition~\ref{def:GV-category}). GV-categories appear in mathematical physics and representation theory \cite{allen2021duality, allen2024hopfalgebroidsgrothendieckverdierduality, fuchs2024grothendieckverdierdualitycategoriesbimodules}, linear logic \cite{See89,MelCatSem}, functional analysis \cite{BarrStarAut}, and algebraic geometry \cite{BoDrinfeld}. They also serve as input for constructions in quantum topology \cite{CyclicFramedLittleDisk,CatOpenTopField}.

 \medskip

Motivated by the above lifting principle for rigid categories, we seek an analogous criterion for Grothendieck--Verdier (GV) duality:
 
 \medskip 

\textit{\textbf{Question.}} Given a functor \(U\colon \cC\to \cD\) between closed monoidal categories, where \(\cD\) is a GV-category, what additional structure on \(U\) ensures that the GV-duality on \(\cD\) lifts to \(\cC\)?

\medskip

To answer this question, we introduce a Frobenius-type structure on a lax monoidal functor.

\smallskip

\textit{\textbf{Frobenius forms}}.
Let \(\cC\) and \(\cD\) be closed monoidal categories, with distinguished objects \(K\in \cC\) and \(k\in \cD\). A \emph{Frobenius form} on a lax monoidal functor \(U\colon \cC \rightarrow \cD\) is a morphism 
\begin{align}\label{mor: Frob form}
	\upsilon^{0,U}\colon\: U(K) \:\longrightarrow\: k
\end{align} 	
such that the induced morphisms (Definition \ref{def: duality transformations})
\begin{align}\label{mor: induced mor}
	U(X\multimap K) \:\longrightarrow\: U(X) \multimap k \qquad \text{and} \qquad U(K \multimapinv X) \:\longrightarrow\: k \multimapinv U(X)\
\end{align} 
are invertible for all \(X\in \cC\). Here, \(\multimap\) and \(\multimapinv\) denote the left and right internal homs.

\sloppy When \(K\) and \(k\) are dualizing, the Frobenius form \eqref{mor: Frob form} expresses compatibility with Grothendieck--Verdier duality. Indeed, since the left and right duals in a GV-category are given by internal homs into the dualizing object, the invertibility of the morphisms \eqref{mor: induced mor} means that the functor preserves both left and right duals.

\medskip 

Using this notion, we answer the above question:

\begin{liftingthm}[Theorem~\ref{main thm}]\label{thm: A}
Let \(U\colon \cC\to \cD\) be a lax monoidal functor between closed monoidal categories, with distinguished objects \(K\in \cC\) and \(k\in \cD\). Assume that \(k\in \cD\) is dualizing. 

If \(U\) is conservative and admits a Frobenius form, then \(\cC\) is a Grothendieck--Verdier category with dualizing object \(K\).
\end{liftingthm}

This lifting theorem suggests the following definition, which gives the paper its title and, to the best of our knowledge, is new.

\smallskip

\textit{\textbf{Grothendieck--Verdier functors}}. A \emph{Grothendieck--Verdier (GV) functor} (Definition~\ref{def: GV-functor}) is a lax monoidal functor between Grothendieck--Verdier categories equipped with a Frobenius form relative to the dualizing objects.

Conceptually, GV-functors may be viewed as multi-object generalizations of Frobenius algebras. Indeed, just as lax monoidal functors generalize unital associative algebras, a GV-Frobenius algebra in the sense of \cite{fuchs2024grothendieckverdier, DeS} is exactly a GV-functor from the terminal category. In a similar vein, GV-categories themselves can be regarded as categorified Frobenius algebras; see \cite{QuantumCatStreetDay, star-autonomoFrob, CyclicFramedLittleDisk}.

\medskip

 \textbf{\textit{A \(2\)-equivalence.}} 
 Frobenius algebras in a monoidal category admit two equivalent characterizations: as algebras equipped with a Frobenius form, or as algebras endowed with a compatible coalgebra structure. We extend this equivalence to the doctrine of GV-categories and GV-functors:
 Let \(\mathsf{GV}\) be the \((2,1)\)-category of GV-categories, GV-functors, and their morphisms (Definition~\ref{def: morphism of GV-functors}). Let \(\mathsf{LDN}\) be the \((2,1)\)-category of linearly distributive categories with negation (Definition~\ref{def: LD-category with negation}), Frobenius linearly distributive functors (Definition~\ref{def: Frobenius LD-functor}), and their morphisms (Definition~\ref{def: morphism of Frob LD-functors}); see also \cite{WDC,DeS}.
 
 Our second main result is:

\begin{sndthm}[Theorem~\ref{thm: GV equiv LDN}]\label{thm: B}
	 \(\mathsf{GV}\) and \(\mathsf{LDN}\) are {\(2\)-equivalent}. The \(2\)-equivalence can be chosen to strictly commute with the forgetful \(2\)-functors to the \((2,1)\)-category \(\mathsf{MonCat}_g\) of monoidal categories, lax monoidal functors, and monoidal natural isomorphisms:
\[
\begin{tikzcd}
	\mathsf{GV} \arrow[rr, <->, "\simeq"] \arrow[rd, bend right=15, "{\text{forget}}"']
	&&
	\mathsf{LDN} \arrow[ld, bend left=15, "{\text{forget}}"]\\
	& \mathsf{MonCat}_g. &
\end{tikzcd}
\]
\end{sndthm}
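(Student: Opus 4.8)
The plan is to exhibit the $2$-equivalence by a pair of strict $2$-functors $\Phi\colon\mathsf{GV}\to\mathsf{LDN}$ and $\Psi\colon\mathsf{LDN}\to\mathsf{GV}$, each acting as the identity on the underlying monoidal data, so that commutativity of the triangle with the forgetful $2$-functors holds on the nose and only the comparison of $\Psi\Phi$ and $\Phi\Psi$ with identities remains. On objects this reduces to the classical passage between Grothendieck-Verdier categories and linearly distributive categories with negation (see \cite{BoDrinfeld,WDC,BarrStarAut}): $\Phi$ equips a GV-category $(\cC,\otimes,K)$, with duality functor $D$ determined by $K$, with the par tensor $A\parll B:=D^{-1}(DB\otimes DA)$, par-unit $K$, and negation $D$ — the distributors and the triangle isomorphisms for the negation being transported along $D$ from the associators and unitors of $\otimes$; conversely $\Psi$ forgets $\parll$, reading off the dualizing object as the par-unit $\bot$ and the duality functor as the (left) negation. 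Since $\parll$ in every LDN-category is canonically isomorphic to $D^{-1}(D(-)\otimes D(-))$, we obtain $\Psi\Phi=\id_{\mathsf{GV}}$ strictly (GV-data carries no chosen $D$) together with a canonical monoidal pseudonatural equivalence $\Phi\Psi\simeq\id_{\mathsf{LDN}}$ on objects.

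The content is at the level of $1$- and $2$-cells, and the bookkeeping hinge is that the Frobenius form of a GV-functor \emph{is} the colax-monoidal unit comparison of the par structure: a functor $U$ carrying colax $\parll$-structure has unit constraint $U(\bot_\cC)\to\bot_\cD$, i.e.\ $U(K)\to k$. So $\Psi$ sends a Frobenius linearly distributive functor $U$ to its underlying lax $\otimes$-monoidal functor together with this map as Frobenius form; one must check this is a GV-functor, i.e.\ that the induced maps $U(DX)\to DU(X)$ and $U(D^{-1}X)\to D^{-1}U(X)$ of Definition~\ref{def: duality transformations} are invertible — this is precisely the statement that Frobenius linear functors preserve negation, and it follows by assembling the lax $\otimes$-constraint, the colax $\parll$-constraint, and the Frobenius equations into a two-sided-dual witness for $U(DX)$. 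Conversely $\Phi$ must upgrade a GV-functor $(U,\upsilon^{0,U})$ to a Frobenius linear functor: one takes $\upsilon^{0,U}\colon U(K)\to k$ as colax par-unit and defines the colax par-multiplication by
\begin{align*}
U(A\parll B)=U D^{-1}(DB\otimes DA)\;&\cong\;D^{-1}U(DB\otimes DA)\\
&\xrightarrow{\ D^{-1}\mu_{DB,DA}\ }\;D^{-1}(UDB\otimes UDA)\;\cong\;D^{-1}(DUB\otimes DUA)=UA\parll UB,
\end{align*}
where the outer isomorphisms are the duality transformations of $U$ — available exactly because $U$ is a GV-functor — and $\mu$ is the lax $\otimes$-constraint; note that $D^{-1}$ is contravariant, which is what makes the variances and the order-reversal match. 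One then verifies the linear-functor axioms (compatibility of the $\otimes$- and $\parll$-constraints with the distributors) and the Frobenius equations; naturality of the duality transformations and the coherence already packaged around the lifting theorem keep this a structured verification rather than an unstructured diagram chase.

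For $2$-cells, a morphism of GV-functors is a monoidal natural transformation compatible with Frobenius forms, while a morphism of Frobenius linear functors is a natural transformation simultaneously monoidal for $\otimes$ and comonoidal for $\parll$. Since $\parll$ and the colax structures above are manufactured functorially out of $\otimes$, $D$, and the given data, a transformation is automatically comonoidal for the reconstructed $\parll$ once it is monoidal for $\otimes$ and compatible with par-units; hence the two classes of $2$-cells coincide as sub-collections of ordinary natural transformations (all invertible, the ambient categories being $(2,1)$-categories), and $\Phi,\Psi$ are bijective on hom-categories and identity-on-$2$-cells over $\mathsf{MonCat}_l$. Combining the three levels: $\Phi$ and $\Psi$ are strict $2$-functors with $\Psi\Phi=\id_{\mathsf{GV}}$ and $\Phi\Psi\simeq\id_{\mathsf{LDN}}$ via the canonical isomorphism above, hence a $2$-equivalence strictly compatible with the forgetful $2$-functors to $\mathsf{MonCat}_l$.

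I expect the main obstacle to be the $\Phi$-direction on $1$-cells: checking that the reconstructed colax $\parll$-structure on $(U,\upsilon^{0,U})$ satisfies \emph{all} Frobenius-linear-functor axioms, and — for the equivalence, not merely an implication of properties — that the GV-functor condition is genuinely equivalent to, not just implied by, the Frobenius condition. The secondary difficulty is purely notational: keeping the left/right duals $D$ and $D^{-1}$, the left and right negations, and the order-reversal in $A\parll B=D^{-1}(DB\otimes DA)$ versus $D(D^{-1}A\otimes D^{-1}B)$ mutually consistent throughout, so that every comparison morphism points the correct way.
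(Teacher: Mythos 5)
Your proposal follows essentially the same route as the paper: a $2$-functor $\mathsf{GV}\to\mathsf{LDN}$ that builds the $\parll$-product as $D(D'(-)\otimes D'(-))$ and manufactures the oplax $\parll$-comultiplication from the duality transformations $\xi^{l},\xi^{r}$ together with the lax $\otimes$-constraint, a $2$-functor $\mathsf{LDN}\to\mathsf{GV}$ that forgets $\parll$ and keeps the counit as Frobenius form (with invertibility of the induced duality maps coming from the fact that Frobenius LD-functors preserve LD-duals, the paper's Proposition~\ref{prop:FrobLDfunctors are closed}), identity on $2$-cells, and a comparison of the composite $\mathsf{LDN}\to\mathsf{GV}\to\mathsf{LDN}$ with the identity via the uniqueness of LD-duals. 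The remaining work you flag (the Frobenius relations~\eqref{eq:F1}--\eqref{eq:F2} for the reconstructed oplax structure, i.e.\ the paper's Lemma~\ref{lemma: F is Frob LD}, and the coherence of the comparison isomorphism in Lemma~\ref{lemma: LDN to LDN is equivalent to identity}) is exactly where the paper's appendix computations live, so your outline is correct in structure.
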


\medskip

This extends \cite[Thm. 4.5]{WDC} and builds on \cite{fuchs2024grothendieckverdierdualitycategoriesbimodules, fuchs2024grothendieckverdier, DeS}. It also shows that the conservative functor \(U\) from Theorem~\hyperref[thm: A]{A} acquires the structure of a Frobenius linearly distributive functor and, in particular, preserves GV-Frobenius algebras (Remark~\ref{rem: Liftingt thm functor ist Frob LD}).

Moreover, upon specialization to \emph{rigid} monoidal categories, the \(2\)-equivalence provides a new characterization of Frobenius monoidal functors \(U\colon (\cC,\otimes,1) \to (\cD,\otimes,1)\), as defined in \cite{NoteOnFrobMonFunctors} and recalled in Definition~\ref{def: Frobenius monoidal functor}: namely, such functors are precisely lax monoidal functors equipped with a Frobenius form \(U(1)\to 1\) (Remark~\ref{rem: rigid case of 2-equivalence}).

\smallskip

\textit{\textbf{Adding braided structure.}}
The above \(2\)-equivalence lifts to braided categories and functors: Let \(\mathsf{BrGV}\) be the \((2,1)\)-category of braided GV-categories (Definition~\ref{def: braided GV}), braided GV-functors (Definition~\ref{def: braided GV functor}), and their morphisms. Let \(\mathsf{BrLDN}\) be the \((2,1)\)-category of braided linearly distributive categories with negation (Definition~\ref{def:braided LD-category}), in the sense of \cite{MelCatSem}, braided Frobenius linearly distributive functors (Definition~\ref{def: braided Frobenius LD-functor}), and their morphisms.

Our third main result is:
\begin{thdthm}[Theorem~\ref{thm: braided GV=braided LDneg}]\label{thm: C}
	\(\mathsf{BrGV}\) and \(\mathsf{BrLDN}\) are {\(2\)-equivalent}. The \(2\)-equivalence can be chosen to strictly commute with the forgetful \(2\)-functors to the \((2,1)\)-category \(\mathsf{BrMonCat}_g\) of braided monoidal categories, braided lax monoidal functors, and monoidal isomorphisms:
	\[
	\begin{tikzcd}
		\mathsf{BrGV} \arrow[rr, <->, "\simeq"] \arrow[rd, bend right=15, "{\text{forget}}"']
		&&
		\mathsf{BrLDN} \arrow[ld, bend left=15, "{\text{forget}}"]\\
		& \mathsf{BrMonCat}_g. &
	\end{tikzcd}
	\]
\end{thdthm}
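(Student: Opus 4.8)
The plan is to bootstrap from Theorem~\ref{thm: GV equiv LDN}. Write $F\colon \mathsf{GV}\to\mathsf{LDN}$ and $G\colon\mathsf{LDN}\to\mathsf{GV}$ for the $2$-functors realizing that equivalence, together with the monoidal $2$-isomorphisms $GF\cong\operatorname{id}_{\mathsf{GV}}$ and $FG\cong\operatorname{id}_{\mathsf{LDN}}$, all lying strictly over $\mathsf{MonCat}_l$. A braided GV-category (resp. a braided Frobenius LD-category) has in particular an underlying GV-category (resp. LDN-category), and $F,G$ never disturb the underlying monoidal category; so the whole statement will follow once I show that $F$ and $G$ lift to the braided doctrines, i.e. that they transport braidings, and that the comparison $2$-isomorphisms do the same. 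In other words, the content is to match up the ``braided'' sub-doctrines sitting over $\mathsf{GV}$ and $\mathsf{LDN}$.

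\textbf{Object level.} A braided GV-category (Definition~\ref{def: braided GV}) is a GV-category whose tensor product carries a braiding $c$ (possibly with a mild compatibility with the dualizing object that will be automatically matched on the LDN side). Given such a $\cC$, the LDN-category $F(\cC)$ has the same underlying $(\cC,\ot)$, hence inherits $c$ on $\ot$; its par $\parll$ is the cotensor derived from $\ot$ and the duality functors, and $c$ induces a braiding $c^{\parll}$ on $\parll$ by transport along that derivation (equivalently, by dualizing $c$). The main step is to verify that $(c,c^{\parll})$ satisfies the axioms of a braided linearly distributive category (Definition~\ref{def:braided LD-category}): that $c^{\parll}$ is a braiding for $\parll$ (immediate from transport) and, crucially, that $c$ and $c^{\parll}$ are compatible with the distributors $\distl,\distr$. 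Since the distributors of a GV-category are themselves assembled from the duality functors and the GV-pairing, these compatibilities unwind into identities between morphisms built from $c$, $D$, and the pairing, which I would check by a diagram chase using naturality of $c$, the hexagon axioms, and the defining adjunction of $D$. The reverse functor $G$ sends a braided Frobenius LD-category to the GV-category on $(\cC,\ot)$ equipped with the braiding $c$ on $\ot$ (discarding $c^{\parll}$); that this lands in $\mathsf{BrGV}$ is the easy direction.

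\textbf{Morphisms, $2$-morphisms, and assembly.} A braided GV-functor (Definition~\ref{def: braided GV functor}) is a GV-functor whose underlying lax monoidal functor is braided; under $F$ it becomes a Frobenius LD-functor whose $\ot$-part is braided, and its $\parll$-part is then automatically braided because the duality transformations $U(DX)\to DU(X)$ are natural isomorphisms, so it is a braided Frobenius LD-functor (Definition~\ref{def: braided Frobenius LD-functor}); $G$ goes back by forgetting braidedness of the $\parll$-part. The $2$-morphisms are monoidal natural transformations with no extra condition in either setting, so $F,G$ act on them as in Theorem~\ref{thm: GV equiv LDN}. For the comparison isomorphisms: $GF\cong\operatorname{id}_{\mathsf{BrGV}}$ is immediate since $G$ re-uses the $\ot$-braiding; for $FG\cong\operatorname{id}_{\mathsf{BrLDN}}$ I would prove the lemma that in a braided Frobenius LD-category with negation the par-braiding $c^{\parll}$ is uniquely determined by $c$ and the LD structure (because $\parll$, and hence $c^{\parll}$, is recovered from $\ot$, $c$, and the negation), whence the $\mathsf{MonCat}_l$-level comparison iso of Theorem~\ref{thm: GV equiv LDN} automatically respects $\parll$-braidings. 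This assembles $F,G$ into a $2$-equivalence $\mathsf{BrGV}\simeq\mathsf{BrLDN}$, and since the forgetful $2$-functors to $\mathsf{BrMonCat}_l$ factor those to $\mathsf{MonCat}_l$ through the braiding, the strictly commuting triangle of Theorem~\ref{thm: GV equiv LDN} upgrades verbatim.

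\textbf{Main obstacle.} The technical heart is the object-level claim that the induced par-braiding $c^{\parll}$ is compatible with the distributors $\distl,\distr$ — equivalently, that a braiding on a GV-category's tensor product always extends (uniquely) to a braiding of the associated linearly distributive structure. This is a coherence computation intertwining $c$, the duality $D$, and the GV-pairing; everything else in the argument is bookkeeping layered on top of Theorem~\ref{thm: GV equiv LDN}.
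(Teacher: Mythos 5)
Your proposal follows the paper's route almost verbatim: dualize the $\otimes$-braiding to obtain the $\parLL$-braiding $\higheroverline{c}_{X,Y}=D(c_{D'(X),D'(Y)})$ (Construction~\ref{constr: BrGV to BrLDN}), verify the two hexagon/Frobenius compatibilities with $\distl,\distr$ (Proposition~\ref{prop: braided GV is braided LDN}, which is exactly the "main obstacle" you flag and is the bulk of the appendix computation, carried out via the auxiliary isomorphisms $\widetilde{c}^{\,\pm}$), check that braided GV-functors give braided Frobenius LD-functors using braidedness of $\varphi^{2,F}$ and naturality of the duality transformations (Proposition~\ref{prop: braided GVf is braided LDN f}), and let the $2$-cells and the triangle over $\mathsf{BrMonCat}_l$ come along for free from Theorem~\ref{thm: GV equiv LDN}. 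The only genuine divergence is the step $FG\simeq\operatorname{id}_{\mathsf{BrLDN}}$: the paper verifies directly that the comparison isomorphism's $\parLL$-comultiplication $\upsilon^{\cC}$ intertwines the given $\parLL$-braiding with the reconstructed one (Lemma~\ref{lemma: BrLDN to BrLDN is equivalent to identity}, Equation~\eqref{eq: identity is braided}), whereas you propose to deduce this from a uniqueness lemma asserting that in a braided LD-category with negation the $\parLL$-braiding is determined by $c$. That lemma is true and does yield the claim (transport the original braiding along the strong comparison equivalence and compare), but your justification "because $\parll$ is recovered from $\ot$, $c$, and the negation" is too quick: $\parLL$ is recovered only up to the canonical comparison isomorphism, so proving uniqueness requires an honest computation with the hexagons, the snake equations and the distributors — essentially the same verification as Equation~\eqref{eq: identity is braided} — rather than following from "transport". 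So your decomposition is sound and matches the paper's, but the repackaging of the last step buys no savings, and both of your deferred verifications (the hexagons for the dualized braiding and the uniqueness/compatibility lemma) are precisely where the paper's technical work lies.
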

The main difficulty in extending Theorem~\hyperref[thm: B]{B} to Theorem~\hyperref[thm: C]{C} is that braided linearly distributive categories and their functors must satisfy hexagon identities involving the coherence data of linearly distributive categories, which is intricate to construct from the GV-structure.

\smallskip

\textbf{\textit{Applications.}}
We now turn to applications of the lifting theorem (Theorem~\hyperref[thm: A]{A}). This theorem unifies various examples of Grothendieck--Verdier categories:

\begin{itemize}
	\item \textit{Modules and local modules (Propositions~\ref{prop: A-mod is GV} and~\ref{prop: local modules}).} 
	For a commutative algebra \(A\) in a braided GV-category \(\cC\) satisfying mild assumptions, the categories of \(A\)-modules and of local \(A\)-modules inherit GV-structures. This recovers results of Creutzig--McRae--Shimizu--Yadav \cite[Thm. 3.9, 3.11]{Creutzig_2025}.
	
	\item \textit{Hopf monads (Proposition~\ref{hopf monad on gv}).}
	We give a short proof of a result of Hasegawa--Lemay \cite[Thm. 5.9]{HaLe}: for a Hopf monad \(T\) on a GV-category, \(T\)-module structures on the dualizing object correspond to lifts of the GV-structure to the category of \(T\)-modules.
\end{itemize}	
The lifting theorem also allows us to extend existing examples:
\begin{itemize}
	\item \textit{Bimodules (Proposition~\ref{prop: category of bimodules is gv}).} For an algebra \(A\) in a GV-category \(\cC\) satisfying mild assumptions, the category of \(A\)-bimodules in \(\cC\) inherits a GV-structure from \(\cC\).
	\item \textit{Hopf algebroids (Proposition~\ref{Hopf algebroids lifting}).} 
	For a Hopf algebroid \(H\) over a finite-dimensional \(k\)-algebra \(R\), any \(H\)-module structure on the \(k\)-linear dual \(R\)-bimodule \(R^\ast:=\operatorname{Hom}_k(R,k)\) yields a GV-structure on the category of finite-dimensional \(H\)-modules. In particular, Hopf algebroids with an invertible antipode give rise to GV-categories (Corollary~\ref{cor: full Hopf gives GV}), recovering the main result of Allen \cite{allen2024hopfalgebroidsgrothendieckverdierduality}.
	\item \textit{Explicit examples.} 
	\sloppy We compute GV-structures for several families of Hopf algebroids, including smash product algebras and skew group algebras (Examples~\ref{ex: GV smash product algebras} and~\ref{ex: GV skew group}). This yields GV-structures on the category of finite-dimensional \(G\)-equivariant \mbox{\(R\)-modules}, where \(G\) is a group and \(R\) is a finite-dimensional commutative \(k\)-algebra with a \(G\)-action. We also consider universal enveloping algebras of Lie--Rinehart algebras, including the truncated modular Weyl algebra (Examples~\ref{ex: enveloping algebras} and~\ref{ex: trunctated mod Weyl algebra}). These generalize enveloping algebras of Lie algebras and, in appropriate cases, include algebras of differential operators as special instances.
\end{itemize}

In each example above, the lifting theorem is applied to the forgetful functor from a category of modules. The theorem then ensures that each such functor is a GV-functor. In the case of Hopf monads and Hopf algebroids, this is a strict monoidal functor, and the Frobenius form is the identity. In the other examples, the functor is only lax monoidal, and the Frobenius form is the GV-dual of the unit of the algebra \(A\).

\medskip
 
\textbf{\textit{Outline.}}
In Section \ref{sec: GV functors}, we introduce the notion of Grothendieck--Verdier functors and morphisms between them. Before that, we review closed monoidal categories (Subsection \ref{sec: closed monoidal categories}), their functors (Subsection \ref{sec: functors between closed monoidal categories}), and Grothendieck--Verdier categories (Subsection \ref{sec: GV-categories and closed monoidal categories}).

Section~\ref{sec: two 2-equivalence} establishes the \(2\)-equivalences of Theorem~\ref{thm: GV equiv LDN} and Theorem~\ref{thm: braided GV=braided LDneg}, after recalling (braided) LD-categories, (braided) Frobenius LD-functors, and their morphisms, and introducing braided GV-functors. Section~\ref{sec:liftingGVstructures} proves the lifting theorem. Section~\ref{sec: Applications} applies it to Hopf monads, Hopf algebroids, and categories of (bi)modules, and develops explicit examples. The relevant algebraic structures are reviewed beforehand in Section~\ref{sec: lifting theorem prep}. All technical proofs but the one of the lifting theorem are deferred to Appendix~\ref{sec:appendixproof}.

\addtocontents{toc}{\protect\setcounter{tocdepth}{1}}
\subsection{Notation and conventions}
\begin{itemize}
	\item All \(2\)-categories and \(2\)-functors appearing in this paper are strict. 
	\item A \emph{\((2,1)\)-category} is a \(2\)-category whose \(2\)-cells are invertible.
	\item A \emph{(strict) \(2\)-natural transformation} \(\zeta\colon F \to G\) between strict \(2\)-functors \(F,G\colon \mathsf{C}\to \mathsf{D}\) consists of, for each \(0\)-cell \(X\in \cC\), a \(1\)-morphism \(\zeta_X \colon {F(X) \to G(X)}\) in \(\mathsf{D}\), such that for every \(1\)-morphism \(f\colon X \to Y\) in \(\mathsf{C}\)
	\begin{equation}
	G(f)\circ \zeta_X\;=\;\zeta_Y\circ F(f).
	\end{equation}
	It is called a \emph{(strict) \(2\)-natural isomorphism} if each component \(\zeta_X\) is an isomorphism.
	\item A functor \(F\colon \cC\rightarrow \cD\) is called \emph{conservative} if any morphism \(f\in \operatorname{Hom}_{\cC}(X,Y)\) whose image \(F(f)\) is invertible, is an isomorphism.
	\item The terminal category is denoted by \(\ast\).
	\item For a morphism \(f\colon X \to Y\) in a category \(\cC\) and an object \(Z\in \cC\), we write
	\begin{align}
	f^{\ast} &\,:=\,\operatorname{Hom}_{\cC}(f,Z)\colon\, \operatorname{Hom}_{\cC}(Y,Z) \rightarrow \operatorname{Hom}_{\cC}(X,Z),\\ 
	f_{\ast}&\,:=\, \operatorname{Hom}_{\cC}(Y,f)\colon\, \operatorname{Hom}_{\cC}(Z,X) \rightarrow \operatorname{Hom}_{\cC}(Z,Y),
	\end{align}
	for the maps given by precomposition and postcomposition with \(f\), respectively.
\end{itemize}

\smallskip

\nid Let \(\cC=(\cC,\otimes,1)\) be a monoidal category. 
\begin{itemize}
	\item The \emph{reversed monoidal product} on \(\cC\) is defined by \({X\otimes^{\text{rev}}Y:=Y\otimes X}\) for \(X,Y\in \cC\). We write \(\cC^{\text{rev}}=(\cC,\otimes^{\text{rev}},1)\) for the resulting \emph{reversed monoidal category}.
	\item When clear from context, we omit the associator 
	\begin{align*}
		\ao \colon\; {\otimes} {\,\circ\,} {(\operatorname{id_\cC} \times \ot)} \,\sxlongrightarrow{\simeq}\, {\ot} {\,\circ\,} {(\otimes \times \operatorname{id_\cC})}
	\end{align*}
	and the left and right unitors  
	\begin{align*}
		\lou\colon\; {\ot} \circ (1\tim \idC) \,\sxlongrightarrow{\simeq}\, \idC, \qquad \qquad \rou \colon\; {\ot} \circ (\idC \tim 1)\,\sxlongrightarrow{\simeq}\, \idC.
	\end{align*}
	For readability, we may also suppress their indices.
	\item Algebras in a monoidal category are unital associative, and their morphisms are unital.
\end{itemize}
\addtocontents{toc}{\protect\setcounter{tocdepth}{2}}

%\newpage

\section{Grothendieck--Verdier functors}\label{sec: GV functors}

\addtocontents{toc}{\protect\setcounter{tocdepth}{1}}
\subsection{Closed monoidal categories}\label{sec: closed monoidal categories}

We recall standard categorical notions:
\begin{definition}
A monoidal category \(\cC=(\cC,\otimes,1)\) is \emph{left closed} if, for each \(X\in \cC\), the endofunctor \(X\otimes{?} \colon \,\cC\rightarrow \cC\) has a right adjoint \(X\multimap {?}\colon \, \cC\rightarrow \cC\). Similarly, it is \emph{right closed}, if, for each \(X\in \cC\), the endofunctor \({?}\otimes X\colon\, \cC\rightarrow \cC\) has a right adjoint \({?}\multimapinv X\colon \, \cC\rightarrow \cC\). If \(\cC\) is both left and right closed, it is called \emph{closed}.
\end{definition}

\begin{remark}[Terminology]\label{internal homs}
	The family of endofunctors \({\{X\multimap {?}\}_{X\in \cC}}\) extends to a bifunctor \(\multimap\colon\; \cC^{\mathrm{op}}\times \cC \,\longrightarrow\, \cC,\) called the \emph{left internal hom}, uniquely determined by
	\begin{equation}\label{AdjMor1}
		\Phi_{X,Y,Z}\colon \;\operatorname{Hom}_{\cC}(X\otimes Y,Z)\,\xlongrightarrow{\simeq}\, \operatorname{Hom}_{\cC}(Y,X\multimap Z)
	\end{equation}
	being natural in all three components \(X,Y,Z\in \cC\). Similarly, from an adjunction isomorphism
	\begin{align}\label{AdjMor2}
		\highoverline{\Phi}_{X,Y,Z}\colon \operatorname{Hom}_{\cC}(Y\otimes X,Z)\,\xlongrightarrow{\simeq}\,\operatorname{Hom}_{\cC}(Y,Z\multimapinv X),
	\end{align}
	one obtains a bifunctor \(\multimapinv\colon\; \cC\times \cC^{\mathrm{op}} \rightarrow \cC\), called the \emph{right internal hom}.
\end{remark}

\begin{remark}[Coclosedness]
	A monoidal category \(\cC\) is left (resp. right) \emph{coclosed} if its opposite \((\cC^{\operatorname{op}},\otimes,1)\) is left (resp. right) closed. Consequently, any result for closed monoidal categories dualizes to coclosed ones.
\end{remark}

\begin{remark}[Notation]
For a left closed monoidal category \(\cC\) and \(X\in \cC\), we denote the unit (`\emph{coevaluation}') and counit (`\emph{evaluation}') of the left \emph{tensor-hom adjunction} (\ref{AdjMor1}) by \(\operatorname{coev}^{X}\colon\, \operatorname{id}_{\cC}\rightarrow X\multimap (X\otimes {?})\) and \(\operatorname{ev}^{X}\colon \,X\otimes (X\multimap {?})\rightarrow \operatorname{id}_{\cC}\). For an object \(X\) in a right closed monoidal category \(\cC\), the unit and counit of the right tensor-hom adjunction are denoted by \(\higheroverline{\operatorname{coev}}^{X}\colon\, \operatorname{id}_{\cC}\rightarrow ({?}\otimes {X})\multimapinv X\) and \(\higheroverline{\operatorname{ev}}^{X}\colon \,({?}\multimapinv X) \otimes X \rightarrow \operatorname{id}_{\cC}\).
\end{remark}

Let \(\cC\) be a left closed monoidal category.
\begin{lemma}\label{lemma:extranaturality of eval and coeval}\emph{(Cf. \cite{EilenbergKellyFun}).}
    The evaluation \(\operatorname{ev}^X\) and coevaluation \(\operatorname{coev}^X\) are both extranatural in the component \(X\in \cC\). This means that for all \(X,Y,Z \in \cC\) and every \(f\in \operatorname{Hom}_{\cC}(X,Y)\),
    \begin{align}
        {\operatorname{ev}^X_Z} \circ {\big(X \otimes (f\multimap Z)\big)} & \;=\; {\operatorname{ev}^Y_Z} \circ {\big(f\otimes(Y \multimap Z)\big)},\label{eval is natural}\\
        {\big(X \multimap (f \otimes Z)\big)} \circ {\operatorname{coev}^X_Z} & \;=\; {\big(f\multimap (Y \otimes Z)\big)} \circ {\operatorname{coev}^Y_Z}. \label{coeval is natural}
    \end{align}
    Analogous identities hold for any right closed monoidal category.
\end{lemma}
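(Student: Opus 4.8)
The plan is to deduce both extranaturality identities directly from the fact, recorded in Remark~\ref{internal homs}, that the tensor-hom adjunction isomorphism $\Phi$ of \eqref{AdjMor1} is natural in all three of its arguments, together with the standard descriptions of counit and unit as transposes of identities, namely $\operatorname{ev}^X_Z=\Phi_{X,X\multimap Z,Z}^{-1}(\operatorname{id}_{X\multimap Z})$, $\operatorname{coev}^X_Z=\Phi_{X,Z,X\otimes Z}(\operatorname{id}_{X\otimes Z})$, and the formula $\Phi_{X,B,Z}^{-1}(g)=\operatorname{ev}^X_Z\circ(X\otimes g)$ for $g\colon B\to X\multimap Z$. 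Since every component of $\Phi$ is a bijection, to prove an equality of two parallel morphisms it suffices to compare their images under an appropriate component of $\Phi$ (or of $\Phi^{-1}$); this is how I would organize each of \eqref{eval is natural} and \eqref{coeval is natural}.

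For \eqref{eval is natural}, I would apply the bijection $\Phi_{X,\,Y\multimap Z,\,Z}$ to both morphisms $X\otimes(Y\multimap Z)\to Z$. On the left-hand side, $\Phi_{X,B,Z}^{-1}(g)=\operatorname{ev}^X_Z\circ(X\otimes g)$ with $g=f\multimap Z\colon Y\multimap Z\to X\multimap Z$ returns $f\multimap Z$. On the right-hand side, I would insert $\operatorname{ev}^Y_Z=\Phi_{Y,Y\multimap Z,Z}^{-1}(\operatorname{id})$ into the naturality square of $\Phi$ in its first argument along $f\colon X\to Y$ — this argument being contravariant, since $X\mapsto X\otimes B$ is covariant, $\operatorname{Hom}_\cC(-,Z)$ contravariant, and $-\multimap Z$ contravariant — obtaining $\Phi_{X,Y\multimap Z,Z}\big(\operatorname{ev}^Y_Z\circ(f\otimes(Y\multimap Z))\big)=\operatorname{id}_{Y\multimap Z}\circ(f\multimap Z)=f\multimap Z$. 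The two images agree, hence so do the two morphisms. For \eqref{coeval is natural} I would argue symmetrically with units: naturality of $\Phi$ in its third (covariant) argument along $f\otimes Z\colon X\otimes Z\to Y\otimes Z$ gives $(X\multimap(f\otimes Z))\circ\operatorname{coev}^X_Z=\Phi_{X,Z,Y\otimes Z}(f\otimes Z)$, while naturality in its first (contravariant) argument along $f\colon X\to Y$ gives $(f\multimap(Y\otimes Z))\circ\operatorname{coev}^Y_Z=\Phi_{X,Z,Y\otimes Z}(f\otimes Z)$; the two right-hand sides coincide, proving the identity. The analogous statements for a right closed $\cC$ then follow by applying the left closed case to $\cC^{\text{rev}}$, whose left internal hom is $\multimapinv$ and whose tensor-hom adjunction is $\highoverline{\Phi}$ of \eqref{AdjMor2}.

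I do not anticipate a genuine obstacle: the only point requiring care is the variance bookkeeping — tracking which of the three naturalities of $\Phi$ to invoke and in which direction, and correctly identifying $f\multimap Z$, $X\multimap(f\otimes Z)$, and $f\multimap(Y\otimes Z)$ as the relevant transposes, which is exactly the functoriality of $\multimap$ from Remark~\ref{internal homs}. Once the slots are matched up, each identity reduces to a one-square diagram chase. As an alternative, one could sidestep the bookkeeping altogether by invoking the general principle of \cite{EilenbergKellyFun} that the transpose operation of a functorially parametrized family of adjunctions is extranatural in the parameter, which is precisely the assertion of this lemma.
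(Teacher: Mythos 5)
Your proof is correct: the paper offers no proof of this lemma (it is cited from Eilenberg--Kelly), and your argument — transposing both sides under the appropriate component of $\Phi$, using its naturality in the first and third variables together with $\operatorname{ev}^X_Z$ and $\operatorname{coev}^X_Z$ as transposes of identities, and passing to $\cC^{\text{rev}}$ for the right closed case — is exactly the standard argument behind that citation. One cosmetic slip: in the computation for \eqref{eval is natural} the composite should read $(f\multimap Z)\circ\operatorname{id}_{Y\multimap Z}$ rather than $\operatorname{id}_{Y\multimap Z}\circ(f\multimap Z)$; this does not affect the conclusion.
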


\begin{remark}[Internal composition]\label{rem:internal composition}
The \emph{(left) internal composition} in \(\cC\) is the family 
\begin{gather*}
   \operatorname{comp}^l_{X,Y,Z}\colon \; (X\multimap Y) \otimes (Y\multimap Z) \longrightarrow X\multimap Z,\\
   \operatorname{comp}^l_{X,Y,Z}\;:=\;(X\multimap \operatorname{ev}^Y_Z) \circ \big(X\multimap({\operatorname{ev}^X_Y}\otimes {(Y\multimap Z)})\big)\circ {\big(X \multimap\ao\big)} \circ {\operatorname{coev}^X_{(X\multimap Y)\otimes (Y\multimap Z)}},
\end{gather*}
natural in \(X,Z\in \cC\) and extranatural in \(Y\in \cC\). The \emph{(right) internal composition} \(\operatorname{comp}^r\) in a right closed monoidal category is defined analogously.
\end{remark}

\begin{lemma}\label{lemma: assoc of composition}\emph{(See e.g.\ \cite[\S 7.9]{EGNO}).}
The internal composition \(\operatorname{comp}^l\) is associative:
\begin{equation*}
    {\operatorname{comp}^l_{W,X,Z}}\circ {\big((W\multimap X)\otimes \operatorname{comp}^l_{X,Y,Z}\big)} \;=\; {{\operatorname{comp}^l_{W,Y,Z}} \circ {\big({\operatorname{comp}^l_{W,X,Y}}\otimes {(Y\multimap Z)}\big)}\circ {\ao}},
\end{equation*}
for all \(W,X,Y,Z\in \cC\). The right internal composition satisfies an analogous identity.
\end{lemma}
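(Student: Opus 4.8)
The strategy is to transport the claimed identity across the left tensor--hom adjunction~\eqref{AdjMor1} and reduce it to a single instance of the pentagon axiom. Throughout, recall that the inverse of the adjunction isomorphism is given explicitly by $\Phi^{-1}_{X,A,Z}(f)=\operatorname{ev}^X_Z\circ(X\otimes f)$ for $f\colon A\to X\multimap Z$.

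The first step is to record the following reformulation of the definition of internal composition: for all $X,Y,Z\in\cC$,
\begin{equation*}
\operatorname{ev}^X_Z\circ\big(X\otimes\operatorname{comp}^l_{X,Y,Z}\big)\;=\;\operatorname{ev}^Y_Z\circ\big(\operatorname{ev}^X_Y\otimes(Y\multimap Z)\big)\circ\ao .
\end{equation*}
Indeed, the formula in Remark~\ref{rem:internal composition} exhibits $\operatorname{comp}^l_{X,Y,Z}$ as $\Phi_{X,A,Z}(g)$, where $A:=(X\multimap Y)\otimes(Y\multimap Z)$ and $g$ denotes the right-hand side above, so the asserted equation is simply $\Phi^{-1}_{X,A,Z}\big(\Phi_{X,A,Z}(g)\big)=g$, which I would expand using naturality of $\operatorname{ev}^X$ in its covariant argument.

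Now set $B:=(W\multimap X)\otimes\big((X\multimap Y)\otimes(Y\multimap Z)\big)$. Both sides of the asserted associativity identity are morphisms $B\to W\multimap Z$, so, since $\Phi_{W,B,Z}$ is a bijection, it suffices to compare their images under $\Phi^{-1}_{W,B,Z}$, i.e.\ to compose each side with $W\otimes(-)$ on the right and $\operatorname{ev}^W_Z$ on the left. For the left-hand side I would rewrite $\operatorname{ev}^W_Z\circ(W\otimes\operatorname{comp}^l_{W,X,Z})$ by the displayed identity (for the triple $(W,X,Z)$), move the resulting associator past $W\otimes\big((W\multimap X)\otimes\operatorname{comp}^l_{X,Y,Z}\big)$ by naturality of $\ao$, regroup tensor factors by the interchange law, and apply the displayed identity once more (for $(X,Y,Z)$). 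The right-hand side is treated symmetrically, using the displayed identity for $(W,Y,Z)$ and then for $(W,X,Y)$, again interleaved with naturality of $\ao$ and the interchange law. Both reductions end at an expression of the form
\begin{equation*}
\operatorname{ev}^Y_Z\circ\Big(\big(\operatorname{ev}^X_Y\circ(\operatorname{ev}^W_X\otimes(X\multimap Y))\big)\otimes(Y\multimap Z)\Big)\circ\Theta ,
\end{equation*}
with $\Theta$ a composite of associators from $W\otimes\big((W\multimap X)\otimes((X\multimap Y)\otimes(Y\multimap Z))\big)$ to $\big((W\otimes(W\multimap X))\otimes(X\multimap Y)\big)\otimes(Y\multimap Z)$; the composite arising from the left-hand side and that arising from the right-hand side are precisely the two branches of the pentagon diagram for the quadruple $\big(W,\,W\multimap X,\,X\multimap Y,\,Y\multimap Z\big)$, hence equal. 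Since $\Phi^{-1}_{W,B,Z}$ is injective, the associativity identity follows. The statement for $\operatorname{comp}^r$ in a right closed category is obtained the same way from the right tensor--hom adjunction~\eqref{AdjMor2}, or by applying the result just proved in $\cC^{\text{rev}}$.

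The one place requiring care is the associator bookkeeping in the middle step: each instance of $\ao$ must be tracked through the rewrites so that the residual coherence $2$-cell is correctly identified with the pentagon, with no stray associator dropped. Beyond the pentagon axiom (and the triangle identities implicit in the adjunction), no coherence input is needed.
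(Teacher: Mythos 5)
Your argument is correct: transposing both sides under the adjunction $\Phi_{W,B,Z}$, using the identity $\operatorname{ev}^X_Z\circ(X\otimes\operatorname{comp}^l_{X,Y,Z})=\operatorname{ev}^Y_Z\circ(\operatorname{ev}^X_Y\otimes(Y\multimap Z))\circ\ao$ three times together with naturality of $\ao$ and the interchange law, does reduce the two sides to the same morphism up to the two branches of the pentagon for $(W,\,W\multimap X,\,X\multimap Y,\,Y\multimap Z)$, and bijectivity of $\Phi$ then gives the claim. The paper does not prove this lemma itself but cites it to \cite[\S 7.9]{EGNO}; your adjunction-plus-pentagon argument is exactly the standard proof being referenced, so there is nothing to correct.
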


\begin{remark}[Internal algebra]
	For \(X\in \cC\), the \emph{endomorphism object} $E_X:=X\multimap X$
	is an algebra (Lemma~\ref{lemma: assoc of composition}) with multiplication \(\operatorname{comp}^l_{X,X,X}\) and unit 	
	\begin{equation*}
		\operatorname{e}^X:=(X\multimap \rho_X) \circ \operatorname{coev}^X_1\in \operatorname{Hom}_{\cC}(1,E_X).
	\end{equation*} 
\end{remark}

\begin{lemma}\label{lem:module over interal algebra}
	For any $X,Y\in \cC$, the object \(X\multimap Y\) carries an \((E_X,E_Y)\)-bimodule structure via internal composition, by Lemma~\ref{lemma: assoc of composition}.
\end{lemma}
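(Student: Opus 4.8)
The plan is to equip $X\multimap Y$ with the left $E_X$-action $\mu^l := \operatorname{comp}^l_{X,X,Y}\colon E_X\otimes(X\multimap Y)\to X\multimap Y$ and the right $E_Y$-action $\mu^r := \operatorname{comp}^l_{X,Y,Y}\colon (X\multimap Y)\otimes E_Y\to X\multimap Y$, and then to verify the bimodule axioms. Associativity of $\mu^l$, namely
\[
\mu^l\circ(E_X\otimes\mu^l)\;=\;\mu^l\circ\big(\operatorname{comp}^l_{X,X,X}\otimes(X\multimap Y)\big)\circ\ao ,
\]
is the instance of Lemma~\ref{lemma: assoc of composition} with the object tuple $(W,X,Y,Z)$ specialized to $(X,X,X,Y)$, upon observing that $\operatorname{comp}^l_{X,X,X}$ is the multiplication of the algebra $E_X$. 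Symmetrically, associativity of $\mu^r$ is Lemma~\ref{lemma: assoc of composition} specialized to $(X,Y,Y,Y)$, now featuring the multiplication $\operatorname{comp}^l_{Y,Y,Y}$ of $E_Y$. The left--right compatibility $\mu^l\circ(E_X\otimes\mu^r) = \mu^r\circ(\mu^l\otimes E_Y)\circ\ao$ is the specialization to $(X,X,Y,Y)$, in which the two outer composites of Lemma~\ref{lemma: assoc of composition} are exactly $\mu^l=\operatorname{comp}^l_{X,X,Y}$ and $\mu^r=\operatorname{comp}^l_{X,Y,Y}$.

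It remains to verify the two unit laws
\[
\mu^l\circ\big(\operatorname{e}^X\otimes(X\multimap Y)\big)\circ\lou_{X\multimap Y}^{-1} \;=\; \operatorname{id}_{X\multimap Y}
\qquad\text{and}\qquad
\mu^r\circ\big((X\multimap Y)\otimes\operatorname{e}^Y\big)\circ\rou_{X\multimap Y}^{-1} \;=\; \operatorname{id}_{X\multimap Y}.
\]
These are not instances of Lemma~\ref{lemma: assoc of composition}. I would prove them by unwinding the defining composite of $\operatorname{comp}^l_{X,X,Y}$ from Remark~\ref{rem:internal composition}, inserting $\operatorname{e}^X = (X\multimap\rou_X)\circ\operatorname{coev}^X_1$ into the left tensor factor, and then repeatedly applying the triangle identities of the tensor--hom adjunction (\ref{AdjMor1}), the naturality and extranaturality of $\operatorname{coev}^X$ and $\operatorname{ev}^X$ (Lemma~\ref{lemma:extranaturality of eval and coeval}), and the monoidal coherence axioms; the string then collapses to $\operatorname{id}_{X\multimap Y}$, and the right unit law is entirely analogous with $\operatorname{e}^Y$ in place of $\operatorname{e}^X$. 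Alternatively, one can note that $\operatorname{comp}^l$ together with the units $\operatorname{e}^X$ exhibit $\cC$ as a category enriched over itself with hom-objects $X\multimap Y$ (cf.\ \cite[\S 7.9]{EGNO}), whence the asserted structure on $X\multimap Y$ is the standard bimodule action over the endomorphism algebras $E_X$ and $E_Y$ coming from the enriched-category axioms.

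The only genuinely laborious point is this last pair of unit laws, since they require carefully threading associators and unitors through the long composite defining $\operatorname{comp}^l$; I would dispatch it either by a routine (if lengthy) diagram chase or, more economically, by invoking the self-enrichment of a closed monoidal category and reading off the module structure from there. Everything else is a direct three-fold application of Lemma~\ref{lemma: assoc of composition}.
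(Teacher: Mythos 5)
Your proof is correct and takes essentially the same route as the paper, which offers no separate argument but treats the lemma as immediate from Lemma~\ref{lemma: assoc of composition} — precisely the three specializations $(W,X,Y,Z)=(X,X,X,Y)$, $(X,Y,Y,Y)$ and $(X,X,Y,Y)$ that you spell out, with the actions $\operatorname{comp}^l_{X,X,Y}$ and $\operatorname{comp}^l_{X,Y,Y}$. Your extra care with the unit laws (via the triangle identities for the tensor--hom adjunction, or equivalently the self-enrichment of $\cC$) only makes explicit what the paper leaves tacit, and is correct.
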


The following result is an immediate consequence of Lemma~\ref{lemma:extranaturality of eval and coeval}.

\begin{lemma}\label{lem:internal composition is extranatural}
    The internal composition \(\operatorname{comp}^l_{X,Y,Z}\) is extranatural in \(Y\), and the unit \(e^X\) is extranatural in \(X\). This means that for all \(W,X,Y,Z \in \cC\) and \(f\in \operatorname{Hom}_{\cC}(X,Y)\),
    \begin{align}
         {\operatorname{comp}^l_{W,X,Z}}\circ {\big((W\multimap X)\otimes (f\multimap Z)\big)} &\;=\; {\operatorname{comp}^l_{W,Y,Z}}\circ {\big((W\multimap f)\otimes (Y\multimap Z)\big)},\\
        {(f\multimap Y)\circ \operatorname{e}^Y} & \;=\;{(X\multimap f) \circ \operatorname{e}^X}.
    \end{align}
    Analogous identities hold for any right closed monoidal category.
\end{lemma}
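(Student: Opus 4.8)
The plan is to transport each of the two identities through the tensor--hom adjunction \eqref{AdjMor1} and then reduce it to the (extra)naturality statements collected in Lemma~\ref{lemma:extranaturality of eval and coeval}, together with naturality of the coherence isomorphisms. Writing $\Phi_{X,Y,Z}(g)=(X\multimap g)\circ\operatorname{coev}^X_Y$ for the adjunction bijection \eqref{AdjMor1}, functoriality of $X\multimap({?})$ rewrites the definitions of $\operatorname{comp}^l$ and of $\operatorname{e}^X$ as the closed formulas
\begin{equation*}
\operatorname{comp}^l_{X,Y,Z}\;=\;\Phi_{X,\,(X\multimap Y)\ot(Y\multimap Z),\,Z}\Big(\operatorname{ev}^Y_Z\circ\big(\operatorname{ev}^X_Y\ot(Y\multimap Z)\big)\circ\ao\Big),\qquad \operatorname{e}^X\;=\;\Phi_{X,1,X}(\rho_X).
\end{equation*}
Because $\Phi$ is a bijection and is natural in all three arguments (Remark~\ref{internal homs}), each claimed identity is equivalent to the identity obtained by replacing both sides by their transposes under the appropriate instance of $\Phi$.

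For the first identity, precomposition with $(W\multimap X)\ot(f\multimap Z)$, respectively with $(W\multimap f)\ot(Y\multimap Z)$, is absorbed into $\Phi$ by naturality in its middle argument (equivalently, naturality of $\operatorname{coev}$), so the claim reduces to an equality of two morphisms $W\ot\big((W\multimap X)\ot(Y\multimap Z)\big)\to Z$ assembled from $\operatorname{ev}^W$, $\operatorname{ev}^X$, $\operatorname{ev}^Y$, the associator, and the maps $W\multimap f$ and $f\multimap Z$. That equality is a short diagram chase: first move the factor $W\ot({-})$ past the two associators using naturality of $\ao$; next absorb $W\multimap f$ and $f\multimap Z$ into the evaluations using bifunctoriality of $\ot$ and the naturality of $\operatorname{ev}^W$ in its object variable, i.e.\ $f\circ\operatorname{ev}^W_X=\operatorname{ev}^W_Y\circ\big(W\ot(W\multimap f)\big)$; then apply \eqref{eval is natural} to replace $\operatorname{ev}^X_Z\circ\big(X\ot(f\multimap Z)\big)$ by $\operatorname{ev}^Y_Z\circ\big(f\ot(Y\multimap Z)\big)$; and finally reassemble the associators via naturality of $\ao$, after which the two expressions literally agree.

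For the second identity the reduction is even more direct: postcomposing $\operatorname{e}^Y$ with $f\multimap Y$ and $\operatorname{e}^X$ with $X\multimap f$ and using naturality of $\Phi$ in its first and third arguments turns the claim into $\rho_Y\circ(f\ot\operatorname{id}_1)=f\circ\rho_X$, which is precisely naturality of the right unitor. The statements for a right closed monoidal category follow by running the same argument in $\cC^{\mathrm{rev}}$, equivalently by duality. I do not expect a genuine obstacle here; the only point that requires care is the bookkeeping --- keeping track of the associators and of which variable each coherence or (extra)naturality square is applied to --- which is exactly the sense in which the result is an immediate consequence of Lemma~\ref{lemma:extranaturality of eval and coeval}.
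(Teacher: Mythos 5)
Your argument is correct and matches the paper's intent: the paper offers no written proof, merely declaring the lemma an immediate consequence of Lemma~\ref{lemma:extranaturality of eval and coeval}, and your transposition through $\Phi$ followed by the chase using naturality of $\operatorname{coev}$, $\operatorname{ev}$, $\alpha$, the extranaturality identity~\eqref{eval is natural}, and naturality of $\rho$ is exactly the implicit verification. No gaps; the reduction of the second identity to $\rho_Y\circ(f\otimes 1)=f\circ\rho_X$ is the right endpoint.
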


Any left closed monoidal category $\cC$ comes with further canonical morphisms, which we recall for later use:

\begin{remark}[Internal hom tensorality]\label{rem: internal hom tensorality}
	For $X,Y,Z\in \cC$, define the morphism
	\begin{gather*}
		\underline{X\otimes }_{Y,Z}\colon\;Y\multimap Z \;\longrightarrow\; (X\otimes Y)\multimap (X\otimes Z),\\
		\underline{X\otimes }_{Y,Z} \;:=\; {\big((X\otimes Y)\multimap(X \otimes {\operatorname{ev}^Y_Z})\big)} \circ {\operatorname{coev}^{X\otimes Y}_{Y \multimap Z}}.
	\end{gather*}
	This is natural in \(Y,Z\in \cC\) and extranatural in \(X\in \cC\).
\end{remark}

\begin{remark}[Canonical isomorphisms]\label{rem:canonical isos}
    By Yoneda's lemma, the associator induces an isomorphism (`\emph{internal adjunction isomorphism}')
    \begin{align}
    {\beta_{X,Y,Z}\colon\; (X\otimes Y)\multimap Z} & \,\xlongrightarrow{\simeq}\, {Y \multimap (X\multimap Z)},
    \end{align}
    natural in $X,Y,Z\in \cC$.   
Similarly, the left unitor yields an isomorphism
\begin{align}\label{gamma unitor}
 	 {\gamma_{X}\colon\; 1\multimap X} & \,\xlongrightarrow{\simeq}\,{X},
    \end{align}
    natural in $X \in \cC$. For a right closed monoidal category, one obtains analogous isomorphisms
    \begin{align}
\higheroverline{\beta}_{X,Y,Z}\colon\; Z\multimapinv (X\otimes Y) & \,\xlongrightarrow{\simeq}\, (Z \multimapinv Y)\multimapinv X,\\
\higheroverline{\gamma}_{X}\colon\; X\multimapinv 1 & \,\xlongrightarrow{\simeq}\, X.
    \end{align}
\end{remark}

Using Mac Lane's coherence theorem together with Yoneda's lemma, we have:
\begin{lemma}\label{lemma: relation beta and gamma}
 The isomorphism \(\beta\) is compatible with unitors:
	\begin{align}
		(\lambda_{X} \multimap \gamma_Y) \;=\; \beta_{1,X,Y}^{-1},\label{eq: relation beta and gamma1}\\
		(\rho_{X} \multimap Y) \circ \gamma_{X\multimap Y} \;=\; \beta_{X,1,Y}^{-1},
	\end{align}
	for all $X,Y\in \cC$. Analogous identities hold in any right closed monoidal category.
\end{lemma}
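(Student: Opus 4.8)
The plan is to prove both identities with the Yoneda lemma. It suffices to show that $(\lambda_X\multimap\gamma_Y)\circ\beta_{1,X,Y}=\operatorname{id}_{(1\otimes X)\multimap Y}$ and that $\big((\rho_X\multimap Y)\circ\gamma_{X\multimap Y}\big)\circ\beta_{X,1,Y}=\operatorname{id}_{(X\otimes1)\multimap Y}$, since $\beta$ is invertible. Fix $T\in\cC$; by Yoneda it is enough to evaluate each composite on an arbitrary $\phi\colon T\to(1\otimes X)\multimap Y$, which we write as $\phi=\Phi_{1\otimes X,T,Y}(f)$ for a unique $f\colon(1\otimes X)\otimes T\to Y$ (respectively $\phi=\Phi_{X\otimes1,T,Y}(f)$ with $f\colon(X\otimes1)\otimes T\to Y$ for the second composite).

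One then unwinds using the defining adjunction properties of $\beta$ and $\gamma$ recorded in Remark~\ref{rem:canonical isos} — in the form $\beta_{A,B,C}\circ\Phi_{A\otimes B,T,C}(f)=\Phi_{B,T,A\multimap C}\big(\Phi_{A,B\otimes T,C}(f\circ\ao_{A,B,T})\big)$ and $\gamma_W\circ\Phi_{1,S,W}(h)=h\circ\lambda_S^{-1}$ — together with the naturality of $\Phi$ in all three arguments. A routine chase gives
\begin{equation*}
\big((\lambda_X\multimap\gamma_Y)\circ\beta_{1,X,Y}\big)(\phi)\;=\;\Phi_{1\otimes X,T,Y}\big(f\circ\ao_{1,X,T}\circ\lambda_{X\otimes T}^{-1}\circ(\lambda_X\otimes T)\big)
\end{equation*}
and
\begin{equation*}
\big((\rho_X\multimap Y)\circ\gamma_{X\multimap Y}\circ\beta_{X,1,Y}\big)(\phi)\;=\;\Phi_{X\otimes1,T,Y}\big(f\circ\ao_{X,1,T}\circ(X\otimes\lambda_T^{-1})\circ(\rho_X\otimes T)\big).
\end{equation*}
Thus each composite sends $\Phi(f)\mapsto\Phi(f\circ m_T)$ for an endomorphism $m_T$ of $(1\otimes X)\otimes T$, respectively $(X\otimes1)\otimes T$, built only from associators and unitors; so it remains to verify $m_T=\operatorname{id}$. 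In the first case this amounts to $(\lambda_X\otimes T)\circ\ao_{1,X,T}=\lambda_{X\otimes T}$, the standard identity $\lambda_{A\otimes B}=(\lambda_A\otimes B)\circ\ao_{1,A,B}$, which follows from Mac Lane's coherence theorem (indeed from the pentagon and triangle axioms). In the second case it amounts to $(\rho_X\otimes T)\circ\ao_{X,1,T}=X\otimes\lambda_T$, which is precisely the triangle axiom, given the associator convention $\ao_{X,Y,Z}\colon X\otimes(Y\otimes Z)\to(X\otimes Y)\otimes Z$ used here. Hence both composites are identities, which gives the two displayed identities; the right-closed analogues follow by running the same argument in $\cC^{\text{rev}}$.

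The argument is entirely mechanical; the only points requiring attention are applying each naturality square of $\Phi$ in the correct direction — recall $\multimap$ is contravariant in its first argument — and tracking the inverse unitors that the defining property of $\gamma$ introduces. I expect no conceptual obstacle beyond this bookkeeping, together with quoting the coherence identities in the form matching the paper's associator convention.
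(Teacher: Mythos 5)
Your proposal is correct and takes exactly the route the paper intends: the paper proves this lemma only by the remark ``Using Mac Lane's coherence theorem together with Yoneda's lemma,'' and your argument simply spells out that reduction, transporting $\beta$ and $\gamma$ through the adjunction isomorphisms $\Phi$ and landing on the coherence identities $(\lambda_X\otimes T)\circ\alpha_{1,X,T}=\lambda_{X\otimes T}$ and $(\rho_X\otimes T)\circ\alpha_{X,1,T}=X\otimes\lambda_T$, both in the paper's associator convention. The bookkeeping (directions of naturality of $\Phi$, the inverse unitor introduced by $\gamma$) is handled correctly, and the right-closed case via $\cC^{\mathrm{rev}}$ is fine.
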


The following lemma is a routine verification.

\begin{lemma}\label{lemma: beta  compatibility}
	The isomorphism \(\beta\) is compatible with (co)evaluations:
	\begin{align}
		{\operatorname{ev}^{X\otimes Y}_Z} \circ \big((X \otimes Y) \otimes \beta^{-1}_{X,Y,Z}\big) &\,=\, {\operatorname{ev}^X_Z} \circ {\big({X}\otimes {\operatorname{ev}^{Y}_{X \multimap Z}}\big)} \circ {\alpha^{-1}_{X,Y,Y\multimap (X\multimap Z)}},\label{eq: ev compatible with beta}\\[.5em]
		\beta_{X,Y,(X\otimes Y)\otimes Z} \circ \operatorname{coev}^{X\otimes Y}_Z &\,=\, \big(Y\multimap (X\multimap \alpha_{X,Y,Z})\big) \circ \big(Y \multimap \operatorname{coev}^{X}_{Y \otimes Z}\big)\circ \operatorname{coev}^Y_Z,\label{eq: coev compatible with beta}
	\end{align}
	for all \(X,Y,Z \in \cC\). Analogous equations hold in any right closed monoidal category.
\end{lemma}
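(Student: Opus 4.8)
The plan is to reduce both identities to the defining property of the isomorphism $\beta$ from Remark~\ref{rem:canonical isos}, phrased through the tensor--hom adjunction isomorphisms $\Phi$ of Remark~\ref{internal homs}, and then to close each identity using only the triangle identities of the tensor--hom adjunctions together with the ordinary naturality of the (co)evaluations. By Yoneda, $\beta_{X,Y,Z}$ is the unique isomorphism such that for every $W\in\cC$ the square of natural bijections
\[
\operatorname{Hom}_{\cC}\!\big(W,\,(X\otimes Y)\multimap Z\big)\;\xrightarrow{\ (\beta_{X,Y,Z})_{\ast}\ }\;\operatorname{Hom}_{\cC}\!\big(W,\,Y\multimap(X\multimap Z)\big)
\]
becomes, after applying $\Phi^{-1}_{X\otimes Y,W,Z}$ on the left and $\Phi^{-1}_{X,Y\otimes W,Z}\circ\Phi^{-1}_{Y,W,X\multimap Z}$ on the right, precomposition with the associator $\alpha_{X,Y,W}\colon X\otimes(Y\otimes W)\to(X\otimes Y)\otimes W$ (this being the direction of $\alpha$ in the conventions of the paper). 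Read off at a general element $g\colon W\to(X\otimes Y)\multimap Z$, this is the key identity
\[
\Phi^{-1}_{X,\,Y\otimes W,\,Z}\Big(\Phi^{-1}_{Y,\,W,\,X\multimap Z}\big(\beta_{X,Y,Z}\circ g\big)\Big)\;=\;\Phi^{-1}_{X\otimes Y,\,W,\,Z}(g)\circ\alpha_{X,Y,W},
\]
and, taking $W=(X\otimes Y)\multimap Z$ with $g=\id$, resp.\ $W=Y\multimap(X\multimap Z)$ and chasing $\id$ backwards, it yields the two transpose descriptions of $\beta$; in particular
\[
\beta^{-1}_{X,Y,Z}\;=\;\Phi_{X\otimes Y,\ Y\multimap(X\multimap Z),\ Z}\big(\operatorname{ev}^X_Z\circ(X\otimes\operatorname{ev}^Y_{X\multimap Z})\circ\alpha^{-1}_{X,Y,Y\multimap(X\multimap Z)}\big),
\]
which I would extract using the triangle identities $\Phi^{-1}(\operatorname{coev})=\id$ and $\Phi(\operatorname{ev})=\id$ at the relevant objects.

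Granting this, identity~\eqref{eq: ev compatible with beta} is immediate: its left-hand side is $\Phi^{-1}_{X\otimes Y,\,Y\multimap(X\multimap Z),\,Z}(\beta^{-1}_{X,Y,Z})$ by the definition of $\Phi^{-1}$, and the displayed formula for $\beta^{-1}_{X,Y,Z}$ says exactly that this equals the right-hand side. For identity~\eqref{eq: coev compatible with beta} I would apply the bijection $\Phi^{-1}_{X,\,Y\otimes Z,\,(X\otimes Y)\otimes Z}\circ\Phi^{-1}_{Y,\,Z,\,X\multimap((X\otimes Y)\otimes Z)}$ to both sides and check equality there. On the left, the key identity above (with $g=\operatorname{coev}^{X\otimes Y}_Z$) turns $\beta_{X,Y,(X\otimes Y)\otimes Z}\circ\operatorname{coev}^{X\otimes Y}_Z$ into $\Phi^{-1}_{X\otimes Y}(\operatorname{coev}^{X\otimes Y}_Z)\circ\alpha_{X,Y,Z}=\alpha_{X,Y,Z}$, using a triangle identity. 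On the right, I peel off the two adjunction transposes one at a time: regrouping the first two factors by functoriality of $Y\multimap(-)$, then using naturality of $\operatorname{ev}^Y$ and the triangle identity $\operatorname{ev}^Y\circ(Y\otimes\operatorname{coev}^Y)=\id$, the transpose $\Phi^{-1}_Y$ collapses the right-hand side to $(X\multimap\alpha_{X,Y,Z})\circ\operatorname{coev}^X_{Y\otimes Z}$; a second application of the same facts for $\operatorname{ev}^X$ collapses $\Phi^{-1}_X$ of this to $\alpha_{X,Y,Z}$. Both sides therefore have transpose $\alpha_{X,Y,Z}$, so they agree.

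The right-closed identities follow by the same argument after replacing $\multimap,\operatorname{ev},\operatorname{coev},\beta$ throughout by the right-closed analogues $\multimapinv,\higheroverline{\operatorname{ev}},\higheroverline{\operatorname{coev}},\higheroverline{\beta}$ of Remark~\ref{rem:canonical isos}, or equivalently by applying the left-closed case to the reversed monoidal category $\cC^{\text{rev}}$. I expect the only genuine difficulty to be bookkeeping: writing Remark~\ref{rem:canonical isos} out precisely enough to pin down which associator, and in which direction, is the one ``induced'' there, and then threading $\alpha$ and $\alpha^{-1}$ consistently through the two transpositions (one into an internal hom, one out of it). Once the defining square for $\beta$ is in hand, I do not expect to need any coherence beyond the triangle identities and the ordinary naturality of $\operatorname{ev}$ and $\operatorname{coev}$; in particular the extranaturality of Lemma~\ref{lemma:extranaturality of eval and coeval} should not be required.
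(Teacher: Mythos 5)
Your proposal is correct and follows essentially the same route as the paper: the paper's proof also writes out the Yoneda/adjunction-transpose equation defining $\beta$ (precisely your ``key identity'', rearranged with $(\beta^{-1})_{\ast}$ and $(\alpha^{-1})^{\ast}$) and obtains~\eqref{eq: ev compatible with beta} by evaluating it at the identity on $Y\multimap(X\multimap Z)$, treating~\eqref{eq: coev compatible with beta} as analogous. Your explicit transposition argument for~\eqref{eq: coev compatible with beta}, using only the triangle identities and ordinary naturality of $\operatorname{ev}$ and $\operatorname{coev}$ (no extranaturality), is exactly the intended ``analogous'' argument, just spelled out.
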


Let $\cC$ now be a \emph{closed} monoidal category. We relate the left and right internal homs of $\cC$:

\begin{remark}[Another canonical isomorphism]\label{rem:another can iso}
    For $X,Y,Z\in \cC$, define
\begin{equation}\label{eq: associator for internal homs}
\iota_{X,Y,Z}\colon\; (X\multimap Y)\multimapinv Z \,\xlongrightarrow{\simeq}\, X\multimap (Y\multimapinv Z)
\end{equation}
as the unique isomorphism characterized by the property that, for all \(W\in \cC\), the map
\begin{align*}
	(\iota_{X,Y,Z})_{\ast}\colon\;\operatorname{Hom}_{\cC}(W,(X\multimap Y)\multimapinv Z)\,\xlongrightarrow{\simeq}\,\operatorname{Hom}_{\cC}(W,X \multimap (Y \multimapinv Z)),
\end{align*}
is equal to the composite 
\begin{align*}
  \operatorname{Hom}_{\cC}(W,(X\multimap Y)\multimapinv Z)\xrightarrow{\simeq}\operatorname{Hom}_{\cC}(W \otimes Z,X\multimap Y)\xrightarrow{\simeq} \operatorname{Hom}_{\cC}(X \otimes (W \otimes Z),Y) \xrightarrow{(\alpha^{-1}_{X,W,Z})^{\ast}}\\ 
  \operatorname{Hom}_{\cC}((X \otimes W) \otimes Z,Y)\xrightarrow{\simeq}\operatorname{Hom}_{\cC}(X \otimes W,Y \multimapinv Z)\xrightarrow{\simeq}\operatorname{Hom}_{\cC}(W,X \multimap (Y \multimapinv Z)),
\end{align*}
where the unlabeled morphisms are instances of adjunction isomorphisms.
\end{remark}

The following lemma is a routine verification.

\begin{lemma}\label{lemma: iota compatibility}
 Let \(\cC\) be a closed monoidal category. 
 \begin{enumerate}[label=(\roman*)]
     \item The isomorphism \(\iota\) is compatible with (co)evaluations:
    \begin{align}
   \higheroverline{\operatorname{ev}}^Z_Y \circ {\big({\operatorname{ev}^X_{Y \multimapinv Z}} \otimes {Z}\big)} \circ {\big((X \otimes \iota^{-1}_{X,Y,Z}) \otimes Z \big)}\circ {\alpha}^{-1}&\,=\, \operatorname{ev}^X_Y\circ {\big(X\otimes {\higheroverline{\operatorname{ev}}^Z_{X\multimap Y}}\big)},\label{eq: ev compatible with iota}\\[.5em]
  \iota \circ \big((X\multimap \alpha)\multimapinv Z \big) \circ \big(\operatorname{coev}^X_{Y \otimes Z}\multimapinv Z\big)\circ \higheroverline{\operatorname{coev}}_Y^Z &\,=\, \big(X\multimap \higheroverline{\operatorname{coev}}^Z_{X\otimes Y}\big)\circ \operatorname{coev}_Y^X,\label{eq: coev compatible with iota}
 \end{align}
 for all objects \(X,Y,Z \in \cC\).
 \item The isomorphism \(\iota\) is compatible with the isomorphisms \(\beta,\higheroverline{\beta},\gamma\) and \(\higheroverline{\gamma}\):
 \begin{align}
 \beta_{W,X,Y\multimapinv Z}\circ \iota_{W\otimes X,Y,Z} &\,=\, (X\multimap \iota_{W,Y,Z})\circ \iota_{X,W\multimap Y,Z}\circ(\beta_{W,X,Y}\multimapinv Z),\label{eq: pentagon iota 1}\\
 \higheroverline{\beta}_{Z,Y,W\multimap X}\circ \iota_{W,X,Y\otimes Z}^{-1} &\,=\, (\iota^{-1}_{W,X,Z}\multimapinv Y)\circ \iota^{-1}_{W,X\multimapinv Z,Y}\circ(W\multimap \higheroverline{\beta}_{Z,Y,X}),\label{eq: pentagon iota 2}\\
 \gamma_{X\multimapinv Y}\circ \iota_{1,X,Y} &\,=\, \gamma_X \multimapinv Y,\label{eq: triangle iota 1}\\
 \higheroverline{\gamma}_{X\multimap Y}\circ \iota_{X,Y,1}^{-1} &\,=\, X \multimap \higheroverline{\gamma}_Y,\label{eq: triangle iota 2}
 \end{align}
 for all \(W,X,Y,Z\in \cC\).
 \end{enumerate}
\end{lemma}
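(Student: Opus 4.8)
The plan is to verify every identity by the Yoneda lemma, exploiting that $\iota$ — like $\beta$, $\higheroverline{\beta}$, $\gamma$, $\higheroverline{\gamma}$ — is \emph{defined} through its action on the hom-functors $\operatorname{Hom}_{\cC}(W,-)$ as a composite of the tensor--hom adjunction isomorphisms~(\ref{AdjMor1}) and~(\ref{AdjMor2}) interspersed with a single associator-induced map. Thus, to check an equality of parallel morphisms $A \rightrightarrows B$ assembled from these data, I would apply $\operatorname{Hom}_{\cC}(W,-)$ for an arbitrary $W\in\cC$, unfold both composites into strings of adjunction isomorphisms interleaved with maps of the form $(\alpha^{\pm 1})^{\ast}$, $(\alpha^{\pm 1})_{\ast}$, $(\lambda^{\pm 1})_{\ast}$, $(\rho^{\pm 1})_{\ast}$, cancel the adjunction isomorphisms using their naturality in all variables, and then invoke Mac Lane's coherence theorem to conclude that the remaining composites of structural isomorphisms agree. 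Throughout I would use that $\operatorname{ev}$, $\operatorname{coev}$, $\higheroverline{\operatorname{ev}}$, $\higheroverline{\operatorname{coev}}$ are the (co)units of precisely these adjunctions, so that the ``on the nose'' identities of part~(i) are exactly the ``on representables'' identities after transposition.

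For part~(i): to prove~(\ref{eq: ev compatible with iota}) I would transpose both sides under the left adjunction $X\otimes -\dashv X\multimap -$ and the right adjunction $-\otimes Z\dashv -\multimapinv Z$ (equivalently, tensor with $X$ and with $Z$ and precompose with the appropriate coevaluations), which strips away the outermost internal homs and reduces the statement to an equality of morphisms between honest iterated tensor products. Substituting the defining composite of $\iota_{X,Y,Z}$ from Remark~\ref{rem:another can iso} then produces exactly the reassociation $\alpha^{-1}_{X,W,Z}$ appearing there, and matching it against the associators already present on the other side is an instance of the pentagon axiom (a rearrangement of a four-fold tensor product). The coevaluation identity~(\ref{eq: coev compatible with iota}) is dual — postcompose with the evaluations rather than precompose with coevaluations — or, more economically, deduced from~(\ref{eq: ev compatible with iota}) via the triangle identities of the two adjunctions together with naturality of $\iota$.

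For part~(ii): each of~(\ref{eq: pentagon iota 1})--(\ref{eq: triangle iota 2}) is an equality of isomorphisms between internal-hom objects, so I would again apply $\operatorname{Hom}_{\cC}(W,-)$ and expand each occurrence of $\iota$, $\beta$, $\higheroverline{\beta}$ (built from $\alpha$) and $\gamma$, $\higheroverline{\gamma}$ (built from $\lambda$, $\rho$; see Remark~\ref{rem:canonical isos}) into its defining composite. After cancelling the adjunction isomorphisms, (\ref{eq: pentagon iota 1}) and~(\ref{eq: pentagon iota 2}) reduce to two instances of the pentagon identity (with some tensor factors of the shape $W\otimes Z$), while~(\ref{eq: triangle iota 1}) and~(\ref{eq: triangle iota 2}) reduce to instances of the triangle identity relating $\alpha$ with $\lambda$, $\rho$; all four then follow from coherence. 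Alternatively, some of these can be bootstrapped from part~(i) together with Lemmas~\ref{lemma: relation beta and gamma} and~\ref{lemma: beta  compatibility}.

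The conceptual content is thin — it is all coherence — so the main obstacle is bookkeeping: at each transposition one must track which of the two adjunctions ($X\otimes -$ versus $-\otimes X$) is in play, keep the opposite variances of $\multimap$ and $\multimapinv$ straight, and correctly identify the associator reindexing so that the residual diagram is literally a pentagon or a triangle rather than something that only becomes one after further manipulation. A minor but genuine subtlety is that $\iota$ is pinned down only through the Yoneda identification, so every step should be phrased as an equality of natural transformations of hom-functors, with Yoneda invoked only at the very end; reversing this order risks circularity.
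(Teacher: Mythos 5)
Your proposal follows essentially the same route as the paper: part (i) amounts to transposing under the two tensor--hom adjunctions, i.e.\ evaluating the defining hom-level identity of $\iota$ from Remark~\ref{rem:another can iso} at the identity of $X\multimap(Y\multimapinv Z)$, and part (ii) is handled by Yoneda plus pentagon/triangle coherence, exactly as in Appendix~\ref{sec:appendixproof-closed-mon}. One small correction: for part (i) no pentagon is needed --- the single reassociation $(\alpha^{-1})^{\ast}$ built into the definition of $\iota$ is precisely the $\alpha^{-1}$ appearing in~\eqref{eq: ev compatible with iota}, so after transposition the two sides agree on the nose (coherence genuinely enters only in part (ii)).
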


\smallskip

\subsection{Closed monoidal functors}\label{sec: functors between closed monoidal categories}
The following morphisms will play an important role.
\begin{definition}\label{def: closed monoidal functor}
    Let \(\cC=(\cC,\otimes,1)\) and \(\cD=(\cD,\otimes,1)\) be left closed monoidal categories, and let \({F\colon \cC\rightarrow \cD}\) be a lax monoidal functor with multiplication morphism \(\varphi^{2,F}\). 
    
    The \emph{(left internal hom) comparator} of \(F\) is the family of morphisms, natural in \(X,Y\in \cC\),
        \begin{align}\label{ClosedFunctor1} 
        \tau^{l,F} & \;=\;\{\tau^{l,F}_{X,Y} \colon \; F(X\multimap{Y})\,\longrightarrow\, F(X)\multimap {F(Y)}\}_{X,Y\in \cC},\\
        \tau^{l,F}_{X,Y} & \;:=\;\Big(F(X)\multimap \big(F(\operatorname{ev}^X_Y)\circ\varphi^{2,F}_{X,X\multimap\, {Y}}\big)\Big) \circ \operatorname{coev}^{F(X)}_{F(X\multimap\, {Y})}.
        \end{align}
        
        For right closed monoidal categories, the \emph{(right internal hom) comparator} of \(F\),
        \begin{align*}
        \tau^{r,F} \;=\;{{\{\tau^{r,F}_{Y,X} \colon \; F(Y\multimapinv{X}) \,\longrightarrow\, F(Y)\multimapinv {F(X)}\}_{X,Y\in \cC}}},
        \end{align*} 
        is defined similarly.
\end{definition}

The following lemmas record routine compatibilities of the comparators.

\begin{lemma}\label{lemma: compatibility of multi and left internal hom transfo}
    Let \(F\colon \cC \rightarrow \cD\) be a lax monoidal functor between left closed monoidal categories. Its left comparator \({\tau^{l,F}}\) is compatible with (co)evaluations:
\begin{align}
        {F(\operatorname{ev}^X_Y)} \circ {\varphi^{2,F}_{X,X\multimap Y}} &\;=\; {\operatorname{ev}^{F(X)}_{F(Y)}} \circ {\big(F(X) \otimes \tau^{l,F}_{X,Y}\big)}, \label{eval lax functor compatibility}\\
        {\tau^{l,F}_{Y,Y\otimes X}} \circ {F(\operatorname{coev}^Y_X)} &\;=\; {\big(F(Y)\multimap \varphi^{2,F}_{X,Y}\big)} \circ {\operatorname{coev}^{F(Y)}_{F(X)}},\label{coeval lax functor compatibility}
    \end{align}
for all \(X,Y \in \cC\). Analogous relations hold for right closed monoidal categories.
\end{lemma}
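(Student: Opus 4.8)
The plan is to reduce both identities to the triangle identities of the left tensor--hom adjunctions in $\cD$ together with the naturality of the lax structure morphism $\varphi^{2,F}$; no deeper input is needed, and the only real difficulty I anticipate is bookkeeping. Throughout, I would write $\Phi_{A,B,C}\colon \operatorname{Hom}_{\cD}(A\otimes B,C)\xrightarrow{\simeq}\operatorname{Hom}_{\cD}(B,A\multimap C)$ for the left tensor--hom adjunction isomorphism in $\cD$, so that $\Phi_{A,B,C}(h)=(A\multimap h)\circ \operatorname{coev}^{A}_{B}$ for $h\colon A\otimes B\to C$ and $\Phi^{-1}_{A,B,C}(g)=\operatorname{ev}^{A}_{C}\circ(A\otimes g)$ for $g\colon B\to A\multimap C$. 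The first step is to observe that the defining formula for the comparator exhibits it as a $\Phi$-transpose:
\[
\tau^{l,F}_{X,Y}\;=\;\Phi_{F(X),\,F(X\multimap Y),\,F(Y)}\big(F(\operatorname{ev}^{X}_{Y})\circ\varphi^{2,F}_{X,X\multimap Y}\big).
\]

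To prove \eqref{eval lax functor compatibility} I would simply apply $\Phi^{-1}_{F(X),F(X\multimap Y),F(Y)}$ to this display. Since $\Phi^{-1}$ is precisely the operation $g\mapsto \operatorname{ev}^{F(X)}_{F(Y)}\circ(F(X)\otimes g)$, the left-hand side of \eqref{eval lax functor compatibility} appears on one side and $F(\operatorname{ev}^{X}_{Y})\circ\varphi^{2,F}_{X,X\multimap Y}$ on the other, which is exactly the assertion. (Equivalently, and avoiding adjunction-isomorphism notation, one expands $\tau^{l,F}_{X,Y}$, slides $F(X)\otimes(-)$ inside, applies the naturality square of $\operatorname{ev}^{F(X)}$ to the morphism $F(\operatorname{ev}^{X}_{Y})\circ\varphi^{2,F}_{X,X\multimap Y}$, and collapses the remaining $\operatorname{ev}^{F(X)}_{F(X)\otimes F(X\multimap Y)}\circ\big(F(X)\otimes\operatorname{coev}^{F(X)}_{F(X\multimap Y)}\big)=\operatorname{id}$ by the triangle identity.)

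For \eqref{coeval lax functor compatibility} I would note that both sides are morphisms $F(X)\to F(Y)\multimap F(Y\otimes X)$, so since $\Phi_{F(Y),F(X),F(Y\otimes X)}$ is a bijection it suffices to check that their images under $\Phi^{-1}_{F(Y),F(X),F(Y\otimes X)}$, i.e.\ under $g\mapsto \operatorname{ev}^{F(Y)}_{F(Y\otimes X)}\circ(F(Y)\otimes g)$, agree. On the left-hand side I would factor $F(Y)\otimes(-)$ through the composite, invoke \eqref{eval lax functor compatibility} already established (applied with $X$ and $Y$ replaced by $Y$ and $Y\otimes X$) to rewrite $\operatorname{ev}^{F(Y)}_{F(Y\otimes X)}\circ\big(F(Y)\otimes\tau^{l,F}_{Y,Y\otimes X}\big)$ as $F(\operatorname{ev}^{Y}_{Y\otimes X})\circ\varphi^{2,F}_{Y,\,Y\multimap(Y\otimes X)}$, then push $F(\operatorname{coev}^{Y}_{X})$ across $\varphi^{2,F}$ using naturality of the lax structure, and finally collapse $F\big(\operatorname{ev}^{Y}_{Y\otimes X}\circ(Y\otimes\operatorname{coev}^{Y}_{X})\big)=F(\operatorname{id}_{Y\otimes X})$ by the triangle identity; the outcome is $\varphi^{2,F}_{Y,X}$. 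On the right-hand side I would use the naturality square of $\operatorname{ev}^{F(Y)}$ against $\varphi^{2,F}_{Y,X}$ and the triangle identity $\operatorname{ev}^{F(Y)}_{F(Y)\otimes F(X)}\circ\big(F(Y)\otimes\operatorname{coev}^{F(Y)}_{F(X)}\big)=\operatorname{id}$, which again produces $\varphi^{2,F}_{Y,X}$. Equality of the two transposes then yields \eqref{coeval lax functor compatibility}.

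The right-closed analogues would come for free by applying the two identities just proved to the lax monoidal functor $F\colon\cC^{\mathrm{rev}}\to\cD^{\mathrm{rev}}$, whose left internal-hom comparator is, by construction, the right comparator $\tau^{r,F}$ of $F\colon\cC\to\cD$ and whose lax structure morphism is $\varphi^{2,F}$ with its two arguments swapped. As anticipated, the main obstacle is not conceptual: one only has to keep the triple indices on $\varphi^{2,F}$, $\operatorname{ev}$, $\operatorname{coev}$ and on the adjunction isomorphisms straight and resist inserting associators or unitors, which in fact do not occur in these particular identities.
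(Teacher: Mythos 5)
Your proposal is correct and takes essentially the same route as the paper: the paper proves the evaluation identity by exactly the expansion--naturality--snake-identity computation you describe (equivalently, by reading off that $\tau^{l,F}_{X,Y}$ is by definition the adjoint transpose of $F(\operatorname{ev}^X_Y)\circ\varphi^{2,F}_{X,X\multimap Y}$), and treats the coevaluation identity as "analogous". Your variants --- deducing the coevaluation identity by transposing both sides and invoking the first identity, and handling the right-closed case via $F\colon\cC^{\mathrm{rev}}\to\cD^{\mathrm{rev}}$ --- are harmless repackagings of the same ingredients (triangle identities plus naturality of $\operatorname{ev}$, $\operatorname{coev}$, and $\varphi^{2,F}$).
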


\begin{lemma}\label{lemma: compatibility of multi and beta}
	Let \(F\colon \cC \rightarrow \cD\) be a lax monoidal functor between left closed monoidal categories. Its left comparator \({\tau^{l,F}}\) is compatible with the isomorphism \(\beta\) from Remark~\ref{rem:canonical isos}:
	\begin{align}
		\beta_{F(X),F(Y),F(Z)} \circ {\big(\varphi^{2,F}_{X,Y} \multimap F(Z)\big)} \circ \tau^{l,F}_{X\otimes Y,Z} \;=\; 
		\big(F(Y)\multimap \tau^{l,F}_{X,Z}\big) \circ \tau^{l,F}_{Y,X\multimap Z}\circ F(\beta_{X,Y,Z}),
	\end{align}
	for all \(X,Y,Z \in \cC\). An analogous equation holds for right closed monoidal categories.
\end{lemma}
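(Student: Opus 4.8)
The plan is to prove the identity by a Yoneda argument that strips the two nested internal homs off the target. Both sides are morphisms $F\big((X\otimes Y)\multimap Z\big)\to F(Y)\multimap\big(F(X)\multimap F(Z)\big)$ in $\cD$, and since $\beta_{F(X),F(Y),F(Z)}$ is invertible it suffices to show that the morphisms obtained by post-composing with $\beta^{-1}_{F(X),F(Y),F(Z)}$ agree, namely
\[
\big(\varphi^{2,F}_{X,Y}\multimap F(Z)\big)\circ\tau^{l,F}_{X\otimes Y,Z}
\qquad\text{and}\qquad
\beta^{-1}_{F(X),F(Y),F(Z)}\circ\big(F(Y)\multimap\tau^{l,F}_{X,Z}\big)\circ\tau^{l,F}_{Y,X\multimap Z}\circ F(\beta_{X,Y,Z}),
\]
both $F\big((X\otimes Y)\multimap Z\big)\to\big(F(X)\otimes F(Y)\big)\multimap F(Z)$. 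By the adjunction isomorphism \eqref{AdjMor1} defining the left internal hom, two morphisms into $\big(F(X)\otimes F(Y)\big)\multimap F(Z)$ are equal as soon as they agree after post-composition with $\operatorname{ev}^{F(X)\otimes F(Y)}_{F(Z)}\circ\big((F(X)\otimes F(Y))\otimes({-})\big)$, so the claim reduces to a single identity of morphisms $\big(F(X)\otimes F(Y)\big)\otimes F\big((X\otimes Y)\multimap Z\big)\to F(Z)$.

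For the first morphism I would use the extranaturality of evaluation (Lemma~\ref{lemma:extranaturality of eval and coeval}, equation~\eqref{eval is natural}) with $f=\varphi^{2,F}_{X,Y}$ to rewrite $\operatorname{ev}^{F(X)\otimes F(Y)}_{F(Z)}\circ\big((F(X)\otimes F(Y))\otimes(\varphi^{2,F}_{X,Y}\multimap F(Z))\big)$ as $\operatorname{ev}^{F(X\otimes Y)}_{F(Z)}\circ\big(\varphi^{2,F}_{X,Y}\otimes(F(X\otimes Y)\multimap F(Z))\big)$, and then apply the compatibility of $\tau^{l,F}$ with evaluation (Lemma~\ref{lemma: compatibility of multi and left internal hom transfo}, equation~\eqref{eval lax functor compatibility}) with the pair $(X\otimes Y,Z)$, arriving at
\[
F(\operatorname{ev}^{X\otimes Y}_Z)\circ\varphi^{2,F}_{X\otimes Y,\,(X\otimes Y)\multimap Z}\circ\big(\varphi^{2,F}_{X,Y}\otimes F((X\otimes Y)\multimap Z)\big).
\]

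For the second morphism I would first apply equation~\eqref{eq: ev compatible with beta} of Lemma~\ref{lemma: beta  compatibility}, with $F(X),F(Y),F(Z)$ in place of $X,Y,Z$, to replace $\operatorname{ev}^{F(X)\otimes F(Y)}_{F(Z)}\circ\big((F(X)\otimes F(Y))\otimes\beta^{-1}_{F(X),F(Y),F(Z)}\big)$ by $\operatorname{ev}^{F(X)}_{F(Z)}\circ\big(F(X)\otimes\operatorname{ev}^{F(Y)}_{F(X)\multimap F(Z)}\big)\circ\alpha^{-1}$. Then I would peel off the remaining internal homs one factor at a time: $F(Y)\multimap\tau^{l,F}_{X,Z}$ by naturality of the counit $\operatorname{ev}^{F(Y)}$, then $\tau^{l,F}_{Y,X\multimap Z}$ via \eqref{eval lax functor compatibility} for $(Y,X\multimap Z)$, then $\tau^{l,F}_{X,Z}$ via \eqref{eval lax functor compatibility} for $(X,Z)$, using naturality of $\varphi^{2,F}$ to shuffle identities past it and naturality of $\alpha$ to reposition the trailing associator; what remains is a morphism of the form $F(\operatorname{ev}^X_Z)\circ F\big(X\otimes\operatorname{ev}^Y_{X\multimap Z}\big)\circ F(\alpha^{-1})\circ(\text{two factors of }\varphi^{2,F})\circ F(\beta_{X,Y,Z})$ up to associators. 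Applying $F$ to equation~\eqref{eq: ev compatible with beta}, now inside $\cC$, produces a factor $F(\beta^{-1}_{X,Y,Z})$ that cancels the trailing $F(\beta_{X,Y,Z})$ and collapses $F(\operatorname{ev}^X_Z)\circ F(X\otimes\operatorname{ev}^Y_{X\multimap Z})\circ F(\alpha^{-1})$ into $F(\operatorname{ev}^{X\otimes Y}_Z)$; a final appeal to the associativity coherence of the lax structure morphism $\varphi^{2,F}$ — whose associators are placed correctly by Mac Lane's coherence theorem — identifies the result with the expression displayed above for the first morphism.

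There is no conceptual difficulty; the work is entirely bookkeeping, and that is where the main obstacle lies: one must index every instance of (co)evaluation and of $\tau^{l,F}$ at exactly the right objects, and track the associators carefully — in particular recalling the convention $\alpha_{X,Y,Z}\colon X\otimes(Y\otimes Z)\to(X\otimes Y)\otimes Z$ used in this paper, which is what makes the associator produced by the lax coherence cancel rather than accumulate. The one genuine input beyond formal manipulation is the associativity coherence of $\varphi^{2,F}$ invoked at the end. The statement for right closed monoidal categories follows by running the same argument in the reversed categories $\cC^{\mathrm{rev}}$ and $\cD^{\mathrm{rev}}$, equivalently by interchanging the roles of left and right internal homs throughout.
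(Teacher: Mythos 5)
Your argument is correct, and it reaches the identity by a genuinely different (though adjunction-related) route than the paper. The paper proves the lemma by a direct eight-step computation at the level of the internal-hom-valued morphisms: it unfolds the defining formula of \(\tau^{l,F}_{X\otimes Y,Z}\), then uses naturality of \(\beta\), \emph{both} halves of Lemma~\ref{lemma: beta  compatibility} (the evaluation identity~\eqref{eq: ev compatible with beta} \emph{and} the coevaluation identity~\eqref{eq: coev compatible with beta}), the extranaturality of \(\operatorname{coev}\), and the associativity of \(\varphi^{2,F}\). You instead transpose both sides along the adjunction \(\big((F(X)\otimes F(Y))\otimes{-}\big)\dashv\big((F(X)\otimes F(Y))\multimap{-}\big)\) — which is legitimate, since the adjunction bijection~\eqref{AdjMor1} is injective — so that only evaluations appear; you then never need the coevaluation identity, the extranaturality of \(\operatorname{coev}\), or the explicit formula for \(\tau^{l,F}\), relying solely on Lemma~\ref{lemma: compatibility of multi and left internal hom transfo} (three instances), extranaturality of \(\operatorname{ev}\), Equation~\eqref{eq: ev compatible with beta} applied once in \(\cD\) and once under \(F\) in \(\cC\), and the lax associativity of \(\varphi^{2,F}\). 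The two proofs are essentially mates of one another, but yours buys a cleaner reduction: I checked the rewriting chain and it closes, with the only loose point being the phrasing near the end — the \(F(\alpha^{-1})\) that lets the \(F\)-image of~\eqref{eq: ev compatible with beta} absorb the trailing \(F(\beta_{X,Y,Z})\) is produced precisely by the lax associativity axiom (after shuffling \(F(\beta_{X,Y,Z})\) inward through the two \(\varphi^{2,F}\)'s by naturality), so that appeal must be understood as interleaved with, not strictly after, the cancellation; your "up to associators" hedge covers this, and the order of these commuting rewrites does not affect the outcome. The reduction of the right closed case to the reversed categories is also fine.
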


\begin{lemma}\label{lem:compatibility composition and internal hom transfo}
Let \(F\colon \cC \rightarrow \cD\) be a lax monoidal functor between left closed monoidal categories, with multiplication morphism $\varphi^{2,F}$ and unit morphism $\varphi^{0,F}$. Its left comparator \({\tau^{l,F}}\) is compatible with internal composition and internal unit:
    \begin{align}
        {\tau^{l,F}_{X,Z}\circ F(\operatorname{comp}^l_{X,Y,Z}) \circ \varphi^{2,F}_{X\multimap Y,Y\multimap Z}} &\;=\; {{\operatorname{comp}^l_{F(X),F(Y),F(Z)}}\circ {(\tau^{l,F}_{X,Y}\otimes \tau^{l,F}_{Y,Z})}},\label{eq:comp and inhom compatibility}\\
        \tau^{l,F}_{X,X}\circ F(\operatorname{e}^X)\circ \varphi^{0,F} & \;=\; \operatorname{e}^{F(X)},\label{eq:unit and inhom compatibility}
    \end{align}
 for all \(X,Y,Z \in \cC\). Analogous equations hold for right closed monoidal categories.
\end{lemma}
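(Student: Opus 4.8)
The plan is to verify both identities after transposing along the left tensor--hom adjunction $\Phi_{F(X),-,F(Z)}$ of \eqref{AdjMor1}. Since $\Phi$ is a bijection, two parallel morphisms with codomain $F(X)\multimap F(Z)$ agree as soon as their composites with $\operatorname{ev}^{F(X)}_{F(Z)}\circ\big(F(X)\otimes(-)\big)$ do. The inputs I will use are: equation~\eqref{eval lax functor compatibility}, which is exactly the assertion $\operatorname{ev}^{F(X)}_{F(Y)}\circ\big(F(X)\otimes\tau^{l,F}_{X,Y}\big)=F(\operatorname{ev}^X_Y)\circ\varphi^{2,F}_{X,X\multimap Y}$; the transpose of the defining formula in Remark~\ref{rem:internal composition}, namely $\operatorname{ev}^X_Z\circ\big(X\otimes\operatorname{comp}^l_{X,Y,Z}\big)=\operatorname{ev}^Y_Z\circ\big(\operatorname{ev}^X_Y\otimes(Y\multimap Z)\big)\circ\alpha_{X,X\multimap Y,Y\multimap Z}$, which holds in any left closed monoidal category and in particular in $\cD$; the analogous transpose $\operatorname{ev}^X_X\circ(X\otimes\operatorname{e}^X)=\rho_X$ of the internal unit; naturality of $\varphi^{2,F}$; the associativity and unit coherence axioms of the lax monoidal functor $(F,\varphi^{2,F},\varphi^{0,F})$; and naturality of the associators and unitors together with the interchange law.

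For \eqref{eq:comp and inhom compatibility}, the idea is to apply $\operatorname{ev}^{F(X)}_{F(Z)}\circ\big(F(X)\otimes(-)\big)$ to both sides. On the left-hand side one first strips off $\tau^{l,F}_{X,Z}$ via \eqref{eval lax functor compatibility}, then slides $\varphi^{2,F}_{X,X\multimap Z}$ past $F(\operatorname{comp}^l_{X,Y,Z})$ by naturality of $\varphi^{2,F}$, and finally applies $F$ to the transposed definition of $\operatorname{comp}^l_{X,Y,Z}$; this yields
\[
F(\operatorname{ev}^Y_Z)\circ F\big(\operatorname{ev}^X_Y\otimes(Y\multimap Z)\big)\circ F(\alpha_{X,X\multimap Y,Y\multimap Z})\circ\varphi^{2,F}_{X,(X\multimap Y)\otimes(Y\multimap Z)}\circ\big(F(X)\otimes\varphi^{2,F}_{X\multimap Y,Y\multimap Z}\big).
\]
On the right-hand side one applies the transposed definition of $\operatorname{comp}^l_{F(X),F(Y),F(Z)}$ in $\cD$, moves the $\cD$-associator outwards by naturality, and uses \eqref{eval lax functor compatibility} twice (the second time after an interchange step) together with naturality of $\varphi^{2,F}$; this yields
\[
F(\operatorname{ev}^Y_Z)\circ F\big(\operatorname{ev}^X_Y\otimes(Y\multimap Z)\big)\circ\varphi^{2,F}_{X\otimes(X\multimap Y),Y\multimap Z}\circ\big(\varphi^{2,F}_{X,X\multimap Y}\otimes F(Y\multimap Z)\big)\circ\alpha_{F(X),F(X\multimap Y),F(Y\multimap Z)}.
\]
These two composites agree by the lax monoidal associativity coherence of $(F,\varphi^{2,F})$ applied to the triple $(X,\;X\multimap Y,\;Y\multimap Z)$, which equates $F(\alpha)\circ\varphi^{2,F}\circ(\id\otimes\varphi^{2,F})$ with $\varphi^{2,F}\circ(\varphi^{2,F}\otimes\id)\circ\alpha$.

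For \eqref{eq:unit and inhom compatibility}, the idea is to apply $\operatorname{ev}^{F(X)}_{F(X)}\circ\big(F(X)\otimes(-)\big)$ to both sides. The right-hand side becomes $\rho_{F(X)}$ by the transpose of $\operatorname{e}^{F(X)}=(F(X)\multimap\rho_{F(X)})\circ\operatorname{coev}^{F(X)}_1$. For the left-hand side, functoriality of $\otimes$ and \eqref{eval lax functor compatibility} give $F(\operatorname{ev}^X_X)\circ\varphi^{2,F}_{X,X\multimap X}\circ\big(F(X)\otimes F(\operatorname{e}^X)\big)\circ\big(F(X)\otimes\varphi^{0,F}\big)$; naturality of $\varphi^{2,F}$ rewrites this as $F(\operatorname{ev}^X_X)\circ F(X\otimes\operatorname{e}^X)\circ\varphi^{2,F}_{X,1}\circ\big(F(X)\otimes\varphi^{0,F}\big)$; the transpose $\operatorname{ev}^X_X\circ(X\otimes\operatorname{e}^X)=\rho_X$ collapses it to $F(\rho_X)\circ\varphi^{2,F}_{X,1}\circ\big(F(X)\otimes\varphi^{0,F}\big)$, which equals $\rho_{F(X)}$ by the right-unit coherence axiom of $(F,\varphi^{2,F},\varphi^{0,F})$. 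The statements for right closed monoidal categories follow by the mirror-image argument, with all left closed data ($\multimap,\operatorname{ev},\operatorname{coev},\operatorname{comp}^l$ and the left unitor) replaced by the corresponding right closed data.

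The single genuinely fiddly point is the diagram chase closing the first identity: one must keep four associators straight (two in $\cC$, two in $\cD$) and invoke the lax monoidal coherence cell in the correct orientation. Everything else is formal manipulation with the adjunction $\Phi$ and naturality, so I anticipate no conceptual obstacle beyond this bookkeeping.
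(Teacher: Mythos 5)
Your proof is correct, but it is organized differently from the paper's. The paper proves \eqref{eq:comp and inhom compatibility} by a direct eight-step chain of equalities starting from the left-hand side: it unpacks the definition of $\operatorname{comp}^l$, uses naturality of $\tau^{l,F}$, \emph{both} compatibilities of Lemma~\ref{lemma: compatibility of multi and left internal hom transfo} (the coevaluation identity~\eqref{coeval lax functor compatibility} as well as~\eqref{eval lax functor compatibility}), naturality of $\operatorname{coev}^{F(X)}$, lax associativity and naturality of $\varphi^{2,F}$, and then reassembles the right-hand side. You instead exploit injectivity of the inverse adjunction bijection $g\mapsto \operatorname{ev}^{F(X)}_{F(Z)}\circ\big(F(X)\otimes g\big)$ and verify the identity after "untransposing" both sides; this lets you replace $\operatorname{comp}^l$ and $\operatorname{e}$ by their evaluated forms $\operatorname{ev}^X_Z\circ(X\otimes\operatorname{comp}^l_{X,Y,Z})=\operatorname{ev}^Y_Z\circ\big(\operatorname{ev}^X_Y\otimes(Y\multimap Z)\big)\circ\alpha$ and $\operatorname{ev}^X_X\circ(X\otimes\operatorname{e}^X)=\rho_X$, so that only~\eqref{eval lax functor compatibility}, naturality of $\varphi^{2,F}$ and $\alpha$, interchange, and the associativity/right-unit coherence of $(F,\varphi^{2,F},\varphi^{0,F})$ are needed — the coevaluation identity and explicit snake manipulations drop out. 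The two derived expressions you display for the transposed left- and right-hand sides are exactly what these steps produce, and they do coincide by the lax associativity hexagon applied to $(X,\,X\multimap Y,\,Y\multimap Z)$ in the paper's associator convention, so the argument closes. What your route buys is lighter bookkeeping (no coevaluations, fewer auxiliary naturality moves); what the paper's route buys is that it stays entirely at the level of the comparator morphisms, which matches the style of the surrounding appendix computations. The unit equation and the right-closed mirror statement are handled in essentially the same way in both treatments.
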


By specializing Lemma~\ref{lem:compatibility composition and internal hom transfo} to \(X=Y=Z\), we find:
\begin{corollary}
Let \(F\colon \cC \rightarrow \cD\) be a lax monoidal functor between left closed monoidal categories. For any \(X\in \cC\), the corresponding component of the left comparator 
 \begin{equation*}
     \tau^{l,F}_{X,X}\colon\; F(E_{X})\eqdef F(X\multimap X)\,\longrightarrow\, F(X)\multimap F(X) \eqdef E_{F(X)}
\end{equation*}
is a morphism of algebras. An analogous statement holds in the right closed case.
\end{corollary}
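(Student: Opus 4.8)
The plan is to unwind what "morphism of algebras" means here and then observe that the two required identities are \emph{verbatim} the $X=Y=Z$ specializations of Lemma~\ref{lem:compatibility composition and internal hom transfo}. First I would recall the standard fact that a lax monoidal functor preserves algebras: since $E_X=X\multimap X$ is an algebra in $\cC$ with multiplication $\operatorname{comp}^l_{X,X,X}$ and unit $\operatorname{e}^X$, the object $F(E_X)$ becomes an algebra in $\cD$ with multiplication
\[
F(\operatorname{comp}^l_{X,X,X})\circ \varphi^{2,F}_{E_X,E_X}\colon\; F(E_X)\otimes F(E_X)\longrightarrow F(E_X)
\]
and unit $F(\operatorname{e}^X)\circ \varphi^{0,F}\colon 1\to F(E_X)$; associativity and unitality follow from functoriality of $F$ together with the lax monoidal coherence axioms for $\varphi^{2,F}$ and $\varphi^{0,F}$.

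With this algebra structure on $F(E_X)$ and the algebra structure on $E_{F(X)}=F(X)\multimap F(X)$ from the internal composition, the claim that $\tau^{l,F}_{X,X}$ is a morphism of algebras amounts to exactly two equations: compatibility with multiplications,
\[
\tau^{l,F}_{X,X}\circ\big(F(\operatorname{comp}^l_{X,X,X})\circ \varphi^{2,F}_{E_X,E_X}\big)\;=\;\operatorname{comp}^l_{F(X),F(X),F(X)}\circ\big(\tau^{l,F}_{X,X}\otimes \tau^{l,F}_{X,X}\big),
\]
which is precisely \eqref{eq:comp and inhom compatibility} after setting $X=Y=Z$; and compatibility with units,
\[
\tau^{l,F}_{X,X}\circ\big(F(\operatorname{e}^X)\circ \varphi^{0,F}\big)\;=\;\operatorname{e}^{F(X)},
\]
which is precisely \eqref{eq:unit and inhom compatibility}. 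Hence the corollary follows immediately. The right closed statement is obtained in the same way from the right closed versions of \eqref{eq:comp and inhom compatibility} and \eqref{eq:unit and inhom compatibility}.

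I do not anticipate any genuine obstacle; the only point requiring a modicum of care is making sure the algebra structure transported onto $F(E_X)$ via the lax structure of $F$ is the one against which the two displayed identities are read as the morphism-of-algebras axioms, i.e.\ that the grouping of $\varphi^{2,F}$ and $F(\operatorname{comp}^l)$ matches. This is exactly how Lemma~\ref{lem:compatibility composition and internal hom transfo} is phrased, so the specialization is direct.
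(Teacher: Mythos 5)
Your proposal is correct and is essentially the paper's own argument: the paper derives the corollary precisely by specializing Equations~\eqref{eq:comp and inhom compatibility} and~\eqref{eq:unit and inhom compatibility} of Lemma~\ref{lem:compatibility composition and internal hom transfo} to $X=Y=Z$, with $F(E_X)$ carrying the algebra structure transported along the lax monoidal structure of $F$, exactly as you spell out. Your extra care in identifying that transported algebra structure is a reasonable elaboration of what the paper leaves implicit.
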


\begin{lemma}\label{lemma:taul and monoidal nat transfos}
	Let \(F,G\colon \cC \rightarrow \cD\) be two lax monoidal functors between left closed monoidal categories, and let \(f \colon F\to G\) be a monoidal natural transformation. Then \(f\) is compatible with the left comparators \(\tau^{l,F}\) and \(\tau^{l,G}\), in the sense that
	 \begin{equation}
	\big(F(X)\multimap f_Y\big) \circ \tau^{l,F}_{X,Y} \;=\; \big(f_X\multimap G(Y)\big)\circ \tau^{l,G}_{X,Y} \circ f_{X\multimap Y},
	\end{equation}
	for all \(X,Y\in \cC\). An analogous equation holds for right closed monoidal categories.
\end{lemma}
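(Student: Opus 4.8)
The cleanest route is to test the asserted equality, which concerns two parallel morphisms $F(X\multimap Y)\to F(X)\multimap G(Y)$, against the tensor-hom adjunction. Since $\Phi_{F(X),F(X\multimap Y),G(Y)}$ from \eqref{AdjMor1} is a bijection (Remark~\ref{internal homs}), two morphisms $g$ with codomain $F(X)\multimap G(Y)$ coincide as soon as the morphisms $\operatorname{ev}^{F(X)}_{G(Y)}\circ\big(F(X)\otimes g\big)\colon F(X)\otimes F(X\multimap Y)\to G(Y)$ coincide. I would therefore apply the operation $g\mapsto \operatorname{ev}^{F(X)}_{G(Y)}\circ\big(F(X)\otimes g\big)$ to each side of the claimed identity and show that both results equal $f_Y\circ F(\operatorname{ev}^X_Y)\circ\varphi^{2,F}_{X,X\multimap Y}$.

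For the left-hand side $\big(F(X)\multimap f_Y\big)\circ\tau^{l,F}_{X,Y}$, I would use functoriality of $\otimes$ to split off the factor $F(X)\otimes\tau^{l,F}_{X,Y}$, then apply naturality of $\operatorname{ev}^{F(X)}$ in its target variable (with the morphism $f_Y\colon F(Y)\to G(Y)$) to pull $f_Y$ out of the evaluation, and finally invoke Lemma~\ref{lemma: compatibility of multi and left internal hom transfo} for $F$ (equation \eqref{eval lax functor compatibility}) to rewrite $\operatorname{ev}^{F(X)}_{F(Y)}\circ\big(F(X)\otimes\tau^{l,F}_{X,Y}\big)$ as $F(\operatorname{ev}^X_Y)\circ\varphi^{2,F}_{X,X\multimap Y}$. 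This yields $f_Y\circ F(\operatorname{ev}^X_Y)\circ\varphi^{2,F}_{X,X\multimap Y}$, as desired.

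For the right-hand side $\big(f_X\multimap G(Y)\big)\circ\tau^{l,G}_{X,Y}\circ f_{X\multimap Y}$ the crucial move is different: after distributing $F(X)\otimes(-)$ over the composite, I would apply the extranaturality of $\operatorname{ev}$ in its index variable, equation \eqref{eval is natural} of Lemma~\ref{lemma:extranaturality of eval and coeval}, with $f_X\colon F(X)\to G(X)$, to replace $\operatorname{ev}^{F(X)}_{G(Y)}\circ\big(F(X)\otimes(f_X\multimap G(Y))\big)$ by $\operatorname{ev}^{G(X)}_{G(Y)}\circ\big(f_X\otimes(G(X)\multimap G(Y))\big)$. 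Repeated use of the interchange law then collects the remaining factors into $\operatorname{ev}^{G(X)}_{G(Y)}\circ\big(G(X)\otimes\tau^{l,G}_{X,Y}\big)\circ\big(f_X\otimes f_{X\multimap Y}\big)$, and Lemma~\ref{lemma: compatibility of multi and left internal hom transfo} for $G$ rewrites the first two factors as $G(\operatorname{ev}^X_Y)\circ\varphi^{2,G}_{X,X\multimap Y}$. The defining compatibility of the monoidal natural transformation $f$ with the multiplication morphisms turns $\varphi^{2,G}_{X,X\multimap Y}\circ\big(f_X\otimes f_{X\multimap Y}\big)$ into $f_{X\otimes(X\multimap Y)}\circ\varphi^{2,F}_{X,X\multimap Y}$, and naturality of $f$ at $\operatorname{ev}^X_Y$ converts $G(\operatorname{ev}^X_Y)\circ f_{X\otimes(X\multimap Y)}$ into $f_Y\circ F(\operatorname{ev}^X_Y)$. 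Both sides thus equal $f_Y\circ F(\operatorname{ev}^X_Y)\circ\varphi^{2,F}_{X,X\multimap Y}$, which proves the identity; the right closed case is handled identically, replacing $\operatorname{ev}$, $\tau^{l}$, and \eqref{AdjMor1} by $\higheroverline{\operatorname{ev}}$, $\tau^{r}$, and \eqref{AdjMor2}.

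I do not anticipate a conceptual obstacle: everything reduces to the two cited lemmas and the definition of a monoidal natural transformation. The one point requiring care is the bookkeeping of the various tensor factors under the interchange law, and ensuring that Lemma~\ref{lemma: compatibility of multi and left internal hom transfo} is applied in the correct form for $F$ versus $G$ — i.e.\ with the evaluation taken at $F(X)$ as opposed to $G(X)$ — which is exactly what the extranaturality step in the right-hand side computation is there to arrange.
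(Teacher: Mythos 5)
Your proof is correct. You verify the identity by applying the inverse of the tensor–hom adjunction bijection $\Phi_{F(X),F(X\multimap Y),G(Y)}$, i.e.\ by showing that both sides have the same adjunct $\operatorname{ev}^{F(X)}_{G(Y)}\circ\big(F(X)\otimes(-)\big)$, and each of the two computations you outline goes through: on the left, naturality of $\operatorname{ev}^{F(X)}$ plus Equation~\eqref{eval lax functor compatibility} for $F$; on the right, extranaturality~\eqref{eval is natural} at $f_X$, Equation~\eqref{eval lax functor compatibility} for $G$, monoidality of $f$, and naturality of $f$ at $\operatorname{ev}^X_Y$, both landing on $f_Y\circ F(\operatorname{ev}^X_Y)\circ\varphi^{2,F}_{X,X\multimap Y}$.

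This is a genuinely different (though closely related, essentially ``adjoint'') route from the paper's. The paper argues entirely on the internal-hom side: it unfolds $\tau^{l,F}_{X,Y}$ via its defining coevaluation formula, then pushes $f$ through using naturality, monoidality of $f$, and the extranaturality of $\operatorname{coev}$ (Lemma~\ref{lemma:extranaturality of eval and coeval}), reassembling the result as $\big(f_X\multimap G(Y)\big)\circ\tau^{l,G}_{X,Y}\circ f_{X\multimap Y}$ without ever transposing. Your argument instead transposes under the adjunction and leverages the already-established compatibility Lemma~\ref{lemma: compatibility of multi and left internal hom transfo} twice (once for $F$, once for $G$), using extranaturality of $\operatorname{ev}$ rather than of $\operatorname{coev}$. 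What your approach buys is less bookkeeping inside the internal hom and a clean reduction to previously proven statements; what the paper's buys is a self-contained chain of equalities between the two sides that never leaves the hom-object level. Both are equally rigorous, and your handling of the right closed case by the evident replacements is fine.
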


We use the comparators to define what it means for a functor to preserve internal homs:

\begin{definition}
    Let \({F\colon \cC\rightarrow \cD}\) be a lax monoidal functor between monoidal categories.
    If \(\cC\) and \(\cD\) are left closed and the left comparator \(\tau^{l,F}\) of \(F\) is invertible, we say \(F\) is \emph{left closed}. Similarly, if \(\cC\) and \(\cD\) are right closed and \(\tau^{r,F}\) is invertible, we say \(F\) is \emph{right closed}.
    
    Finally, if \(\cC\) and \(\cD\) are closed and \(F\) is both left and right closed, we say \(F\) is \emph{closed}.
\end{definition}

\begin{remark}[Comparing definitions]
    What we call left (resp. right) closed monoidal categories are called right (resp. left) closed in \cite{BLV}. Up to this reversal, our definitions of left/right closed lax monoidal functors agree with \cite[\S 3.2]{BLV}, and our notion of closed strong monoidal functor matches the definition of a monoidal functor preserving left and right inner homs in \cite[\S I.4.3]{CatTann} and \cite[\S 3.1]{DualDoubleHopfAlgSch}.
\end{remark}

\begin{remark}[Coclosed functors]
    The notion of a (left or right) coclosed oplax monoidal functor between (left or right) coclosed monoidal categories is defined analogously.
\end{remark}

\subsection{Grothendieck--Verdier categories}\label{sec: GV-categories and closed monoidal categories}
A key notion for this paper is the following: 
\begin{definition}\label{def:GV-category}
Let \(\cC=(\cC,\otimes,1)\) be a monoidal category.

\begin{myitemize}
	\vspace{-0.3em}
    \item A \emph{dualizing object} of \(\cC\) is an object \(K\in \cC\) such that for every object \(Y\in \cC\), the functor \(\operatorname{Hom}_{\cC}(-\otimes Y,K)\) is representable by some object \(DY\in \cC\) and the induced contravariant functor \(D\) on \(\cC\) is an antiequivalence. We call \(D\) the \emph{duality functor} associated to \(K\), and denote its quasi-inverse by \(D^{-1}\).
    \item A \emph{Grothendieck--Verdier (GV) structure} on \(\cC\) is a choice of a dualizing object.\\ A \emph{Grothendieck--Verdier (GV) category} is a monoidal category with a GV-structure.
    \item If the monoidal unit \(1\) is a dualizing object of \(\cC\), we call \(\cC\) an \emph{r-category}.
\end{myitemize}
\end{definition}

\begin{remark}[Terminology]
We follow the terminology of \cite{BoDrinfeld}; GV-categories are also known as (non-symmetric) \(\ast\)-autonomous categories \cite{BarrStarAut,Ba95}.
\end{remark}

\begin{remark}[Closedness]\label{GV implies closed}
    Every GV-category is closed, with internal homs given by
    \begin{equation}\label{eq:right internal hom}
        Y\multimapinv X := D\big(X\otimes D^{-1}(Y)\big),
    \end{equation}
    \begin{equation}\label{eq:left internal hom}
        X\multimap Y := D^{-1}\big(D(Y)\otimes X\big),
    \end{equation}
    see, e.g., \cite[\S6]{Ba95} or \cite[Rem. 1.1]{BoDrinfeld}.
\end{remark}

We recall an alternative definition of a GV-category (Definition~\ref{def: GV via closed}), which is equivalent to Definition~\ref{def:GV-category} (Proposition~\ref{CompDualizing}). It will be used in the proof of one of our main results, the lifting theorem (Theorem~\ref{main thm}). We begin by presenting the necessary background:

\begin{remark}[A specific adjunction]\label{rem: double dualization morphism}
Let \((\cC,\otimes,1)\) be closed. For \(K\in \cC\), define the functors
\begin{align}
	D_K:=\;(K\multimapinv {?}) \colon \;\cC \rightarrow \cC^{\mathrm{op}}, \qquad \qquad D'_K:=\;({?}\multimap K) \colon\; \cC^{\mathrm{op}} \rightarrow \cC.
\end{align} 
Then \(D_K\) is left adjoint to \(D'_K\), with unit and counit given by the double-transposes of the identity morphisms on \({D_K(X)}\) and \({D'_K(X)}\) for any $X\in \cC$:
   \begin{equation}\label{DoubDualMor1}
	d^K_X\colon \:X \,\xrightarrow{\operatorname{coev}_X^{D_K(X)}}\, D_K(X)\multimap \big(D_K(X)\otimes X\big) \, \xrightarrow{D_K(X)\;\multimap\;{\higheroverline{\operatorname{ev}}^X_K}}\,D'_KD_K(X),
\end{equation}
\begin{equation}\label{DoubDualMor2}
	\widetilde{d}^K_X\colon \: X \,\xrightarrow{\higheroverline{\operatorname{coev}}^{D'_K(X)}_X}\, \big(X\otimes D'_K(X)\big)\multimapinv D'_K(X)\,\xrightarrow{\operatorname{ev}^X_K\;\multimapinv\;D'_K(X)}\,D_K D'_K(X). 
\end{equation}
\end{remark}

\smallskip

\begin{definition}\label{def: GV via closed}
Let \((\cC,\otimes,1)\) be closed. An object \(K\in \cC\) is a \emph{dualizer} if the unit \(d^K\) and counit \(\widetilde{d}^K\) are both invertible.
\end{definition}

This notion has been considered by several authors, e.g. \cite{BarrStarAut, BluteScott}, including in a non-symmetric setting \cite[\S4.8]{MelCatSem}.

\begin{remark}[Notation]
When the dualizer $K$ is clear, we write \(d\) and \(\widetilde{d}\) for the unit and counit, and $D$ and $D'$ for the associated duality functors, omitting superscripts and subscripts.
\end{remark}

A GV-category is equivalently a closed monoidal category with a dualizer:

\begin{prop}\label{CompDualizing}\emph{(Cf. \cite[\S6]{Ba95}.)}
    Let \((\cC,\otimes,1)\) be a monoidal category. An object \(K\in \cC\) is dualizing in the sense of Definition~\ref{def:GV-category} if and only if \(\cC\) is closed and \(K\) is a dualizer. In this case, the duality functor associated to $K$ is given by \(({K\multimapinv {?}})\) and its quasi-inverse by \(({?\multimap K})\).
\end{prop}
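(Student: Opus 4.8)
The plan is to prove both implications by comparing the two ways of representing the relevant Hom-functors, using the closedness of $\cC$ as the bridge. For the ``only if'' direction, suppose $K$ is dualizing in the sense of Definition~\ref{def:GV-category}. First I would invoke Remark~\ref{GV implies closed} (whose proof I would spell out or cite) to conclude that $\cC$ is closed, with internal homs given by~\eqref{eq:right internal hom} and~\eqref{eq:left internal hom}. Next, I would identify the functor $D_K = (K\multimapinv{?})$ with the duality functor $D$: indeed, by definition of the right internal hom via the adjunction~\eqref{AdjMor2}, $\operatorname{Hom}_\cC(Y, K\multimapinv X)\cong \operatorname{Hom}_\cC(Y\otimes X, K)$ naturally, which is exactly the representing property of $DX$; so $D_K\cong D$ as functors $\cC\to\cC^{\op}$. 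Likewise $D'_K = ({?}\multimap K)$ is identified with $D^{-1}$ using~\eqref{AdjMor1}. Since $D$ is an antiequivalence with quasi-inverse $D^{-1}$ by hypothesis, and the adjunction $D_K\dashv D'_K$ of Remark~\ref{rem: double dualization morphism} has $D\dashv D^{-1}$ up to the identifications just made, the uniqueness of adjoints forces the unit $d^K$ and counit $\widetilde d^K$ to be the unit and counit of an adjoint equivalence, hence invertible. Thus $K$ is a dualizer.

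For the ``if'' direction, suppose $\cC$ is closed and $K$ is a dualizer. I would set $D := D_K = (K\multimapinv{?})$ and show it satisfies Definition~\ref{def:GV-category}. Representability of $\operatorname{Hom}_\cC(-\otimes Y, K)$ by $D_K(Y)$ is immediate from the defining adjunction isomorphism~\eqref{AdjMor2}: $\operatorname{Hom}_\cC(Z\otimes Y, K)\cong\operatorname{Hom}_\cC(Z, K\multimapinv Y) = \operatorname{Hom}_\cC(Z, D_K(Y))$, naturally in $Z$. It remains to check that the induced contravariant functor $D_K$ is an antiequivalence. But $D_K\dashv D'_K$ (Remark~\ref{rem: double dualization morphism}) with unit $d^K$ and counit $\widetilde d^K$, and by hypothesis both are invertible; an adjunction whose unit and counit are isomorphisms is an (adjoint) equivalence, so $D_K\colon\cC\to\cC^{\op}$ is an antiequivalence of $\cC$ with quasi-inverse $D'_K = ({?}\multimap K)$. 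This simultaneously establishes the final sentence of the statement identifying the duality functor and its quasi-inverse.

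The main obstacle I anticipate is purely bookkeeping rather than conceptual: one must check that the unit/counit of the tensor–hom adjunctions, when transported along the natural isomorphisms $D_K\cong D$ and $D'_K\cong D^{-1}$, really do match the morphisms $d^K_X$ and $\widetilde d^K_X$ as written in~\eqref{DoubDualMor1} and~\eqref{DoubDualMor2}—i.e. that ``the double-transpose of the identity'' description of $d^K,\widetilde d^K$ coincides with the canonical unit/counit of the composite adjunction $D_K\dashv D'_K$. This is a diagram chase with $\operatorname{coev}$, $\higheroverline{\operatorname{ev}}$ and the adjunction isomorphisms $\Phi,\higheroverline\Phi$, and it is where Lemma~\ref{lemma:extranaturality of eval and coeval} and the Yoneda lemma do the work. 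Once that identification is in place, the logical core—``an adjunction is an equivalence iff unit and counit are iso''—is standard, and both directions close.
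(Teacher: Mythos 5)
Your proposal is correct, and its overall decomposition is the same as the paper's: for the forward direction, use Remark~\ref{GV implies closed} to get closedness, identify $(K\multimapinv{?})$ with $D$ and $({?}\multimap K)$ with $D^{-1}$ via the representing property, and conclude invertibility of $d^K,\widetilde d^K$; for the converse, read off representability from the adjunction~\eqref{AdjMor2} and deduce the antiequivalence from the invertible unit and counit. The one place you diverge is the justification of the key step that an adjoint pair of quasi-inverse functors has invertible unit and counit: the paper isolates this as Lemma~\ref{eqMelDual} and proves it with Joyal-type Lemma~\ref{unit of monoid}, applied to the monad $D'_K\circ D_K$ (whose underlying endofunctor is isomorphic to the identity), with a dual argument for the counit; you instead invoke uniqueness of adjoints, i.e.\ the standard fact that an adjunction whose left adjoint is an equivalence is an adjoint equivalence. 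Both arguments are standard and give the same conclusion, so nothing is lost either way. Also, the ``bookkeeping'' you anticipate at the end is not actually needed: Remark~\ref{rem: double dualization morphism} already asserts that $d^K$ and $\widetilde d^K$ \emph{are} the unit and counit of the adjunction $D_K\dashv D'_K$, so you may apply the adjoint-equivalence fact directly to that adjunction without transporting anything along the isomorphisms $D_K\cong D$, $D'_K\cong D^{-1}$ (quasi-inverseness is preserved by natural isomorphism), and no diagram chase with $\operatorname{coev}$, $\higheroverline{\operatorname{ev}}$, $\Phi$, $\higheroverline{\Phi}$ is required.
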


\begin{remark}[Terminology]
 	From now on, we identify dualizers with dualizing objects, write $D^{-1}$ or $D'$ for the inverse duality functor, and use `GV-category' and `closed monoidal category with a dualizer' interchangeably.
\end{remark}

\subsection{Grothendieck--Verdier functors} \label{sec: Functors between GV-categories}

We now introduce a notion of functor between GV-categories, which to the best of our knowledge is new. Let \(\cC\) and \(\cD\) be monoidal categories with distinguished objects \(K\in \cC\) and \(k\in \cD\). At this stage, \(K\) and \(k\) are not assumed to be dualizing.

\begin{definition}
	A \emph{form} on a functor \(F\colon \cC \to \cD\) is a morphism ${\upsilon^{0,F}\in \operatorname{Hom}_{\cD}(F(K),k)}$.
\end{definition}	

From now on, we assume that \(\cC\) and \(\cD\) are closed. By abuse of notation, we write
\begin{align*}
D' &:= (- \multimap K) \colon \cC^{\mathrm{op}} \to \cC,
\qquad
&D' := (- \multimap k) \colon \cD^{\mathrm{op}} \to \cD,\\
D &:= (K \multimapinv -) \colon \cC^{\mathrm{op}} \to \cC,
\qquad
&D := (k \multimapinv -) \colon \cD^{\mathrm{op}} \to \cD.
\end{align*}

\begin{definition}\label{def: duality transformations}
 Let \(F\colon \cC \to \cD\) be a lax monoidal functor with a form $\upsilon^{0,F}$. Its \emph{duality transformations} are the families of morphisms
	\begin{align*}
		& \xi^{l,F}_X\, \colon \,  FD'(X) \eqdef F(X\multimap K) \,\xlongrightarrow{\tau^{l,F}_{X,K}}\, F(X)\multimap F(K) \,\xlongrightarrow{{F(X)}\,\multimap\, {\upsilon^{0,F}}}\, F(X)\multimap k \eqdef D'F(X),\\
		& \xi^{r,F}_X\, \colon \, FD(X) \eqdef F(K\multimapinv X) \,\xlongrightarrow{\tau^{r,F}_{K,X}}\, F(K) \multimapinv F(X) \,\xlongrightarrow{\upsilon^{0,F}\,\multimapinv\, F(X)}\, k \multimapinv F(X) \eqdef DF(X),
	\end{align*} 
	natural in \(X\in \cC\). Here, $\tau^{l,F}$ and $\tau^{r,F}$ denote the comparators from Definition~\ref{ClosedFunctor1}.
\end{definition}

\begin{remark}[Bijective correspondences]
	Forms on a lax monoidal functor \(F\), natural transformations \(FD'\to D'F\), and natural transformations \(FD\to DF\) are all in bijective correspondence. For example, any form on \(F\) determines a natural transformation \(FD'\to D'F\), as described in Definition~2.35. Conversely, such a natural transformation gives rise to a form by evaluating at the monoidal unit \(1\), and then using the unit morphism \(\varphi^{0,F}\colon 1 \to F(1)\) and the isomorphisms \eqref{gamma unitor} from Remark \ref{rem:canonical isos}.
\end{remark}

We record compatibility results of the duality transformations \(\xi^{l,F}\) and \(\xi^{r,F}\) for a lax monoidal functor \(F\colon \cC \to \cD\) with form $\upsilon^{0,F}$:

\begin{lemma}\label{lemma:left-right duality transfos relations}
	 \(\xi^{l,F}\) and \(\xi^{r,F}\) are compatible with the unit \(d\) and counit \(\widetilde{d}\) from Remark~\ref{rem: double dualization morphism}:
	\begin{align}
		\xi^{l,F}_{D(X)} \circ F(d_X) &\;=\; D'(\xi^{r,F}_X) \circ d_{F(X)},\label{eq:left-right duality transfos relations 1}\\
		\xi^{r,F}_{D'(X)} \circ F(\widetilde{d}_X) &\;=\; D(\xi^{l,F}_X) \circ \widetilde{d}_{F(X)},\label{eq:left-right duality transfos relations 2}
	\end{align}
	for all \(X \in \cC\).
\end{lemma}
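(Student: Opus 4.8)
\begin{proofsketch}
Both assertions are equalities of parallel morphisms in $\cD$: for \eqref{eq:left-right duality transfos relations 1} two morphisms $F(X)\to D'FD(X)$, and for \eqref{eq:left-right duality transfos relations 2} two morphisms $F(X)\to DFD'(X)$. Since $D'FD(X)=FD(X)\multimap k$ and $DFD'(X)=k\multimapinv FD'(X)$, the tensor-hom adjunctions $\Phi$ and $\highoverline{\Phi}$ of $\cD$ from \eqref{AdjMor1} and \eqref{AdjMor2} identify these with morphisms $FD(X)\otimes F(X)\to k$, respectively $F(X)\otimes FD'(X)\to k$. As $\Phi$ and $\highoverline{\Phi}$ are bijective, the plan is to transpose each side of each equation and to show that both reduce to one and the same morphism into the dualizing object.

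The only preliminary observation needed is that, by \eqref{DoubDualMor1}, \eqref{DoubDualMor2} and naturality of the transpose, $d^K_X$ is the $\Phi$-transpose of the evaluation $\higheroverline{\operatorname{ev}}^X_K\colon D(X)\otimes X\to K$, and $\widetilde{d}^K_X$ is the $\highoverline{\Phi}$-transpose of $\operatorname{ev}^X_K\colon X\otimes D'(X)\to K$; likewise for $d_{F(X)}$ and $\widetilde{d}_{F(X)}$ in $\cD$. For \eqref{eq:left-right duality transfos relations 1} I would transpose the left-hand side $\xi^{l,F}_{D(X)}\circ F(d_X)$: expanding $\xi^{l,F}$ by Definition~\ref{def: duality transformations}, pulling $\upsilon^{0,F}$ through the evaluation by naturality of $\operatorname{ev}$, applying the compatibility \eqref{eval lax functor compatibility} of the left comparator with evaluation, invoking naturality of $\varphi^{2,F}$, and using the counit identity $\operatorname{ev}^{D(X)}_K\circ\big(D(X)\otimes d_X\big)=\higheroverline{\operatorname{ev}}^X_K$ of the tensor-hom adjunction, this collapses to $\upsilon^{0,F}\circ F(\higheroverline{\operatorname{ev}}^X_K)\circ\varphi^{2,F}_{D(X),X}$. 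For the right-hand side $D'(\xi^{r,F}_X)\circ d_{F(X)}$, noting that $D'$ sends a morphism $g$ to $g\multimap k$, the extranaturality of $\operatorname{ev}$ (Lemma~\ref{lemma:extranaturality of eval and coeval}) pulls $\xi^{r,F}_X$ out of the internal hom onto the left tensor factor, and the counit identity identifies $d_{F(X)}$ with $\higheroverline{\operatorname{ev}}^{F(X)}_k$, leaving $\higheroverline{\operatorname{ev}}^{F(X)}_k\circ\big(\xi^{r,F}_X\otimes F(X)\big)$; expanding $\xi^{r,F}$ by Definition~\ref{def: duality transformations}, using naturality of $\higheroverline{\operatorname{ev}}$ and the right-closed analogue of \eqref{eval lax functor compatibility} (compatibility of $\tau^{r,F}$ with $\higheroverline{\operatorname{ev}}$, with $D(X)=K\multimapinv X$), this too becomes $\upsilon^{0,F}\circ F(\higheroverline{\operatorname{ev}}^X_K)\circ\varphi^{2,F}_{D(X),X}$. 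Equality of the two transposes, with injectivity of $\Phi$, yields \eqref{eq:left-right duality transfos relations 1}.

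Equation \eqref{eq:left-right duality transfos relations 2} is proved by the mirror-image computation: transpose along $\highoverline{\Phi}$ into a morphism $F(X)\otimes FD'(X)\to k$, use $\widetilde{d}_X=\highoverline{\Phi}(\operatorname{ev}^X_K)$, Definition~\ref{def: duality transformations} for $\xi^{l,F}$ and $\xi^{r,F}$, naturality and extranaturality of $\operatorname{ev}$ and $\higheroverline{\operatorname{ev}}$, and the compatibilities of $\tau^{l,F}$ and $\tau^{r,F}$ with (co)evaluations; both sides then collapse to $\upsilon^{0,F}\circ F(\operatorname{ev}^X_K)\circ\varphi^{2,F}_{X,D'(X)}$. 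Alternatively, once \eqref{eq:left-right duality transfos relations 1} is known, \eqref{eq:left-right duality transfos relations 2} can be deduced formally: the two equations together say that $\xi^{l,F}$ and $\xi^{r,F}$ are mates for the adjunctions $D\dashv D'$ in $\cC$ and in $\cD$, and the triangle identities for $d,\widetilde{d}$ together with naturality of $\xi^{l,F},\xi^{r,F}$ force one of the two squares to commute once the other does. I expect no conceptual obstacle: the content is just that the comparators intertwine (co)evaluations and that $d,\widetilde{d}$ are assembled from (co)evaluations. The one delicate point is the bookkeeping, namely keeping track of the transposes, associators, unitors, and the two variances of the internal-hom bifunctors so that the two reduced composites are visibly identical; this remains manageable by working systematically with $\Phi$- and $\highoverline{\Phi}$-transposes, in the spirit of Remark~\ref{rem:another can iso}.
\end{proofsketch}
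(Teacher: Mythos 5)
Your proposal is correct and is essentially the paper's argument: both come down to the comparator--evaluation compatibilities of Lemma~\ref{lemma: compatibility of multi and left internal hom transfo} (and its right closed analogue), (extra)naturality of the (co)evaluations and of $\varphi^{2,F}$, and the description of $d$ and $\widetilde{d}$ as adjunction transposes of $\higheroverline{\operatorname{ev}}^X_K$ and $\operatorname{ev}^X_K$. The only difference is organizational: the paper rewrites $\xi^{l,F}_{D(X)}\circ F(d_X)$ into $D'(\xi^{r,F}_X)\circ d_{F(X)}$ directly in internal-hom form (using also the coevaluation compatibility~\eqref{coeval lax functor compatibility} and extranaturality of $\operatorname{coev}$), whereas you transpose both sides along the tensor-hom adjunction and match them against the common pairing $\upsilon^{0,F}\circ F(\higheroverline{\operatorname{ev}}^X_K)\circ\varphi^{2,F}_{D(X),X}$.
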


\begin{lemma}\label{lemma:duality transfos relations}
	\(\xi^{l,F}\) and \(\xi^{r,F}\) are compatible with the isomorphisms \(\beta\) and \(\higheroverline{\beta}\) \mbox{from Remark~\ref{rem:canonical isos}:}
		\begin{align}
		\beta_{F(X),F(Y),K} \circ D'(\varphi^{2,F}_{X,Y})\circ \xi^{l,F}_{X\ot Y} &\;=\;\big({F(Y) \multimap \xi^{l,F}_X}\big) \circ {\tau^{l,F}_{Y,D'(X)}}\circ F(\beta_{X,Y,K}),\label{eq: GV functor 1} \\
		\higheroverline{\beta}_{F(X),F(Y),K} \circ D(\varphi^{2,F}_{X,Y})\circ \xi^{r,F}_{X\ot Y} &\;=\;\big({\xi^{r,F}_Y\multimapinv F(X)}\big) \circ {\tau^{r,F}_{D(Y),X}}\circ F(\higheroverline{\beta}_{X,Y,K}),\label{eq: GV functor 2}
	\end{align}
	for all \(X,Y \in \cC\).
\end{lemma}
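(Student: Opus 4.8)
The plan is to derive both identities by specializing Lemma~\ref{lemma: compatibility of multi and beta} to the case where its third object is the distinguished object, and then transporting the result along the form $\upsilon^{0,F}$. I treat the first equation in detail; the second is its mirror image, obtained by replacing every left closed ingredient by the corresponding right closed one.

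First I would unfold the left-hand side using $\xi^{l,F}_{X\ot Y} = \big(F(X\ot Y)\multimap \upsilon^{0,F}\big)\circ \tau^{l,F}_{X\ot Y,K}$ (Definition~\ref{def: duality transformations}), and note $D'(\varphi^{2,F}_{X,Y}) = \big(\varphi^{2,F}_{X,Y}\multimap k\big)$. The two internal-hom morphisms $\varphi^{2,F}_{X,Y}\multimap k$ and $F(X\ot Y)\multimap \upsilon^{0,F}$ act in opposite variances of the bifunctor $\multimap$, so bifunctoriality rewrites their composite as $\big((F(X)\ot F(Y))\multimap \upsilon^{0,F}\big)\circ\big(\varphi^{2,F}_{X,Y}\multimap F(K)\big)$; next, naturality of $\beta$ in its third argument moves $\big((F(X)\ot F(Y))\multimap \upsilon^{0,F}\big)$ across $\beta_{F(X),F(Y),k}$ to produce $\big(F(Y)\multimap(F(X)\multimap \upsilon^{0,F})\big)\circ\beta_{F(X),F(Y),F(K)}$. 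After these purely formal steps the left-hand side reads
\[
\big(F(Y)\multimap(F(X)\multimap \upsilon^{0,F})\big)\circ\beta_{F(X),F(Y),F(K)}\circ\big(\varphi^{2,F}_{X,Y}\multimap F(K)\big)\circ\tau^{l,F}_{X\ot Y,K}.
\]

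Now I would invoke Lemma~\ref{lemma: compatibility of multi and beta} with $Z:=K$, which replaces the last three factors by $\big(F(Y)\multimap \tau^{l,F}_{X,K}\big)\circ\tau^{l,F}_{Y,X\multimap K}\circ F(\beta_{X,Y,K})$. Functoriality of $F(Y)\multimap(-)$ then merges $\big(F(Y)\multimap(F(X)\multimap \upsilon^{0,F})\big)$ with $\big(F(Y)\multimap \tau^{l,F}_{X,K}\big)$ into $F(Y)\multimap\big((F(X)\multimap \upsilon^{0,F})\circ\tau^{l,F}_{X,K}\big) = F(Y)\multimap \xi^{l,F}_X$, and since $D'(X) = X\multimap K$ this is exactly the right-hand side of the first equation.

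For the second equation I would run the identical computation with $\tau^{r,F}$, $\higheroverline{\beta}$, $D(X) = K\multimapinv X$, and the right closed analogue of Lemma~\ref{lemma: compatibility of multi and beta} in place of their left closed counterparts, using bifunctoriality of $\multimapinv$ to commute $D(\varphi^{2,F}_{X,Y})$ past $\upsilon^{0,F}\multimapinv(-)$ (again opposite variances) and naturality of $\higheroverline{\beta}$ in its dualizing-object slot. Every step is either bifunctoriality of an internal hom, naturality of one of the canonical isomorphisms in a single variable, the definitions of $\xi^{l,F}$ and $\xi^{r,F}$, or an already-established compatibility, so I do not anticipate a genuine obstacle; the only care required is the bookkeeping of the $\cC^{\mathrm{op}}$-conventions for $\multimapinv$ in the right closed case, which is where a sign- or slot-swap is easiest to get wrong.
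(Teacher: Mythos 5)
Your proposal is correct and follows essentially the same route as the paper: unfold $\xi^{l,F}$, use functoriality of the internal hom and naturality of $\beta$ to reduce to the identity of Lemma~\ref{lemma: compatibility of multi and beta} specialized at $Z=K$, then reassemble $F(Y)\multimap \xi^{l,F}_X$, with the mirror argument for the right-closed case. This is exactly the paper's proof, just written out in more detail.
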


Proofs appear in Appendix~\ref{sec:GV-functors-appendix}.

\begin{definition}\label{def: Frobenius form}
A form $\upsilon^{0,F}$ on a lax monoidal functor \(F\colon \cC \to \cD\) is called \emph{Frobenius} if its associated duality transformations \(\xi^{l,F}\) and \(\xi^{r,F}\) are invertible.
\end{definition}

From now on, we assume that \(K\in \cC\) and \(k\in \cD\) are dualizing, so that \((\cC,K)\) and \((\cD,k)\) are GV-categories. Under this assumption, Lemma \ref{lemma:left-right duality transfos relations} implies that the natural transformation \(\xi^{l,F}\) is invertible if and only if \(\xi^{r,F}\) is invertible.

\begin{definition}\label{def: GV-functor}
A \emph{GV-functor} is a lax monoidal functor \(F\colon \cC \to \cD\) equipped with a Frobenius form $\upsilon^{0,F}\in \operatorname{Hom}_{\cD}(F(K),k)$.
\end{definition}

\begin{example}[Non-conservative GV-functor]\label{ex: non-con GV}
	In Example \ref{ex: duality functors are GV} and Section \ref{sec: Applications}, we encounter many examples of conservative GV-functors. Of course, not every GV-functor is conservative. For instance, if \(\cC\) is a symmetric GV-category \(\cC\), its double gluing \(\mathbf{G}(\cC)\) is again a symmetric GV-category, and the forgetful functor \(U\colon \mathbf{G}(\cC)\to \cC\) is a strict monoidal GV-functor with identity Frobenius form; cf.~\cite[Prop.~41]{HySch03}. This functor is not conservative in general.
\end{example}

\begin{example}[Functor without a Frobenius form]\label{ex: fun wo Frob}
As an example of a strict monoidal functor between GV-categories that does not, in general, admit a Frobenius form, let \(\cC\) be a symmetric GV-category with finite products, and consider the GV-structure on the product \(\cC\times \cC^{\operatorname{op}}\) described in \cite[Prop.~10]{HySch03}. Then the projection functor \(\cC\times \cC^{\operatorname{op}}\to \cC\) onto the first factor is strict monoidal but does not, in general, admit a Frobenius form.
\end{example}

\begin{definition}\label{def: morphism of GV-functors}
	Let \((F,\upsilon^{0,F})\) and \((G,\upsilon^{0,G})\) be two GV-functors \((\cC,K) \ra (\cD,k)\).
	A \emph{morphism of GV-functors} is a monoidal natural transformation \(f\colon F\ra G\) such that 
	\begin{align}
		\upsilon^{0,F}\;=\;\upsilon^{0,G}\circ f_{K}.\label{eq: morphism of GV-functors}
	\end{align}
\end{definition}

\begin{lemma}\label{lemma:pre-GV-morph-duality-transfo}
	Any morphism \(f\colon F \to G\) of GV-functors is compatible with the associated right duality transformations \(\xi^{r,F}\) and \(\xi^{r,G}\); explicitly, for all \(X\in \cC\),
	\begin{align}
		\xi^{r,F}_X &\;=\; D(f_X) \circ \xi^{r,G}_X \circ f_{D(X)}. \label{eq:xirF-relation-xirG}
	\end{align}
	An analogous equation holds for \(\xi^{l,F}\) and \(\xi^{l,G}\).
\end{lemma}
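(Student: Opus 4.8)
The plan is to unwind the definitions of $\xi^{r,F}$ and $\xi^{r,G}$ and then reduce the claimed identity to two inputs: the compatibility of the monoidal natural transformation $f$ with the right comparators (the right-closed analogue of Lemma~\ref{lemma:taul and monoidal nat transfos}), and the defining equation~(\ref{eq: morphism of GV-functors}) of a morphism of GV-functors. Recall from Definition~\ref{def: duality transformations} that $\xi^{r,F}_X = (\upsilon^{0,F}\multimapinv F(X))\circ\tau^{r,F}_{K,X}$, and likewise for $G$, while in the present notation $D(f_X)=k\multimapinv f_X$ and $f_{D(X)}=f_{K\multimapinv X}$.

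Starting from the right-hand side $D(f_X)\circ\xi^{r,G}_X\circ f_{D(X)}$ and substituting the definition of $\xi^{r,G}_X$, one obtains $(k\multimapinv f_X)\circ(\upsilon^{0,G}\multimapinv G(X))\circ\tau^{r,G}_{K,X}\circ f_{K\multimapinv X}$. Using bifunctoriality of $\multimapinv$ (covariant in the first slot, contravariant in the second), the first two factors re-bracket as $(\upsilon^{0,G}\multimapinv F(X))\circ(G(K)\multimapinv f_X)$, so the whole composite becomes $(\upsilon^{0,G}\multimapinv F(X))\circ\big[(G(K)\multimapinv f_X)\circ\tau^{r,G}_{K,X}\circ f_{K\multimapinv X}\big]$. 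The bracketed term is exactly the right-hand side of the right-closed analogue of Lemma~\ref{lemma:taul and monoidal nat transfos} with the pair $(X,Y)$ there taken to be $(K,X)$; it therefore equals $(f_K\multimapinv F(X))\circ\tau^{r,F}_{K,X}$. Substituting this in, and once more using functoriality of $\multimapinv$ in its first slot, collapses the composite to $\big((\upsilon^{0,G}\circ f_K)\multimapinv F(X)\big)\circ\tau^{r,F}_{K,X}$. Finally~(\ref{eq: morphism of GV-functors}) gives $\upsilon^{0,G}\circ f_K=\upsilon^{0,F}$, turning this into $(\upsilon^{0,F}\multimapinv F(X))\circ\tau^{r,F}_{K,X}=\xi^{r,F}_X$, as claimed. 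The analogous identity $\xi^{l,F}_X = D'(f_X)\circ\xi^{l,G}_X\circ f_{D'(X)}$ is proved in the same way, now using Lemma~\ref{lemma:taul and monoidal nat transfos} itself (at $Y=K$), bifunctoriality of $\multimap$, and~(\ref{eq: morphism of GV-functors}).

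There is no genuine obstacle here: the argument is a short diagram chase, essentially forced once the definitions are expanded. The only point requiring care is bookkeeping the variances of the internal-hom bifunctors, so that the two invocations of functoriality of $\multimapinv$ — first splitting $\upsilon^{0,G}$ off from $f_X$, then recombining $\upsilon^{0,G}$ with $f_K$ — are applied in the correct slots; it may be worth writing out the right-closed version of Lemma~\ref{lemma:taul and monoidal nat transfos} explicitly before invoking it.
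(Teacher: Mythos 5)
Your proposal is correct and follows essentially the same route as the paper's proof: unwind $\xi^{r,G}_X$ and $\xi^{r,F}_X$ via Definition~\ref{def: duality transformations}, apply the right closed analogue of Lemma~\ref{lemma:taul and monoidal nat transfos} at the pair $(K,X)$, and finish with the bifunctoriality of $\multimapinv$ together with Equation~\eqref{eq: morphism of GV-functors}. The variance bookkeeping you flag is handled correctly, so nothing is missing.
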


See Appendix~\ref{sec:GV-functors-appendix} for a proof. Lemma~\ref{lemma:pre-GV-morph-duality-transfo} has the following immediate consequence.
\begin{corollary}\label{cor: morph of GV-functros compat with duality transfo}
Any morphism of GV-functors \(f\colon(F,\upsilon^{0,F})\to(G,\upsilon^{0,G})\) is invertible. For each \(X\in \cC\), the inverse of \(f_X\colon\, F(X)\longrightarrow G(X)\) is given by
\begin{align}
	(\xi^{r,F}_{D'(X)})^{-1} \circ D(f_{D'(X)}) \circ \xi^{r,G}_{D'(X)},
\end{align}
where we have suppressed the natural isomorphism \(DD' \simeq \idC\).
\end{corollary}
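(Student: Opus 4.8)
The plan is to exhibit the displayed morphism as a genuine two-sided inverse of $f$. For $X\in\cC$, put
\[
g_X \;:=\; (\xi^{r,F}_{D'(X)})^{-1}\circ D(f_{D'(X)})\circ\xi^{r,G}_{D'(X)}\;\colon\;G(X)\longrightarrow F(X);
\]
this makes sense because $F$ and $G$ are GV-functors, so $\upsilon^{0,F}$ and $\upsilon^{0,G}$ are Frobenius and hence $\xi^{r,F}$ and $\xi^{r,G}$ are invertible. Each of the three factors is natural in $X$ (the $\xi^{r}$'s are natural transformations, $D$ is a functor, $f$ is natural), so $g=(g_X)_X$ is a natural transformation $G\to F$. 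As in the statement, I suppress throughout the natural isomorphism $DD'\simeq\idC$.

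The identity $g_X\circ f_X=\id_{F(X)}$ is immediate from Lemma~\ref{lemma:pre-GV-morph-duality-transfo}: evaluating that lemma at the object $D'(X)$ gives
\[
\xi^{r,F}_{D'(X)}\;=\;D(f_{D'(X)})\circ\xi^{r,G}_{D'(X)}\circ f_{DD'(X)},
\]
and, after identifying $f_{DD'(X)}$ with $f_X$ via $DD'\simeq\idC$ and composing on the left with $(\xi^{r,F}_{D'(X)})^{-1}$, this reads $g_X\circ f_X=\id_{F(X)}$. In particular every $f_X$ is a split monomorphism, every $g_X$ a split epimorphism, and $g$ is a retraction of $f$.

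To promote this to a genuine inverse I would show that $g$ is again a morphism of GV-functors $(G,\upsilon^{0,G})\to(F,\upsilon^{0,F})$ and then feed it back into the same construction. Passing from $f$ to $g$ amounts to applying the contravariant duality functor $D$ and conjugating by the duality transformations; since $\xi^{r,F},\xi^{r,G}$ are compatible with $\higheroverline{\beta}$ and with the comparators $\tau^{r,F},\tau^{r,G}$ in the precise sense of Lemmas~\ref{lemma:duality transfos relations} and~\ref{lemma:taul and monoidal nat transfos}, and $f$ is monoidal, one checks that the multiplication and unit constraints of $F$ and $G$ are preserved by $g$; the form compatibility $\upsilon^{0,G}=\upsilon^{0,F}\circ g_K$ is then read off from the explicit description of $g_K$ together with $\upsilon^{0,F}=\upsilon^{0,G}\circ f_K$. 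Granting that $g$ is a morphism of GV-functors, the construction above applied to $g\colon G\to F$ yields a natural transformation $\widetilde g\colon F\to G$ with $\widetilde g\circ g=\id_G$. Then
\[
\widetilde g\;=\;\widetilde g\circ\id_F\;=\;\widetilde g\circ(g\circ f)\;=\;(\widetilde g\circ g)\circ f\;=\;\id_G\circ f\;=\;f,
\]
so $f\circ g=\widetilde g\circ g=\id_G$. Combined with $g\circ f=\id_F$, this shows that each $f_X$ is invertible with inverse $g_X$, which is exactly the asserted formula.

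Everything except one step is routine bookkeeping with the isomorphism $DD'\simeq\idC$; the main obstacle is verifying that $g$ is monoidal and respects the forms, i.e.\ that it is again a morphism of GV-functors. (Alternatively one could try to establish $f_X\circ g_X=\id_{G(X)}$ head-on by playing the right and left versions of Lemma~\ref{lemma:pre-GV-morph-duality-transfo} against the compatibility of $\xi^{l}$ and $\xi^{r}$ from Lemma~\ref{lemma:left-right duality transfos relations}, but organising the argument around the monoidality of $g$ is cleaner.)
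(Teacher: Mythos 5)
Your first half is fine and matches the intended use of Lemma~\ref{lemma:pre-GV-morph-duality-transfo}: evaluating it at $D'(X)$ and suppressing $DD'\simeq\idC$ gives exactly $g_X\circ f_X=\id_{F(X)}$, with $g$ natural. The problem is the second composite. Your plan hinges on the claim that $g$ is again a morphism of GV-functors, and this claim is not proved but only asserted to be ``routine bookkeeping''; it is in fact the entire content of the statement. Concretely, the form compatibility you need is $\upsilon^{0,F}\circ g_K=\upsilon^{0,G}$, and the argument you indicate — combining $\upsilon^{0,F}=\upsilon^{0,G}\circ f_K$ with the explicit description of $g_K$ — only yields $\upsilon^{0,F}\circ g_K=\upsilon^{0,G}\circ f_K\circ g_K$, so you would need $f_K\circ g_K=\id_{G(K)}$, which is precisely (an instance of) what you are trying to prove; avoiding this circularity requires a genuine computation with $\xi^{r}$ at $D'(K)$, the unit coherences and Lemma~\ref{lemma: relation beta and gamma}, none of which you carry out. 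Likewise, monoidality of $g$ is a substantive diagram chase in the style of the appendix proofs (it needs the compatibility of $\xi^{r}$ with $\varphi^{2}$ from Lemma~\ref{lemma:duality transfos relations} woven together with monoidality of $f$ and the $\higheroverline{\beta}$-identifications), not something that ``one checks'' from the cited lemmas without doing that work.

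Note also that your parenthetical fallback does not close the gap either: the left and right versions of Lemma~\ref{lemma:pre-GV-morph-duality-transfo}, together with Lemma~\ref{lemma:left-right duality transfos relations}, only ever produce identities of the shape $(\text{stuff})\circ f_{Y}=\id$ or $D(f_{Y})\circ(\text{stuff})=\id$, i.e.\ they show every component of $f$ is a split monomorphism (equivalently every $D(f_Y)$ is a split epimorphism), and split mono plus a natural retraction does not imply invertibility. Indeed, writing Lemma~\ref{lemma:pre-GV-morph-duality-transfo} at $D'(X)$ as $u=a\,v\,b$ with $a=D(f_{D'(X)})$, the missing identity $f_X\circ g_X=\id_{G(X)}$ is equivalent (given that relation) to $a$ being invertible, i.e.\ to the invertibility of $f_{D'(X)}$ itself. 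So the two-sidedness genuinely requires an extra ingredient — compatibility of $f$ with the ``coevaluation side'' (equivalently, the $\parLL$-opmonoidality of $f$ as in Lemma~\ref{lemma: morphism of GV is morphism of Frob}, or a direct chase using naturality of $f$ at $d$, $\widetilde{d}$) — and your proposal neither supplies it nor reduces it to the quoted lemmas. As it stands you have proved only that $g$ is a natural retraction of $f$.
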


We record the following:

\begin{prop}\label{prop: 2Cat GV}
GV-categories, together with GV-functors and their morphisms, form a \((2,1)\)-category \(\mathsf{GV}\).
\end{prop}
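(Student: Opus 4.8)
The plan is to verify that GV-categories (0-cells), GV-functors (1-cells), and morphisms of GV-functors (2-cells) assemble into a strict 2-category, and that all 2-cells are invertible. The underlying scaffolding is already available: monoidal categories, lax monoidal functors, and monoidal natural transformations form a strict 2-category $\mathsf{MonCat}_l$, so the only work is to check that the GV-enhancements (the Frobenius form on 1-cells, and condition~\eqref{eq: morphism of GV-functors} on 2-cells) are stable under the relevant compositions, that identities carry canonical GV-enhancements, and that the forgetful assignment to $\mathsf{MonCat}_l$ is faithful enough that associativity and unit axioms for the 2-category structure are inherited for free.

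First I would treat the 1-cells. For a GV-category $(\cC,K)$, the identity functor $\id_\cC$ is lax monoidal with identity structure morphisms, and I equip it with the form $\upsilon^{0,\id_\cC}:=\id_K\colon \id_\cC(K)\to K$; one checks directly from Definition~\ref{def: duality transformations} that the comparator $\tau^{l,\id_\cC}$ is the identity and hence $\xi^{l,\id_\cC}=\id_{D'}$, $\xi^{r,\id_\cC}=\id_{D}$ are invertible, so this is a GV-functor. For composition, given GV-functors $(F,\upsilon^{0,F})\colon(\cC,K)\to(\cD,k)$ and $(G,\upsilon^{0,G})\colon(\cD,k)\to(\cE,\ell)$, the composite $GF$ is lax monoidal, and I equip it with the form $\upsilon^{0,GF}:=\upsilon^{0,G}\circ G(\upsilon^{0,F})\colon GF(K)\to \ell$. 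The key point is that the comparator of a composite factors as $\tau^{l,GF}_{X,Y}=\big(\tau^{l,G}_{F(X),F(Y)}\big)\circ G(\tau^{l,F}_{X,Y})$ (a standard fact about lax monoidal functors, provable exactly as Lemma~\ref{lemma: compatibility of multi and left internal hom transfo} by chasing (co)evaluations; I would either invoke it as folklore or include a one-line verification), and consequently $\xi^{l,GF}_X=\xi^{l,G}_{F(X)}\circ G(\xi^{l,F}_X)$ — a composite of isomorphisms, hence invertible; symmetrically for $\xi^{r,GF}$. Thus $GF$ is a GV-functor.

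Next the 2-cells. Given morphisms of GV-functors $f\colon(F,\upsilon^{0,F})\to(G,\upsilon^{0,G})$ and $g\colon(G,\upsilon^{0,G})\to(H,\upsilon^{0,H})$, vertical composition $g\circ f$ is a monoidal natural transformation, and $\upsilon^{0,H}\circ(g\circ f)_K=\upsilon^{0,H}\circ g_K\circ f_K=\upsilon^{0,G}\circ f_K=\upsilon^{0,F}$, so $g\circ f$ satisfies~\eqref{eq: morphism of GV-functors}. The identity 2-cell $\id_F$ trivially satisfies it. For horizontal composition (whiskering suffices, since the 2-category is strict): given $f\colon F\to G$ between GV-functors $\cC\to\cD$ and a GV-functor $H\colon\cD\to\cE$, the whiskered transformation $H f\colon HF\to HG$ is monoidal, and $\upsilon^{0,HG}\circ (Hf)_K=\upsilon^{0,H}\circ H(\upsilon^{0,G})\circ H(f_K)=\upsilon^{0,H}\circ H(\upsilon^{0,G}\circ f_K)=\upsilon^{0,H}\circ H(\upsilon^{0,F})=\upsilon^{0,HF}$, using functoriality of $H$ and condition~\eqref{eq: morphism of GV-functors} for $f$; the left whiskering $fH'$ is handled identically (and even more easily, since it does not move $K$). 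Finally, that every 2-cell is invertible is precisely Corollary~\ref{cor: morph of GV-functros compat with duality transfo}, whose explicit inverse is itself a monoidal natural transformation satisfying~\eqref{eq: morphism of GV-functors} — the latter because $\upsilon^{0,G}=\upsilon^{0,F}\circ f_K^{-1}$ follows from $\upsilon^{0,F}=\upsilon^{0,G}\circ f_K$.

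The remaining axioms — associativity and unitality of 1-cell composition, interchange, associativity and unitality of 2-cell composition — all hold at the level of $\mathsf{MonCat}_l$, and since the forgetful assignment $\mathsf{GV}\to\mathsf{MonCat}_l$ is injective on cells with the GV-data uniquely reconstructed from (or constrained by) the monoidal data, these axioms transport verbatim; there is nothing new to check. I do not anticipate a genuine obstacle here: the proposition is essentially bookkeeping. The one spot requiring mild care is the compositionality identity $\tau^{l,GF}=\tau^{l,G}_{F(-),F(-)}\circ G(\tau^{l,F})$ for comparators and the induced identity $\xi^{l,GF}=\xi^{l,G}_{F(-)}\circ G(\xi^{l,F})$; once that is in hand, closure of GV-functors under composition — the only substantive claim — is immediate, and I would present that identity as the technical heart of the (otherwise routine) proof, relegating its verification to the appendix alongside Lemmas~\ref{lemma: compatibility of multi and left internal hom transfo}–\ref{lem:compatibility composition and internal hom transfo}.
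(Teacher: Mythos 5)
Your proposal is correct and is essentially the routine verification that the paper leaves implicit: Proposition~\ref{prop: 2Cat GV} is merely ``recorded'' without proof, and the ingredients you supply are exactly the intended ones — the composite form $\upsilon^{0,G}\circ G(\upsilon^{0,F})$, the compositionality identities $\tau^{l,GF}_{X,Y}=\tau^{l,G}_{F(X),F(Y)}\circ G(\tau^{l,F}_{X,Y})$ and hence $\xi^{l,GF}_X=\xi^{l,G}_{F(X)}\circ G(\xi^{l,F}_X)$ (which the paper also uses implicitly, e.g.\ in the proof of Proposition~\ref{prop: GV to LDN is functorial}), and Corollary~\ref{cor: morph of GV-functros compat with duality transfo} for the invertibility of $2$-cells together with your observation that the inverse again satisfies~\eqref{eq: morphism of GV-functors}. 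One small caveat: your closing justification that the remaining axioms ``transport'' because the GV-data is uniquely reconstructed from the monoidal data is not accurate — the Frobenius form is genuinely extra data on $1$-cells, so the forgetful assignment is not injective there — but this is harmless, since strict associativity and unitality of the composed forms follow from a one-line functoriality computation, and the whiskering checks (including the left whiskering $fH'$, which does require naturality of $f$ at $\upsilon^{0,H'}$, contrary to your parenthetical) go through exactly as you indicate.
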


\vspace{0.1pt}

\section{Two \texorpdfstring{$2$}{2}-equivalences}\label{sec: two 2-equivalence}

\addtocontents{toc}{\protect\setcounter{tocdepth}{2}}
\subsection{Relation between Grothendieck--Verdier and linearly distributive structures}\label{sec: GV vs LDN}
In this subsection, we show that the $2$-category $\mathsf{GV}$ from Proposition~\ref{prop: 2Cat GV} is $2$-equivalent to a $2$-category $\mathsf{LDN}$, which we now introduce.
\subsubsection{\normalfont\textbf{Linearly distributive structures}}\label{sec: linearly distributive categories} We begin by recalling standard notions.

 \begin{definition}\label{def: LD cat} (\cite[Def. 2.1]{WDC}).
 A \emph{linearly distributive (LD) category} is a category \(\cC\) together with two monoidal structures \((\otimes,1)\) and \((\parLL,K)\) and two natural transformations \(\distl \colon {\otimes} {\,\circ\,} {(\operatorname{id_\cC} \times \parLL)} \rightarrow {\parLL} {\,\circ\,} {(\otimes \times \operatorname{id_\cC})}\) and \(\distr \colon {\otimes}{\,\circ\,}{(\parLL \times \,{\operatorname{id_\cC}})} \rightarrow {\parLL} {\,\circ\,}{(\operatorname{id_\cC}\times \,{\otimes)}}\), called \emph{distributors}, satisfying the coherence axioms~\eqref{eq:A1} to~\eqref{eq:A10} of Appendix~\ref{app: coherence diagrams}.
\end{definition}

\begin{definition}\label{def:LD-with-negation}(\cite[Def. 4.1]{WDC}).
Let \(\cC\) be an LD-category. An object \(X \in \cC\) is \emph{right LD-dualizable} if there exists an object \(\Eev\, X \in \cC\), called the \emph{right LD-dual}, together with \emph{evaluation} and \emph{coevaluation} morphisms \(\epsilon^X\colon X \otimes {\Eev\, X} \to K\) and \(\eta^X\colon 1 \to {\Eev\, X} \parLL X\), satisfying the snake equations~\eqref{firstzigzag} and~\eqref{secondzigzag} from Appendix ~\ref{app: coherence diagrams}. \emph{Left LD-duals} \(X \Eev{}\) are defined analogously, with evaluation \(\underline{\epsilon}^X \colon X \Eev{} \otimes X \to K\) and coevaluation \(\underline{\eta}^X\colon 1 \to {X} \parLL X \Eev{}\).
\end{definition}

\begin{definition}\label{def: LD-category with negation}(\cite[Def. 4.1]{WDC}).
 An LD-category is an \emph{LD-category with negation} if every object is both left and right LD-dualizable.
\end{definition}

Variants of the next notion appear in \cite{LD-functors, NoteOnFrobMonFunctors}; we adopt the conventions of \cite{DeS}.

\begin{definition}\label{def: Frobenius LD-functor}
    A \emph{Frobenius LD-functor} \(F\colon \cC \rightarrow \cD\) is a functor \(F\) equipped with both a lax \(\otimes\)-monoidal structure \((\varphi^{2,F},\varphi^{0,F})\) and an oplax \(\parLL\)-monoidal structure \((\upsilon^{2,F},\upsilon^{0,F})\) satisfying the Frobenius relations~\eqref{eq:F1} and~\eqref{eq:F2} from Appendix~\ref{app: coherence diagrams}.
\end{definition}

\begin{definition}
A Frobenius LD-functor is \emph{strong} if both its monoidal structures are strong. It is a \emph{Frobenius LD-equivalence} if it is strong and its underlying functor is an equivalence.
\end{definition}

Treating a monoidal category as an LD-category with identical monoidal structures, we recover a familiar notion:
\begin{definition}(\cite{NoteOnFrobMonFunctors}).\label{def: Frobenius monoidal functor}
	A \emph{Frobenius monoidal functor} is a Frobenius LD-functor between monoidal categories.
\end{definition}

\begin{definition}\label{def: morphism of Frob LD-functors}
A \emph{morphism of Frobenius LD-functors} \(F,G\colon \cC \ra \cD\) is a natural transformation \(f\colon F\ra G\) that is \(\otimes\)-monoidal and \(\parLL\)-opmonoidal.
\end{definition}

\begin{remark}[Invertibility of \(2\)-cells]
	Any morphism of Frobenius LD-functors between LD-categories with negation is invertible \cite[Prop. 2.38]{DeS}.
\end{remark}

\begin{prop}\label{prop: 2Cat LDN}{\normalfont (\cite[Prop. 2.23]{DeS}).}
LD-categories with negation, together with Frobenius LD-functors and their morphisms,
form a \((2,1)\)-category \(\mathsf{LDN}\).   
\end{prop}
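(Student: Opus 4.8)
The plan is to verify directly that $\mathsf{LDN}$ carries the data and axioms of a strict $2$-category, and then to upgrade this to a $(2,1)$-category by invoking invertibility of $2$-cells. I would first set up the identities: on any LD-category with negation $\cC$, the identity endofunctor $\id_\cC$ carries the identity $\otimes$-lax structure $(\varphi^{2}=\id,\varphi^{0}=\id)$ and the identity $\parLL$-oplax structure $(\upsilon^{2}=\id,\upsilon^{0}=\id)$, and for these data the Frobenius relations~\eqref{eq:F1} and~\eqref{eq:F2} reduce to instances of naturality, so $\id_\cC$ is a Frobenius LD-functor. Likewise, for any Frobenius LD-functor $F$, the identity natural transformation $\id_F\colon F\to F$ is trivially $\otimes$-monoidal and $\parLL$-opmonoidal, hence a $2$-cell.

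Next I would establish closure under composition of $1$-cells. Given Frobenius LD-functors $F\colon\cC\to\cD$ and $G\colon\cD\to\cE$, equip $GF$ with the standard composite $\otimes$-lax structure $\varphi^{2,GF}_{X,Y}:=G(\varphi^{2,F}_{X,Y})\circ\varphi^{2,G}_{F(X),F(Y)}$, $\varphi^{0,GF}:=G(\varphi^{0,F})\circ\varphi^{0,G}$, and with the analogously defined composite $\parLL$-oplax structure $(\upsilon^{2,GF},\upsilon^{0,GF})$. One then checks the Frobenius relations for $GF$: each side of~\eqref{eq:F1} (resp.~\eqref{eq:F2}) for $GF$ unfolds into a pasting diagram that factors as the image under $G$ of the corresponding relation for $F$, combined with the relation for $G$ and the naturality squares of $\varphi^{2,G}$ and $\upsilon^{2,G}$; a routine pasting argument closes the chase. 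Associativity and unitality of $1$-cell composition are then inherited from $\mathsf{Cat}$, since the composite $\otimes$-lax and $\parLL$-oplax structures compose strictly associatively and unitally, exactly as for ordinary (op)monoidal functors.

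It remains to treat the $2$-cell structure and the $(2,1)$ upgrade. Vertical composition of morphisms of Frobenius LD-functors is again such a morphism, since the $\otimes$-monoidal and $\parLL$-opmonoidal conditions are stable under vertical pasting; and whiskering a morphism $f\colon F\to G$ of Frobenius LD-functors $\cC\to\cD$ by a Frobenius LD-functor on either side gives again a morphism of Frobenius LD-functors with respect to the composite structures above, by a short diagram chase using the defining compatibilities of $f$ together with naturality and functoriality. The interchange law and the remaining strict $2$-category axioms then follow from those of $\mathsf{Cat}$, since all composites here are built on composition of functors and natural transformations. Finally, $\mathsf{LDN}$ is in fact a $(2,1)$-category: by \cite[Prop. 2.38]{DeS} every morphism of Frobenius LD-functors between LD-categories with negation is invertible, and its inverse natural transformation is again $\otimes$-monoidal and $\parLL$-opmonoidal, hence a $2$-cell of $\mathsf{LDN}$.

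I expect the main obstacle to be the verification in the second step, namely that the composite $\otimes$-lax and $\parLL$-oplax structures on $GF$ satisfy~\eqref{eq:F1} and~\eqref{eq:F2}. The difficulty is purely one of bookkeeping: one must interleave the two Frobenius squares, for $F$ and for $G$, with the naturality squares of the comparators of $G$, and verify that the resulting large pasting diagram commutes. The well-known fact that Frobenius monoidal functors compose \cite{NoteOnFrobMonFunctors} indicates that the same strategy goes through here verbatim, with $\parLL$ in place of the second copy of $\otimes$.
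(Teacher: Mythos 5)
Your verification is sound, and it is the standard argument: identities are trivially Frobenius LD-functors, composites of Frobenius LD-functors are Frobenius LD-functors with the composite lax \(\otimes\)- and oplax \(\parLL\)-structures (the same pasting argument by which Frobenius monoidal functors compose, interleaving the relations~\eqref{eq:F1},~\eqref{eq:F2} for $F$ and $G$ with naturality of $G$'s comparators), the $2$-cell operations are inherited from $\mathsf{Cat}$, and invertibility of $2$-cells follows from \cite[Prop.~2.38]{DeS} together with the standard fact that the inverse of an invertible (op)monoidal transformation is again (op)monoidal. The paper itself offers no proof — it cites \cite[Prop.~2.23]{DeS} — so your write-up simply supplies the direct verification underlying that citation, with no gaps.
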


\subsubsection{\normalfont\textbf{A $2$-equivalence between \(\mathsf{GV}\) and \(\mathsf{LDN}\)}}\label{sec: 2-equivalence gv to ldn}
\begin{remark}[Notation]\label{rem: forgetful 2-functor to MonCatl}
	Let \(\mathsf{MonCat}_g\) denote the \((2,1)\)-category of monoidal categories, lax monoidal functors and monoidal natural isomorphisms. There are forgetful \(2\)-functors from \( \mathsf{LDN}\) to \(\mathsf{MonCat}_g\) (forgetting the \(\parLL\)-monoidal structure) and from \(\mathsf{GV}\) to \(\mathsf{MonCat}_g\) (forgetting the dualizing object at the level of \(0\)-cells and the Frobenius form at the level of \(1\)-cells).
\end{remark}

The first main result of this paper is the following theorem.

\begin{theorem}\label{thm: GV equiv LDN}
    The \((2,1)\)-categories \(\mathsf{GV}\) and \(\mathsf{LDN}\) are {\(2\)-equivalent}. The \(2\)-equivalence can be chosen to strictly commute with the forgetful \(2\)-functors to \(\mathsf{MonCat}_g\).
\end{theorem}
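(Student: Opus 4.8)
The plan is to exhibit strict $2$-functors $\Phi\colon \mathsf{GV}\to\mathsf{LDN}$ and $\Psi\colon \mathsf{LDN}\to\mathsf{GV}$ that are literally the identity on the underlying monoidal data (category, lax monoidal functor, monoidal natural transformation); strict commutation with the forgetful $2$-functors to $\mathsf{MonCat}_l$ is then automatic, so it remains only to equip these $2$-functors with the extra structure and to show that the two composites are ($2$-naturally isomorphic to) identities. On $0$-cells the construction is that of \cite[Thm.~4.5]{WDC} (cf.\ Remark~\ref{GV implies closed}, Proposition~\ref{CompDualizing}, and \cite{fuchs2024grothendieckverdierdualitycategoriesbimodules,fuchs2024grothendieckverdier,DeS}): a GV-category $(\cC,\otimes,1,K)$ becomes an LD-category with negation by setting $X\parLL Y:=D(D^{-1}Y\ot D^{-1}X)$, with $\parLL$-unit $K$ and distributors induced by the associator; conversely an LD-category with negation becomes a GV-category with dualizing object the $\parLL$-unit. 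These assignments are mutually inverse on $0$-cells, again by \cite[Thm.~4.5]{WDC}.

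For $\Psi$ (the easy direction), given a Frobenius LD-functor $F$ with $\otimes$-lax structure $(\varphi^{2,F},\varphi^{0,F})$ and $\parLL$-oplax structure $(\upsilon^{2,F},\upsilon^{0,F})$, we keep $(\varphi^{2,F},\varphi^{0,F})$ and declare the form to be $\upsilon^{0,F}\colon F(K)\to k$ (recall $K$, $k$ are the $\parLL$-units). The only nontrivial point is that this form is \emph{Frobenius}, i.e.\ that $\xi^{l,F}$ and $\xi^{r,F}$ are invertible. For this one uses the standard fact (in the spirit of \cite{BLV}, available on the LDN side from \cite{DeS}) that a Frobenius LD-functor between LD-categories with negation preserves left and right LD-duals, so that $F$ carries canonical natural isomorphisms $F(D^{-1}X)\cong D^{-1}F(X)$ and $F(DX)\cong DF(X)$ under the dictionary of the previous paragraph; a Yoneda/uniqueness argument identifies these with $\xi^{l,F}$ and $\xi^{r,F}$, which are therefore invertible. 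On $2$-cells, a morphism of Frobenius LD-functors is in particular $\parLL$-opmonoidal, and the component of the $\parLL$-opmonoidality axiom at the unit is exactly condition~\eqref{eq: morphism of GV-functors}, so it is a morphism of GV-functors. Functoriality of $\Psi$ is immediate since it changes nothing.

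The direction $\Phi$ is where the work lies. Starting from a GV-functor $(F,\varphi^{2,F},\varphi^{0,F},\upsilon^{0,F})$ we keep the $\otimes$-lax structure, take $\upsilon^{0,F}$ as the $\parLL$-counit, and must build the $\parLL$-oplax multiplication. Using invertibility of the duality transformations, set $\upsilon^{2,F}_{X,Y}$ to be the composite
\[
F(X\parLL Y)=F\big(D(D^{-1}Y\ot D^{-1}X)\big)\xrightarrow{\xi^{r,F}}DF\big(D^{-1}Y\ot D^{-1}X\big)\xrightarrow{D(\varphi^{2,F})}D\big(FD^{-1}Y\ot FD^{-1}X\big)\xrightarrow{\cong}D\big(D^{-1}FY\ot D^{-1}FX\big)=F(X)\parLL F(Y),
\]
where the last arrow is $D$ applied to $(\xi^{l,F})^{-1}\ot(\xi^{l,F})^{-1}$ (structural isomorphisms are suppressed). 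One then has to verify naturality and the oplax $\parLL$-coherence axioms for $(\upsilon^{2,F},\upsilon^{0,F})$, and the two Frobenius relations~\eqref{eq:F1} and~\eqref{eq:F2} relating $\varphi$, $\upsilon$ and the distributors. Unwinding the explicit formulas for $\parLL$ and the distributors (from the first paragraph and Remark~\ref{GV implies closed}), each of these reduces to identities already established: Lemma~\ref{lemma:duality transfos relations} (compatibility of $\xi^{l,F},\xi^{r,F}$ with $\beta,\higheroverline{\beta}$) yields the (co)associativity and the Frobenius relations; Lemma~\ref{lemma:left-right duality transfos relations} (compatibility of $\xi$ with $d,\widetilde d$) governs the interaction with the double-duality isomorphisms implicit in $\parLL$; and the comparator lemmas of \S\ref{sec: functors between closed monoidal categories} (Lemmas~\ref{lemma: compatibility of multi and left internal hom transfo}, \ref{lemma: compatibility of multi and beta}, \ref{lem:compatibility composition and internal hom transfo}) handle the remaining unit coherences. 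On $2$-cells, a morphism $f$ of GV-functors is $\otimes$-monoidal, hence compatible with $\varphi^{2,F},\varphi^{2,G}$, and is compatible with the duality transformations by Lemma~\ref{lemma:pre-GV-morph-duality-transfo}; since $\upsilon^{2,F}$ is assembled from exactly these data, $f$ is $\parLL$-opmonoidal, and at the unit it satisfies~\eqref{eq: morphism of GV-functors}, so $f$ is a morphism of Frobenius LD-functors.

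Finally, $\Psi\circ\Phi=\id_{\mathsf{GV}}$ on the nose (one recovers the same dualizing object, lax structure and form), while $\Phi\circ\Psi$ rebuilds the $\parLL$-monoidal structure and the distributors; by \cite[Thm.~4.5]{WDC} this rebuilt structure is canonically isomorphic to the original, and the resulting canonical comparison isomorphisms assemble into a $2$-natural isomorphism $\Phi\circ\Psi\simeq\id_{\mathsf{LDN}}$, the $1$- and $2$-cell coherences being routine. Hence $\Phi$ and $\Psi$ form a $2$-equivalence; as both are the identity on $\mathsf{MonCat}_l$-data, they strictly commute with the forgetful $2$-functors, proving the theorem. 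The main obstacle is the construction in the third paragraph — verifying the two Frobenius relations~\eqref{eq:F1}, \eqref{eq:F2} for $(\varphi^{2,F},\upsilon^{2,F})$, where the interplay of $\varphi^{2,F}$, the duality transformations, and the distributors is most delicate; a secondary obstacle is the Yoneda-style identification in the second paragraph of the dual-preservation isomorphisms of a Frobenius LD-functor with the duality transformations $\xi^{l,F},\xi^{r,F}$.
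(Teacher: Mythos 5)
Your proposal follows essentially the same route as the paper: the same $0$-cell dictionary, the identical formula $\upsilon^{2,F}=D\big((\xi^{l,F})^{-1}\ot(\xi^{l,F})^{-1}\big)\circ D(\varphi^{2,F})\circ\xi^{r,F}$ for the $\parLL$-comultiplication of a GV-functor, the counit-as-Frobenius-form assignment in the other direction (with invertibility of $\xi^{l,F},\xi^{r,F}$ obtained by identifying them with the canonical dual-preservation isomorphisms of a Frobenius LD-functor, which is exactly the paper's Proposition~\ref{prop:FrobLDfunctors are closed}), identity on $2$-cells justified by Lemmas~\ref{lemma:pre-GV-morph-duality-transfo}-type compatibilities, and the round-trip comparisons via uniqueness of LD-duals. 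The deferred verifications you flag (the Frobenius relations~\eqref{eq:F1}--\eqref{eq:F2} and the identification $\tau^{l,F}=\Upsilon^{l,F}$) are precisely the computations the paper carries out in Appendix~\ref{sec:GV-cat-and-LD} and~\ref{sec:LD-cat-to-GV}, so the proposal is correct in outline and matches the paper's proof.
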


\begin{remark}[Terminology]
By Theorem~\ref{thm: GV equiv LDN}, it is meaningful to speak of `the LD-structure of a GV-category' and `the GV-structure of an LD-category with negation'. After this theorem, the terms `GV-category' and `LD-category with negation' will be used interchangeably.
\end{remark}

\begin{remark}[The rigid case]\label{rem: rigid case of 2-equivalence}
The $2$-equivalence specializes to the rigid setting: a Frobenius monoidal functor $F\colon \cC\to \cD$ between rigid monoidal categories $(\cC,\otimes,1)$ and $(\cD,\otimes,1)$ is equivalently a lax monoidal functor equipped with a Frobenius form $F(1)\to 1$. Moreover, a monoidal natural transformation between such functors is a morphism of Frobenius monoidal functors if and only if it is compatible with the counit morphisms (Definition~\ref{def: morphism of GV-functors}).
\end{remark}

\subsubsection{\normalfont\textbf{From \(\mathsf{GV}\) to \(\mathsf{LDN}\)}}
We now turn to the proof of Theorem~\ref{thm: GV equiv LDN}. We begin by defining a \(2\)-functor \(\mathsf{GV}\to \mathsf{LDN}\):

\begin{construction}[\(0\)-cells]
It is well-known (cf. \cite[\S 4]{WDC} and \cite[\S 3]{BoDrinfeld}) that a GV\nolinebreak-category \((\cC,\ot,1,K)\) carries a second monoidal product
\begin{align}\label{def: par monoidal product}
	X \parLL Y \;:=\; D\big(D'(Y)\ot D'(X)\big),
\end{align} 
 with unit \(K\) and \(\parLL\)-associator given by
\begin{align}\label{def: par associator}
	\ap_{X,Y,Z}\;:=\;D\big(\ao_{D'(Z),D'(Y),D'(X)}\big),
\end{align}
for \(X,Y,Z\in \cC\). Here, we have suppressed the natural isomorphism \(D'D\simeq \idC\). The \(\parLL\)-unitors are defined similarly.
\end{construction}

\begin{remark}[Internal homs and $\parLL$-monoidal structure]\label{rem:closed mon in LD}
    The \(\parLL\)-monoidal product can be expressed via internal homs, e.g. \cite[\S 3.3]{fuchs2024grothendieckverdierdualitycategoriesbimodules}:
\begin{align}
    X \multimapinv D'(Y) &\enspace\eqabove{\eqref{eq:right internal hom}}\enspace D\big(D'(Y)\otimes D'(X)\big) \;\cong\; X \parLL Y, \label{eq:rightInHom} \\
    D(X) \multimap Y &\enspace \eqabove{\eqref{eq:left internal hom}}\enspace D'\big(D(Y)\otimes D(X)\big) \;\cong\; X \parLL Y, \label{eq:leftInHom}
\end{align}
for all \(X, Y \in \cC\). 
Under these identifications, the canonical isomorphism
\begin{align}
    \iota_{X,Y,Z} \colon (X \multimap Y) \multimapinv Z \;\xlongrightarrow{\simeq}\; X \multimap (Y \multimapinv Z),
\end{align}
equals the inverse \(\parLL\)-associator \(\ap^{-1}_{D'(X), Y, D(Z)}\).
\end{remark}

\begin{remark}[Distributors via internal homs]\label{rem:InHoms and distributors}
    For each component, the left (resp. right) internal hom of \(\cC\) is a lax module endofunctor \(\cC_{\cC}\rightarrow \cC_{\cC}\) (resp. \(_{\cC}\cC\rightarrow {_{\cC}\cC}\)) by doctrinal adjunction; see, e.g., \cite[Lemm. 2.4--2.5]{Shi}.
    Here, \(\cC_{\cC}\) (resp. \(_{\cC}\cC\)) denotes \(\cC\) regarded as a right (resp. left) module category over \((\cC,\otimes,1)\). Explicitly, for \(X,Y,Z\in \cC\), the lax actions \begin{align}
    \widetilde{\distl}_{X,Y,Z}\colon\;X\ot (Y\multimapinv Z) &\;\longrightarrow\; (X\ot Y)\multimapinv Z,\\
    \widetilde{\distr}_{X,Y,Z}\colon\;(X\multimap Y) \ot Z &\;\longrightarrow\; X\multimap (Y \ot Z),\end{align} are obtained by conjugating (inverse) associators:
    \begin{align}
        \widetilde{\distl}_{X,Y,Z}\;:=\;&\big((X\ot \higheroverline{\operatorname{ev}}^Z_Y)\multimapinv Z\big)\circ \big(\aoi_{X,Y\multimapinv Z,Z}\multimapinv Z\big) \circ \higheroverline{\operatorname{coev}}^Z_{X\ot (Y \multimapinv Z)},\label{eq: distl via inhoms}\\
        \widetilde{\distr}_{X,Y,Z}\;:=\;&\big(X\multimap ({\operatorname{ev}^X_Y} \ot Z)\big)\circ \big(X \multimap \ao_{X,X\multimap Y,Z}\big) \circ \operatorname{coev}^X_{(X \multimap Y)\ot Z}.\label{eq: distr via inhoms}
    \end{align}
    These lax actions yield the distributors \(\distl_{X,Y,Z}\) and \(\distr_{X,Y,Z}\) of the associated LD-category:
    \begin{align}
        X\otimes (Y\parLL Z)\congabove{\eqref{eq:rightInHom}} X\otimes \big(Y\multimapinv D'(Z)\big)\,\xlongrightarrow{\widetilde{\distl}_{X,Y,D'(Z)}}\,  (X\otimes Y)\multimapinv D'(Z) \congabove{\eqref{eq:rightInHom}} (X\otimes Y)\parLL Z, \label{eq: def left distributor}\\
        (X\parLL Y) \otimes Z \congabove{\eqref{eq:leftInHom}} \big(D(X)\multimap Y\big)\otimes Z \,\xlongrightarrow{\widetilde{\distr}_{D(X),Y,Z}}\, D(X)\multimap (Y\otimes Z) \congabove{\eqref{eq:leftInHom}} X \parLL {(Y \otimes Z)}.\label{eq: def right distributor}
    \end{align}
See \cite[\S 4]{fuchs2024grothendieckverdierdualitycategoriesbimodules}, \cite[\S 3]{fuchs2024grothendieckverdier}, and \cite[Thm. 2.45]{DeS} for further details.
\end{remark}

\begin{construction}[$1$-cells]\label{constr: GV to LDN}
Let \(F\colon \cC \to \cD\) be a GV-functor with multiplication morphism \(\varphi^{2,F}\), unit morphism \(\varphi^{0,F}\), and Frobenius form \(\upsilon^{0,F}\). Define its \(\parLL\)-comultiplication morphism
 \begin{gather}
 F(X\parLL Y) \eqdef FD\big(D'(Y) \ot D'(X)\big)\xrightarrow{\upsilon^{2,F}_{X,Y}} D\big(D'F(Y) \ot D'F(X)\big) \eqdef F(X)\parLL F(Y), \label{eq: def of comultiplication morphism of GV functor}
\end{gather}
through the formula
\begin{gather}\label{eq: GV to LDN}
\upsilon^{2,F}_{X,Y}\,:=\,D\big((\xi^{l,F}_Y)^{-1}\ot (\xi^{l,F}_X)^{-1}\big)\circ {D\big(\varphi^{2,F}_{D'(Y),D'(X)}\big)}\circ \xi^{r,F}_{D'(Y)\ot D'(X)},
\end{gather}
for all \(X,Y\in \cC\). The counit morphism of \(F\) is defined as the Frobenius form \(\upsilon^{0,F}.\)
\end{construction}

\begin{lemma}\label{lemma: F is Frob LD}
    The coherence morphisms \((\varphi^{2,F},\varphi^{0,F},\upsilon^{2,F},\upsilon^{0,F})\) of Construction~\ref{constr: GV to LDN} endow the GV-functor \(F\) with a Frobenius LD-structure.
\end{lemma}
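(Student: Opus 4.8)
The plan is to verify the two Frobenius relations \eqref{eq:F1} and \eqref{eq:F2} directly, after first checking that $(\upsilon^{2,F},\upsilon^{0,F})$ genuinely defines an oplax $\parll$-monoidal structure. The $\otimes$-lax structure is given, so the real content is: (i) coassociativity and counitality of $\upsilon^{2,F}$ with respect to the $\parll$-monoidal structure of Construction \ref{def: par monoidal product}--\eqref{def: par associator}, and (ii) the compatibility of $\varphi^{2,F}$ and $\upsilon^{2,F}$ expressed by the Frobenius axioms. The guiding principle throughout is that the $\parll$-monoidal structure on both $\cC$ and $\cD$ is obtained by transporting the $\otimes$-monoidal structure through the duality functors $D,D'$, and that the duality transformations $\xi^{l,F},\xi^{r,F}$ are precisely the coherence data comparing $FD$ with $DF$ and $FD'$ with $D'F$. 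So every diagram to be checked can be ``un-dualized'': apply $D'$ (or $D$) to reduce it to a statement about $\varphi^{2,F}$, $\varphi^{0,F}$, the associator $\ao$, and the unitors on $\cD$, using naturality of $\xi^{l,F},\xi^{r,F}$ and the compatibilities already recorded in Lemmas \ref{lemma:left-right duality transfos relations} and \ref{lemma:duality transfos relations}.

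Concretely, I would proceed as follows. First, rewrite $\upsilon^{2,F}_{X,Y}$ from \eqref{eq: GV to LDN} in the mnemonic form $\upsilon^{2,F}_{X,Y} = D\big((\xi^{l,F}_Y)^{-1}\otimes(\xi^{l,F}_X)^{-1}\big)\circ D(\varphi^{2,F}_{D'(Y),D'(X)})\circ \xi^{r,F}_{D'(Y)\otimes D'(X)}$ and note that, since $\xi^{l,F}$ and $\xi^{r,F}$ are invertible (Frobenius form) and $D$ is an antiequivalence, $\upsilon^{2,F}_{X,Y}$ is invertible; record its inverse. Second, verify coassociativity of $\upsilon^{2,F}$: expand both sides of the pentagon-type equation relating $\upsilon^{2,F}_{X\parll Y,Z}\circ(\upsilon^{2,F}_{X,Y}\parll\id)$ and $\upsilon^{2,F}_{X,Y\parll Z}\circ(\id\parll\upsilon^{2,F}_{Y,Z})$ composed with $\ap$; under $D'$ this becomes the associativity constraint for $\varphi^{2,F}$ (the lax $\otimes$-structure of $F$) together with Lemma \ref{lemma:duality transfos relations} to commute the $\xi$'s past the $\beta,\higheroverline\beta$ that enter via \eqref{def: par associator}. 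Counitality is the analogous, shorter computation using that $\upsilon^{0,F}$ is the counit, the unitor compatibilities in Lemma \ref{lemma:left-right duality transfos relations}, and the definition of $\xi^{l,F},\xi^{r,F}$ at $K$. Third, for the Frobenius relations \eqref{eq:F1}--\eqref{eq:F2}: these relate $\upsilon^{2,F}$, $\varphi^{2,F}$, and the distributors $\distl,\distr$. I would use Remark \ref{rem:InHoms and distributors} to express $\distl,\distr$ via the internal-hom lax actions $\widetilde{\distl},\widetilde{\distr}$, rewrite everything in terms of $\tau^{l,F},\tau^{r,F}$ via Definition \ref{def: duality transformations} (so that $\xi$ is unpacked into $\tau$ composed with $(-\multimap\upsilon^{0,F})$), and then invoke the comparator compatibilities: Lemma \ref{lemma: compatibility of multi and left internal hom transfo} (comparators vs.\ (co)evaluation), Lemma \ref{lemma: compatibility of multi and beta}, and Lemma \ref{lemma: iota compatibility}(ii) relating $\iota$ to $\beta,\higheroverline\beta$ — recalling from Remark \ref{rem:closed mon in LD} that $\iota$ is the inverse $\parll$-associator. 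Assembling these, \eqref{eq:F1} and \eqref{eq:F2} reduce to identities already available.

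The main obstacle I anticipate is bookkeeping rather than a conceptual gap: the formula \eqref{eq: GV to LDN} for $\upsilon^{2,F}$ is a composite of five morphisms (two of them applications of $D$ to composites), and the Frobenius axioms themselves are large diagrams, so verifying \eqref{eq:F1}--\eqref{eq:F2} requires threading $\xi^{l,F}$ and $\xi^{r,F}$ through several associators, distributors, and the identifications \eqref{eq:rightInHom}--\eqref{eq:leftInHom} without sign or placement errors. The cleanest route is to apply the antiequivalence $D'$ to each Frobenius diagram so that all the $\parll$-data collapses to $\otimes$-data, at which point the axioms for the lax monoidal functor $(F,\varphi^{2,F},\varphi^{0,F})$ on $\cD$ close the argument; the only genuinely new input needed is the interaction of $\xi^{l,F},\xi^{r,F}$ with $\varphi^{2,F}$, which is exactly Lemma \ref{lemma:duality transfos relations}, and their interaction with $d,\widetilde d$, which is Lemma \ref{lemma:left-right duality transfos relations}. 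I would relegate the full diagram chase to the appendix (as the paper does for its other technical lemmas), presenting here only the reduction strategy and the list of lemmas invoked at each step.
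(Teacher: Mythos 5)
Your proposal follows essentially the same route as the paper's proof in Appendix~\ref{sec:GV-cat-and-LD}: coassociativity/counitality of \(\upsilon^{2,F}\) are reduced through the duality functors to the lax \(\otimes\)-coherences of \(\varphi^{2,F}\) (plus naturality of \(\xi^{r,F}\) and Lemma~\ref{lemma:left-right duality transfos relations}), and the Frobenius relations are reduced via Lemma~\ref{lemma:duality transfos relations} and the internal-hom description of the distributors (Remark~\ref{rem:InHoms and distributors}) to a compatibility of the comparators \(\tau^{r,F},\tau^{l,F}\) with \(\widetilde{\distl},\widetilde{\distr}\), settled using Lemma~\ref{lemma: compatibility of multi and left internal hom transfo} and snake identities. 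Only note that this last residual compatibility is not literally one of the cited lemmas but its own diagram chase (the paper's diagram~\eqref{dgm: GV functor is LD2}); since you list the correct ingredients for it, the plan is sound.
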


\begin{remark}[$2$-cells]
	We let \(\mathsf{GV}\to \mathsf{LDN}\) act as the identity on \(2\)-cells. This is justified by the following lemma.
\end{remark}

\begin{lemma}\label{lemma: morphism of GV is morphism of Frob}
   Let \(\cC,\cD\) be  GV-categories. Let \((F,\upsilon^{0,F}),(G,\upsilon^{0,G}) \colon \cC\to \cD\) be GV-functors. Any morphism of GV-functors \(f\colon F\to G\) is also a morphism of Frobenius LD-functors. Here, \(F\) and \(G\) are endowed with the Frobenius LD-structures from Construction~\ref{constr: GV to LDN}.
\end{lemma}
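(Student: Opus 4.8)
The plan is to show that a morphism of GV-functors $f\colon F\to G$, which by definition is $\otimes$-monoidal and satisfies $\upsilon^{0,F}=\upsilon^{0,G}\circ f_K$, is automatically $\parLL$-opmonoidal for the $\parLL$-comultiplications $\upsilon^{2,F},\upsilon^{2,G}$ built in Construction~\ref{constr: GV to LDN}. Since $\parLL$-opmonoidality with respect to the counits $\upsilon^{0,F},\upsilon^{0,G}$ is exactly condition~\eqref{eq: morphism of GV-functors}, the only thing left to verify is the hexagon
\[
\upsilon^{2,G}_{X,Y}\circ f_{X\parLL Y}\;=\;(f_X\parLL f_Y)\circ \upsilon^{2,F}_{X,Y}
\]
for all $X,Y\in\cC$, where $f_X\parLL f_Y := D\big(D'(f_Y)\ot D'(f_X)\big)$ under the identification $X\parLL Y = D(D'(Y)\ot D'(X))$.

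The key step is to rewrite both sides using the explicit formula~\eqref{eq: GV to LDN} for $\upsilon^{2,-}$ and then reduce everything to known compatibilities of the duality transformations. Concretely, I would:
(i) expand $\upsilon^{2,F}_{X,Y}=D\big((\xi^{l,F}_Y)^{-1}\ot (\xi^{l,F}_X)^{-1}\big)\circ D(\varphi^{2,F}_{D'(Y),D'(X)})\circ \xi^{r,F}_{D'(Y)\ot D'(X)}$ and similarly for $G$;
(ii) use Lemma~\ref{lemma:pre-GV-morph-duality-transfo} (and its left analogue) to commute $f$ past the $\xi^{r}$ and $\xi^{l}$ terms — this converts $f_{D'(Y)\ot D'(X)}$, $f_{D'(X)}$, $f_{D'(Y)}$ into applications of $D$ or $D'$ to components of $f$, at the cost of the duality functors;
(iii) use the fact that $f$ is $\otimes$-monoidal (naturality of $\varphi^{2,-}$ together with $f$) to commute $f$ past $\varphi^{2,F}_{D'(Y),D'(X)}$;
(iv) reassemble and compare with $(f_X\parLL f_Y)\circ\upsilon^{2,F}_{X,Y}$ after applying the functor $D$, using functoriality of $D$ and the naturality square relating $D'(f_X)$ with $\xi^{l,F}$, $\xi^{l,G}$. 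Throughout, the natural isomorphisms $DD'\simeq\idC$ and $D'D\simeq\idC$ that are suppressed in the statement must be tracked; I would fix once and for all the convention used in Remark~\ref{rem: double dualization morphism} via $d,\widetilde d$, and invoke Lemma~\ref{lemma:left-right duality transfos relations} if an interchange between left and right duality transformations of $f$ is needed.

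I expect the main obstacle to be bookkeeping rather than conceptual: keeping the three "directions" of naturality straight — naturality of $\varphi^{2}$ in its two arguments, naturality of $\xi^{l}$ and $\xi^{r}$ in $X$, and the compatibility of $f$ with each of these — while carrying along the suppressed coherence isomorphisms $DD'\simeq\idC$, $D'D\simeq\idC$, and the $D$ applied to everything. One clean way to sidestep part of the mess is to apply $D^{-1}=D'$ to the desired hexagon, turning it into an equation between morphisms $D'(Y)\ot D'(X)\to D'G(Y)\ot D'G(X)$ (roughly), where the identifications become transparent; then the identity to prove is essentially the statement that $f$ is compatible with $\xi^{l}$, $\xi^{r}$ and $\varphi^{2}$ simultaneously, which follows by pasting Lemma~\ref{lemma:pre-GV-morph-duality-transfo}, its left analogue, and the $\otimes$-monoidality of $f$. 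The $\parLL$-unit condition is immediate from~\eqref{eq: morphism of GV-functors}, so no separate argument is needed there. Finally, since the target LD-category $\cD$ is a GV-category (LD-category with negation), the resulting morphism is automatically invertible, but that is already recorded and need not be reproven here.
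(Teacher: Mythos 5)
Your proposal is correct and follows essentially the same route as the paper: the counit condition is exactly Equation~\eqref{eq: morphism of GV-functors}, and the compatibility with the $\parLL$-comultiplications is verified by expanding the formula~\eqref{eq: GV to LDN} and pasting together the compatibility of $f$ with $\xi^{r}$ and $\xi^{l}$ (Lemma~\ref{lemma:pre-GV-morph-duality-transfo} and its left analogue) with the $\otimes$-monoidality of $f$. The paper organizes precisely these three ingredients as three commuting squares in one diagram, so no further comment is needed.
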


\begin{prop}\label{prop: GV to LDN is functorial}
    The assignment \(\mathsf{GV}\to \mathsf{LDN}\) defines a \(2\)-functor.
\end{prop}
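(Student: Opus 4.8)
The plan is to verify the three defining conditions of a strict $2$-functor $\mathsf{GV}\to \mathsf{LDN}$: that it is well-defined on $0$-cells, $1$-cells, and $2$-cells, and that it respects identities and composition. On $0$-cells, a GV-category $(\cC,\otimes,1,K)$ is sent to the LD-category with negation $(\cC,\otimes,1,\parLL,K)$ of the preceding constructions; that this target is genuinely an LD-category with negation is the content of the cited literature (\cite[\S 4]{WDC}, \cite{fuchs2024grothendieckverdierdualitycategoriesbimodules, DeS}), so I would simply invoke it. On $1$-cells, a GV-functor $F$ is sent to $F$ equipped with the coherence data $(\varphi^{2,F},\varphi^{0,F},\upsilon^{2,F},\upsilon^{0,F})$ of Construction~\ref{constr: GV to LDN}; that this is a Frobenius LD-functor is exactly Lemma~\ref{lemma: F is Frob LD}. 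On $2$-cells, a morphism of GV-functors $f$ is sent to itself; that this is a morphism of Frobenius LD-functors is Lemma~\ref{lemma: morphism of GV is morphism of Frob}. So the only genuinely new content of the proposition is functoriality: preservation of identity $1$-cells and of composition of $1$-cells (and the analogous, but trivial, statements for $2$-cells, which hold on the nose since the assignment is the identity on underlying natural transformations).

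For the identity $1$-cell: the identity GV-functor $\id_\cC$ has $\varphi^{2}=\id$, $\varphi^{0}=\id$, and Frobenius form $\upsilon^{0}=\id_K$. I need that its image under the $2$-functor — i.e.\ $\id_\cC$ with the $\parLL$-structure from Construction~\ref{constr: GV to LDN} — is the identity Frobenius LD-functor, which amounts to checking $\upsilon^{2,\id_\cC}_{X,Y}=\id_{X\parLL Y}$. Plugging into formula~\eqref{eq: GV to LDN}, this reduces to showing that the duality transformations of the identity functor are identities, i.e.\ $\xi^{l,\id_\cC}_X=\id$ and $\xi^{r,\id_\cC}_X=\id$; this is immediate from Definition~\ref{def: duality transformations} since $\tau^{l,\id_\cC}$ and $\tau^{r,\id_\cC}$ are identities and $\upsilon^{0,\id_\cC}=\id_K$.

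For composition: given GV-functors $F\colon\cC\to\cD$ and $G\colon\cD\to\cE$, the composite $GF$ is a GV-functor with the usual composite lax monoidal structure and Frobenius form $\upsilon^{0,GF}:=\upsilon^{0,G}\circ G(\upsilon^{0,F})$ (one should note in passing that this composite form is again Frobenius, since the duality transformations satisfy $\xi^{l,GF}=\xi^{l,G}\circ G(\xi^{l,F})$ and similarly on the right, a chase of Definition~\ref{def: duality transformations} using the multiplicativity of the comparators $\tau^{l,-},\tau^{r,-}$ under composition). The core computation is then to show that the $\parLL$-comultiplication $\upsilon^{2,GF}$ produced by formula~\eqref{eq: GV to LDN} applied to $GF$ coincides with the composite oplax structure $G(\upsilon^{2,F}_{X,Y})$ followed by $\upsilon^{2,G}_{F(X),F(Y)}$ that $\mathsf{LDN}$ assigns to the composite of the images. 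This is a diagram chase: expand both sides using~\eqref{eq: GV to LDN}, and push $G$ through the formula for $\upsilon^{2,F}$ using naturality of $\xi^{r,G}$, the compatibility of $\xi^{r,G}$ and $\xi^{l,G}$ with $\varphi^{2,G}$ and the isomorphisms $\beta,\higheroverline\beta$ (Lemma~\ref{lemma:duality transfos relations}), and the composition laws for the duality transformations noted above; the two sides then visibly agree. I expect this composition chase to be the main obstacle, since it requires carefully matching the $D(-)$'s and the inserted isomorphisms $\xi^{l},\xi^{r}$ on both sides; but it is routine, and like the other technical verifications it can be deferred to the appendix.
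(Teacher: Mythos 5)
Your proposal is correct and takes essentially the same route as the paper: after delegating the cell-wise well-definedness to Lemma~\ref{lemma: F is Frob LD} and Lemma~\ref{lemma: morphism of GV is morphism of Frob}, the real content is the composition check, which the paper also performs by expanding the $\parLL$-comultiplication of $G\circ F$ via~\eqref{eq: GV to LDN} and pushing $G$ across, using only the naturality of $\xi^{r,G}$ and of $\varphi^{2,G}$ together with the evident composition laws for $\varphi^{2}$ and the duality transformations. The only (cosmetic) difference is that the compatibility with $\beta,\higheroverline{\beta}$ from Lemma~\ref{lemma:duality transfos relations} that you invoke is not actually needed in that chase.
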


Lemmas~\ref{lemma: F is Frob LD} and~\ref{lemma: morphism of GV is morphism of Frob} and Proposition~\ref{prop: GV to LDN is functorial} are proved in Appendix~\ref{sec:GV-cat-and-LD}.

\subsubsection{\normalfont\textbf{From \(\mathsf{LDN}\) to \(\mathsf{GV}\)}}
We define a \(2\)-functor \(\mathsf{LDN} \to \mathsf{GV}\): 

\begin{construction}[$0$-cells]\label{constr: LDN to GV 0-cells}
	On \(0\)-cells, the \(2\)-functor forgets the \(\parLL\)-monoidal structure, retaining only the \(\parLL\)-monoidal unit, which serves as the dualizing object \cite[Thm. 2.45]{DeS}. Given an LD-category with negation, the duality functors $D$ and $D'$ of its associated GV-category are induced by the assignments $X\mapsto X\Eev{}$ and $X\mapsto {\Eev\, X}$, respectively; see \cite[Prop. 2.51]{DeS}.
\end{construction}

To define  \(\mathsf{LDN} \to \mathsf{GV}\) on \(1\)-cells, we use the following lemma, proved in Appendix~\ref{sec:LD-cat-to-GV}.

\begin{lemma}\label{prop:LD-categories closed}
	Let \(\cC\) be an LD-category. If every object \(X\in \cC\) has a right LD-dual \(D'(X)\in \cC\), then the monoidal category \((\cC,\otimes,1)\) is left closed, with internal hom 
	\begin{equation}\label{eq: def inhom in LD-cat}
		 X \multimap Y \,:=\, D'(X) \parLL {Y},
	\end{equation}
	for \(Y\in \cC\). Similarly, if every object has a left LD-dual, \((\cC,\otimes,1)\) is right closed. 
\end{lemma}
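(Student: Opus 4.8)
The plan is to produce the adjunction $X \otimes {?} \dashv (X \multimap {?})$ directly, using the right LD-dual $D'(X)$ to build evaluation and coevaluation morphisms and then verifying the triangle identities from the LD-coherence axioms.

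First I would fix $X \in \cC$ and define the candidate right adjoint on objects by $X \multimap Y := D'(X) \parLL Y$, extending it to a functor via $\parLL$-functoriality of the second tensor product. Next I would construct the unit and counit of the proposed adjunction. The counit should be a natural transformation $\mathrm{ev}^X_Y \colon X \otimes (D'(X) \parLL Y) \to Y$; I would obtain it as the composite
\[
X \otimes (D'(X) \parLL Y) \xrightarrow{\ \distl_{X, D'(X), Y}\ } (X \otimes D'(X)) \parLL Y \xrightarrow{\ \epsilon^X \parLL Y\ } K \parLL Y \xrightarrow{\ \lpu_Y\ } Y,
\]
using the evaluation $\epsilon^X \colon X \otimes D'(X) \to K$ of the right LD-dual together with the left distributor and the $\parLL$-unitor. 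Dually, the unit $\mathrm{coev}^X_Y \colon Y \to D'(X) \parLL (X \otimes Y)$ should be
\[
Y \xrightarrow{\ \rou_Y^{-1}\ } Y \otimes 1 \xrightarrow{\ Y \otimes \eta^X\ } Y \otimes (D'(X) \parLL X) \xrightarrow{\ \distl_{Y, D'(X), X}\ } (Y \otimes D'(X)) \parLL X,
\]
followed by a reassociation; here I would need to compare $(Y \otimes D'(X)) \parLL X$ with $D'(X) \parLL (X \otimes Y)$, which is exactly where the mixed-associativity/interchange coherence axioms between $\distl$ and the two associators enter. Naturality of both transformations follows from naturality of the distributors, the unitors, and $\parLL$-functoriality.

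The main work — and the step I expect to be the genuine obstacle — is verifying the two triangle identities
\[
\mathrm{ev}^X_{X \otimes Y} \circ (X \otimes \mathrm{coev}^X_Y) = \id_{X \otimes Y}, \qquad (D'(X) \parLL \mathrm{ev}^X_Y) \circ \mathrm{coev}^X_{D'(X)\parLL Y} = \id_{D'(X) \parLL Y}.
\]
Unwinding the definitions, each identity reduces to a large diagram built from instances of $\distl$, the $\otimes$- and $\parLL$-associators and unitors, and crucially the snake equations \eqref{firstzigzag}–\eqref{secondzigzag} for $(\epsilon^X, \eta^X)$. The strategy is: push the two snake triangles into the middle of the diagram so that $\epsilon^X$ and $\eta^X$ cancel, and verify that everything surrounding them commutes purely by the LD-coherence axioms \eqref{eq:A1}–\eqref{eq:A10} — in effect this is a coherence computation in free LD-categories, for which Mac Lane–style arguments apply. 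This is routine but bookkeeping-heavy, so in the actual proof I would organize it as a sequence of subdiagrams, each commuting by a single named axiom, rather than one monolithic pasting.

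Finally, the right-closed statement is obtained by the evident mirror-image argument: if every object has a left LD-dual $X\Eev{}$, one sets $Y \multimapinv X := Y \parLL X\Eev{}$ and runs the symmetric construction with $\distr$, $\underline{\epsilon}^X$, and $\underline{\eta}^X$ in place of $\distl$, $\epsilon^X$, and $\eta^X$; all coherence inputs used above have their stated analogues, so no new ideas are required. In the body I would simply remark that this case follows by applying the left-closed result to an appropriate opposite/reversed LD-category, or spell out the dual composites if a self-contained statement is preferred.
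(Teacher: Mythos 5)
Your counit agrees with the paper's evaluation, but your unit construction has a genuine gap. You propose
\[
Y \xrightarrow{\ \rou_Y^{-1}\ } Y \otimes 1 \xrightarrow{\ Y \otimes \eta^X\ } Y \otimes \big(D'(X) \parLL X\big) \xrightarrow{\ \distl_{Y, D'(X), X}\ } \big(Y \otimes D'(X)\big) \parLL X,
\]
``followed by a reassociation'' to $D'(X) \parLL (X \otimes Y)$. No such comparison morphism exists: passing from $(Y \otimes D'(X)) \parLL X$ to $D'(X) \parLL (X \otimes Y)$ requires exchanging the order of the factors in both $\otimes$ and $\parLL$, and the coherence axioms \eqref{eq:A1}--\eqref{eq:A10} only relate the distributors to associators and unitors — they never permute tensor factors. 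An LD-category is not assumed braided (for either product), so this step cannot be repaired by coherence; your construction of the unit fails at exactly the point you flagged as ``where the mixed-associativity axioms enter.''

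The correct move, which is what the paper does, is to insert the monoidal unit on the \emph{left} and use the \emph{right} distributor: define
\[
\operatorname{coev}^X_Y\colon\; Y \xrightarrow{\ \lou_Y^{-1}\ } 1 \otimes Y \xrightarrow{\ \eta^X \otimes Y\ } \big(D'(X) \parLL X\big) \otimes Y \xrightarrow{\ \distr_{D'(X),X,Y}\ } D'(X) \parLL (X \otimes Y),
\]
which lands directly in the required object with no reassociation. Note that each adjunction thus uses \emph{both} distributors ($\distr$ for the unit, $\distl$ for the counit); your plan of running the whole left-closed case on $\distl$ alone is what forced the impossible swap. With this corrected unit, your remaining outline is sound: naturality is immediate from naturality of the distributors and unitors, and the triangle identities follow from the snake equations \eqref{firstzigzag}--\eqref{secondzigzag} together with the LD-coherence axioms, exactly as in the paper (and your mirror-image treatment of the right-closed case, using the left LD-duals $X\Eev{}$ with $\underline{\epsilon}^X$, $\underline{\eta}^X$, goes through once the same correction is made there).
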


Let \(F\colon \cC \to \cD\) be a Frobenius LD-functor between LD-categories with negation.

\begin{remark}[Frobenius LD-functors preserve LD-duals]\label{rem:Frobenius LD-functors preserve duals}
As in \cite[\S 4.2]{DeS}, there exists a unique natural isomorphism of functors 
\begin{equation}\label{eq: chi lF def}
{\chi}^{l,F}\colon \; F \circ \rD \; \xlongrightarrow{\simeq} \; \rD \circ F,
\end{equation} 
such that, for every object \(X\in \cC\), the following identity holds
\begin{equation}\label{charact property duality transfo}
	{\big({\chi}^{l,F}_X\parLL F(X)\big)}\circ \widetilde{\eta}^{F(X)}\;=\;\eta^{F(X)}.    
\end{equation}
Here, the morphism \(\widetilde{\eta}^{F(X)}\) is defined by 
\begin{equation}
\widetilde{\eta}^{F(X)}\;:=\;\upsilon^{2,F}_{\rD (X),X}\circ F(\eta^X) \circ \varphi^{0,F},
\end{equation} 
where \(\varphi^{0,F}\colon 1\rightarrow F(1)\) is the unit morphism of the lax \(\otimes\)-monoidal functor \(F\), and 
\begin{equation}
\eta^X\colon 1\,\longrightarrow\,{D'(X)}\parLL X \quad \text{and} \quad \eta^{F(X)}\colon 1\,\longrightarrow\,{D'F(X)}\parLL F(X)
\end{equation}
are coevaluation morphisms in the LD-categories with negation \(\cC\) and \(\cD\); see Definition~\ref{def:LD-with-negation}.
\end{remark}

\begin{remark}[A candidate for the left comparator $\tau^{l,F}$]
Using the natural isomorphism \({\chi}^{l,F}\) from Equation~\eqref{eq: chi lF def}, we define morphisms
\begin{equation}
	\Upsilon^{l,F}_{X,Y}\colon \; F(X\multimap{Y}) \,\;\eqabove{\tiny~\eqref{eq: def inhom in LD-cat}}\;\, F\big({\rD (X)} \parLL {Y}\big) \; \longrightarrow \; \rD F(X)\parLL {F(Y)} \,\;\eqabove{\tiny~\eqref{eq: def inhom in LD-cat}}\;\, F(X)\multimap {F(Y)},
\end{equation}
natural in \(X,Y\in \cC\), by 
\begin{equation}\label{eq: candidate comparator}
\Upsilon^{l,F}_{X,Y} \; := \; \big({\chi}^{l,F}_X \parLL {F(Y)}\big)\circ\upsilon^{2,F}_{\rD (X), Y}.
\end{equation}
\end{remark}

Using \(\Upsilon^{l,F}\), we can relate the left comparator $\tau^{l,F}$ of $F$ to the comultiplication morphism $\upsilon^{2,F}$ of $F$, as stated in the following lemma.
\begin{prop}\label{prop:FrobLDfunctors are closed}
	Let \(F\colon \cC \to \cD\) be a Frobenius LD-functor between LD-categories where every object is right LD-dualizable. Then, for all \(X,Y\in \cC\),
	\begin{align} \label{eq: Upsilon=tau}
		\tau^{l,F}_{X,Y} \;&=\; \Upsilon^{l,F}_{X,Y}.
	\end{align}
	An analogous statement holds if every object in \(\cC\) and \(\cD\) is left LD-dualizable.
\end{prop}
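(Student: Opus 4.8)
The plan is to exploit the fact that, by Lemma~\ref{lemma: compatibility of multi and left internal hom transfo}, the left comparator $\tau^{l,F}_{X,Y}$ is the \emph{unique} morphism $F(X\multimap Y)\to F(X)\multimap F(Y)$ satisfying the evaluation-compatibility identity~\eqref{eval lax functor compatibility}: the assignment $h\mapsto \operatorname{ev}^{F(X)}_{F(Y)}\circ\big(F(X)\otimes h\big)$ is the inverse of the tensor--hom adjunction bijection $\operatorname{Hom}_{\cD}\big(F(X)\otimes F(X\multimap Y),F(Y)\big)\cong\operatorname{Hom}_{\cD}\big(F(X\multimap Y),F(X)\multimap F(Y)\big)$, hence injective. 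It therefore suffices to check that $\Upsilon^{l,F}_{X,Y}$ satisfies the same identity, i.e.\ that
\[
\operatorname{ev}^{F(X)}_{F(Y)}\circ\big(F(X)\otimes\Upsilon^{l,F}_{X,Y}\big)\;=\;F(\operatorname{ev}^X_Y)\circ\varphi^{2,F}_{X,X\multimap Y}.
\]

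First I would make the evaluation of the closed structure of Lemma~\ref{prop:LD-categories closed} explicit: unwinding the adjunction $\operatorname{Hom}_{\cC}(X\otimes A,B)\cong\operatorname{Hom}_{\cC}(A,D'(X)\parLL B)$ built from the right LD-dual $D'(X)$, with evaluation $\epsilon^X\colon X\otimes D'(X)\to K$ and coevaluation $\eta^X\colon 1\to D'(X)\parLL X$, shows that for all $X,B\in\cC$
\[
\operatorname{ev}^X_B\;=\;\lpu_B\circ\big(\epsilon^X\parLL B\big)\circ\distl_{X,D'(X),B},
\]
and the same formula holds in $\cD$. Substituting this (in $\cD$) and the definition $\Upsilon^{l,F}_{X,Y}=\big(\chi^{l,F}_X\parLL F(Y)\big)\circ\upsilon^{2,F}_{D'(X),Y}$ into the left-hand side, then moving $F(X)\otimes\big(\chi^{l,F}_X\parLL F(Y)\big)$ across the distributor by naturality of $\distl$ and functoriality of $\parLL$, the left-hand side reduces to
\[
\lpu_{F(Y)}\circ\Big(\big(\epsilon^{F(X)}\circ(F(X)\otimes\chi^{l,F}_X)\big)\parLL F(Y)\Big)\circ\distl_{F(X),FD'(X),F(Y)}\circ\big(F(X)\otimes\upsilon^{2,F}_{D'(X),Y}\big).
\]

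The key step is then the identity
\[
\epsilon^{F(X)}\circ\big(F(X)\otimes\chi^{l,F}_X\big)\;=\;\upsilon^{0,F}\circ F(\epsilon^X)\circ\varphi^{2,F}_{X,D'(X)},
\]
the ``evaluation'' description of the dual-preservation isomorphism $\chi^{l,F}$, complementary to its defining ``coevaluation'' description~\eqref{charact property duality transfo}. To prove it, write $\widetilde{\epsilon}^{F(X)}$ for the right-hand side; using the Frobenius relations~\eqref{eq:F1}--\eqref{eq:F2} and functoriality of $F$ applied to the snake equations~\eqref{firstzigzag}--\eqref{secondzigzag} of $D'(X)$ in $\cC$, one checks that $\big(FD'(X),\widetilde{\epsilon}^{F(X)},\widetilde{\eta}^{F(X)}\big)$ is a right LD-dual of $F(X)$ in $\cD$. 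Since right LD-duals are unique up to a unique compatible isomorphism, the characterisation~\eqref{charact property duality transfo} of $\chi^{l,F}_X$ forces $\chi^{l,F}_X$ to intertwine $\widetilde{\epsilon}^{F(X)}$ with $\epsilon^{F(X)}$, which is the displayed identity (compare the discussion in \cite[\S 4.2]{DeS}). Granting this, the reduced left-hand side becomes $\lpu_{F(Y)}\circ\big((\upsilon^{0,F}\circ F(\epsilon^X)\circ\varphi^{2,F}_{X,D'(X)})\parLL F(Y)\big)\circ\distl_{F(X),FD'(X),F(Y)}\circ\big(F(X)\otimes\upsilon^{2,F}_{D'(X),Y}\big)$.

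Finally I would transform the right-hand side $F(\operatorname{ev}^X_Y)\circ\varphi^{2,F}_{X,X\multimap Y}$ into this same expression: substitute the explicit formula for $\operatorname{ev}^X_Y$, so $F(\operatorname{ev}^X_Y)=F(\lpu_Y)\circ F(\epsilon^X\parLL Y)\circ F(\distl_{X,D'(X),Y})$; use the oplax $\parLL$-unitality of $F$ to replace $F(\lpu_Y)$ by $\lpu_{F(Y)}\circ(\upsilon^{0,F}\parLL F(Y))\circ\upsilon^{2,F}_{K,Y}$; use naturality of $\upsilon^{2,F}$ in its first $\parLL$-variable applied to $\epsilon^X$ to slide $\upsilon^{2,F}_{K,Y}$ past $F(\epsilon^X\parLL Y)$; and then use the Frobenius relation~\eqref{eq:F1} to rewrite $\upsilon^{2,F}_{X\otimes D'(X),Y}\circ F(\distl_{X,D'(X),Y})\circ\varphi^{2,F}_{X,X\multimap Y}$ as $\big(\varphi^{2,F}_{X,D'(X)}\parLL F(Y)\big)\circ\distl_{F(X),FD'(X),F(Y)}\circ\big(F(X)\otimes\upsilon^{2,F}_{D'(X),Y}\big)$. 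After regrouping via functoriality of $\parLL$, the right-hand side collapses to exactly the reduced left-hand side, proving the identity and hence $\Upsilon^{l,F}_{X,Y}=\tau^{l,F}_{X,Y}$. The case in which every object of $\cC$ and $\cD$ is left LD-dualizable is symmetric, replacing $\distl$, left internal homs, $\eta^X$ and~\eqref{eq:F1} by $\distr$, right internal homs, $\underline{\eta}^X$ and~\eqref{eq:F2}. I expect the main obstacle to be the key step — in particular checking that $\big(FD'(X),\widetilde{\epsilon}^{F(X)},\widetilde{\eta}^{F(X)}\big)$ really is a right LD-dual, which is where the Frobenius relations genuinely enter — along with tracking the suppressed associativity and unit coherences of the two monoidal structures $(\otimes,1)$ and $(\parLL,K)$ and of the distributors throughout the snake-equation argument and the final collapse.
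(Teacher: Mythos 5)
Your argument is correct, but it runs along a genuinely different (in fact, mirror-image) route from the paper's. The paper proves \eqref{eq: Upsilon=tau} by a direct surface-diagrammatic computation starting from the \emph{coevaluation} presentation of $\tau^{l,F}_{X,Y}$: it inserts the explicit unit/counit of the adjunction from Lemma~\ref{prop:LD-categories closed}, applies the defining coevaluation property~\eqref{charact property duality transfo} of $\chi^{l,F}$, the Frobenius relation~\eqref{eq:F2}, unitality of the lax $\otimes$-structure, and finally the snake equation~\eqref{secondzigzag}, arriving at $\Upsilon^{l,F}_{X,Y}$. You instead reduce, via injectivity of the adjunction transpose, to checking that $\Upsilon^{l,F}_{X,Y}$ satisfies the \emph{evaluation} compatibility~\eqref{eval lax functor compatibility} that characterizes $\tau^{l,F}_{X,Y}$ (Lemma~\ref{lemma: compatibility of multi and left internal hom transfo}), and then verify this using oplax $\parLL$-counitality, naturality of $\upsilon^{2,F}$, and the \emph{other} Frobenius relation~\eqref{eq:F1}; your computation of both sides checks out. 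The price of this route is your ``key step'': the evaluation-side identity $\epsilon^{F(X)}\circ\bigl(F(X)\otimes\chi^{l,F}_X\bigr)=\upsilon^{0,F}\circ F(\epsilon^X)\circ\varphi^{2,F}_{X,D'(X)}$, which the paper never needs, since it only ever uses the defining coevaluation property of $\chi^{l,F}$. Your sketch for it is sound — showing that $\bigl(FD'(X),\widetilde{\epsilon}^{F(X)},\widetilde{\eta}^{F(X)}\bigr)$ is a right LD-dual of $F(X)$ (this is where both Frobenius relations and $F$ applied to the snake equations enter) and then invoking uniqueness of LD-duals — and this is precisely the ``Frobenius LD-functors preserve duals'' statement of \cite[\S 4.2]{DeS}, with uniqueness as in \cite[Lem.~A.6]{LD-functors}. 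The one point to make explicit is that a morphism between two right LD-duals compatible with the coevaluations is automatically the canonical comparison and hence also compatible with the evaluations; with that spelled out (or simply quoted from \cite{DeS}), your proof is complete, at the cost of an extra lemma the paper's more economical direct computation avoids.
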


See Appendix~\ref{sec:LD-cat-to-GV} for a graphical proof.

\begin{corollary}\label{cor:strong monoidal implies closed}
Let \(F\colon \cC \rightarrow \cD\) be a Frobenius LD-functor between LD-categories where every object is right LD-dualizable. Then, by Equation~\eqref{eq: candidate comparator}, the $\parLL$-comultiplication morphism \(\upsilon^{2,F}\colon F\circ\parLL \,\longrightarrow\, {\parLL} \circ {(F\times F)}\) is invertible if and only if the underlying lax \(\otimes\)-monoidal functor of \(F\) is left closed. An analogous statement holds if every object is left LD-dualizable.
\end{corollary}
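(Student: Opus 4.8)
The plan is to read the equivalence off directly from Proposition~\ref{prop:FrobLDfunctors are closed}. Assume first that every object of \(\cC\) and \(\cD\) is right LD-dualizable. That proposition identifies the left comparator with the candidate \(\Upsilon^{l,F}\), so that, by the defining Equation~\eqref{eq: candidate comparator},
\[
\tau^{l,F}_{X,Y} \;=\; \Upsilon^{l,F}_{X,Y} \;=\; \big(\chi^{l,F}_X \parLL F(Y)\big)\circ \upsilon^{2,F}_{D'(X),Y}
\]
for all \(X,Y\in\cC\). By Remark~\ref{rem:Frobenius LD-functors preserve duals} the morphism \(\chi^{l,F}_X\) is invertible, hence so is \(\chi^{l,F}_X\parLL F(Y)\), since the bifunctor \(\parLL\) preserves isomorphisms. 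Thus \(\tau^{l,F}_{X,Y}\) is invertible if and only if \(\upsilon^{2,F}_{D'(X),Y}\) is.

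From here I would argue the two implications. If \(\upsilon^{2,F}\) is invertible then each component \(\upsilon^{2,F}_{D'(X),Y}\) is invertible, so each \(\tau^{l,F}_{X,Y}\) is invertible, i.e.\ the underlying lax \(\otimes\)-monoidal functor is left closed. Conversely, if \(F\) is left closed, then each \(\tau^{l,F}_{X,Y}\) is invertible, hence each \(\upsilon^{2,F}_{D'(X),Y}=(\chi^{l,F}_X\parLL F(Y))^{-1}\circ\tau^{l,F}_{X,Y}\) is invertible. It then remains to upgrade this to invertibility of \(\upsilon^{2,F}\) in every bidegree: writing an arbitrary object \(A\) as \(A\cong D'(D(A))\) — possible because \(D'\) is (the object part of) an antiequivalence, cf.\ Construction~\ref{constr: LDN to GV 0-cells} — naturality of \(\upsilon^{2,F}\) in its first slot transports invertibility of \(\upsilon^{2,F}_{D'(D(A)),Y}\) to invertibility of \(\upsilon^{2,F}_{A,Y}\), for all \(A,Y\). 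Hence \(\upsilon^{2,F}\) is invertible. The case in which every object is left LD-dualizable is entirely analogous, using the right comparator \(\tau^{r,F}\), the analogous assertion of Proposition~\ref{prop:FrobLDfunctors are closed}, and right closedness in place of left closedness.

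The only delicate point — and the step I would take the most care over — is that last upgrade from invertibility on the components \(\upsilon^{2,F}_{D'(X),Y}\) to invertibility on all components: it rests on every object being, up to isomorphism, a right LD-dual, equivalently on \(D'\) being essentially surjective, which is guaranteed once both left and right LD-duals are available (the LD-categories-with-negation setting in which the corollary is applied). Everything else is a purely formal manipulation of isomorphisms, so I do not expect any genuine obstacle beyond this bookkeeping.
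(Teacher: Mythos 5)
Your proof is correct and is essentially the paper's own argument: the corollary is treated there as immediate from Proposition~\ref{prop:FrobLDfunctors are closed} together with Equation~\eqref{eq: candidate comparator}, i.e.\ precisely the transfer of invertibility along $\tau^{l,F}_{X,Y}=\bigl(\chi^{l,F}_X\parLL F(Y)\bigr)\circ\upsilon^{2,F}_{D'(X),Y}$ with $\chi^{l,F}$ invertible, which is what you carry out. The ``delicate point'' you isolate is genuine and is in fact glossed over in the paper: under the literal hypothesis (right LD-dualizability only) this argument controls only the components $\upsilon^{2,F}_{D'(X),Y}$, and since the essential image of $D'$ consists exactly of the objects admitting a left LD-dual, the implication ``left closed $\Rightarrow$ $\upsilon^{2,F}$ invertible in all components'' does require every object to be isomorphic to a right LD-dual --- as in the LD-categories-with-negation setting where the corollary is actually used, exactly as you note.
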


\begin{construction}[\(1\)-cells]\label{constr: LDN to GV 1-cells}
	On $1$-cells, the assignment \(\mathsf{LDN}\to \mathsf{GV}\) forgets the \(\parLL\)-monoidal structure, retaining only the counit morphism. By Proposition~\ref{prop:FrobLDfunctors are closed} and the counitality of the underlying oplax \(\parLL\)-monoidal structure of the Frobenius LD-functor,
	this counit morphism is a Frobenius form.
\end{construction}

\begin{construction}[$2$-cells]\label{constr: LDN to GV 2-cells}
	We let \(\mathsf{LDN}\to \mathsf{GV}\) act as the identity on \(2\)-cells. This is justified by the following direct lemma. 
\end{construction}

\begin{lemma}\label{lemma: morphism of Frob LD is morphism of GV}
	Let \(\cC,\cD\) be  LD-categories with negation, and let \(F,G \colon \cC\to \cD\) be Frobenius LD-functors. Any morphism  \(f\colon F\to G\) of Frobenius LD-functors is also a morphism of GV-functors, with \(F\) and \(G\) equipped with the GV-functor structures from Construction~\ref{constr: LDN to GV 1-cells}.
\end{lemma}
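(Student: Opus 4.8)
The plan is to unwind both definitions and observe that a morphism of Frobenius LD-functors already carries exactly the data needed to be a morphism of GV-functors, so the only real content is verifying the Frobenius-form compatibility~\eqref{eq: morphism of GV-functors}.

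First I would recall that, by Definition~\ref{def: morphism of Frob LD-functors}, the natural transformation $f\colon F\to G$ is $\otimes$-monoidal; this is precisely the data of a monoidal natural transformation between the underlying lax $\otimes$-monoidal functors, which is the first requirement in Definition~\ref{def: morphism of GV-functors}. (One should note that $F$ and $G$, viewed as GV-functors via Construction~\ref{constr: LDN to GV 1-cells}, have underlying lax monoidal functors given by forgetting the $\parLL$-structure, so this matching is on the nose.) It therefore remains to check the single equation $\upsilon^{0,F} = \upsilon^{0,G}\circ f_K$, where now $\upsilon^{0,F}$ and $\upsilon^{0,G}$ denote the Frobenius forms assigned in Construction~\ref{constr: LDN to GV 1-cells} — namely the \emph{counit morphisms} of the oplax $\parLL$-monoidal structures of $F$ and $G$.

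The key step is to invoke that $f$ is $\parLL$-opmonoidal, again by Definition~\ref{def: morphism of Frob LD-functors}. Opmonoidality of $f$ with respect to the $\parLL$-structures includes, besides compatibility with the comultiplications $\upsilon^{2,F},\upsilon^{2,G}$, the counit axiom: the counit morphisms $\upsilon^{0,F}\colon F(K)\to K$ and $\upsilon^{0,G}\colon G(K)\to K$ (here $K$ is the $\parLL$-unit of $\cD$, i.e.\ the dualizing object) satisfy $\upsilon^{0,G}\circ f_K = \upsilon^{0,F}$. This is literally Equation~\eqref{eq: morphism of GV-functors} after the identification of $\parLL$-counits with Frobenius forms made in Construction~\ref{constr: LDN to GV 1-cells}. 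Hence both conditions of Definition~\ref{def: morphism of GV-functors} hold, and $f$ is a morphism of GV-functors.

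There is essentially no obstacle here — the lemma is a bookkeeping statement, and the phrase ``direct lemma'' in Construction~\ref{constr: LDN to GV 2-cells} reflects this. The only point requiring a word of care is the implicit claim that $F$ and $G$ are themselves honest GV-functors (not merely lax monoidal functors with a form), i.e.\ that the Frobenius forms produced in Construction~\ref{constr: LDN to GV 1-cells} genuinely have invertible duality transformations; but this is already established there via Proposition~\ref{prop:FrobLDfunctors are closed} and counitality, so it may be cited rather than reproved. I would close by remarking that, combined with Lemma~\ref{lemma: morphism of GV is morphism of Frob}, this shows the assignments on $2$-cells in both directions are mutually inverse (both being the identity), which is what is ultimately needed for the $2$-equivalence of Theorem~\ref{thm: GV equiv LDN}.
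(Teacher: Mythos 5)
Your proposal is correct and matches the paper's intent: the paper gives no separate proof, calling this a ``direct lemma'', and the argument is exactly the one you spell out — the $\otimes$-monoidality of $f$ gives the monoidal-transformation requirement, and the counit axiom of $\parLL$-opmonoidality is literally Equation~\eqref{eq: morphism of GV-functors} once the Frobenius forms are identified with the $\parLL$-counits as in Construction~\ref{constr: LDN to GV 1-cells}.
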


The next two results follow immediately.

\begin{proposition}\label{prop: LDN to GV is functorial}
The assignment \(\mathsf{LDN}\to \mathsf{GV}\) defines a \(2\)-functor.
\end{proposition}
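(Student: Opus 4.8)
The plan is to verify the defining clauses of a strict $2$-functor directly, making essential use of the fact that $\mathsf{LDN}\to\mathsf{GV}$ is the identity on underlying functors and natural transformations. First I would collect the already-established well-definedness of the assignment on cells: on $0$-cells it retains the $\otimes$-monoidal category and promotes the $\parLL$-unit to a dualizing object (Construction~\ref{constr: LDN to GV 0-cells}, legitimate by \cite[Thm. 2.45]{DeS}); on $1$-cells it retains the underlying lax $\otimes$-monoidal functor and equips it with the oplax $\parLL$-counit $\upsilon^{0,F}$ as a form, which is Frobenius by Proposition~\ref{prop:FrobLDfunctors are closed} together with counitality of the $\parLL$-oplax structure (Construction~\ref{constr: LDN to GV 1-cells}); on $2$-cells it does nothing, which indeed lands in $\mathsf{GV}$ by Lemma~\ref{lemma: morphism of Frob LD is morphism of GV}. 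Hence all target data are well-defined GV-cells.

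Next I would verify preservation of identities and of all composites. The point is that the composition and units of $\mathsf{LDN}$ (Proposition~\ref{prop: 2Cat LDN}) restrict, on the underlying data, to the ordinary composition and units of lax monoidal functors and monoidal natural transformations, and the same holds in $\mathsf{GV}$ (Proposition~\ref{prop: 2Cat GV}); thus on the level of $\mathsf{MonCat}_l$-data there is nothing to prove, and what remains is to track the forms. For the identity $1$-cell on $\cC$ the $\parLL$-oplax counit is $\id_K$, which is exactly the Frobenius form of the identity GV-functor; for a composable pair $F\colon\cC\to\cD$, $G\colon\cD\to\cE$ the $\parLL$-oplax counit of $G\circ F$ is $\upsilon^{0,G}\circ G(\upsilon^{0,F})$, which is precisely the form attached to the composite GV-functor by the composition law in $\mathsf{GV}$. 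On $2$-cells, vertical composites and whiskerings of morphisms of Frobenius LD-functors are computed as the corresponding composites of underlying natural transformations, and likewise in $\mathsf{GV}$, so the identity assignment is automatically functorial there.

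I do not anticipate a genuine obstacle: the mathematical content has been isolated in Constructions~\ref{constr: LDN to GV 0-cells},~\ref{constr: LDN to GV 1-cells}, and~\ref{constr: LDN to GV 2-cells} and in Lemma~\ref{lemma: morphism of Frob LD is morphism of GV}, and $2$-functoriality collapses to the observation that all structure maps in $\mathsf{LDN}$ and $\mathsf{GV}$ agree on underlying $\mathsf{MonCat}_l$-data while the form assignments match by inspection ($\id_K$ for identities, $\upsilon^{0,G}\circ G(\upsilon^{0,F})$ for composites). The only step requiring a sentence of care is to record explicitly that the composition law for oplax $\parLL$-monoidal structures produces these counit formulas and that $\mathsf{GV}$ composes forms in the same way — a routine unwinding of definitions, after which the $2$-functoriality of $\mathsf{LDN}\to\mathsf{GV}$ is immediate.
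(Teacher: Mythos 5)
Your proposal is correct and matches the paper's intent: the paper records this proposition as following immediately from Constructions~\ref{constr: LDN to GV 0-cells}--\ref{constr: LDN to GV 2-cells} and Lemma~\ref{lemma: morphism of Frob LD is morphism of GV}, and your verification (identity of underlying $\mathsf{MonCat}_l$-data plus matching of the forms $\id_K$ for identities and $\upsilon^{0,G}\circ G(\upsilon^{0,F})$ for composites) is exactly the routine unwinding that justifies that immediacy.
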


\begin{lemma}\label{lemma: GV to GV is equivalent to identity}
The composite \(2\)-functor \(\mathsf{GV}\to \mathsf{LDN} \to \mathsf{GV}\) is \(2\)-naturally isomorphic to the identity \(2\)-functor on \(\mathsf{GV}\).
\end{lemma}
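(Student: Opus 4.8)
The plan is to trace the two constituent $2$-functors cell by cell and check that their composite returns every input unchanged. On $0$-cells, $\mathsf{GV}\to\mathsf{LDN}$ equips a GV-category $(\cC,\otimes,1,K)$ with the second monoidal product~\eqref{def: par monoidal product} and its distributors, leaving the monoidal category $(\cC,\otimes,1)$ and the object $K$ untouched, and $\mathsf{LDN}\to\mathsf{GV}$ then forgets the $\parLL$-structure and records $K$ as dualizing object (Construction~\ref{constr: LDN to GV 0-cells}). Since, by Proposition~\ref{CompDualizing}, the duality functors attached to a closed monoidal category and a dualizer are canonically $(K\multimapinv-)$ and $(-\multimap K)$, the recovered GV-category is $(\cC,\otimes,1,K)$ again, with its original $D$ and $D'$. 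On $1$-cells, $\mathsf{GV}\to\mathsf{LDN}$ augments a GV-functor $(F,\varphi^{2,F},\varphi^{0,F},\upsilon^{0,F})$ by the $\parLL$-comultiplication $\upsilon^{2,F}$ of Construction~\ref{constr: GV to LDN} and keeps $\upsilon^{0,F}$ as $\parLL$-counit, while $\mathsf{LDN}\to\mathsf{GV}$ discards $\upsilon^{2,F}$ and retains the counit, which by Proposition~\ref{prop:FrobLDfunctors are closed} and counitality is again a Frobenius form (Construction~\ref{constr: LDN to GV 1-cells}); thus the original GV-functor is returned verbatim. On $2$-cells both $2$-functors are the identity, hence so is the composite.

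It then follows that $\mathsf{GV}\to\mathsf{LDN}\to\mathsf{GV}$ equals the identity $2$-functor on $\mathsf{GV}$, and so is $2$-naturally isomorphic to it with the identity $2$-natural transformation as witness. If one prefers to allow the two constructions to record the duality functors through distinct (canonically isomorphic) representatives rather than on the nose, I would instead take the witnessing $2$-natural transformation to have, as component at $\cC$, the identity functor $\id_\cC$ carrying the Frobenius form $\id_K\colon K\to K$: by Definition~\ref{def: duality transformations}, and since the comparators of $\id_\cC$ are identities (a triangle identity of the tensor-hom adjunction), its duality transformations are exactly the comparison isomorphisms between the two representatives, hence invertible, so this is an isomorphism in $\mathsf{GV}$ with strict inverse again an identity functor with the form $\id_K$; and $2$-naturality reduces to the equalities, established above, of the underlying lax monoidal functors and of the Frobenius forms of $(\mathsf{GV}\to\mathsf{LDN}\to\mathsf{GV})(F)$ and $F$.

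Since the argument involves essentially no computation, I do not anticipate a genuine obstacle; the step most deserving of attention is the bookkeeping flagged above — namely, that $\upsilon^{0,F}$ survives the round trip as a Frobenius form (which is precisely Proposition~\ref{prop:FrobLDfunctors are closed} combined with counitality of the $\parLL$-opmonoidal structure) and that no data of the dualizing object or of the two monoidal structures is lost, which amounts to checking that the conventions of Constructions~\ref{constr: GV to LDN} and~\ref{constr: LDN to GV 0-cells} are compatible with Proposition~\ref{CompDualizing}.
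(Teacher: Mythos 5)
Your proposal is correct and matches the paper's reasoning: the paper treats this lemma as immediate precisely because, as you spell out, the round trip returns every $0$-, $1$-, and $2$-cell unchanged (the $\parLL$-structure added by $\mathsf{GV}\to\mathsf{LDN}$ is exactly what $\mathsf{LDN}\to\mathsf{GV}$ forgets, while $K$, the lax monoidal structure, and the Frobenius form $\upsilon^{0,F}$ survive verbatim), so the composite is the identity $2$-functor and the identity $2$-natural transformation witnesses the claim. Your fallback via the identity functor equipped with the form $\id_K$ is a harmless extra precaution not needed in the paper's setup.
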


The following lemma is proved in Appendix~\ref{sec:LD-cat-to-GV}.

\begin{lemma}\label{lemma: LDN to LDN is equivalent to identity}
	The composite \(2\)-functor \(\mathsf{LDN}\to \mathsf{GV} \to \mathsf{LDN}\) is \(2\)-naturally isomorphic to the identity \(2\)-functor on \(\mathsf{LDN}\).
\end{lemma}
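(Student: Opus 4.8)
The plan is to exhibit, for each LD-category with negation $\cC$, a Frobenius LD-equivalence $\cC \to R(L(\cC))$ that is $2$-natural in $\cC$, where $L\colon \mathsf{LDN}\to\mathsf{GV}$ and $R\colon\mathsf{GV}\to\mathsf{LDN}$ denote the two $2$-functors constructed above. Since $L$ only forgets the $\parLL$-structure down to its unit $K$, and $R$ reconstructs a $\parLL$-product out of the GV-data via \eqref{def: par monoidal product}, the underlying category and the $\otimes$-monoidal structure of $R(L(\cC))$ agree with those of $\cC$ on the nose, and the dualizing object is $K$ in both cases. Thus the identity functor on the underlying category, equipped with the identity $\otimes$-monoidal structure, is the obvious candidate for the component at $\cC$; what must be checked is that its reconstructed $\parLL$-product $D\big(D'(Y)\ot D'(X)\big)$ is canonically monoidally isomorphic to the original $\parLL$ on $\cC$, and that this isomorphism is compatible with associators, unitors, distributors, and — crucially — assembles into a morphism of Frobenius LD-functors, hence (being between LD-categories with negation) an isomorphism.

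First I would produce the comparison isomorphism $\omega_{X,Y}\colon X\parLL Y \xrightarrow{\simeq} D\big(D'(Y)\ot D'(X)\big)$. By Construction~\ref{constr: LDN to GV 0-cells} the GV-duality functors of $L(\cC)$ are $D' = (-)\Eev{}$ and $D = {}^\vee\!(-)$, with the canonical natural isomorphisms $DD'\simeq\id_\cC\simeq D'D$ coming from the snake equations. Using Lemma~\ref{prop:LD-categories closed}, the internal hom of $L(\cC)$ is $X\multimap Y = D'(X)\parLL Y$, and \eqref{eq:rightInHom}–\eqref{eq:leftInHom} then identify $D\big(D'(Y)\ot D'(X)\big)$ with $D'(Y)\multimapinv D'(D'(X))$ — but I prefer to argue directly: both $X\parLL Y$ and $D\big(D'(Y)\ot D'(X)\big)$ represent the same functor $Z\mapsto \operatorname{Hom}(Z, \cdot)$ once unwound through the adjunctions $D\dashv D'$, so Yoneda hands over a canonical $\omega_{X,Y}$. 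Naturality in $X,Y$ is then automatic. Next I would verify that $\omega$ intertwines the original $\parLL$-associator/unitors on $\cC$ with the reconstructed ones $\ap_{X,Y,Z}=D(\ao_{D'(Z),D'(Y),D'(X)})$ of \eqref{def: par associator}: this is a coherence computation that reduces, via Yoneda and the adjunction $D\dashv D'$, to Mac Lane coherence for the $\otimes$-associator on $\cC$, so it is routine. The same goes for compatibility of $\omega$ with the distributors, using Remark~\ref{rem:InHoms and distributors}, which expresses both distributors of $R(L(\cC))$ in terms of $\otimes$, internal homs, and the canonical isomorphism $\iota$ — all of which are unchanged.

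With $\omega$ in hand, the remaining task is to check that for every Frobenius LD-functor $F\colon \cC\to\cD$, the square relating $F$, $R(L(F))$, and the components $\omega_\cC,\omega_\cD$ commutes — i.e.\ that $\omega$ is a (strict) $2$-natural transformation, and that each $\omega_\cC$ is a morphism of Frobenius LD-functors from the identity to $R(L(\cC))\to\cC$. On the $\otimes$-side this is trivial since all $\otimes$-structures are strictly preserved. On the $\parLL$-side, the $\parLL$-comultiplication of $R(L(F))$ is the one manufactured in Construction~\ref{constr: GV to LDN} from the comparators and duality transformations of $L(F)$; by Proposition~\ref{prop:FrobLDfunctors are closed} (the identity $\tau^{l,F}=\Upsilon^{l,F}$) together with Corollary~\ref{cor:strong monoidal implies closed}, this reconstructed $\upsilon^{2}$ is conjugate, via $\omega$, precisely to the original $\upsilon^{2,F}$. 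Chasing this through is the one genuinely technical point, and I expect it to be \emph{the main obstacle}: it requires carefully matching the $\chi^{l,F}$ of Remark~\ref{rem:Frobenius LD-functors preserve duals} — defined by the universal property \eqref{charact property duality transfo} — against the duality transformations $\xi^{l,F},\xi^{r,F}$ that enter formula~\eqref{eq: GV to LDN}, and unwinding both through the snake equations. A graphical (string-diagram) argument in the LD-calculus, in the style of the proof of Proposition~\ref{prop:FrobLDfunctors are closed} in the appendix, is the cleanest route. Once this compatibility is established, the fact that $\omega_\cC$ is an isomorphism of Frobenius LD-functors is free: any morphism of Frobenius LD-functors between LD-categories with negation is invertible \cite[Prop. 2.38]{DeS}. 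Assembling the $\omega_\cC$ over all $\cC$ then yields the desired $2$-natural isomorphism $\id_{\mathsf{LDN}}\simeq R\circ L$. As a sanity check one verifies the triangle identities against Lemma~\ref{lemma: GV to GV is equivalent to identity}, so that $L\dashv\!\!\!\vdash R$ is genuinely a $2$-equivalence; this closes the proof of Theorem~\ref{thm: GV equiv LDN} as well.
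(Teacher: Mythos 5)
Your proposal takes essentially the same route as the paper: the component at \(\cC\) is the identity functor, endowed with a strong Frobenius LD-structure whose only nontrivial datum is a canonical isomorphism \(X\parLL Y\xrightarrow{\simeq} D\bigl(D'(Y)\otimes D'(X)\bigr)\) (the paper obtains it by exhibiting \(D'(Y)\otimes D'(X)\) as a right LD-dual of \(X\parLL Y\) with explicit (co)evaluations and invoking uniqueness of LD-duals, you via representability --- equivalent mechanisms), and \(2\)-naturality is then checked against the comultiplication \(\widetilde{\upsilon}^{2,F}\) of Construction~\ref{constr: GV to LDN}, a verification which both you and the paper leave as a naturality/coherence chase. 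One correction: \(\omega_{\cC}\) is the \(\parLL\)-comultiplication of this \(1\)-cell, not a \(2\)-cell between Frobenius LD-functors, so the appeal to \cite[Prop.~2.38]{DeS} is misplaced; invertibility of the component is instead immediate because its underlying functor is the identity and the comparison morphisms \(\omega_{X,Y}\) are isomorphisms by construction.
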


We collect our results.

\begin{proof}[Proof of Theorem~\ref{thm: GV equiv LDN}]
The \(2\)-functors \(\mathsf{GV}\to \mathsf{LDN}\) and \(\mathsf{LDN}\to \mathsf{GV}\) from Propositions~\ref{prop: GV to LDN is functorial} and~\ref{prop: LDN to GV is functorial} clearly commute with the forgetful \(2\)-functors to \(\mathsf{MonCat}_g\). 
The claim now follows from Lemmas~\ref{lemma: GV to GV is equivalent to identity} and~\ref{lemma: LDN to LDN is equivalent to identity}.
\end{proof}

\begin{remark}[Frobenius algebras]
	Applying Theorem~\ref{thm: GV equiv LDN} to the terminal category $\ast$, we obtain an equivalence of hom-categories
	\begin{equation}
	\mathsf{GV}(\ast, \cC) \,\cong\, \mathsf{LDN}(\ast, \cC),
	\end{equation}
	for any GV-category \(\cC\). 
	Together with Proposition~\ref{prop:FrobLDfunctors are closed}, this recovers the characterization of LD-Frobenius algebras , the objects of \(\mathsf{LDN}(\ast, \cC)\), as given in \cite[Def.~3.22]{DeS}.
\end{remark}

\subsubsection{\normalfont\textbf{Pivotality}} Using the characterization of GV-categories as linearly distributive categories, we present more examples of GV-functors:
\begin{example}[Duality functors]\label{ex: duality functors are GV}
	Let $(\cC,\otimes,1,K)$ be a GV-category. Its duality functors 
	\begin{align}
		D,D'\colon\; (\cC,\otimes,1,K) \; \longrightarrow \; (\cC^{\operatorname{op}},\parLL^{\operatorname{rev}},K,1)
	\end{align}
	are GV-functors. Using Remark~\ref{rem:closed mon in LD}, we equip $D$ and $D'$ with strong monoidal structures $(\overline{\beta}_{X,Y,K},\higheroverline{\gamma}_K)$ and $({\beta}_{X,Y,K},{\gamma}_K)$, respectively (cf.~\cite[Prop.~2.54 and Cor.~2.55]{DeS}). The invertible Frobenius forms are $D(\gamma_K)^{-1}\circ \widetilde{d}_1$ and $D'(\higheroverline{\gamma}_K)^{-1}\circ d_1$, where $\gamma,\higheroverline{\gamma},\beta, \higheroverline{\beta}$ are as in Remark~\ref{rem:canonical isos}.
\end{example}
Using Example \ref{ex: duality functors are GV}, and for applications in Section \ref{sec: Applications}, we introduce the following notion:
\begin{definition}\label{def:pivotal structure}
	A \emph{pivotal structure} on a GV-category $(\cC,\otimes,1,K)$ is a morphism 
	\begin{equation*}
		\pi\colon \; D \; \longrightarrow \; D',
	\end{equation*}
	of GV-functors. Explicitly, $\pi$ is required to satisfy, for all $X,Y\in \cC$:
	\begin{align}
		\beta_{X,Y,K} \circ \pi_{X\otimes Y} & \;=\; (Y\multimap \pi_X)\circ \iota_{Y,K,X}\circ (\pi_Y\multimapinv X)\circ \higheroverline{\beta}_{X,Y,K}.\label{eq: PivotalStructure1}\\
		{\gamma_K}\circ \pi_1 & \;=\; \higheroverline{\gamma}_K.\label{eq: PivotalStructure2}\\
		D(\gamma_K) & \;=\; \iota_{K,1,K}\circ D'(\higheroverline{\gamma}_K)\circ \pi_K.\label{eq: PivotalStructure3}
	\end{align}
\end{definition}

\begin{remark}[Invertibility]
	By Corollary~\ref{cor: morph of GV-functros compat with duality transfo}, any pivotal structure is invertible.
\end{remark}

\begin{remark}[Comparing definitions]\label{rem: equivalent def of pivotality}
	By Theorem~\ref{thm: GV equiv LDN}, \(\pi\) is equivalently an isomorphism of the associated Frobenius LD-functors. Invoking~\cite[Thm. 4.2]{DeS}, a pivotal structure 
	as defined in Definition~\ref{def:pivotal structure} thus corresponds precisely to a pivotal structure in the sense of~\cite[Def. 5.1]{BoDrinfeld}. This can also be deduced directly from~\cite[Prop. 5.7]{BoDrinfeld}. 
\end{remark}

\subsection{Extension to the braided setting}\label{sec: pivotal and braided GV-categories}

\subsubsection{\normalfont\textbf{Braidings}}
A structure that a GV-category may carry is the following.
\vspace{-3pt}
\begin{definition}\label{def: braided GV}(\cite[\S 6]{BoDrinfeld}).
    A GV-category $\cC$ is \emph{braided} if its underlying monoidal category is equipped with a braiding $c_{X,Y}\colon X\otimes Y\xrightarrow{\simeq}Y\otimes X$.
\end{definition}
\vspace{-6pt}
\begin{remark}\label{rem: braided shorthand}(Shorthand from \cite[\S 6.2]{BoDrinfeld}).
	For a braided monoidal category \(\cC\) with braiding \(c\), and objects $X,Y\in \cC$, we will write $c^+_{X,Y}:=c_{X,Y}$ and $c^-_{X,Y}:=c^{-1}_{Y,X}$.
\end{remark}
\vspace{-6pt}
\begin{definition}\label{def: braided GV functor}
   A GV-functor \(F\colon \cC \to \cD\) between braided GV-categories is called \emph{braided} if its underlying lax monoidal functor is braided.
\end{definition}
\vspace{-5pt}
\begin{proposition}\label{prop: BrGV}
	Braided GV-categories, together with braided GV-functors and their morphisms, form a \((2,1)\)-category  \(\mathsf{BrGV}\).
\end{proposition}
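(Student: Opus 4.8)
The plan is to show that the three data defining a $(2,1)$-category—composition of $1$-cells, horizontal and vertical composition of $2$-cells, and the requisite identities and invertibility—are all inherited from the ambient $2$-category $\mathsf{GV}$ of Proposition~\ref{prop: 2Cat GV}, with the braiding carried along transparently. First I would recall that a braided GV-functor is by Definition~\ref{def: braided GV functor} just a GV-functor whose underlying lax monoidal functor is braided, and that a morphism of braided GV-functors is simply a morphism of the underlying GV-functors; thus $\mathsf{BrGV}$ is exactly the (non-full) sub-$2$-graph of $\mathsf{GV}$ spanned by braided GV-categories, braided GV-functors, and all their GV-functor morphisms. So the entire claim reduces to checking that this collection is closed under the compositions of $\mathsf{GV}$ and contains the relevant identities.

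The key steps, in order, are: (i) observe that for a braided GV-category the identity GV-functor $\id_\cC$ is braided (its coherence data are identities, so the braided hexagon for the unit lax structure is trivially satisfied), so identity $1$-cells exist in $\mathsf{BrGV}$; (ii) check that the composite of two braided GV-functors is again a braided GV-functor—the composite is a GV-functor by Proposition~\ref{prop: 2Cat GV}, and the composite of two braided lax monoidal functors is braided, which is the standard fact that the two coherence squares relating $\varphi^{2,G\circ F}$ to the braidings paste from the two squares for $F$ and $G$; (iii) verify that the identity natural transformation on a braided GV-functor is a morphism of GV-functors (immediate, since $\upsilon^{0,F} = \upsilon^{0,F}\circ (\id_F)_K$) and that both horizontal and vertical composites of morphisms of GV-functors are again morphisms of GV-functors, i.e.\ $\mathsf{GV}$'s $2$-cell compositions restrict; (iv) invoke Corollary~\ref{cor: morph of GV-functros compat with duality transfo} to note that every such $2$-cell is invertible, so the sub-$2$-category is automatically a $(2,1)$-category; (v) conclude that the associativity and unit coherence for $1$-cells and the interchange law for $2$-cells hold in $\mathsf{BrGV}$ because they already hold in $\mathsf{GV}$ and all the structure is inherited.

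Concretely I would write: ``By Definition~\ref{def: braided GV functor}, a braided GV-functor is a GV-functor whose underlying lax monoidal functor is braided, and a morphism of braided GV-functors is a morphism of the underlying GV-functors. Hence braided GV-categories, braided GV-functors, and their morphisms form a sub-$2$-graph of the $2$-category $\mathsf{GV}$ of Proposition~\ref{prop: 2Cat GV}. It remains to check this sub-$2$-graph is closed under the compositions and contains the identities of $\mathsf{GV}$. The identity GV-functor on a braided GV-category has identity coherence morphisms, so it is trivially braided; a composite of braided lax monoidal functors is braided by the usual pasting of the two naturality squares relating the multiplication morphisms to the braidings, and a composite of GV-functors is a GV-functor by Proposition~\ref{prop: 2Cat GV}, so composites of braided GV-functors are braided GV-functors. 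The identity $2$-cell on a braided GV-functor $F$ satisfies $\upsilon^{0,F} = \upsilon^{0,F}\circ(\id_F)_K$, hence is a morphism of GV-functors; horizontal and vertical composites of morphisms of GV-functors are again such, since $\mathsf{GV}$ is a $2$-category. The associativity, unit, and interchange axioms thus hold in $\mathsf{BrGV}$ as they hold in $\mathsf{GV}$. Finally, by Corollary~\ref{cor: morph of GV-functros compat with duality transfo} every $2$-cell of $\mathsf{BrGV}$ is invertible, so $\mathsf{BrGV}$ is a $(2,1)$-category.''

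The main obstacle—really a minor one—is to confirm that ``braided'' genuinely passes through all the inherited operations without interacting with the GV-specific data: that the braided-compatibility condition on $\varphi^{2,F}$ composes correctly under $1$-cell composition, and that nothing in the definition of a morphism of GV-functors (Definition~\ref{def: morphism of GV-functors}), which only constrains the Frobenius form via~\eqref{eq: morphism of GV-functors}, needs an extra braided axiom. Since a morphism of braided lax monoidal functors is automatically just a monoidal natural transformation (the braided-compatibility is a property of the functors, not extra data on the transformation), no new condition appears at the level of $2$-cells, so the sub-$2$-graph is well defined. I expect the write-up to be essentially the paragraph above, with the composability of braided lax monoidal functors cited as folklore rather than proved in detail.
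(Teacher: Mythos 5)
Your proposal is correct and matches the paper's treatment: the paper records Proposition~\ref{prop: BrGV} without proof, regarding it as the routine observation that $\mathsf{BrGV}$ sits inside $\mathsf{GV}$ (Proposition~\ref{prop: 2Cat GV}) with braidedness a property preserved by identities and composition of lax monoidal functors, no new condition on $2$-cells, and invertibility of $2$-cells supplied by Corollary~\ref{cor: morph of GV-functros compat with duality transfo}. Your write-up is exactly this routine verification, so nothing is missing.
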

We now recall the notion of a braiding for LD-categories.
\vspace{-5pt}
\begin{definition}\label{def:braided LD-category} (\cite[\S 4.11]{MelCatSem}).
A \emph{braided LD-category} is an LD-category $(\cC,\otimes,1,\parLL,K)$ equipped with a braiding $c$ for $(\otimes,1)$ and a braiding $\higheroverline{c}$ for $(\parLL,K)$, satisfying the following two hexagon relations for all $X,Y,Z\in \cC$:
\begin{align}
    (c_{X,Y}\parLL Z) \circ\distl_{X,Y,Z} \circ (X \otimes {\higheroverline{c}^{-1}_{Y,Z}}) &\;=\; \higheroverline{c}^{-1}_{Y\otimes X,Z} \circ \distr_{Z,Y,X}\circ c_{X,Z\parLL Y},\tag{H1}\label{eq: braiding and Frobenius relation 1}\\
    (X\parLL c_{Y,Z})\circ\distr_{X,Y,Z}\circ (\higheroverline{c}^{-1}_{X,Y}\otimes Z) &\;=\; \higheroverline{c}^{-1}_{X, Z\otimes Y} \circ \distl_{Z,Y,X}\circ c_{Y\parLL X,Z}.\tag{H2}\label{eq: braiding and Frobenius relation 2}
\end{align}
A \emph{symmetric LD-category} is an LD-category in which both braidings are symmetric. A \emph{braided (respectively, symmetric) LD-category with negation} is a braided (respectively, symmetric) LD-category whose underlying LD-category is an LD-category with negation.
\end{definition}
\vspace{-5pt}
\begin{example}
	Since strong monoidal functors are Frobenius monoidal \cite[Prop. 3]{NoteOnFrobMonFunctors}, a braided monoidal category is exactly a braided LD-category whose two braided monoidal structures coincide.
\end{example}
\vspace{-6pt}
\begin{example}
	Let $R$ be a finite-dimensional commutative $k$-algebra. The right exact tensor product $\otimes=\otimes_R$ of $R$-modules and the left exact cotensor product $\parLL=\otimes^R$ over the dual coalgebra $R^{\ast}=\operatorname{Hom}_{k}(R,k)$ endow the category $ {_R {\operatorname{Mod}}}$ of $R$-modules with an LD-structure; cf. \cite[Ex. 2.10]{DeS} and \cite[\S 6]{fuchs2024grothendieckverdierdualitycategoriesbimodules}. The symmetric braiding of $k$-vector spaces induces symmetric braidings on both $\otimes_R$ and $\otimes^R$, making $ {_R {\operatorname{Mod}}}$ a symmetric LD-category.
\end{example}

The hexagon relations can be understood as Frobenius relations:
\vspace{-4pt}
\begin{remark}[Frobenius relations]\label{rem: braided Frobenius relations}
By \cite[Ex. 2.5]{JoSt}, in any braided LD-category $\cC$, the \(\ot\)-braiding \(c\) endows the identity functor on \(\cC\) with a strong monoidal structure 
\begin{equation}
(\cC, \otimes^{\operatorname{rev}}, 1) \;\xlongrightarrow{\simeq} \; (\cC, \otimes, 1),
\end{equation}
while the inverse \(\parLL\)-braiding \(\higheroverline{c}^{-1}\) yields a strong opmonoidal structure \((\cC, \parLL^{\operatorname{rev}}, K) \to (\cC, \parLL, K)\).

The hexagon relations~\eqref{eq: braiding and Frobenius relation 1} and~\eqref{eq: braiding and Frobenius relation 2} then express precisely that these two structures make the identity functor on \(\cC\) a Frobenius LD-functor \begin{equation}\label{eq: Joyal-Street equivalence}
    I\colon\; \cC^{\operatorname{rev}}=(\cC,\otimes^{\operatorname{rev}},1,\parLL^{\operatorname{rev}},K) \;\xlongrightarrow{\simeq}\;(\cC,\otimes,1,\parLL,K)=\cC.
\end{equation}

In fact, \(I\) is a Frobenius LD-equivalence, whose inverse \(I^{-1}\colon \cC \to \cC^{\operatorname{rev}}\) is given by the identity functor on \(\cC\), equipped with the inverse \(\ot\)-braiding \(c^{-1}\) and the \(\parLL\)-braiding \(\higheroverline{c}\).
\end{remark}
\vspace{-5pt}
\begin{definition}\label{def: Joyal-Street equivalence}(Cf. \cite[\S 6.1]{BoDrinfeld}).
	The \emph{Joyal--Street equivalences} of a braided LD-category \(\cC\) are the squared Frobenius LD-equivalence \(J= I^{2} \colon \cC \rightarrow \cC\) and its inverse \(J^{-1}=I^{-2} \colon \cC \rightarrow \cC\).
\end{definition}
\vspace{-5pt}
\begin{definition}\label{def: braided Frobenius LD-functor}
A Frobenius LD-functor \(F\colon \cC \to \cD\) between braided LD-categories is called \emph{braided} if its underlying lax \(\ot\)-monoidal and oplax \(\parLL\)-monoidal functor are both braided.
\end{definition}
\vspace{-5pt}
\begin{proposition}\label{prop: BrLDN}
	Braided LD-categories with negation, together with braided Frobenius LD-functors and their morphisms, form a \((2,1)\)-category \(\mathsf{BrLDN}\).
\end{proposition}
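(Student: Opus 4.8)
The plan is to realise $\mathsf{BrLDN}$ as sitting over the $(2,1)$-category $\mathsf{LDN}$ of Proposition~\ref{prop: 2Cat LDN} via a faithful forgetful assignment, so that composition of $1$-cells, vertical and horizontal composition of $2$-cells, identities, and all coherence laws (associativity, unitality, the middle-four interchange) are transported directly from $\mathsf{LDN}$; the only genuine content is to check that these operations remain within the braided world. First I would fix the data: a $0$-cell of $\mathsf{BrLDN}$ is an LD-category with negation equipped with a $\otimes$-braiding $c$ and a $\parLL$-braiding $\higheroverline{c}$ satisfying the hexagons~\eqref{eq: braiding and Frobenius relation 1} and~\eqref{eq: braiding and Frobenius relation 2}; a $1$-cell is a Frobenius LD-functor whose lax $\otimes$-monoidal structure and oplax $\parLL$-monoidal structure are each braided (Definition~\ref{def: braided Frobenius LD-functor}); and a $2$-cell is simply a morphism of Frobenius LD-functors, with no further condition imposed. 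Forgetting braidings and the adjective ``braided'' then defines an assignment $\mathsf{BrLDN}\to\mathsf{LDN}$ that is the identity on $2$-cells between fixed $1$-cells and is injective on $0$- and $1$-cells up to the chosen braidings.

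Second, I would verify closure under the operations. For $1$-cells: if $F\colon\cC\to\cD$ and $G\colon\cD\to\cE$ are braided Frobenius LD-functors, then $G\circ F$ is a Frobenius LD-functor by Proposition~\ref{prop: 2Cat LDN}, its lax $\otimes$-structure and oplax $\parLL$-structure being the usual composites of those of $F$ and $G$; the classical fact that a composite of braided lax (resp.\ oplax) monoidal functors is again braided, applied once to the $\otimes$-data and once to the $\parLL$-data, shows that $G\circ F$ is braided. The identity Frobenius LD-functor $\id_\cC$, having identity structure morphisms, is trivially braided. For $2$-cells there is nothing new to check: $\mathsf{BrLDN}(\cC,\cD)$ is by definition the full subcategory of $\mathsf{LDN}(\cC,\cD)$ spanned by the braided $1$-cells, so vertical composition, horizontal composition (whiskering), and identity $2$-cells are inherited, as is the invertibility of every $2$-cell (the remark following Definition~\ref{def: morphism of Frob LD-functors}, i.e.\ \cite[Prop.~2.38]{DeS}). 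Hence $\mathsf{BrLDN}$ is closed under all operations of $\mathsf{LDN}$.

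Third, the remaining axioms then come for free: associativity and unitality of $1$-cell composition, the interchange law, and naturality of associators and unitors all hold in $\mathsf{BrLDN}$ because the forgetful assignment to $\mathsf{LDN}$ strictly preserves composites and identities and $\mathsf{LDN}$ is a strict $(2,1)$-category. Therefore $\mathsf{BrLDN}$ is a $(2,1)$-category, exactly parallel to $\mathsf{BrGV}$ in Proposition~\ref{prop: BrGV}.

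I do not anticipate a real obstacle: the statement is structural. The only point needing care is that a Frobenius LD-functor carries a \emph{lax} structure on $\otimes$ and an \emph{oplax} structure on $\parLL$, so the stability-under-composition lemma must be invoked in both its lax and its oplax forms; and one should note that the Frobenius relations defining a Frobenius LD-functor involve only the $\otimes$- and $\parLL$-structure morphisms, not the braidings, so they are left untouched by these verifications and are already handled by $\mathsf{LDN}$.
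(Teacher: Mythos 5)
Your argument is correct and matches the paper's treatment: the paper states Proposition~\ref{prop: BrLDN} without proof, implicitly relying on exactly the observation you spell out, namely that braided Frobenius LD-functors form a locally full sub-$2$-category of $\mathsf{LDN}$ (Proposition~\ref{prop: 2Cat LDN}), since braidedness of the lax $\otimes$- and oplax $\parLL$-structures is preserved by composition and identities, and invertibility of $2$-cells is inherited from \cite[Prop.~2.38]{DeS}. No gaps.
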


\subsubsection{\normalfont\textbf{A $2$-equivalence between \(\mathsf{BrGV}\) and \(\mathsf{BrLDN}\)}}\label{sec: 2-equivalence BrGV to BrLDN}

\begin{remark}[Notation]
Let \(\mathsf{BrMonCat}_g\) denote the \((2,1)\)-category of braided monoidal categories, braided lax monoidal functors and monoidal natural isomorphisms. The \(2\)-functors from Remark~\ref{rem: forgetful 2-functor to MonCatl} lift to forgetful $2$-functors \(\mathsf{BrLDN} \to \mathsf{BrMonCat}_g\) and \(\mathsf{BrGV} \to \mathsf{BrMonCat}_g\).
\end{remark}

The second main result of the paper is the following theorem.

\begin{theorem}\label{thm: braided GV=braided LDneg}
The \((2,1)\)-categories \(\mathsf{BrGV}\) and \(\mathsf{BrLDN}\) are {\(2\)-equivalent}. The \(2\)-equivalence can be chosen to strictly commute with the forgetful \(2\)-functors to \(\mathsf{BrMonCat}_g\).
\end{theorem}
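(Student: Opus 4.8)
The plan is to lift the two 2-functors $\mathsf{GV}\to\mathsf{LDN}$ and $\mathsf{LDN}\to\mathsf{GV}$ of Section~\ref{sec: 2-equivalence gv to ldn} to the braided setting, and then to check that the 2-natural isomorphisms exhibiting the 2-equivalence of Theorem~\ref{thm: GV equiv LDN} survive the refinement. Since $\mathsf{BrGV}$ and $\mathsf{BrLDN}$ sit over $\mathsf{GV}$ and $\mathsf{LDN}$ by forgetting braidings, and $\mathsf{BrMonCat}_l$ sits over $\mathsf{MonCat}_l$, everything will be deduced from the unbraided statement together with two compatibility lemmas about braidings, which I would defer to Appendix~\ref{sec:appendixproof}.

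For the 2-functor $\mathsf{GV}\to\mathsf{LDN}$: given a braided GV-category $(\cC,\otimes,1,K,c)$, I would equip its associated LD-category (with $\parLL$-product~\eqref{def: par monoidal product}) with the $\parLL$-braiding $\higheroverline{c}$ transported from $c$ through the contravariant duality functors, as in \cite[\S 6]{BoDrinfeld}; concretely $\higheroverline{c}_{X,Y}:=D\bigl(c_{D'(X),D'(Y)}\bigr)$ up to the canonical isomorphism $D'D\simeq\idC$ (the opposite sign convention works equally well). The first — and, I expect, main — task is to verify that $\higheroverline{c}$ is a braiding for $(\parLL,K)$ and that the two hexagon relations~\eqref{eq: braiding and Frobenius relation 1} and~\eqref{eq: braiding and Frobenius relation 2} of Definition~\ref{def:braided LD-category} hold. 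Using the descriptions of the distributors via internal homs from Remark~\ref{rem:InHoms and distributors} (in particular formulas~\eqref{eq: distl via inhoms} and~\eqref{eq: distr via inhoms}) together with the definition of $\higheroverline{c}$, both sides of each hexagon unwind into composites of associators, unitors, (co)evaluations and instances of $c$, so that the identities reduce to coherence for braided monoidal categories; alternatively one can argue graphically.

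On 1-cells: for a braided GV-functor $F\colon\cC\to\cD$ — a braided lax monoidal functor equipped with a Frobenius form — I would show that the oplax $\parLL$-monoidal structure $\upsilon^{2,F}$ of Construction~\ref{constr: GV to LDN} is compatible with $\higheroverline{c}$, hence that $F$ is a braided Frobenius LD-functor. This follows from the defining formula~\eqref{eq: GV to LDN}, the braidedness of the multiplication $\varphi^{2,F}$, the naturality of the duality transformations $\xi^{l,F},\xi^{r,F}$ (Lemmas~\ref{lemma:left-right duality transfos relations} and~\ref{lemma:duality transfos relations}), and the fact that $\higheroverline{c}$ on both source and target is $D$ applied to an instance of $c$; I would record this as a second appendix lemma. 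Since $\mathsf{GV}\to\mathsf{LDN}$ is the identity on 2-cells, it then restricts to a 2-functor $\mathsf{BrGV}\to\mathsf{BrLDN}$. In the reverse direction the braided refinement is immediate: a braided LD-category with negation forgets to a GV-category whose underlying monoidal category carries the $\otimes$-braiding, hence to a braided GV-category; a braided Frobenius LD-functor forgets to a braided lax monoidal functor with a Frobenius form, i.e.\ a braided GV-functor; and nothing changes on 2-cells.

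To conclude, I would observe that the components of the 2-natural isomorphisms from Lemmas~\ref{lemma: GV to GV is equivalent to identity} and~\ref{lemma: LDN to LDN is equivalent to identity} are monoidal natural isomorphisms, and that being braided is a property of the functors involved which imposes no further condition on a natural transformation between them (the 1-cells of $\mathsf{BrMonCat}_l$ are braided, but its 2-cells are plain monoidal natural transformations). Hence these components are automatically isomorphisms of braided GV-functors, respectively of braided Frobenius LD-functors, so the same data exhibits $\mathsf{BrGV}\simeq\mathsf{BrLDN}$. Finally, strict commutativity with the forgetful 2-functors to $\mathsf{BrMonCat}_l$ is inherited from the strict commutativity with the forgetful 2-functors to $\mathsf{MonCat}_l$ established in Theorem~\ref{thm: GV equiv LDN}, because both lifted 2-functors act as the identity on underlying braided monoidal categories, braided lax monoidal functors, and monoidal natural transformations.
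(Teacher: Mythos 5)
Your construction of the lift is essentially the paper's: the same $\parLL$-braiding $\higheroverline{c}_{X,Y}=D(c_{D'(X),D'(Y)})$ on $0$-cells (Construction~\ref{constr: BrGV to BrLDN}), the same verification tasks — the hexagon relations~\eqref{eq: braiding and Frobenius relation 1}--\eqref{eq: braiding and Frobenius relation 2} (Proposition~\ref{prop: braided GV is braided LDN}) and the braidedness of $\upsilon^{2,F}$ for a braided GV-functor (Proposition~\ref{prop: braided GVf is braided LDN f}) — and the same immediate forgetful direction, with strict commutativity over $\mathsf{BrMonCat}_l$ inherited from Theorem~\ref{thm: GV equiv LDN}. (Your remark that the hexagons ``reduce to coherence for braided monoidal categories'' is a little too quick — coherence does not apply verbatim to diagrams built from duality functors and internal homs — but the intended computation, carried out in the paper via Lemmas~\ref{lemma: tildec ev and coev} and~\ref{lemma: relationship c overline and c tilde}, is the right one.)

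The genuine gap is in your concluding step. The components of the $2$-natural isomorphism witnessing $\mathsf{LDN}\to\mathsf{GV}\to\mathsf{LDN}\simeq\operatorname{id}$ (Lemma~\ref{lemma: LDN to LDN is equivalent to identity}) are \emph{$1$-cells}, namely Frobenius LD-equivalences $\cC\to\widetilde{\cC}$ whose $\parLL$-comultiplication $\upsilon^{\cC}$ (Equation~\eqref{eq: comultiplication LDN-LDN}) is nontrivial; they are not $2$-cells. Your argument that ``being braided is a property of the functors which imposes no further condition on a natural transformation between them'' is correct for $2$-cells of $\mathsf{BrLDN}$, but it does not address the point at issue: for the same data to be a $2$-natural isomorphism in $\mathsf{BrLDN}$, each component must itself be a \emph{braided} Frobenius LD-functor, i.e.\ its oplax $\parLL$-structure must intertwine the given $\parLL$-braiding $\higheroverline{c}$ of $\cC$ with the reconstructed braiding $D(c_{D'(-),D'(-)})$ on $\widetilde{\parLL}$. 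This is a nontrivial compatibility — Equation~\eqref{eq: identity is braided} in the paper, relating the a priori independent datum $\higheroverline{c}$ to the braiding transported through the duality functors — and it is exactly what the appendix proof of Lemma~\ref{lemma: BrLDN to BrLDN is equivalent to identity} verifies, using the naturality of the unitors and of $\distl$ together with the extranaturality of the LD-(co)evaluations. (For the $\mathsf{GV}$ round-trip the components are essentially identities, so there your conclusion is harmless.) Without this check your proof of the $2$-equivalence is incomplete.
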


\begin{remark}[The rigid case]
Analogously to Theorem~\ref{thm: GV equiv LDN} (as explained in
Remark~\ref{rem: rigid case of 2-equivalence}), the $2$-equivalence of Theorem~\ref{thm: braided GV=braided LDneg} specializes to the rigid braided setting.
\end{remark}

\begin{remark}[The symmetric case]
	Similarly, the $2$-equivalence of Theorem~\ref{thm: braided GV=braided LDneg} specializes to the symmetric setting.
\end{remark}

First, we lift the \(2\)-functor \(\mathsf{GV}\to \mathsf{LDN}\) from Proposition~\ref{prop: GV to LDN is functorial} to a \(2\)-functor \(\mathsf{BrGV}\to \mathsf{BrLDN}\):
\begin{construction}[$0$-cells]\label{constr: BrGV to BrLDN}
For a GV-category \(\cC\) with braiding $c$, define the isomorphism 
\begin{equation}\label{def: c overline}
\higheroverline{c}^{\pm}_{X,Y}\colon\; X \parLL Y \,\eqdef\, D\big(D'(Y)\ot D'(X)\big)\;\xlongrightarrow{\simeq}\; D\big(D'(X)\ot D'(Y)\big) \,\eqdef\,Y \parLL X,
\end{equation}
natural in \(X,Y\in \cC\), by \(\higheroverline{c}^{\pm}_{X,Y}:=D(c^{\pm}_{D'(X),D'(Y)})\), using the shorthand from Remark~\ref{rem: braided shorthand}.
\end{construction}

For later use, we recall another natural isomorphism.

\begin{remark}[Braided closed monoidal categories]\label{rem:braided closed mon cats}
Let \(\cC\) be a closed monoidal category with braiding \(c\). By Yoneda's Lemma, the isomorphisms \(c^{\pm}\) induce isomorphisms 
	\begin{equation}\label{def: c tilde}
		\widetilde{c}^{\,\pm}_{X,Y}\colon\; (X \multimap Y) \;\xlongrightarrow{\simeq}\; (Y \multimapinv X),
	\end{equation}
	natural in \(X,Y\in \cC\). Explicitly, we define
	\allowdisplaybreaks
	\begin{align}
		\widetilde{c}^{\,\pm}_{X,Y}&:=(\operatorname{ev}^X_Y \multimapinv X)\circ (c^{\pm}_{X\multimap Y,X}\multimapinv X)\circ \higheroverline{\operatorname{coev}}^X_{X \multimap Y},
	\end{align}
	with inverse given by
	\begin{align}
		(\widetilde{c}^{\,\pm}_{X,Y})^{-1}&=(X \multimap \higheroverline{\operatorname{ev}}^X_Y)\circ (X\multimap c^{\mp}_{X,Y\multimapinv X})\circ \operatorname{coev}^X_{Y \multimapinv X}.
	\end{align}
\end{remark}

The following lemma is proved in Appendix~\ref{sec:Braided-GV-categories}.

\begin{lemma}\label{lemma: tildec ev and coev}
	Let \(\cC\) be a closed monoidal category with braiding \(c\). For \(\widetilde{c}^{\,\pm}\) as in Remark~\ref{rem:braided closed mon cats}
	\begin{align}
		{\higheroverline{\operatorname{coev}}^Y_X} \;&=\; {\widetilde{c}^{\,\pm}_{Y,X\ot Y}} \circ {(Y \multimap c^{\mp}_{Y,X})} \circ \operatorname{coev}^Y_X,\label{eq: ctilde compat coev}\\
		{\operatorname{ev}^{Y}_{X}}\;&=\;\higheroverline{\operatorname{ev}}^Y_X \circ c^{\mp}_{Y,X \multimapinv Y} \circ \big(Y \ot \widetilde{c}^{\,\pm}_{Y,X}\big),\label{eq: ctilde compat ev}
	\end{align}
	for all \(X,Y\in \cC\).
\end{lemma}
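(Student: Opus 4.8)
The plan is to reduce everything to a single adjunction-level characterization of $\widetilde{c}^{\,\pm}$, after which both identities follow by naturality of the braiding and the snake equations. The first step is to establish that $\widetilde{c}^{\,\pm}_{X,Y}\colon X\multimap Y \to Y\multimapinv X$ is the unique morphism satisfying
\[
\higheroverline{\operatorname{ev}}^X_Y\circ\big(\widetilde{c}^{\,\pm}_{X,Y}\otimes X\big)\;=\;\operatorname{ev}^X_Y\circ c^{\pm}_{X\multimap Y,\,X}.
\]
Uniqueness is immediate from the right tensor–hom adjunction $({?}\otimes X)\dashv({?}\multimapinv X)$ (this is precisely the Yoneda characterization of $\widetilde{c}^{\,\pm}$ recorded in Remark~\ref{rem:braided closed mon cats}); the displayed identity itself is obtained by unwinding the formula $\widetilde{c}^{\,\pm}_{X,Y}=(\operatorname{ev}^X_Y \multimapinv X)\circ (c^{\pm}_{X\multimap Y,X}\multimapinv X)\circ \higheroverline{\operatorname{coev}}^X_{X \multimap Y}$: post-compose with $\higheroverline{\operatorname{ev}}^X_Y$, apply naturality of $\higheroverline{\operatorname{ev}}^X$ with respect to $\operatorname{ev}^X_Y$ and then to $c^{\pm}_{X\multimap Y,X}$, and collapse $\higheroverline{\operatorname{ev}}^X_{(X\multimap Y)\otimes X}\circ(\higheroverline{\operatorname{coev}}^X_{X\multimap Y}\otimes X)=\operatorname{id}$ by the right snake equation. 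I would also record the equivalent ``mirror'' form $\operatorname{ev}^X_Y\circ\big(X\otimes(\widetilde{c}^{\,\pm}_{X,Y})^{-1}\big)=\higheroverline{\operatorname{ev}}^X_Y\circ c^{\mp}_{X,\,Y\multimapinv X}$, which one gets from the displayed identity by naturality of $c$ together with $(c^{\pm}_{A,B})^{-1}=c^{\mp}_{B,A}$.

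For Equation~\eqref{eq: ctilde compat ev}, I would start from the right-hand side $\higheroverline{\operatorname{ev}}^Y_X \circ c^{\mp}_{Y,X \multimapinv Y} \circ (Y \ot \widetilde{c}^{\,\pm}_{Y,X})$, slide $Y\otimes\widetilde{c}^{\,\pm}_{Y,X}$ through the braiding using naturality of $c^{\mp}$, i.e.\ $c^{\mp}_{Y,X\multimapinv Y}\circ(Y\otimes\widetilde{c}^{\,\pm}_{Y,X})=(\widetilde{c}^{\,\pm}_{Y,X}\otimes Y)\circ c^{\mp}_{Y,Y\multimap X}$, apply the characteristic property of the previous paragraph (with $X$ and $Y$ interchanged) to rewrite $\higheroverline{\operatorname{ev}}^Y_X\circ(\widetilde{c}^{\,\pm}_{Y,X}\otimes Y)=\operatorname{ev}^Y_X\circ c^{\pm}_{Y\multimap X,Y}$, and finally cancel $c^{\pm}_{Y\multimap X,Y}\circ c^{\mp}_{Y,Y\multimap X}=\operatorname{id}$. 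The result is $\operatorname{ev}^Y_X$, as required.

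For Equation~\eqref{eq: ctilde compat coev}, I would exploit that $\higheroverline{\operatorname{coev}}^Y_X$ is the unit of $({?}\otimes Y)\dashv({?}\multimapinv Y)$: the assignment $g\mapsto \higheroverline{\operatorname{ev}}^Y_{X\otimes Y}\circ(g\otimes Y)$ is a bijection $\operatorname{Hom}_{\cC}(X,(X\otimes Y)\multimapinv Y)\to\operatorname{Hom}_{\cC}(X\otimes Y,X\otimes Y)$ carrying $\higheroverline{\operatorname{coev}}^Y_X$ to $\operatorname{id}_{X\otimes Y}$, so it suffices to check that the proposed composite $\widetilde{c}^{\,\pm}_{Y,X\ot Y}\circ (Y \multimap c^{\mp}_{Y,X})\circ \operatorname{coev}^Y_X$ is sent to $\operatorname{id}_{X\otimes Y}$ as well. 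Evaluating this, I would use the characteristic property on the outermost $\widetilde{c}^{\,\pm}$, then naturality of $c$ twice (to move the braiding past $Y\multimap c^{\mp}_{Y,X}$ and past $\operatorname{coev}^Y_X$) and naturality of $\operatorname{ev}^Y$ with respect to $c^{\mp}_{Y,X}$, reducing the expression to $c^{\mp}_{Y,X}\circ\operatorname{ev}^Y_{Y\otimes X}\circ(Y\otimes\operatorname{coev}^Y_X)\circ c^{\pm}_{X,Y}$; the left snake equation collapses the middle to $\operatorname{id}_{Y\otimes X}$, and $c^{\mp}_{Y,X}\circ c^{\pm}_{X,Y}=\operatorname{id}_{X\otimes Y}$ finishes the argument.

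The only genuine subtlety is the bookkeeping of the $\pm/\mp$ signs: one must keep track of which of $c^{\pm},c^{\mp}$ appears with which objects and in which order, so that the cancellations $c^{\pm}_{A,B}\circ c^{\mp}_{B,A}=\operatorname{id}$ and the identity $(c^{\pm}_{A,B})^{-1}=c^{\mp}_{B,A}$ are invoked correctly; there is no conceptual obstacle, and each side of both equations is a single braiding composed with a cup or a cap, so I would present the appendix proof graphically, with the adjunction-based computation above as its justification.
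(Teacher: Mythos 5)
Your proof is correct and relies on essentially the same ingredients as the paper's: the adjoint-transpose description of $\widetilde{c}^{\,\pm}$ (equivalently of its inverse), naturality of the braiding and of the (co)evaluations, the snake identities for the tensor--hom adjunctions, and the cancellation $c^{\pm}_{A,B}\circ c^{\mp}_{B,A}=\operatorname{id}$. The only difference is organizational: the paper proves \eqref{eq: ctilde compat coev} by computing $(\widetilde{c}^{\,\pm}_{Y,X\otimes Y})^{-1}\circ\higheroverline{\operatorname{coev}}^Y_X$ directly from the explicit formula for the inverse and declares \eqref{eq: ctilde compat ev} analogous, whereas you first isolate the transposed characterization $\higheroverline{\operatorname{ev}}^X_Y\circ\big(\widetilde{c}^{\,\pm}_{X,Y}\otimes X\big)=\operatorname{ev}^X_Y\circ c^{\pm}_{X\multimap Y,X}$ and then derive \eqref{eq: ctilde compat ev} directly and \eqref{eq: ctilde compat coev} via injectivity of the adjunction transpose; both routes are complete and your sign bookkeeping is consistent with the paper's convention $c^{\pm}$, $c^{\mp}$.
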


Our candidate for the $\parLL$-braiding from Construction~\ref{constr: BrGV to BrLDN} (Equation~\eqref{def: c overline}) can also be constructed from the isomorphism of Remark~\ref{rem:braided closed mon cats} (Equation~\eqref{def: c tilde}):
\begin{lemma}\label{lemma: relationship c overline and c tilde}
	Let \(\cC\) be a GV-category with braiding $c$. The following diagram
	\begin{equation}\label{dgm: relationship c overline and c tilde}
		\begin{tikzcd}
			{X \parLL Y}&&{Y \parLL X}\\
			{D(X)\multimap Y}&{Y\multimapinv D(X)}&{Y\multimapinv D'(X),}
			\arrow["\higheroverline{c}^{\pm}_{X,Y}"{yshift=1pt},"\simeq"'{yshift=-1pt}, from=1-1, to=1-3]
			\arrow["\widetilde{c}^{\,\mp}_{D(X),Y}"'{yshift=-3pt},"\simeq"{yshift=1pt}, from=2-1, to=2-2]
			\arrow["Y \multimapinv \widetilde{c}^{\,\pm}_{X,K}"'{yshift=-3pt},"\simeq"{yshift=1pt}, from=2-2, to=2-3]
			\arrow[dash,"\eqref{eq:rightInHom}"'{xshift=-2pt},"\simeq"{xshift=2pt}, from=1-3, to=2-3]
			\arrow[dash,"\eqref{eq:leftInHom}"'{xshift=-2pt},"\simeq"{xshift=2pt}, from=1-1, to=2-1]
		\end{tikzcd}
	\end{equation}
	commutes for all \(X,Y\in \cC\).
\end{lemma}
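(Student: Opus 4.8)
The plan is to prove the commutativity of \eqref{dgm: relationship c overline and c tilde} via the Yoneda lemma, exploiting that both \(\higheroverline{c}^{\pm}\) (Equation~\eqref{def: c overline}) and \(\widetilde{c}^{\,\pm}\) (Remark~\ref{rem:braided closed mon cats}) are, by construction, Yoneda-transported from instances of the \(\otimes\)-braiding \(c^{\pm}\). First I would apply \(\operatorname{Hom}_{\cC}(W,-)\) to the whole square and transport each vertex along the defining adjunctions of the internal homs and of the duality functor \(D=(K\multimapinv{?})\). Since \(Y\parLL X=D\bigl(D'(X)\otimes D'(Y)\bigr)\) and \(\operatorname{Hom}_{\cC}(W,D(Z))\cong\operatorname{Hom}_{\cC}(W\otimes Z,K)\), this presents \(\operatorname{Hom}_{\cC}(W,X\parLL Y)\) and \(\operatorname{Hom}_{\cC}(W,Y\parLL X)\) as hom-sets into \(K\) out of \(W\otimes D'(Y)\otimes D'(X)\) and \(W\otimes D'(X)\otimes D'(Y)\), respectively. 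The goal then becomes to show that the two composites around the square both reduce to precomposition with a single morphism
\[
W\otimes D'(X)\otimes D'(Y)\;\longrightarrow\;W\otimes D'(Y)\otimes D'(X)
\]
(up to associators).

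Next I would evaluate the two routes. For the outer route, \(\higheroverline{c}^{\pm}_{X,Y}=D\bigl(c^{\pm}_{D'(X),D'(Y)}\bigr)\) makes postcomposition by \(\higheroverline{c}^{\pm}_{X,Y}\) transport, via functoriality of \(D\) and its adjunction, to precomposition by \(W\otimes c^{\pm}_{D'(X),D'(Y)}\), while the structural identifications \eqref{eq:leftInHom} and \eqref{eq:rightInHom} become the corresponding coherence isomorphisms (built from the duality adjunctions, the associator, and \(\beta,\gamma,\iota\) as in Remark~\ref{rem:closed mon in LD}). For the inner route, I would unfold \(\widetilde{c}^{\,\mp}_{D(X),Y}\) and \(Y\multimapinv\widetilde{c}^{\,\pm}_{X,K}\) through the explicit formulas of Remark~\ref{rem:braided closed mon cats}, invoking Lemma~\ref{lemma: tildec ev and coev} to rewrite each \(\widetilde{c}^{\,\pm}\) as a braiding acting on a strand bent through \(\operatorname{ev}\) and \(\higheroverline{\operatorname{coev}}\); the role of the correction factor \(\widetilde{c}^{\,\pm}_{X,K}\colon D'(X)\to D(X)\) is precisely to convert the \(D(X)\)-strand produced by \(\widetilde{c}^{\,\mp}_{D(X),Y}\) back into a \(D'(X)\)-strand, so that after transport the inner route likewise reduces to precomposition with a braiding of \(D'(X)\) past \(D'(Y)\) (keeping \(W\) fixed).

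It then remains to identify the two resulting morphisms \(W\otimes D'(X)\otimes D'(Y)\to W\otimes D'(Y)\otimes D'(X)\), which follows from naturality of the braiding together with the hexagon identities for \(c\) — equivalently, from coherence for braided monoidal categories, since both represent the single crossing of the \(D'(X)\)- and \(D'(Y)\)-strands. The sign pattern \(\pm\) versus \(\mp\) in the statement is exactly what makes the braiding contributed by \(\widetilde{c}^{\,\mp}_{D(X),Y}\) and the one contributed by \(\widetilde{c}^{\,\pm}_{X,K}\) combine into that single crossing. Equivalently, the entire argument can be run as an isotopy in the graphical calculus for braided GV-categories, in which \(D'(X)\) and \(D(X)\) are the right and left duals, the maps \eqref{eq:rightInHom}--\eqref{eq:leftInHom} bend strands, and \(\higheroverline{c},\widetilde{c}\) are crossings; this is the style used for Proposition~\ref{prop:FrobLDfunctors are closed}. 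I expect the main obstacle to be purely bookkeeping — threading the several structural isomorphisms (\(\beta,\gamma,\iota\), the two duality adjunctions, \eqref{eq:rightInHom}--\eqref{eq:leftInHom}) through the reduction and tracking the braiding signs — rather than anything conceptual, since once everything is pushed into \(\operatorname{Hom}_{\cC}(-,K)\) the required identity is immediate from the hexagon.
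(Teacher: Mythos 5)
Your plan is essentially the paper's proof: apply the Yoneda lemma, transport both routes along the duality/internal-hom adjunctions, unfold the definition of $\widetilde{c}^{\,\pm}$, and reduce the required identity to a hexagon axiom for $c$ together with naturality (in the paper this appears as $\big(c^{\pm}_{D(X),D(Y)}\otimes Z\big)^{\ast}=\big(c^{\pm}_{D(X),D(Y)\otimes Z}\big)^{\ast}\circ\big(D(Y)\otimes c^{\mp}_{Z,D(X)}\big)^{\ast}$, with the test object genuinely entering the braidings before the hexagon collapses everything to a single crossing). So the proposal is correct and follows the same route; only the bookkeeping of which adjunction presentation is used differs.
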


See Appendix~\ref{sec:Braided-GV-categories} for a proof. We return to the proof of Theorem~\ref{thm: braided GV=braided LDneg}:

\begin{proposition}\label{prop: braided GV is braided LDN}
Let $\cC$ be a GV-category with braiding $c$.
The natural isomorphism \(\higheroverline{c}=\higheroverline{c}^{\,+}\) from Construction~\ref{constr: BrGV to BrLDN} makes \(\cC\) a braided LD-category with negation.
\end{proposition}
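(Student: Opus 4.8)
The plan is to verify, in order, the three conditions in Definition~\ref{def:braided LD-category}: that $\higheroverline{c}:=\higheroverline{c}^{+}$ is a braiding for the monoidal structure $(\parLL,K)$, and then the two hexagon relations \eqref{eq: braiding and Frobenius relation 1} and \eqref{eq: braiding and Frobenius relation 2}. The first condition I would obtain by transport of structure. Naturality and invertibility of $\higheroverline{c}^{\pm}_{X,Y}=D(c^{\pm}_{D'(X),D'(Y)})$ are immediate, and $\higheroverline{c}^{-}$ is inverse to $\higheroverline{c}^{+}$ with the indices exchanged, just as for $c$. For the two $\parLL$-hexagons and the compatibility of $\higheroverline{c}$ with the $\parLL$-unitors I would apply the strong monoidal functor $D\colon (\cC,\otimes,1)\to(\cC^{\op},\parLL^{\text{rev}},K)$ of Remark~\ref{ex: duality functors are GV} to the corresponding $\otimes$-axioms for $c$ at the objects $D'(X),D'(Y),D'(Z)$, using $\ap_{X,Y,Z}=D(\ao_{D'(Z),D'(Y),D'(X)})$ from \eqref{def: par associator}, the definition \eqref{def: c overline} of $\higheroverline{c}$, and the fact that the $\parLL$-unitors are the $D$-images of the $\otimes$-unitors. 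Equivalently, $c$ transports along the strong monoidal anti-equivalence $D$ to a braiding on $(\cC,\parLL,K)$, which is $\higheroverline{c}$ by construction.

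The hexagon relations \eqref{eq: braiding and Frobenius relation 1} and \eqref{eq: braiding and Frobenius relation 2} are the main part. I would first rewrite both sides of each relation entirely in terms of internal homs of $\cC$: the distributors $\distl,\distr$ via \eqref{eq: def left distributor}–\eqref{eq: def right distributor} and the conjugated associators $\widetilde{\distl},\widetilde{\distr}$ of \eqref{eq: distl via inhoms}–\eqref{eq: distr via inhoms}; each instance of $\parLL$ via the identifications \eqref{eq:rightInHom}–\eqref{eq:leftInHom}; and each occurrence of $\higheroverline{c}^{\pm}$ via Lemma~\ref{lemma: relationship c overline and c tilde}, i.e.\ through the canonical isomorphisms $\widetilde{c}^{\,\pm}$ of Remark~\ref{rem:braided closed mon cats}. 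After this translation, each of \eqref{eq: braiding and Frobenius relation 1} and \eqref{eq: braiding and Frobenius relation 2} becomes an identity of two composites in $\cC$ built only from associators, the left and right (co)evaluations, the braiding $c$, and $\widetilde{c}^{\,\pm}$, which I would check graphically in the string-diagram calculus for $\cC$ that represents both internal homs (the calculus used for the graphical proofs in the appendix): substitute the defining formulas for $\widetilde{c}^{\,\pm}$, $\widetilde{\distl}$, $\widetilde{\distr}$, apply Lemma~\ref{lemma: tildec ev and coev} to trade the right (co)evaluations for the left ones at the cost of a braiding, and then collapse the diagram using the (co)evaluation snake identities, extranaturality (Lemma~\ref{lemma:extranaturality of eval and coeval}), and the hexagon axioms and naturality of $c$. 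I expect \eqref{eq: braiding and Frobenius relation 2} to follow from \eqref{eq: braiding and Frobenius relation 1} by the De Morgan–type self-duality of GV-categories (passing to $(\cC^{\op})^{\text{rev}}$, which is again a braided GV-category and exchanges $\otimes\leftrightarrow\parLL$, $D\leftrightarrow D'$, and the two hexagons), so that only one genuine computation is needed; if that symmetry is awkward to set up cleanly, the second relation is proved by the identical method. The main obstacle is precisely this bookkeeping in Step~2 — carrying the two independent families of $\multimap$- and $\multimapinv$-(co)evaluations and the $\pm$ labels on $\widetilde{c}^{\,\pm}$ simultaneously through a hexagon; Lemma~\ref{lemma: tildec ev and coev} is the decisive ingredient, since it is exactly what converts between the left closed and right closed diagrammatic presentations so that the hexagon identity for $c$ can be brought to bear.

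As an alternative, more structural route for the hexagons, one could check that the identity functor $I\colon (\cC,\otimes^{\text{rev}},1,K)\to(\cC,\otimes,1,K)$ — which is a GV-category morphism candidate once we note $\cC^{\text{rev}}$ is a GV-category with $D^{\cC^{\text{rev}}}=D'^{\cC}$ and $\parLL^{\cC^{\text{rev}}}=\parLL^{\text{rev}}$ — made strong monoidal by $c$ and equipped with the Frobenius form $\id_K$, is a GV-functor: its duality transformations are (up to the natural isomorphism $DD'\simeq\idC$) the isomorphisms $\widetilde{c}^{\,\pm}$ of Remark~\ref{rem:braided closed mon cats}, hence invertible. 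Applying the $2$-functor $\mathsf{GV}\to\mathsf{LDN}$ (Construction~\ref{constr: GV to LDN}, Lemma~\ref{lemma: F is Frob LD}) then makes $I$ a Frobenius LD-functor $\cC^{\text{rev}}\to\cC$ for free; unwinding its $\parLL$-comultiplication $\upsilon^{2,I}$ via the formula \eqref{eq: GV to LDN} and identifying it with $\higheroverline{c}$ through Lemma~\ref{lemma: relationship c overline and c tilde}, the Frobenius relations of $I$ become (after relating $\distl,\distr$ of $\cC^{\text{rev}}$ to those of $\cC$) exactly \eqref{eq: braiding and Frobenius relation 1} and \eqref{eq: braiding and Frobenius relation 2}, along the lines of Remark~\ref{rem: braided Frobenius relations}. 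This trades the graphical computation for an identification of coherence data, but still rests on Lemmas~\ref{lemma: tildec ev and coev} and~\ref{lemma: relationship c overline and c tilde}.
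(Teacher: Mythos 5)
Your main route is essentially the paper's own proof: the $\parLL$-braiding axioms are transported along $D$ using the definition \eqref{def: par associator} of the $\parLL$-associator, and the hexagons \eqref{eq: braiding and Frobenius relation 1}--\eqref{eq: braiding and Frobenius relation 2} are rewritten through the internal-hom descriptions \eqref{eq: def left distributor}--\eqref{eq: def right distributor} of the distributors together with Lemma~\ref{lemma: relationship c overline and c tilde}, and then verified by a computation resting on Lemma~\ref{lemma: tildec ev and coev}, naturality, snake identities, and the hexagon axioms for $c$ --- exactly as the paper does, proving \eqref{eq: braiding and Frobenius relation 1} explicitly and treating \eqref{eq: braiding and Frobenius relation 2} analogously, as you anticipate. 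Your alternative structural route via the identity functor $\cC^{\mathrm{rev}}\to\cC$ is a plausible variant but is not needed for correctness.
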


This proposition, and the following one, are proved in Appendix~\ref{sec:Braided-GV-categories}. 

\begin{proposition}\label{prop: braided GVf is braided LDN f}
	Let \(F\colon \cC \to \cD\) be a braided GV-functor between braided GV-categories. The Frobenius LD-functor between the associated braided LD-categories with negation, induced by Lemma~\ref{lemma: F is Frob LD}, is braided.
\end{proposition}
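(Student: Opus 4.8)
The goal is to show that the Frobenius LD-functor associated to a braided GV-functor $F\colon\cC\to\cD$ via Construction~\ref{constr: GV to LDN} (or Lemma~\ref{lemma: F is Frob LD}) is braided, i.e. that its underlying lax $\ot$-monoidal structure is braided (which holds by hypothesis, since $F$ is a \emph{braided} GV-functor) and that its oplax $\parLL$-monoidal structure $(\upsilon^{2,F},\upsilon^{0,F})$ is braided. The plan is therefore to verify the single remaining compatibility: for all $X,Y\in\cC$,
\begin{equation*}
	\higheroverline{c}^{\,F(X),F(Y)}\circ\upsilon^{2,F}_{X,Y}\;=\;\upsilon^{2,F}_{Y,X}\circ F(\higheroverline{c}^{\,X,Y}),
\end{equation*}
where the $\parLL$-braidings $\higheroverline{c}$ on $\cC$ and $\cD$ are those of Construction~\ref{constr: BrGV to BrLDN}.

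First I would substitute the definitions. By Construction~\ref{constr: BrGV to BrLDN}, $\higheroverline{c}^{\,X,Y}=D(c^+_{D'(X),D'(Y)})$ in $\cC$, and likewise in $\cD$; and by Equation~\eqref{eq: GV to LDN}, $\upsilon^{2,F}_{X,Y}$ is built from $\xi^{r,F}$, $\varphi^{2,F}$, and $(\xi^{l,F})^{-1}$ applied to $D'(Y)\ot D'(X)$. Pulling all of these through the contravariant functor $D$ (using that $D=k\multimapinv(-)$ in $\cD$ and $D=K\multimapinv(-)$ in $\cC$, and that $F$ is left closed with comparator $\tau^{r,F}$), the claimed identity reduces — after cancelling the outer $D$'s via the antiequivalence — to an identity between morphisms $D'F(X)\ot D'F(Y)\to F(D'(Y)\ot D'(X))$-type composites, where the essential input is the braidedness of the lax $\ot$-monoidal structure of $F$: namely
\begin{equation*}
	\varphi^{2,F}_{D'(X),D'(Y)}\circ F(c^+_{D'(Y),D'(X)})\;=\;c^+_{F(D'(Y)),F(D'(X))}\circ\varphi^{2,F}_{D'(Y),D'(X)}.
\end{equation*}
The remaining ingredients are the naturality of $\xi^{l,F}$ and $\xi^{r,F}$ in $X\in\cC$ (stated after Definition~\ref{def: duality transformations}) and Lemmas~\ref{lemma:left-right duality transfos relations} and~\ref{lemma:duality transfos relations}, which govern how $\xi^{l,F},\xi^{r,F}$ interact with the structural isomorphisms $d,\widetilde d,\beta,\higheroverline\beta$ that enter the definition of the $\parLL$-monoidal product and its braiding.

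Because Construction~\ref{constr: GV to LDN} and Construction~\ref{constr: BrGV to BrLDN} are both phrased via the duality functors and internal homs, I expect the cleanest route is the graphical one used elsewhere in the appendix: translate $\upsilon^{2,F}$ and $\higheroverline c$ into string diagrams in the closed monoidal setting (using Remark~\ref{rem:closed mon in LD} to identify $\parLL$ with a composite of internal homs, and Proposition~\ref{prop:FrobLDfunctors are closed}, i.e. $\tau^{l,F}=\Upsilon^{l,F}$, to pass between the comparator and the comultiplication), then slide the braiding $c^+$ across using the hypothesis that $F$ is braided lax monoidal together with Lemma~\ref{lemma: tildec ev and coev} and Lemma~\ref{lemma: relationship c overline and c tilde}, which already relate $\higheroverline c$ to the internal-hom braiding $\widetilde c^{\,\pm}$. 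In fact, Lemma~\ref{lemma: relationship c overline and c tilde} is exactly the bridge: it lets me rewrite both $\higheroverline c^{\,F(X),F(Y)}$ and $F(\higheroverline c^{\,X,Y})$ in terms of $\widetilde c^{\,\mp}$ and $\widetilde c^{\,\pm}$, and then the compatibility of $\tau^{r,F}$ (hence of $\xi^{r,F}$) with these internal-hom braidings — which follows from $F$ being braided and the definitions in Remark~\ref{rem:braided closed mon cats} — closes the diagram.

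The main obstacle will be bookkeeping: $\upsilon^{2,F}_{X,Y}$ is a five-fold composite involving $\xi^{l,F},\xi^{r,F}$ at the object $D'(Y)\ot D'(X)$, and each of $\xi^{l,F},\xi^{r,F}$ itself unfolds into a comparator followed by a form, so the identity to be checked is large, and one must be careful that the braiding of $F$ interacts correctly with the \emph{tensor-reversal} built into the definition of $\parLL$ (note the order $D'(Y)\ot D'(X)$, not $D'(X)\ot D'(Y)$) — it is precisely this reversal that forces the use of $c^+$ on $\cC$ matching $c^+$ on $\cD$ with arguments in swapped order, so no sign/inverse mismatch occurs. A useful sanity check before committing to the full computation is the rigid special case: when $\cC,\cD$ are rigid with $K=1=k$, $D=D'={}^{\vee}(-)$, $\parLL=\ot$, $\higheroverline c=c$, and $\upsilon^{2,F}$ becomes the usual oplax structure of a Frobenius monoidal functor, so the statement collapses to the well-known fact that a braided Frobenius monoidal functor has braided oplax structure; verifying that the general computation specializes correctly will confirm the diagram chase is set up properly. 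Once the graphical identity is assembled, invoking the antiequivalence $D$ (applied to the outermost layer) and the faithfulness packaged in Corollary~\ref{cor: morph of GV-functros compat with duality transfo} finishes the proof.
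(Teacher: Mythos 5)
Your direct route (your second paragraph) is essentially the paper's proof: after substituting the definition~\eqref{eq: GV to LDN} of $\upsilon^{2,F}$ and $\higheroverline{c}=D(c)$ from Construction~\ref{constr: BrGV to BrLDN}, the identity $\higheroverline{c}_{F(X),F(Y)}\circ\upsilon^{2,F}_{X,Y}=\upsilon^{2,F}_{Y,X}\circ F(\higheroverline{c}_{X,Y})$ decomposes into a three-square ladder that commutes by (i) naturality of $\xi^{r,F}$, (ii) naturality of the $\otimes$-braiding $c$ (applied to the components $(\xi^{l,F})^{-1}$), and (iii) braidedness of the lax $\otimes$-structure $\varphi^{2,F}$ --- exactly the three ingredients the paper uses, with the lax $\otimes$-side braided by hypothesis. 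Two caveats. First, your displayed ``essential input'' is mis-composed: braidedness of a lax monoidal functor reads $F(c_{A,B})\circ\varphi^{2,F}_{A,B}=\varphi^{2,F}_{B,A}\circ c_{F(A),F(B)}$, whereas neither side of your equation typechecks as written. Second, most of the auxiliary machinery you list is not needed: Lemmas~\ref{lemma:left-right duality transfos relations} and~\ref{lemma:duality transfos relations}, Lemma~\ref{lemma: relationship c overline and c tilde}, Proposition~\ref{prop:FrobLDfunctors are closed}, and Corollary~\ref{cor: morph of GV-functros compat with duality transfo} play no role here (and the latter does not encode faithfulness of $D$; that comes simply from $D$ being an antiequivalence). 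In particular, your preferred graphical detour through internal homs and $\widetilde{c}^{\,\pm}$ would require first proving a compatibility of $\tau^{r,F}$ with the internal-hom braidings, which you assert but do not establish; the direct argument at the level of $D$ applied to morphisms, as in your second paragraph, avoids this entirely and is the route the paper takes.
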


\begin{remark}[Duality functors as braided Frobenius LD-equivalences]\label{rem: D and D' braided FLD functors}
	 Let \(\cC\) be a braided GV-category. The duality functors \(D,D'\colon (\cC,\otimes,1,K) \to (\cC^{\operatorname{op}},\parLL^{\operatorname{rev}},K,1)\) are GV-functors by Example~\ref{ex: duality functors are GV}. The \(\parLL\)-braiding \(\higheroverline{c}^{\, +}\) on \(\cC\) is defined so that \(D\) (and equivalently \(D'\)) becomes a braided GV-functor. By Proposition~\ref{prop: braided GVf is braided LDN f}, it follows that \(D\) and \(D'\) are braided LD-equivalences between the associated braided LD-categories with negation.
\end{remark}

In light of Propositions~\ref{prop: braided GV is braided LDN} and~\ref{prop: braided GVf is braided LDN f}, the following proposition is immediate.

\begin{proposition}\label{prop: BrGV BrLDN 2functor}
The \(2\)-functor \(\mathsf{GV}\to \mathsf{LDN}\) from Proposition~\ref{prop: GV to LDN is functorial} lifts to a \(2\)-functor \(\mathsf{BrGV}\to \mathsf{BrLDN}\).
\end{proposition}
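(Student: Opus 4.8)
The plan is to build the lifted $2$-functor $\mathsf{BrGV}\to\mathsf{BrLDN}$ so that it agrees with the $2$-functor $\mathsf{GV}\to\mathsf{LDN}$ of Proposition~\ref{prop: GV to LDN is functorial} once the braidings are forgotten, and then to check that adjoining the braiding data obstructs neither well-definedness nor $2$-functoriality. Concretely, I would send a braided GV-category $(\cC,c)$ to the image of $\cC$ under $\mathsf{GV}\to\mathsf{LDN}$, i.e.\ the LD-category with negation associated to $\cC$, equipped with the $\parLL$-braiding $\higheroverline{c}^{\,+}$ from Construction~\ref{constr: BrGV to BrLDN}; a braided GV-functor $F$ to the Frobenius LD-functor associated to $F$ by Construction~\ref{constr: GV to LDN} and Lemma~\ref{lemma: F is Frob LD}; and a morphism of braided GV-functors to itself, viewed as a morphism of the underlying Frobenius LD-functors via Lemma~\ref{lemma: morphism of GV is morphism of Frob}.

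The first point to verify is that this assignment actually lands in $\mathsf{BrLDN}$. On $0$-cells this is exactly Proposition~\ref{prop: braided GV is braided LDN}, which states that $\higheroverline{c}^{\,+}$ makes the associated LD-category with negation into a braided LD-category with negation. On $1$-cells, the Frobenius LD-functor associated to $F$ has braided underlying lax $\otimes$-monoidal functor by hypothesis, while its underlying oplax $\parLL$-monoidal functor is braided by Proposition~\ref{prop: braided GVf is braided LDN f}; hence it is a braided Frobenius LD-functor. On $2$-cells there is nothing further to check: a morphism of braided Frobenius LD-functors is merely a morphism of the underlying Frobenius LD-functors whose source and target happen to be braided, so the image of a morphism of GV-functors is automatically such a morphism.

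Next I would deduce $2$-functoriality by transport along the forgetful $2$-functors. Since being braided is a property of a Frobenius LD-functor, and a morphism of braided Frobenius LD-functors carries no extra data beyond the underlying morphism, the forgetful $2$-functor $\mathsf{BrLDN}\to\mathsf{LDN}$ is faithful on $1$-cells and on $2$-cells. Our assignment, postcomposed with this forgetful $2$-functor and precomposed with the forgetful $2$-functor $\mathsf{BrGV}\to\mathsf{GV}$, recovers the $2$-functor of Proposition~\ref{prop: GV to LDN is functorial}; hence preservation of identities and of horizontal and vertical composition is inherited, and the square of forgetful $2$-functors to $\mathsf{GV}$ and $\mathsf{LDN}$ commutes strictly by construction.

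I do not expect any genuine obstacle here: the two nontrivial inputs — that $\higheroverline{c}^{\,+}$ satisfies the hexagon axioms of Definition~\ref{def:braided LD-category}, and that the comultiplication morphism $\upsilon^{2,F}$ of the associated Frobenius LD-functor is braided — are precisely Propositions~\ref{prop: braided GV is braided LDN} and~\ref{prop: braided GVf is braided LDN f}, which are proved separately in Appendix~\ref{sec:Braided-GV-categories}. The only point requiring a moment's care is confirming that the $\parLL$-braiding attached to $0$-cells in Proposition~\ref{prop: braided GV is braided LDN} is the same $\higheroverline{c}^{\,+}$ relative to which $1$-cells are declared braided, which is immediate from Construction~\ref{constr: BrGV to BrLDN}.
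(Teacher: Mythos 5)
Your proposal is correct and matches the paper's own treatment: the paper declares the proposition immediate from Propositions~\ref{prop: braided GV is braided LDN} and~\ref{prop: braided GVf is braided LDN f}, exactly the two inputs you invoke for the $0$- and $1$-cells. Your additional remarks --- that $2$-cells require nothing new and that $2$-functoriality transports along the faithful forgetful $2$-functors from the already established $2$-functor $\mathsf{GV}\to\mathsf{LDN}$ --- simply spell out what the paper leaves implicit.
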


The following results likewise follow immediately.

\begin{proposition}\label{prop: BrLDN to BrGV is functorial}
	The \(2\)-functor \(\mathsf{LDN}\to \mathsf{GV}\) from Proposition~\ref{prop: LDN to GV is functorial} lifts to a \(2\)-functor \(\mathsf{BrLDN}\to \mathsf{BrGV}\).
\end{proposition}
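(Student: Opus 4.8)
The plan is to verify that the $2$-functor $\mathsf{LDN}\to\mathsf{GV}$ of Proposition~\ref{prop: LDN to GV is functorial}, together with the bookkeeping of braidings, restricts to a $2$-functor between the braided variants. Since a braiding on a GV-category (Definition~\ref{def: braided GV}) and the notion of a braided GV-functor (Definition~\ref{def: braided GV functor}) only refer to the underlying $\otimes$-monoidal structure, the argument is essentially a matter of unwinding definitions; no new coherence axioms need to be checked, in contrast with the forward direction (Proposition~\ref{prop: braided GVf is braided LDN f}).

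Concretely, I would first define the candidate lift $\mathsf{BrLDN}\to\mathsf{BrGV}$. On $0$-cells it sends a braided LD-category with negation $(\cC,\otimes,1,\parLL,K,c,\higheroverline{c})$ to the GV-category $(\cC,\otimes,1,K)$ of Construction~\ref{constr: LDN to GV 0-cells}, equipped with the $\otimes$-braiding $c$; by Definition~\ref{def: braided GV} this is already a braided GV-category, so there is nothing to check. On $1$-cells it sends a braided Frobenius LD-functor $F$ to the GV-functor of Construction~\ref{constr: LDN to GV 1-cells}, i.e.\ the underlying lax $\otimes$-monoidal functor of $F$ together with the counit of its oplax $\parLL$-monoidal structure, which is a Frobenius form by Proposition~\ref{prop:FrobLDfunctors are closed}; since Definition~\ref{def: braided Frobenius LD-functor} requires the underlying lax $\otimes$-monoidal functor of $F$ to be braided, Definition~\ref{def: braided GV functor} shows that the associated GV-functor is braided. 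On $2$-cells the assignment is the identity: a morphism of braided Frobenius LD-functors is in particular a morphism of Frobenius LD-functors, hence a morphism of GV-functors by Lemma~\ref{lemma: morphism of Frob LD is morphism of GV}, and no further braiding condition is imposed on $2$-cells in either $\mathsf{BrLDN}$ or $\mathsf{BrGV}$.

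It then remains to observe that this assignment is a strict $2$-functor and that it strictly commutes with the forgetful $2$-functors to $\mathsf{BrMonCat}_l$. Both facts are inherited: the underlying assignment on LD-categories with negation, Frobenius LD-functors, and their morphisms is literally the $2$-functor $\mathsf{LDN}\to\mathsf{GV}$ of Proposition~\ref{prop: LDN to GV is functorial}, so preservation of identities and of horizontal and vertical composition is immediate, and since the braidings are carried over verbatim the relevant square of forgetful $2$-functors commutes on the nose (using that $\mathsf{BrLDN}\to\mathsf{BrMonCat}_l$ and $\mathsf{BrGV}\to\mathsf{BrMonCat}_l$ both retain the $\otimes$-braiding). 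I do not expect a genuine obstacle here; the only point that deserves a sentence of care is precisely this last compatibility of which braiding each forgetful $2$-functor retains, which holds by construction.
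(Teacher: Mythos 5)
Your proposal is correct and follows essentially the same route as the paper, which records this proposition as an immediate consequence: since a braided GV-category and a braided GV-functor only impose conditions on the underlying $\otimes$-monoidal data (Definitions~\ref{def: braided GV} and~\ref{def: braided GV functor}), retaining the $\otimes$-braiding and discarding the $\parLL$-braiding, as you do on $0$-, $1$- and $2$-cells, is all that is needed. Your closing remark on compatibility with the forgetful $2$-functors to $\mathsf{BrMonCat}_l$ is not part of this proposition itself, but it is the same observation invoked later in the proof of Theorem~\ref{thm: braided GV=braided LDneg}, so including it does no harm.
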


\begin{lemma}\label{lemma: BrGV to BrGV is equivalent to identity}
	The composite \(2\)-functor \(\mathsf{BrGV}\to \mathsf{BrLDN} \to \mathsf{BrGV}\) is \(2\)-naturally isomorphic to the identity \(2\)-functor on \(\mathsf{BrGV}\).
\end{lemma}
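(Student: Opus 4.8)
The plan is to reuse the $2$-natural isomorphism produced in Lemma~\ref{lemma: GV to GV is equivalent to identity}, after checking that it is automatically compatible with braidings. Write $G\colon \mathsf{GV}\to\mathsf{GV}$ for the composite $\mathsf{GV}\to\mathsf{LDN}\to\mathsf{GV}$ and $G^{\mathrm{br}}\colon\mathsf{BrGV}\to\mathsf{BrGV}$ for the composite $\mathsf{BrGV}\to\mathsf{BrLDN}\to\mathsf{BrGV}$ of the lifts from Propositions~\ref{prop: BrGV BrLDN 2functor} and~\ref{prop: BrLDN to BrGV is functorial}. Each of the four constituent $2$-functors strictly commutes with the forgetful $2$-functors to $\mathsf{MonCat}_l$, resp.\ $\mathsf{BrMonCat}_l$; hence, writing $U$ for the $2$-functor that forgets braidings, the square
\[
\begin{tikzcd}
	\mathsf{BrGV} \arrow[r, "G^{\mathrm{br}}"] \arrow[d, "U"'] & \mathsf{BrGV} \arrow[d, "U"]\\
	\mathsf{GV} \arrow[r, "G"'] & \mathsf{GV}
\end{tikzcd}
\]
commutes strictly. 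In particular, for a braided GV-category $(\cC,c)$ the underlying GV-category of $G^{\mathrm{br}}(\cC,c)$ is $G(\cC)$ and its $\otimes$-braiding is $c$ itself, and likewise $G^{\mathrm{br}}$ agrees with $G$ on $1$- and $2$-cells once braidings are forgotten.

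The next point is to observe that $\mathsf{BrGV}$ is a \emph{locally full} sub-$2$-category of $\mathsf{GV}$: being braided is a property of a GV-functor (given the braidings on source and target) rather than additional data, and any monoidal natural transformation between braided GV-functors is automatically a morphism of braided GV-functors as soon as it is one of GV-functors. Thus $\mathsf{BrGV}\big((\cC,c),(\cD,c')\big)$ is the full subcategory of $\mathsf{GV}(\cC,\cD)$ on those GV-functors whose underlying lax monoidal functor is braided. Consequently, to produce a $2$-natural isomorphism $\mathrm{Id}_{\mathsf{BrGV}}\xRightarrow{\simeq}G^{\mathrm{br}}$ sitting over the one $\Theta\colon\mathrm{Id}_{\mathsf{GV}}\xRightarrow{\simeq}G$ of Lemma~\ref{lemma: GV to GV is equivalent to identity}, it suffices to check that every component $\Theta_{\cC}\colon\cC\to G(\cC)$, together with its inverse, lies in the relevant full subcategory, i.e.\ is braided; all $2$-naturality squares then hold in $\mathsf{BrGV}$ for free, since they already hold in $\mathsf{GV}$ by $2$-naturality of $\Theta$ and every $1$-cell occurring in them (the $\Theta_\cC$'s, a given braided GV-functor $F$, and $G^{\mathrm{br}}(F)=G(F)$ with its braided $\otimes$-structure) is braided.

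So the only thing left to verify is that $\Theta_{\cC}$ and $(\Theta_{\cC})^{-1}$ respect the braidings. By the construction underlying Lemma~\ref{lemma: GV to GV is equivalent to identity}, $\Theta_{\cC}$ has as underlying monoidal functor the identity functor of $(\cC,\otimes,1)$ equipped only with coherence data (built from associators, unitors, and the canonical isomorphisms $DD'\simeq\idC$, $D'D\simeq\idC$ that were suppressed in the $\parLL$-construction), and the $\otimes$-braiding of $G^{\mathrm{br}}(\cC,c)$ is literally $c$; hence compatibility with the braidings reduces, via naturality of $c$ and Mac Lane-type coherence, to an identity that holds by inspection, and the same applies to $(\Theta_{\cC})^{-1}$. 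This is the one non-formal step, but I expect it to be entirely routine precisely because $\Theta_{\cC}$ is essentially an identity and the braidings on its source and target coincide. Combining this with the previous paragraph, $\Theta$ reinterpreted in $\mathsf{BrGV}$ is the desired $2$-natural isomorphism $\mathrm{Id}_{\mathsf{BrGV}}\simeq G^{\mathrm{br}}$.
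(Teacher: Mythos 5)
Your argument is correct and is essentially the paper's intended route: the paper records this lemma as following immediately from the unbraided case, precisely because the composite \(\mathsf{BrGV}\to \mathsf{BrLDN}\to \mathsf{BrGV}\) returns the same underlying monoidal data, Frobenius forms, and \(\otimes\)-braiding, so the \(2\)-natural isomorphism of Lemma~\ref{lemma: GV to GV is equivalent to identity} lifts verbatim along the locally fully faithful forgetful \(2\)-functor \(\mathsf{BrGV}\to\mathsf{GV}\). Your one hedged step --- braidedness of the components \(\Theta_{\cC}\) and their inverses --- is indeed trivial, since the composite leaves the braiding \(c\) untouched and \(\Theta_{\cC}\) is (essentially) the identity GV-functor.
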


The following lemma is proved in Appendix~\ref{sec:Braided-GV-categories}.

\begin{lemma}\label{lemma: BrLDN to BrLDN is equivalent to identity}
	The composite \(2\)-functor \(\mathsf{BrLDN}\to \mathsf{BrGV} \to \mathsf{BrLDN}\) is \(2\)-naturally isomorphic to the identity \(2\)-functor on \(\mathsf{BrLDN}\).
\end{lemma}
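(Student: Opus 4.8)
The plan is to upgrade the $2$-natural isomorphism of the non-braided Lemma~\ref{lemma: LDN to LDN is equivalent to identity} to one of $2$-functors on $\mathsf{BrLDN}$; the only additional ingredient is a compatibility with the two braidings.

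Recall from Lemma~\ref{lemma: LDN to LDN is equivalent to identity} the $2$-natural isomorphism $\Theta\colon \mathrm{R}\dra \id_{\mathsf{LDN}}$, where $\mathrm{R}\colon \mathsf{LDN}\to\mathsf{GV}\to\mathsf{LDN}$ is the round-trip $2$-functor. Because the $2$-functors $\mathsf{GV}\to\mathsf{LDN}$ and $\mathsf{LDN}\to\mathsf{GV}$ strictly commute with the forgetful $2$-functors to $\mathsf{MonCat}_l$ (see the proof of Theorem~\ref{thm: GV equiv LDN}), $\mathrm{R}$ strictly fixes the underlying monoidal category $(\cC,\ot,1)$ of any object and the lax $\ot$-monoidal structure of any $1$-cell; it only replaces the $\parLL$-product of $\cC$ by the reconstructed product $X \parLL' Y \eqdef D\big(D'(Y)\ot D'(X)\big)$. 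Correspondingly, the construction of $\Theta$ yields a component $\Theta_\cC\colon \mathrm{R}(\cC)\to\cC$ that is the identity on $(\cC,\ot,1)$ and carries a strong $\parLL$-comparator $\upsilon^{2,\Theta_\cC}$ witnessing the canonical isomorphism $\parLL'\cong \parLL$.

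Now a morphism of braided Frobenius LD-functors is simply a morphism of the underlying Frobenius LD-functors (Definition~\ref{def: morphism of Frob LD-functors}), so the forgetful $2$-functor $\mathsf{BrLDN}\to\mathsf{LDN}$ is locally fully faithful onto the braided objects; thus it suffices to show that for every braided LD-category with negation $\cC$ the component $\Theta_\cC$ is a \emph{braided} Frobenius LD-equivalence (Definition~\ref{def: braided Frobenius LD-functor}). Here the source $\mathrm{R}(\cC)$ carries, by Propositions~\ref{prop: BrGV BrLDN 2functor} and~\ref{prop: BrLDN to BrGV is functorial}, the $\ot$-braiding $c$ of the original $\cC$ and the reconstructed $\parLL$-braiding $\higheroverline{c}^{\,\mathrm{rec}}_{X,Y}\eqdef D\big(c^{+}_{D'(X),D'(Y)}\big)$ of Construction~\ref{constr: BrGV to BrLDN}. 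Compatibility of $\Theta_\cC$ with the $\ot$-braidings is automatic, since $\Theta_\cC$ is the identity on $(\cC,\ot,1)$ and both $\ot$-braidings equal $c$. Everything therefore reduces to the identity asserting that $\upsilon^{2,\Theta_\cC}$ intertwines $\higheroverline{c}^{\,\mathrm{rec}}$ with the given $\parLL$-braiding $\higheroverline{c}$ of $\cC$.

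This $\parLL$-braiding identity is the main obstacle. I would prove it by passing to internal homs: under the identifications~\eqref{eq:rightInHom} and~\eqref{eq:leftInHom}, Lemma~\ref{lemma: relationship c overline and c tilde} rewrites $\higheroverline{c}^{\,\mathrm{rec}}$ in terms of the natural isomorphisms $\widetilde{c}^{\,\pm}$ of Remark~\ref{rem:braided closed mon cats}, while the given braiding $\higheroverline{c}$, together with the distributors of $\cC$, is pinned down by the hexagon relations~\eqref{eq: braiding and Frobenius relation 1} and~\eqref{eq: braiding and Frobenius relation 2}; Lemma~\ref{lemma: tildec ev and coev} then reconciles these two descriptions at the level of (co)evaluations. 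Equivalently --- and I expect this route to be the cleanest --- one observes that $\higheroverline{c}^{\,\mathrm{rec}}$ is characterized by the requirement that the duality functor $D$ be a braided GV-functor (Remark~\ref{rem: D and D' braided FLD functors}), and that the hexagon axioms force the $\parLL$-braiding of \emph{any} braided LD-category with negation to satisfy the same characterization, so the two coincide under the canonical identification. Once this compatibility is established, $2$-naturality of the lifted $\Theta$ in $\mathsf{BrLDN}$ follows from its $2$-naturality in $\mathsf{LDN}$ together with the local full faithfulness noted above, completing the proof.
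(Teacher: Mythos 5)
Your formal reductions coincide with the paper's: both arguments observe that the round-trip only replaces the $\parLL$-product and braiding, that it suffices to check that each component of the $2$-natural isomorphism from Lemma~\ref{lemma: LDN to LDN is equivalent to identity} is a \emph{braided} Frobenius LD-functor, that the $\otimes$-side is trivial, and that everything boils down to the single identity
\(\upsilon^{\cC}_{Y,X}\circ \higheroverline{c}_{X,Y}= D\big(c_{D'(X),D'(Y)}\big)\circ \upsilon^{\cC}_{X,Y}\)
(the paper's Equation~\eqref{eq: identity is braided}). The gap is that you never prove this identity. Since the given $\parLL$-braiding $\higheroverline{c}$ and the $\otimes$-braiding $c$ are related \emph{only} through the hexagon axioms~\eqref{eq: braiding and Frobenius relation 1}--\eqref{eq: braiding and Frobenius relation 2}, the identity cannot follow from formal (extra)naturality plus the lemmas you cite: Lemma~\ref{lemma: relationship c overline and c tilde} and Lemma~\ref{lemma: tildec ev and coev} live in a braided closed monoidal/GV-category and only describe the \emph{reconstructed} braiding $D(c^{\pm}_{D'(X),D'(Y)})$ of Construction~\ref{constr: BrGV to BrLDN}; they carry no information about an arbitrary $\parLL$-braiding of a braided LD-category with negation. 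So the phrase ``the given braiding $\higheroverline{c}$ \ldots\ is pinned down by the hexagon relations'' in Route~A, and ``the hexagon axioms force the $\parLL$-braiding \ldots\ to satisfy the same characterization'' in Route~B, are exactly the statement to be proved, asserted rather than established. (Indeed, without invoking the hexagons the claim is false: on a rigid monoidal category with $\otimes=\parLL$ one may pair $c$ with a different braiding $\higheroverline{c}$, and then the identity fails, so any correct argument must genuinely use~\eqref{eq: braiding and Frobenius relation 1}--\eqref{eq: braiding and Frobenius relation 2}.)

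Route~B is structurally salvageable --- uniqueness of a braiding transported along the strong monoidal equivalence $D$ is easy, so it would suffice to show that the \emph{given} $\higheroverline{c}$ already makes $D$ a braided functor (cf.\ Remark~\ref{rem: D and D' braided FLD functors}, where this property is a definition of $\higheroverline{c}^{\,+}$, not a theorem about arbitrary braided LD-categories) --- but that statement is an equivalent reformulation of \eqref{eq: identity is braided} and still requires a concrete diagram chase with the hexagons, the snake equations, and the structure of $D$ on morphisms. The paper instead finishes by unwinding the explicit formula~\eqref{eq: comultiplication LDN-LDN} for $\upsilon^{\cC}$ (built from $\underline{\eta}$, $\epsilon$, the distributors and unitors) and verifying the displayed equation directly. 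In short: your setup is correct and matches the paper, but the core verification --- the only nontrivial content of the lemma --- is missing from both of your proposed routes.
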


We collect our results.

\begin{proof}[Proof of Theorem~\ref{thm: braided GV=braided LDneg}]
The \(2\)-functors \(\mathsf{BrGV}\to \mathsf{BrLDN}\) and \(\mathsf{BrLDN}\to \mathsf{BrGV}\) from Propositions~\ref{prop: BrGV BrLDN 2functor} and~\ref{prop: BrLDN to BrGV is functorial} clearly commute with the forgetful \(2\)-functors to \(\mathsf{BrMonCat}_g\). The claim then follows from Lemmas~\ref{lemma: BrGV to BrGV is equivalent to identity} and~\ref{lemma: BrLDN to BrLDN is equivalent to identity}.
\end{proof}

\begin{remark}[Commutative Frobenius algebras]
	Applying Theorem~\ref{thm: braided GV=braided LDneg} to the terminal category $\ast$, we obtain an equivalence of hom-categories
	\begin{equation}
		\mathsf{BrGV}(\ast, \cC) \,\cong\, \mathsf{BrLDN}(\ast, \cC),
	\end{equation}
	for any braided GV-category \(\cC\). This implies that the data of a commutative GV-algebra $A \in \cC$ equipped with a Frobenius form $A \to K$ in the sense of \cite[\S 3.2]{DeS} is equivalent to the data of an object $A\in \cC$ endowed with both a commutative GV-algebra and a cocommutative GV-coalgebra structure satisfying the LD-Frobenius relations from \cite[\S 3.1]{DeS}. 
\end{remark}

For applications of the lifting theorem (Section~\ref{sec: Applications}), we discuss how a braided GV-category $\cC$ comes with natural isomorphisms that \emph{nearly} endow it with a pivotal structure: Specializing the isomorphisms \(\widetilde{c}^{\,\pm}_{X,Y}\) from Equation~\eqref{def: c tilde} in Remark~\ref{rem:braided closed mon cats} to \(Y=K\) yields
\begin{align}\label{eq: identification of duals in braided GV-cats}
	\varphi^{\pm}_X \, := \, \widetilde{c}^{\,\pm}_{X,K}\colon\; D'(X)\; \xlongrightarrow{\simeq}\; D(X),
\end{align}
natural in \(X \in \cC\).

\begin{remark}[Comparing definitions]\label{rem: comparing duality transfos in braided GV}
The natural isomorphisms $\varphi^{\pm}$ coincide with those denoted by the same symbols in \cite[Lemma 6.8]{BoDrinfeld}.
\end{remark}

By Theorem~\ref{thm: braided GV=braided LDneg}, every braided GV-category is a braided LD-category. Hence we have the Joyal--Street equivalences \(J^{\pm 1}\colon \cC \rightarrow \cC\) as in Definition~\ref{def: Joyal-Street equivalence}. 
\begin{prop}\label{prop: Drinfeld iso is morphism of Frob LD}
    Let $\cC$ be a braided GV-category. The natural isomorphisms $\varphi^{\pm}$ are isomorphisms of Frobenius LD-functors \(D'\circ J^{\pm 1}\xrightarrow{\simeq} D\). 
\end{prop}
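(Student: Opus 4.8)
The plan is to verify that each $\varphi^\pm_X = \widetilde c^{\,\pm}_{X,K}$ is simultaneously a monoidal and an opmonoidal natural transformation between the relevant Frobenius LD-functors, since by Definition~\ref{def: morphism of Frob LD-functors} a morphism of Frobenius LD-functors is exactly a natural transformation that is $\otimes$-monoidal and $\parLL$-opmonoidal. Concretely, I would first spell out the source and target functors: by Remark~\ref{ex: duality functors are GV} (and its braided refinement in Remark~\ref{rem: D and D' braided FLD functors}), $D$ and $D'$ are braided Frobenius LD-functors $(\cC,\otimes,1,\parLL,K)\to(\cC^{\op},\parLL^{\operatorname{rev}},K,\otimes^{\operatorname{rev}},1)$ (after transposing the target appropriately), with strong $\otimes$-monoidal structures $(\higheroverline\beta_{X,Y,K},\higheroverline\gamma_K)$ and $(\beta_{X,Y,K},\gamma_K)$; the Joyal--Street equivalences $J^{\pm1}=I^{\pm2}\colon\cC\to\cC$ are the squared braiding-twist Frobenius LD-equivalences from Definition~\ref{def: Joyal-Street equivalence} and Remark~\ref{rem: braided Frobenius relations}. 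Precomposing $D'$ with $J^{\pm1}$ just re-decorates the coherence morphisms of $D'$ by the appropriate powers of the $\otimes$- and $\parLL$-braidings, and the claim is that $\varphi^\pm$ intertwines these with the (undecorated) coherences of $D$.

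The key computational steps are then: (i) show $\varphi^\pm$ is $\otimes$-monoidal, i.e.\ compatible with the multiplication morphisms — this amounts to an identity relating $\widetilde c^{\,\pm}_{X\otimes Y,K}$, $\beta$, $\higheroverline\beta$, the braiding-squares coming from $J^{\pm1}$, and $\widetilde c^{\,\pm}_{X,K}\otimes\widetilde c^{\,\pm}_{Y,K}$, plus the unit compatibility with $\gamma_K$, $\higheroverline\gamma_K$; (ii) show $\varphi^\pm$ is $\parLL$-opmonoidal, i.e.\ compatible with the $\parLL$-comultiplication morphisms of $D$ and $D'\circ J^{\pm1}$ (which by Construction~\ref{constr: GV to LDN} are built from the duality transformations $\xi^{l},\xi^{r}$, but for $D,D'$ these reduce, via Remark~\ref{rem:closed mon in LD}, to instances of $\iota$, $\beta$, $\higheroverline\beta$). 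Both are Yoneda-type identities among canonical isomorphisms of a closed monoidal category; I would reduce them using Lemmas~\ref{lemma: beta  compatibility}, \ref{lemma: iota compatibility}, \ref{lemma: relation beta and gamma}, \ref{lemma: tildec ev and coev}, and \ref{lemma: relationship c overline and c tilde}, the last of which already expresses $\higheroverline c^{\pm}$ in terms of $\widetilde c^{\,\pm}$ and hence provides the crucial bridge between the $\parLL$-braiding (which defines $J^{\pm1}$) and $\varphi^\pm$. Since by Remark~\ref{rem: comparing duality transfos in braided GV} these $\varphi^\pm$ are the Drinfeld-type isomorphisms of \cite[Lemma 6.8]{BoDrinfeld}, I would also cross-check the identities against the hexagon/pivotality-style relations recorded there, which gives an independent handle on the bookkeeping.

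The main obstacle I anticipate is purely organizational rather than conceptual: keeping track of the two sign choices $\pm$ simultaneously and of the directions of all the $\op$- and $\operatorname{rev}$-flips, so that the "monoidal" structure on $D'$ matches up as an "opmonoidal" structure after transport through $J^{\pm1}$ and the opposite-category conventions. A graphical (string-diagram) proof in the style of the appendix proofs (e.g.\ the graphical proof of Proposition~\ref{prop:FrobLDfunctors are closed}) would make the braiding-squares from $J^{\pm1}$ manifest and is likely the cleanest route; I would carry out (i) and (ii) diagrammatically and then invoke Definition~\ref{def: morphism of Frob LD-functors} to conclude. One simplification worth exploiting: by Corollary~\ref{cor: morph of GV-functros compat with duality transfo} any morphism of GV-functors is automatically invertible, so it suffices to exhibit $\varphi^\pm$ as a morphism in one direction, and by Remark~\ref{rem: D and D' braided FLD functors} it is enough to treat $D'$ (the case of $D$ following by applying the already-known strong equivalence $D$ or $D'$), which roughly halves the work.
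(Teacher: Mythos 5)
Your plan is viable in principle, but it takes a genuinely different and considerably heavier route than the paper. The paper does not verify the two Frobenius LD compatibilities by hand: it invokes Theorem~\ref{thm: GV equiv LDN} (concretely, Lemma~\ref{lemma: morphism of GV is morphism of Frob}) to reduce the claim to showing that $\varphi^{\pm}$ is a morphism of \emph{GV-functors}. Monoidality of $\varphi^{\pm}\colon D'\circ J^{\pm 1}\to D$, viewed as monoidal functors $(\cC,\otimes,1)\to(\cC^{\op},\parLL^{\operatorname{rev}},K)$, is imported from \cite[Prop.~6.10]{BoDrinfeld} via Remark~\ref{rem: comparing duality transfos in braided GV}, so the only computation left is the Frobenius-form condition~\eqref{eq: morphism of GV-functors}: using the forms of Remark~\ref{ex: duality functors are GV}, unpacking $d_1$, $\widetilde{d}_1$, $\gamma$, $\higheroverline{\gamma}$ and the extranaturality of the (co)evaluations, it collapses to the unitor--braiding identity $\rho_K\circ c^{\pm}_{1,K}=\lambda_K$. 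By contrast, your step (ii) commits you to verifying the $\parLL$-comultiplication compatibility directly, a computation on the scale of the proof of Lemma~\ref{lemma: F is Frob LD}, which the $2$-equivalence makes redundant; your step (i) likewise reproves, rather than cites, the Boyarchenko--Drinfeld monoidality. Two further cautions: $\parLL$-opmonoidality also includes compatibility with the \emph{counits} (the Frobenius forms), which your step (ii) never names although it is precisely the one condition the paper actually computes; and your proposed simplifications are off target --- Corollary~\ref{cor: morph of GV-functros compat with duality transfo} is not needed since $\varphi^{\pm}$ is already invertible, and there is no separate ``$D$ case'' to deduce from a ``$D'$ case'', because the statement concerns a single transformation $D'\circ J^{\pm 1}\to D$. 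So nothing in your outline is wrong, but carried out as written it would be a long diagrammatic verification whose hardest part (the counit condition) is not yet planned for, whereas the paper's route buys both compatibilities from machinery already established.
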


See Appendix~\ref{sec:Braided-GV-categories} for a proof.

\section{Algebraic structures for the lifting theorem}\label{sec: lifting theorem prep}
We review algebraic structures to which the lifting theorem \ref{main thm} will be applied.
\subsection{Algebras, bimodules, and local modules}\label{sec: algebras, bimodules, local modules}
In this subsection, we fix a closed monoidal category $\cC=(\cC,\otimes,1)$. Let $A=(A,\mu,\eta)$ be an algebra in \(\cC\). Assume that $\cC$ admits coequalizers. Since \(\cC\) is closed, the monoidal product $\otimes$ preserves coequalizers in each variable. Denote by \(_A\cC\) and \(\cC_A\) the categories of left and right \(A\)-modules in \(\cC\), and by ${_A}\cC_A$ the category of \(A\)-bimodules in \(\cC\).

\begin{remark}[The monoidal category of \(A\)-bimodules]\label{rem: monoidal category of bimodules}
As usual, the monoidal product of a right \(A\)-module $(M,r^M\colon M\otimes A \rightarrow M)\in \cC_A$ and a left \(A\)-module $(N,l^N\colon A\otimes N \rightarrow N)\in {_A\cC}$ is defined by the reflexive coequalizer
\begin{equation}
		\begin{tikzcd}
		{M\otimes_A N \;:=\; \operatorname{coeq}\!\big(M \otimes A \otimes N} && {M\otimes N\big).}
			\arrow[->,shift left=.6ex, "M\,\otimes \,l^N"{yshift=1.5pt}, from=1-1, to=1-3]
			\arrow[->,shift left=-.6ex, "r^M \,\otimes\, N"'{yshift=-1.5pt}, from=1-1, to=1-3]
		\end{tikzcd}
\end{equation}
This construction endows ${_A}\cC_A$ with a monoidal structure whose unit is $A$. Moreover, the canonical projection \(p_{M,N}\colon M\otimes N \twoheadrightarrow M \otimes_A N\), together with the unit \(\eta\colon 1 \rightarrow A\), equips the forgetful functor \(U_A\colon _A\cC_A \to \cC\) with a lax monoidal structure.
\end{remark}

We now discuss internal homs in ${_A}\cC_A$. To do so, we first establish a few technical lemmas. For $(M,r^M)\in {\cC_A}$, consider the morphism
\begin{equation}\label{eq: r underline morphism}
	\underline{r}^M\colon\; A \, \xrightarrow{\operatorname{coev^M_A}}\, M\multimap (M\otimes A) \,\xrightarrow{M\multimap\,r^M}\, M\multimap M\eqdef E_M,
\end{equation}
which, by Lemma~\ref{lemma:extranaturality of eval and coeval}, is extranatural in $M\in \cC.$ The next result follows from a straightforward computation.

\begin{lemma}\label{lem:morphism of algebras into internal hom}
    For every $(M,r^M)\in {\cC_A}$, the morphism $\underline{r}^M$ is a morphism of algebras.
\end{lemma}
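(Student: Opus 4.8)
The plan is to verify directly that $\underline{r}^M\colon A \to E_M$ respects multiplication and unit, using the universal properties that characterize morphisms into the internal hom $E_M = M\multimap M$. Recall that a morphism $A \to M\multimap M$ is the same (via the adjunction isomorphism $\Phi_{M,A,M}$ of~\eqref{AdjMor1}) as a morphism $M\otimes A \to M$, and under this correspondence $\underline{r}^M$ corresponds precisely to the action $r^M$ (this is exactly the content of the definition~\eqref{eq: r underline morphism}: applying $\Phi$ to $r^M$ and post-composing the coevaluation data recovers $\underline{r}^M$). So the strategy is to transport both sides of each algebra axiom along $\Phi$ and check the resulting identities in $\operatorname{Hom}_{\cC}(M\otimes A \otimes A, M)$ and $\operatorname{Hom}_{\cC}(M\otimes 1, M)$, where they become the associativity and unitality axioms of the right $A$-module $M$.

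For the unit axiom, I would show $\underline{r}^M \circ \eta = \operatorname{e}^M$, where $\operatorname{e}^M = (M\multimap \rho_M)\circ \operatorname{coev}^M_1$. Using naturality of $\operatorname{coev}^M$ in its subscript and the fact that $M\otimes \eta$ followed by $r^M$ equals $\rho_M$ (the right unit axiom for $M$), both sides are seen to agree; concretely, $\underline{r}^M\circ \eta = (M\multimap r^M)\circ \operatorname{coev}^M_A \circ \eta = (M\multimap r^M)\circ (M\multimap(M\otimes\eta))\circ \operatorname{coev}^M_1 = (M\multimap(r^M\circ(M\otimes\eta)))\circ\operatorname{coev}^M_1 = (M\multimap\rho_M)\circ\operatorname{coev}^M_1 = \operatorname{e}^M$. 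For the multiplicativity axiom, I would show that $\underline{r}^M\circ \mu$ equals $\operatorname{comp}^l_{M,M,M}\circ(\underline{r}^M\otimes\underline{r}^M)$. The cleanest route is to apply $\Phi_{M,A\otimes A,M}$ to both sides: the left side becomes $r^M\circ(M\otimes\mu)\circ\alpha^{-1}$, and the right side, after unwinding the definition of $\operatorname{comp}^l$ (Remark~\ref{rem:internal composition}) and using the compatibility of the adjunction with $\operatorname{comp}^l$ together with Lemma~\ref{lemma:extranaturality of eval and coeval}, becomes $r^M\circ(r^M\otimes A)$; these agree by the associativity axiom of the right $A$-module $M$. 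The naturality and extranaturality statements recorded in Lemma~\ref{lemma:extranaturality of eval and coeval} and Remark~\ref{rem:internal composition} are precisely what allow the evaluations and coevaluations to be pushed around so that only the module axioms remain.

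The main obstacle — really the only nontrivial point — is bookkeeping the associators and the interplay between $\operatorname{comp}^l$ and the tensor-hom adjunction when checking multiplicativity: one must carefully track how $\Phi$ intertwines $\operatorname{comp}^l_{M,M,M}$ with ordinary composition of morphisms out of $M$, which is a standard but slightly fiddly computation that reduces, after invoking Lemma~\ref{lemma: assoc of composition} or directly the definition, to the associativity square for $r^M$. Since the paper flags this as following "from a straightforward computation," I would present the argument concisely: state the two identities to be checked, reduce each via $\Phi$ to a module axiom, and cite Lemma~\ref{lemma:extranaturality of eval and coeval} (and the characterization of $\operatorname{e}^M$ and $\operatorname{comp}^l$) for the manipulations, without spelling out every coherence isomorphism.
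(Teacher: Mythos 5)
Your proposal is correct: the paper omits this proof as a ``straightforward computation,'' and your argument --- transposing along the tensor-hom adjunction $\Phi$ so that unitality reduces to $r^M\circ(M\otimes\eta)=\rho_M$ and multiplicativity (via the defining transpose of $\operatorname{comp}^l$ and naturality of $\alpha$ and $\operatorname{ev}^M$) reduces to $r^M\circ(r^M\otimes A)\circ\alpha = r^M\circ(M\otimes\mu)$ --- is exactly that computation. The only cosmetic point is that Lemma~\ref{lemma: assoc of composition} is not needed; the definition of $\operatorname{comp}^l$ in Remark~\ref{rem:internal composition} together with the snake identities and (extra)naturality from Lemma~\ref{lemma:extranaturality of eval and coeval} suffices, as your ``or directly the definition'' already indicates.
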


Lemmas~\ref{lem:module over interal algebra} and~\ref{lem:morphism of algebras into internal hom} directly imply the following result.

\begin{lemma}\label{lemma: module structure on left internal hom}
	\begin{enumerate}[label=(\roman*)]
	\item For $(M,r^M)\in {\cC_A}$ and $N\in \cC$, the composite 
	\begin{equation}\label{eq: left action on internal hom}
		l^{M\multimap N}\;:=\;{\operatorname{comp}^l_{M,M,N}}\circ {\big(\underline{r}^M\otimes (M\multimap N)\big)}
	\end{equation}
	defines a left $A$-module structure on the left internal hom $M\multimap N$. 
	
	\item Likewise, for $M\in \cC$ and $(N,r^N)\in {\cC_A}$, the composite
	\begin{equation}\label{eq: right action on internal hom}
	r^{M\multimap N}\;:=\; {\operatorname{comp}^l_{M,N,N}} \circ \big((M\multimap N)\otimes \underline{r}^N\big)
	\end{equation}
	defines a right $A$-module structure on $M\multimap N$.
	\item For $(M,r^M),(N,r^N)\in {\cC_A}$, the above \(A\)-actions \(l^{M\multimap N}\) and \(r^{M\multimap N}\) make $M\multimap N$ an \(A\)-bimodule.
\end{enumerate}	
Analogous statements hold for right internal homs.
\end{lemma}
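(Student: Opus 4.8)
The plan is to exhibit each of the three $A$-module structures as a restriction of scalars, along the algebra morphisms $\underline{r}^M$ and $\underline{r}^N$ of Lemma~\ref{lem:morphism of algebras into internal hom}, of the $(E_M,E_N)$-bimodule structure on $M\multimap N$ provided by Lemma~\ref{lem:module over interal algebra}. Recall the general fact that restriction of scalars along an algebra morphism $f\colon A\to B$ in $\cC$ turns a left $B$-module $(X,\ell)$ into a left $A$-module with action $\ell\circ(f\otimes X)$, and dually on the right; unitality and associativity of the new action follow formally from those of $\ell$ together with the compatibility of $f$ with multiplications and units.

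For part~(i): Lemma~\ref{lem:module over interal algebra} endows $M\multimap N$ with a left $E_M$-module structure whose action is $\operatorname{comp}^l_{M,M,N}\colon (M\multimap M)\otimes(M\multimap N)\to M\multimap N$. Since $\underline{r}^M\colon A\to E_M$ is a morphism of algebras by Lemma~\ref{lem:morphism of algebras into internal hom}, restriction of scalars along $\underline{r}^M$ yields a left $A$-module structure on $M\multimap N$ with action
\[
\operatorname{comp}^l_{M,M,N}\circ\big(\underline{r}^M\otimes(M\multimap N)\big)\;=\;l^{M\multimap N},
\]
which is exactly Equation~\eqref{eq: left action on internal hom}. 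Part~(ii) is proved identically, now using the right $E_N$-module structure on $M\multimap N$ (with action $\operatorname{comp}^l_{M,N,N}$) and restricting along $\underline{r}^N\colon A\to E_N$, which produces the action $r^{M\multimap N}$ of Equation~\eqref{eq: right action on internal hom}.

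For part~(iii), Lemma~\ref{lem:module over interal algebra} gives $M\multimap N$ a full $(E_M,E_N)$-bimodule structure, so its left $E_M$-action and right $E_N$-action commute. Restricting scalars on the left along $\underline{r}^M$ and on the right along $\underline{r}^N$ only precomposes these actions with morphisms supported on disjoint tensor factors, so the resulting $A$-actions $l^{M\multimap N}$ and $r^{M\multimap N}$ still commute and therefore assemble into an $A$-bimodule structure on $M\multimap N$. The statements for the right internal hom are obtained by running the same argument in the right closed setting, replacing $\operatorname{comp}^l$ and $E_X=X\multimap X$ by $\operatorname{comp}^r$ and $X\multimapinv X$, and the algebra morphism $\underline{r}^M$ by its right-closed counterpart (which is built from $\higheroverline{\operatorname{coev}}$ and a left $A$-action, so that right $A$-modules are systematically replaced by left $A$-modules).

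Since both cited lemmas are already available, the argument is formal; the only points needing care are to confirm that the restricted actions coincide on the nose with the composites in Equations~\eqref{eq: left action on internal hom} and~\eqref{eq: right action on internal hom}, and — in part~(iii) — that restriction on the left and on the right do not interfere, which holds because they act on different tensor factors. I do not anticipate a genuine obstacle.
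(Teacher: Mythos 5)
Your proposal is correct and is exactly the argument the paper intends: the lemma is stated as a direct consequence of Lemmas~\ref{lem:module over interal algebra} and~\ref{lem:morphism of algebras into internal hom}, i.e.\ restriction of scalars of the $(E_M,E_N)$-bimodule structure along the algebra morphisms $\underline{r}^M$ and $\underline{r}^N$, which is precisely what you do.
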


One may rewrite the action in Equation~\eqref{eq: left action on internal hom} as follows; for a proof see Appendix~\ref{sec:Bimodules-GV-categories}.

\begin{lemma}\label{lemma: rewrite inHom action}
 For $(M,r^M)\in {\cC_A}$ and $N\in \cC$, we have
\begin{align}\label{eq: rewrite inHom action}
    l^{M\multimap N}\;=\; {\operatorname{ev}^A_{M\multimap N}} \circ {\big(A\otimes \beta_{M,A,N}\big)} \circ {\big(A\otimes (r^M\multimap N)\big)}.
\end{align}
An analogous formula holds for right internal homs.
\end{lemma}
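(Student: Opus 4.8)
The plan is to verify~\eqref{eq: rewrite inHom action} by transposing both sides under the left tensor--hom adjunction. Both $l^{M\multimap N}$ and the right-hand side of~\eqref{eq: rewrite inHom action} are morphisms $A\otimes(M\multimap N)\to M\multimap N$, and $\Phi_{M,\,A\otimes(M\multimap N),\,N}$ is a bijection, so it suffices to show that applying $\operatorname{ev}^M_N\circ(M\otimes -)$ to each side produces the same morphism $M\otimes A\otimes(M\multimap N)\to N$. I claim that both transposes equal $\operatorname{ev}^M_N\circ\big(r^M\otimes(M\multimap N)\big)$, modulo the canonical associator identifying the two bracketings of $M\otimes A\otimes(M\multimap N)$, which the paper suppresses.

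For the left-hand side, the first step is to record the characterizing property of internal composition obtained by transposing the defining formula of Remark~\ref{rem:internal composition} and invoking the triangle identities, namely
\[
\operatorname{ev}^X_Z\circ\big(X\otimes\operatorname{comp}^l_{X,Y,Z}\big)\;=\;\operatorname{ev}^Y_Z\circ\big(\operatorname{ev}^X_Y\otimes(Y\multimap Z)\big)\circ\alpha_{X,X\multimap Y,Y\multimap Z}.
\]
Applying this with $X=Y=M$ and $Z=N$, pulling $\underline r^M$ past the associator by naturality of $\alpha$, and using that the transpose of $\underline r^M=(M\multimap r^M)\circ\operatorname{coev}^M_A$ is simply $r^M$ (again by a triangle identity), the transpose of $l^{M\multimap N}$ collapses to $\operatorname{ev}^M_N\circ\big(r^M\otimes(M\multimap N)\big)$, up to associators.

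For the right-hand side, the transpose is $\operatorname{ev}^M_N\circ(M\otimes\operatorname{ev}^A_{M\multimap N})\circ\big(M\otimes A\otimes\beta_{M,A,N}\big)\circ\big(M\otimes A\otimes(r^M\multimap N)\big)$. Here I would apply the compatibility of $\beta$ with evaluations, equation~\eqref{eq: ev compatible with beta} with $X=M$, $Y=A$, $Z=N$, to rewrite $\operatorname{ev}^M_N\circ(M\otimes\operatorname{ev}^A_{M\multimap N})$ in terms of $\operatorname{ev}^{M\otimes A}_N$ and $\beta^{-1}_{M,A,N}$; after one naturality-of-$\alpha$ step the two copies of $\beta$ cancel, leaving $\operatorname{ev}^{M\otimes A}_N\circ\big((M\otimes A)\otimes(r^M\multimap N)\big)$ up to associators. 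Finally, the extranaturality of evaluation in its superscript (Lemma~\ref{lemma:extranaturality of eval and coeval}, equation~\eqref{eval is natural}) applied to $f=r^M\colon M\otimes A\to M$ turns this into $\operatorname{ev}^M_N\circ\big(r^M\otimes(M\multimap N)\big)$, which matches the left-hand side.

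The computation is essentially mechanical; the only point needing care is the bookkeeping of the suppressed associators and checking that the various instances of $\alpha$-naturality and the cancellation $\beta^{-1}\circ\beta$ line up correctly. The statement for right internal homs follows by the same argument carried out in the right-closed setting, with $\beta$ replaced by $\higheroverline{\beta}$ and equation~\eqref{eq: ev compatible with beta} by its right-closed analogue.
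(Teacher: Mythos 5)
Your argument is correct. Transposing under $\Phi_{M,\,A\otimes(M\multimap N),\,N}$ is legitimate since the adjunction bijection is injective, your transposed characterization of $\operatorname{comp}^l$ does follow from its definition via the triangle identities, the adjunct of $\underline{r}^M$ is indeed $r^M$, and on the right-hand side the combination of Equation~\eqref{eq: ev compatible with beta}, one naturality-of-$\alpha$ step, the cancellation $\beta^{-1}\circ\beta$, and the extranaturality~\eqref{eval is natural} of $\operatorname{ev}$ applied to $r^M$ brings both adjuncts to $\operatorname{ev}^M_N\circ\big(r^M\otimes(M\multimap N)\big)\circ\alpha_{M,A,M\multimap N}$, with matching associators. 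This is, however, organized differently from the paper's proof: there, the right-hand side is transformed \emph{directly} into $l^{M\multimap N}$ by a single chain of equalities, unfolding the definition of $\beta$ explicitly, using naturality of $\operatorname{ev}^A$, two snake identities, the definition of $\operatorname{comp}^l$, and finally the extranaturality of internal composition (Lemma~\ref{lem:internal composition is extranatural}) together with the definitions of $\underline{r}^M$ and $l^{M\multimap N}$ --- no transposition is ever performed, and Lemma~\ref{lemma: beta  compatibility} is not invoked. Your route buys a cleaner conceptual skeleton (both sides are identified by their common adjunct) and reuses the already-proven compatibility of $\beta$ with evaluations, at the cost of having to track the suppressed associators on both sides; the paper's route stays on one side, keeps all bookkeeping in a single display, and trades the appeal to Lemma~\ref{lemma: beta  compatibility} for an explicit expansion of $\beta$ and the extranaturality of $\operatorname{comp}^l$ rather than of $\operatorname{ev}$. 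Both are complete proofs of the statement, and your closing remark that the right-internal-hom case follows mutatis mutandis is also in line with the paper.
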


For later use, we record the following technical lemma; for a proof see again Appendix~\ref{sec:Bimodules-GV-categories}.

\begin{lemma}\label{lemma: rewrite coreflexive pair}
	Let \((L,r^L)\in\cC_A\) and \(M,N\in \cC\). Then, omitting associators, we have\vspace{4pt}
	\begin{align}
		{\big((A\otimes M)\multimap l^{L\multimap N}\big)} \circ {\underline{A\otimes}_{M,L\multimap N}}\circ \beta_{L,M,N} &\,=\, \beta_{L,A\otimes M,N} \circ \big((r^L\otimes M)\multimap N\big).
	\end{align} 

	Here, \(\underline{A\otimes }_{M,L\multimap N}\colon\,M\multimap (L\multimap N) \,\longrightarrow\, (A\otimes M)\multimap \big(A\otimes (L \multimap N)\big)\) is the morphism of Remark~\ref{rem: internal hom tensorality}, and \(l^{L\multimap N}\) is the left $A$-action from Equation~\eqref{eq: left action on internal hom} in Lemma~\ref{lemma: module structure on left internal hom}.
	
	An analogous formula holds for right internal homs.
\end{lemma}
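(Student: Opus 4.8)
The plan is a Yoneda argument. Both sides are morphisms $(L\otimes M)\multimap N\to(A\otimes M)\multimap(L\multimap N)$, so it suffices to show that for every $W\in\cC$ the two induced maps on $\operatorname{Hom}_\cC(W,-)$ agree. I would transpose everything across the relevant tensor--hom adjunctions, identifying the source with $\operatorname{Hom}_\cC\big((L\otimes M)\otimes W,N\big)$ and the target with $\operatorname{Hom}_\cC\big(L\otimes\big((A\otimes M)\otimes W\big),N\big)$, and then check that both composites carry a morphism $\phi$ to $\phi\circ(r^L\otimes M\otimes W)$, up to the associators the statement suppresses.

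Three elementary observations drive this. (a) By the defining Yoneda property of $\beta$ (Remark~\ref{rem:canonical isos}), postcomposition with $\beta_{L,M,N}$, resp.\ $\beta_{L,A\otimes M,N}$, transposes to mere reassociation. (b) The tensorality morphism $\underline{A\otimes}_{Y,Z}$ of Remark~\ref{rem: internal hom tensorality} internalizes the endofunctor $A\otimes-$: if $g\colon W\to Y\multimap Z$ has transpose $\tilde g\colon Y\otimes W\to Z$, then the transpose of $\underline{A\otimes}_{Y,Z}\circ g$ is $A\otimes\tilde g$, up to associators; this follows from the formula for $\underline{A\otimes}$ by the triangle identity for $\operatorname{coev}^{A\otimes Y},\operatorname{ev}^{A\otimes Y}$ together with naturality of $\operatorname{ev}$. (c) The left $A$-action $l^{L\multimap N}$ of Equation~\eqref{eq: left action on internal hom} satisfies $\operatorname{ev}^L_N\circ\big(L\otimes l^{L\multimap N}\big)=\operatorname{ev}^L_N\circ\big(r^L\otimes(L\multimap N)\big)$, up to associators; that is, $\operatorname{ev}^L_N$ is balanced for the right $A$-module $(L,r^L)$. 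I would derive this by unwinding the definitions of $\operatorname{comp}^l_{L,L,N}$ and $\underline r^L$ via the triangle identities (alternatively it can be read off from Lemma~\ref{lemma: rewrite inHom action} using the compatibility of $\beta$ with evaluations in Lemma~\ref{lemma: beta  compatibility}).

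Granting (a)--(c): on the right-hand side, $\beta_{L,A\otimes M,N}$ is a reassociation by (a) and, by Lemma~\ref{lemma:extranaturality of eval and coeval}, $(r^L\otimes M)\multimap N$ transposes to precomposition by $r^L\otimes M\otimes W$, so this side sends $\phi$ to $\phi\circ(r^L\otimes M\otimes W)$. On the left-hand side I would peel off the three factors in order: $\beta_{L,M,N}$ reassociates by (a); (b) turns $\underline{A\otimes}_{M,L\multimap N}$ into $A\otimes(-)$ applied to the transpose $\psi\colon M\otimes W\to L\multimap N$ of $\beta_{L,M,N}\circ f$; and, by naturality of $\operatorname{ev}$, postcomposition with $(A\otimes M)\multimap l^{L\multimap N}$ transposes to postcomposition with $l^{L\multimap N}$. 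Applying (c) together with naturality of the associator then collapses $\operatorname{ev}^L_N\circ(L\otimes l^{L\multimap N})\circ(L\otimes(A\otimes\psi))$ to $\operatorname{ev}^L_N\circ(r^L\otimes\psi)=\big(\operatorname{ev}^L_N\circ(L\otimes\psi)\big)\circ(r^L\otimes M\otimes W)$, which is again $\phi\circ(r^L\otimes M\otimes W)$ up to associators. Hence the two sides agree, and the statement for right internal homs follows by the symmetric argument with $\multimapinv,\higheroverline\beta,\higheroverline{\operatorname{ev}},\higheroverline{\operatorname{coev}}$ in place of $\multimap,\beta,\operatorname{ev},\operatorname{coev}$. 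The only genuine difficulty is the associator bookkeeping — checking that the reassociations produced along the way compose to exactly the one implicit in the statement — which is forced by Mac Lane coherence but tedious to spell out; passing to a strictification (and reinstating $\beta,\gamma$ afterwards), or working in the graphical calculus for closed monoidal categories, avoids it.
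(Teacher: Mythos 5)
Your proposal is correct, and it takes a genuinely different route from the paper. The paper proves the identity by a direct eight-step chain of equalities at the level of morphisms: it first rewrites $l^{L\multimap N}$ via Lemma~\ref{lemma: rewrite inHom action}, then uses naturality of $\operatorname{ev}$ and $\operatorname{coev}$, naturality of $\beta$, the pentagon axiom together with the definition of $\beta$, the compatibility of $\beta$ with evaluations (Equation~\eqref{eq: ev compatible with beta} of Lemma~\ref{lemma: beta  compatibility}), and finally a snake identity. You instead argue representably: after transposing across the tensor--hom adjunctions, both sides become the map $\phi\mapsto\phi\circ(r^L\otimes M\otimes W)$ on $\operatorname{Hom}_\cC\big((L\otimes M)\otimes W,N\big)$, and your three supporting claims are all correct and correctly justified --- (a) is the defining Yoneda property of $\beta$, (b) follows from the formula for $\underline{A\otimes}$ by a triangle identity and naturality of $\operatorname{ev}$, and (c) (that $\operatorname{ev}^L_N$ is balanced for $r^L$) indeed drops out of the definitions of $\operatorname{comp}^l$ and $\underline r^L$ via triangle identities; extranaturality of $\operatorname{ev}$ (Lemma~\ref{lemma:extranaturality of eval and coeval}) handles the right-hand side. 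What your route buys is a shorter, more conceptual argument that bypasses Lemma~\ref{lemma: rewrite inHom action} and the explicit pentagon/$\beta$-compatibility manipulations, at the cost of leaving the associator bookkeeping implicit; since the statement and the paper's own proof already work ``omitting associators,'' and your appeal to coherence (or strictification) is legitimate there, this is an acceptable level of rigor, though spelling out (b) and (c) as displayed identities would make the write-up self-contained. The paper's computation, by contrast, stays entirely at the morphism level and reuses its previously established auxiliary lemmas, matching the style of the other appendix proofs.
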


We are finally ready to describe the internal homs in \(_A\cC_A\). We now additionally assume that $\cC$ admits equalizers.
\begin{remark}[Internal homs in ${_A}\cC_A$]\label{rem: inhoms in abimod}
	For \(M,N\in {_A\cC_A}\), consider the equalizer
	\begin{equation}
			\begin{tikzcd}
				{M\multimap_A N \;:=\; \operatorname{eq}\!\big(M\multimap N} &&&& {(A\otimes M)\multimap N\big).}
				\arrow[->,shift left=.7ex, "l^M\,\multimap\, N"{yshift=1.5pt}, from=1-1, to=1-5]
				\arrow[->,shift left=-.7ex, "{\big((A\,\otimes \,M)\,\multimap \,l^N\big)}\,\circ\,{\underline{A\otimes }_{M,N}}"'{yshift=-1.5pt}, from=1-1, to=1-5]
			\end{tikzcd}
	\end{equation}
	Here, \(\underline{A\otimes }_{M,N}\colon\,M\multimap N \,\longrightarrow\, (A\otimes M)\multimap (A\otimes N)\) is defined in Remark~\ref{rem: internal hom tensorality}.
The \(A\)-bimodule structure on \(M\multimap N\) supplied by Lemma~\ref{lemma: module structure on left internal hom}, induced by the right \(A\)-actions \(r^M\) and \(r^N\), restricts to the equalizer \(M\multimap_A N\); see \cite[Def. 3.2]{ShiYa}. Candidates for right internal homs in \(_A\cC_A\) are defined analogously.
\end{remark}

\begin{prop}\emph{(\cite[Cor. 3.4]{ShiYa}).}\label{lemma: bimodules closed}
    Let \(A\) be an algebra in a closed monoidal category \(\cC\). If the category $\cC$ admits equalizers and coequalizers, the monoidal category $_A\cC_A$ of $A$-bimodules is closed. Internal homs are given as in Remark~\ref{rem: inhoms in abimod}.
\end{prop}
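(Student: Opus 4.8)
The plan is to verify directly that, for $A$-bimodules $M,N$, the object $M\multimap_A N$ of Remark~\ref{rem: inhoms in abimod} represents the functor $P\mapsto\operatorname{Hom}_{{_A\cC_A}}(M\otimes_A P,N)$, so that $({_A\cC_A},\otimes_A,A)$ is left closed; right closedness then follows symmetrically (equivalently, by applying the left-closed case to the closed monoidal category $\cC^{\mathrm{rev}}$ with the algebra $A^{\op}$, using $({_A\cC_A},\otimes_A)^{\mathrm{rev}}\simeq {_{A^{\op}}(\cC^{\mathrm{rev}})_{A^{\op}}}$). Concretely, I would construct a bijection, natural in $P\in{_A\cC_A}$,
\[
\operatorname{Hom}_{{_A\cC_A}}(M\otimes_A P,N)\;\cong\;\operatorname{Hom}_{{_A\cC_A}}(P,M\multimap_A N).
\]

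First I would present both sides as $\operatorname{Hom}$-sets of $\cC$ cut out by three conditions each. On the left, the canonical epimorphism $p\colon M\otimes P\twoheadrightarrow M\otimes_A P$ (Remark~\ref{rem: monoidal category of bimodules}) is $A$-bilinear for the left $A$-action of $M$ and the right $A$-action of $P$ on $M\otimes P$, and is the coequalizer in $\cC$ of $r^M\otimes P$ and $M\otimes l^P$; hence a bimodule map $M\otimes_A P\to N$ is the same as a morphism $g\colon M\otimes P\to N$ in $\cC$ that is (L1) left $A$-linear for $l^M,l^N$, (L2) right $A$-linear for $r^P,r^N$, and (L3) coequalizes $r^M\otimes P$ and $M\otimes l^P$. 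On the right, the equalizer inclusion $M\multimap_A N\hookrightarrow M\multimap N$ is an $A$-bilinear monomorphism (Remark~\ref{rem: inhoms in abimod}), the bimodule structure on $M\multimap N$ being the one of Lemma~\ref{lemma: module structure on left internal hom} induced by $r^M,r^N$; hence a bimodule map $P\to M\multimap_A N$ is the same as a morphism $h\colon P\to M\multimap N$ in $\cC$ that (E) equalizes the pair defining $M\multimap_A N$ and is (LL) left $A$-linear for $l^P,l^{M\multimap N}$ and (RR) right $A$-linear for $r^P,r^{M\multimap N}$.

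Next I would transport everything along the tensor--hom adjunction $\Phi_{M,P,N}\colon\operatorname{Hom}_\cC(M\otimes P,N)\xrightarrow{\simeq}\operatorname{Hom}_\cC(P,M\multimap N)$ of $\cC$ and check that it identifies the conditions pairwise: (L1)$\Leftrightarrow$(E), (L2)$\Leftrightarrow$(RR), and (L3)$\Leftrightarrow$(LL). The equivalence (L1)$\Leftrightarrow$(E) is a short transposition of the equalizer equation of Remark~\ref{rem: inhoms in abimod}, using the naturality of $\operatorname{ev}$ (Lemma~\ref{lemma:extranaturality of eval and coeval}), the formula for $\underline{A\otimes}_{M,N}$ (Remark~\ref{rem: internal hom tensorality}), and a reassociation. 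The equivalence (L2)$\Leftrightarrow$(RR) reduces to the identity $\operatorname{ev}^M_N\circ(M\otimes r^{M\multimap N})=r^N\circ(\operatorname{ev}^M_N\otimes A)$ up to associators, obtained by unwinding $r^{M\multimap N}$ through internal composition via Lemmas~\ref{lemma:extranaturality of eval and coeval}--\ref{lem:internal composition is extranatural}. Since $\Phi_{M,P,N}$ is natural in all three arguments, the resulting bijection is natural in $P$, so $M\multimap_A(-)$ is right adjoint to $M\otimes_A(-)$.

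The equivalence (L3)$\Leftrightarrow$(LL) is the step I expect to be the main obstacle: the action $l^{M\multimap N}$ is defined through internal composition with $\underline{r}^M$ (Equation~\eqref{eq: left action on internal hom}) and is too unwieldy to compare directly with the coequalizing condition. Here I would first replace $l^{M\multimap N}$ by the rewriting of Lemma~\ref{lemma: rewrite inHom action}; combined with the $\beta$-compatibility of $\operatorname{ev}$ (Lemma~\ref{lemma: beta  compatibility}) and the naturality of $\operatorname{ev}$, this reduces $\operatorname{ev}^M_N\circ(M\otimes l^{M\multimap N})$ to $\operatorname{ev}^M_N\circ(r^M\otimes(M\multimap N))$ up to an associator, after which transposing (LL) through $\Phi$ produces exactly (L3), with Lemma~\ref{lemma: rewrite coreflexive pair} handling the bookkeeping involving $\underline{A\otimes}$ and $\beta$. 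Assembling these three equivalences completes the verification.
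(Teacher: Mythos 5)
Your argument is correct, but note that the paper itself gives no proof of this proposition: it is imported wholesale from \cite[Cor.~3.4]{ShiYa}, so your direct verification is genuinely self-contained where the paper simply cites. Your route — presenting $\operatorname{Hom}_{{_A\cC_A}}(M\otimes_A P,N)$ as the subset of $\operatorname{Hom}_{\cC}(M\otimes P,N)$ cut out by (L1)--(L3) via the coequalizer presentation, presenting $\operatorname{Hom}_{{_A\cC_A}}(P,M\multimap_A N)$ as the subset of $\operatorname{Hom}_{\cC}(P,M\multimap N)$ cut out by (E), (LL), (RR) via the equalizer presentation, and matching the conditions pairwise under $\Phi_{M,P,N}$ — is the standard proof and the pairing (L1)$\Leftrightarrow$(E), (L2)$\Leftrightarrow$(RR), (L3)$\Leftrightarrow$(LL) is the right one; the two evaluation identities you need, $\operatorname{ev}^M_N\circ(M\otimes r^{M\multimap N})=r^N\circ(\operatorname{ev}^M_N\otimes A)$ and $\operatorname{ev}^M_N\circ(M\otimes l^{M\multimap N})=\operatorname{ev}^M_N\circ(r^M\otimes(M\multimap N))$ (both up to associators), do follow from the definitions of $\operatorname{comp}^l$ and $\underline{r}^M,\underline{r}^N$ together with (extra)naturality of $\operatorname{ev}$, or, as you suggest, from Lemma~\ref{lemma: rewrite inHom action}. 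Two small points if you want the argument airtight: first, descending the linearity conditions along $p$ uses that $A\otimes p$ and $p\otimes A$ are (regular) epimorphisms, which holds because $\otimes$ preserves coequalizers in each variable by closedness of $\cC$ — say this explicitly; second, you quietly use that the bimodule structure of Lemma~\ref{lemma: module structure on left internal hom} restricts along the equalizer inclusion $M\multimap_A N\hookrightarrow M\multimap N$, which Remark~\ref{rem: inhoms in abimod} itself only asserts with a citation, so a fully self-contained proof should include that (routine) verification, essentially by the same transposition that gives (L1)$\Leftrightarrow$(E). Your symmetry argument for right closedness (passing to $\cC^{\mathrm{rev}}$ with $A$ regarded as an algebra there) is fine, though the notation $A^{\op}$ should be read as exactly that, since no opposite algebra exists inside the non-braided $\cC$.
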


\begin{remark}[Evaluations and coevaluations in ${_A}\cC_A$]\label{rem: coeval in A-bimod}
In the setting of Proposition~\ref{lemma: bimodules closed}, the (co)evaluations in $_A\cC_A$ are induced by those in \(\cC\). For \(M,N\in {_A\cC_A}\), the morphisms 
\begin{align}
	 {_A{\operatorname{ev}}^M_N} \colon \; &M \otimes_A (M \multimap_A N) \;\longrightarrow\; N,\\
	 {_A{\operatorname{coev}}^M_N} \colon \; &N \;\longrightarrow\; M \multimap_A (M\otimes_A N),
	\end{align}
are characterized uniquely by the equations
\begin{align}
{_A{\operatorname{ev}}^M_N} \circ p_{M,M\multimap_A N} &\;=\; {\operatorname{ev}^M_N} \circ (M \otimes i_{M,N}),\label{eq: AevM,N}\\
{i_{M,M\otimes_A N}} \circ {_A{\operatorname{coev}}^M_N} &\;=\; (M\multimap p_{M,N}) \circ {\operatorname{coev}^M_N},\label{eq: AcoevM,N} 
\end{align}
where \(i_{M,N}\colon\, M\multimap_A N \,\hookrightarrow\, M \multimap N\) and \(p_{M,N}\colon M\otimes N \,\twoheadrightarrow\, M \otimes_A N\) denote the canonical monomorphism and epimorphism associated to the equalizer and coequalizer, respectively.
\end{remark}

\begin{remark}[Comparator for \(U_A\)]\label{rem: comparator for U}
By Equation~\eqref{eq: AevM,N}, the comparator 
\(\tau^{l,U}_{M,N}\) (see Definition~\ref{def: closed monoidal functor}) for the lax monoidal forgetful functor \(U_A\colon {_A\cC_A} \to \cC\) (see Remark~\ref{rem: monoidal category of bimodules}) coincides with the canonical monomorphism \(i_{M,N}\colon (M \multimap_A N) \,\hookrightarrow\, (M\multimap N)\).
\end{remark}

\begin{prop}\emph{(\cite[Cor. 3.7]{ShiYa}).}\label{lemma: modules closed}
	Let \(A\) be a commutative algebra in a braided closed monoidal category \(\cC\). If the category $\cC$ admits equalizers and coequalizers, the monoidal category $_A\cC$ of left $A$-modules is closed. Internal homs are again those in Remark~\ref{rem: inhoms in abimod}.
\end{prop}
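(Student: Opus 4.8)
The plan is to realize \({}_A\cC\) as a full monoidal subcategory of \(\AcCA\) that is closed under the internal‑hom functor, and then to transport the closed structure provided by Proposition~\ref{lemma: bimodules closed}. Write \(c\) for the braiding of \(\cC\).

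\emph{Step 1: the embedding.} Since \(A\) is commutative, every left \(A\)-module \((M,l^M)\) becomes an \(A\)-bimodule by equipping it with the right action \(r^M:=l^M\circ c_{M,A}\); associativity of this right action and its compatibility with \(l^M\) follow from the hexagon axioms, associativity of \(l^M\), and the identity \(\mu\circ c_{A,A}=\mu\). Naturality of the braiding shows that any morphism of left \(A\)-modules between two such bimodules is automatically a morphism of \(A\)-bimodules, so one obtains a fully faithful functor \(S\colon {}_A\cC\hookrightarrow\AcCA\) whose essential image is the full subcategory of \emph{symmetric} bimodules, i.e.\ those \(M\) with \(r^M=l^M\circ c_{M,A}\). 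Moreover the reflexive coequalizer defining the relative tensor product of left \(A\)-modules over the commutative algebra \(A\) is, by construction, exactly the coequalizer computing \(S(M)\otimes_A S(N)\) in \(\AcCA\), and a routine check shows this is again symmetric; hence \(S\) is (strict) monoidal and identifies \(({}_A\cC,\otimes_A,A)\) with a full monoidal subcategory of \((\AcCA,\otimes_A,A)\).

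\emph{Step 2: closure under internal homs.} By Proposition~\ref{lemma: bimodules closed} (applicable since \(\cC\) admits equalizers and coequalizers), \(\AcCA\) is closed with internal hom \(M\multimap_A N\) the equalizer of Remark~\ref{rem: inhoms in abimod}, carrying the \(A\)-bimodule structure induced by the right actions \(r^M\) and \(r^N\). The crux is to show that whenever \(M\) and \(N\) lie in the image of \(S\), the bimodule \(M\multimap_A N\) is again symmetric, hence again in the image of \(S\); concretely, one must prove
\[
r^{M\multimap_A N}\;=\;l^{M\multimap_A N}\circ c_{M\multimap_A N,\,A}.
\]
I would establish this on the ambient internal hom \(M\multimap N\), using the explicit formulas for the left and right \(A\)-actions (Lemma~\ref{lemma: module structure on left internal hom}, rewritten via Lemma~\ref{lemma: rewrite inHom action}), the hypotheses \(r^M=l^M\circ c_{M,A}\) and \(r^N=l^N\circ c_{N,A}\), commutativity of \(A\), and the braided‑closed coherence encoded by the isomorphisms \(\widetilde{c}^{\,\pm}\) of Remark~\ref{rem:braided closed mon cats} together with Lemma~\ref{lemma: tildec ev and coev}; the defining relation of the equalizer \(M\multimap_A N\) is then used to turn the a priori distinct left action (coming from \(r^M\)) and right action (coming from \(r^N\)) into braiding‑conjugate morphisms after restriction to the subobject.

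\emph{Step 3: conclusion.} Granting Step 2, full faithfulness and monoidality of \(S\) yield, naturally in \(L,M,N\in{}_A\cC\),
\begin{align*}
\operatorname{Hom}_{{}_A\cC}(L\otimes_A M,\,N)
&\;\cong\;\operatorname{Hom}_{\AcCA}\big(S(L)\otimes_A S(M),\,S(N)\big)\\
&\;\cong\;\operatorname{Hom}_{\AcCA}\big(S(L),\,S(M)\multimap_A S(N)\big)
\;\cong\;\operatorname{Hom}_{{}_A\cC}(L,\,M\multimap_A N),
\end{align*}
the middle isomorphism being the tensor–hom adjunction in \(\AcCA\) and the last one using that \(S(M)\multimap_A S(N)\) lies in the essential image of \(S\). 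Thus \({}_A\cC\) is left closed with internal hom the equalizer of Remark~\ref{rem: inhoms in abimod}, and right closedness follows symmetrically from the braiding. Finally, since the forgetful functor \({}_A\cC\to\cC\) factors through \(\AcCA\), the evaluation and coevaluation morphisms of \({}_A\cC\) are induced by those of \(\cC\) exactly as in Remark~\ref{rem: coeval in A-bimod}. The hard part is Step~2: unlike the analogous symmetry check for the tensor product, the symmetry of \(M\multimap_A N\) does not follow formally and requires genuine braided‑closed bookkeeping — manipulating evaluations, coevaluations, internal composition, and the braiding, and invoking commutativity of \(A\) at precisely the right place — which is exactly what the coherence identities for \(\widetilde{c}^{\,\pm}\) are for.
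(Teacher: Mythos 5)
The paper itself offers no argument for this proposition: it is imported verbatim from \cite{ShiYa}(Cor.~3.7), so there is no internal proof to measure you against. Your route — identify ${_A\cC}$ with the full monoidal subcategory of ``symmetric'' bimodules $r^M=l^M\circ c_{M,A}$ inside $\AcCA$ and transport the closed structure of Proposition~\ref{lemma: bimodules closed} — is legitimate and is in fact exactly how the paper later \emph{uses} the result: the embeddings $B^{\pm}\colon {_A\cC}\hookrightarrow {_A\cC_A}$ in the proof of Proposition~\ref{prop: A-mod is GV} are your $S$ (for the two choices of braiding), and they are asserted there to be strict monoidal, conservative and closed. Steps 1 and 3 are fine, up to a harmless slip in Step 3: the displayed bijection $\operatorname{Hom}(L\otimes_A M,N)\cong\operatorname{Hom}(L,M\multimap_A N)$ is the \emph{right}-hom adjunction; for left closedness you want $\operatorname{Hom}(L\otimes_A M,N)\cong\operatorname{Hom}(M,L\multimap_A N)$, which follows from the bimodule adjunction in the same way (and with a braiding present the two closednesses are interchangeable anyway).

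The one genuine shortfall is that Step 2 — which you rightly single out as the crux — is announced rather than carried out. The claim is true, and your sketch contains the correct mechanism: on all of $M\multimap N$ the action induced by $r^M$ and the action induced by $r^N$ are \emph{not} braiding-conjugate, and the identity $r^{M\multimap_A N}=l^{M\multimap_A N}\circ c_{M\multimap_A N,A}$ only appears after restriction to the equalizer. To actually prove it, compare the two morphisms $(M\multimap_A N)\otimes A\to M\multimap_A N$ after postcomposing with the monomorphism $i_{M,N}$ of Remark~\ref{rem: inhoms in abimod} (harmless, since both ambient actions restrict along $i_{M,N}$ by construction), transpose to morphisms $M\otimes\big((M\multimap_A N)\otimes A\big)\to N$, and then chase: the action formulas of Lemma~\ref{lemma: module structure on left internal hom} in the form of Lemma~\ref{lemma: rewrite inHom action}, the hypotheses $r^M=l^M\circ c_{M,A}$ and $r^N=l^N\circ c_{N,A}$, naturality and the hexagons for $c$, and — decisively — the transposed equalizer relation satisfied by $i_{M,N}$, which lets you trade ``evaluate, then act on $N$'' for ``act on $M$, then evaluate''. (The coherence of $\widetilde{c}^{\,\pm}$ you invoke is not really needed; naturality of $c$ plus the equalizer relation suffice.) The analogous symmetry check for $S(M)\otimes_A S(N)$ in Step 1 likewise uses the coequalizer relation and should at least be flagged as such. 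With Step 2 written out along these lines your argument is complete; as it stands, the pivotal computation is a plan rather than a proof.
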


\begin{remark}[Local modules]\label{rem: local modules}
Let $A$ be a commutative algebra in a braided monoidal category $\cC$. A left $A$-module $(M,l^M)$ is called \emph{local} if $l^M=l^M \circ c_{M,A} \circ c_{A,M}$. By \cite[Cor. 3.7]{ShiYa}, if $\cC$ is closed and admits equalizers and coequalizers, the full monoidal subcategory of local $A$-modules ${_A\cC}^{\operatorname{loc}}\subseteq {_A\cC}$  is closed with the same internal homs as those in Remark~\ref{rem: inhoms in abimod}.
\end{remark}

From now on, let \(K\in \cC\) be a dualizing object in the monoidal category $(\cC,\otimes,1)$. 
\begin{definition}(\cite[Def. 4.2]{fuchs2024grothendieckverdier}).
	An algebra in $(\cC,\otimes,1)$ is called a \emph{GV-algebra}, while a coalgebra in the monoidal category $(\cC,\parLL,K)$ is called a \emph{GV-coalgebra}.
\end{definition}

Recall from Example~\ref{ex: duality functors are GV} that the categorical equivalences 
\begin{equation*}
D,D'\colon\,(\cC, \otimes, 1) \,\xlongrightarrow{\simeq}\, (\cC^{\operatorname{op}}, \parLL^{\operatorname{rev}}, K)
\end{equation*}
carry strong monoidal structures. Hence the next statement follows directly.

\begin{lemma}\emph{(\cite[Lem. 4.3]{fuchs2024grothendieckverdier}).}\label{lemma: GV-algebras-GV-coalgebras}
  In a GV-category \(\cC\), GV-algebras and GV-coalgebras are in bijection under either one of the duality functors \(D\) or \(D'\).
\end{lemma}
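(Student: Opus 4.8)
The plan is to deduce the statement directly from the fact, recalled just above in Remark~\ref{ex: duality functors are GV}, that the duality equivalences
\[
D,D'\colon\,(\cC, \otimes, 1) \,\xlongrightarrow{\simeq}\, (\cC^{\operatorname{op}}, \parLL^{\operatorname{rev}}, K)
\]
are strong monoidal, combined with the elementary fact that a strong monoidal equivalence induces an equivalence between the associated categories of algebras.

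First I would record the standard monoidal dictionary. For any monoidal category $\cM$, an algebra object in $\cM^{\operatorname{op}}$ is precisely the datum of a coalgebra in $\cM$: the multiplication and unit of the $\cM^{\operatorname{op}}$-algebra are, read in $\cM$, a comultiplication and a counit, and the associativity and unitality axioms become coassociativity and counitality. Moreover, reversing the monoidal product does not affect the notion of algebra, since after substituting $\ao^{\operatorname{rev}}=\ao^{-1}$ (and the corresponding substitution of unitors) the associativity and unit axioms for $\otimes^{\operatorname{rev}}$ become literally the associativity and unit axioms for $\otimes$. Applying both observations to $\cM=(\cC,\parLL,K)$ yields a canonical identification
\[
\mathrm{Alg}\big(\cC^{\operatorname{op}},\parLL^{\operatorname{rev}},K\big)\;=\;\mathrm{Coalg}(\cC,\parLL,K),
\]
so that algebras in the codomain of $D$ (and of $D'$) are exactly GV-coalgebras.

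Next, since $D$ is strong monoidal, it carries a GV-algebra $A=(A,\mu,\eta)$ to an algebra in $(\cC^{\operatorname{op}},\parLL^{\operatorname{rev}},K)$, with structure morphisms obtained by conjugating $D(\mu)$ and $D(\eta)$ by the comparators $(\overline{\beta}_{-,-,K},\overline{\gamma}_K)$ from Remark~\ref{ex: duality functors are GV}; by the previous paragraph this is a GV-coalgebra. The same argument applies to $D'$, using the comparators $(\beta_{-,-,K},\gamma_K)$ instead. Since $D$ and $D'$ are equivalences of categories, the induced functors on categories of algebras are equivalences, and hence, together with the identification above, each of $D$ and $D'$ restricts to an equivalence between the category of GV-algebras and the category of GV-coalgebras; in particular one obtains the asserted bijection on objects.

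There is no real obstacle here; the only place requiring (minor) care is tracking the interaction of ``op'' and ``rev'' in the translation between algebras and coalgebras, which is why I would isolate the monoidal dictionary as a separate preliminary step rather than unwinding it inside the main argument.
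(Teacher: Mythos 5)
Your proposal is correct and follows essentially the same route as the paper: the paper derives the lemma directly from Remark~\ref{ex: duality functors are GV}, i.e.\ from the strong monoidal structures on the equivalences $D,D'\colon(\cC,\otimes,1)\to(\cC^{\operatorname{op}},\parLL^{\operatorname{rev}},K)$, which is exactly your argument, with the op/rev--coalgebra dictionary spelled out explicitly.
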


\begin{remark}[Cocommutativity]
    Let \(\cC\) be a braided GV-category. By Remark~\ref{rem: D and D' braided FLD functors}, the bijections from Lemma~\ref{lemma: GV-algebras-GV-coalgebras} restrict to the classes of commutative GV-algebras and cocommutative GV-coalgebras in \(\cC\).
\end{remark}

\begin{remark}[Coalgebra structure and internal homs]\label{rem: coalgebra and in homs}
    By Remark~\ref{rem:closed mon in LD} and Example~\ref{ex: duality functors are GV}, the comultiplication $\Delta$ and counit $\epsilon$ of the GV-coalgebra $D(A)$ are given explicitly by
\begin{align}
    \Delta&\colon\; D(A)\,\xrightarrow{D(\mu)}\,D(A\otimes A)\,\xrightarrow{\higheroverline{\beta}_{A,A,K}}\,(K\multimapinv A)\multimapinv A\;\;\eqabove{\tiny~\eqref{eq:rightInHom}}\;\;D(A)\parLL D(A),\label{def:Comultiplication of DA}\\
    \epsilon&\colon\; D(A)\,\xrightarrow{D(\eta)}\,D(1) \xrightarrow{\higheroverline{\gamma}_K}\,K.
\end{align}
The comultiplication and counit of the GV–coalgebra \(D'(A)\) are described by analogous formulas. Consequently, a morphism of GV-coalgebras $f\colon D(A)\to D'(A)$ is a morphism of the underlying objects of $\cC$ satisfying
\begin{align}
    \beta_{A,A,K}\circ D'(\mu) \circ f &\,=\, (A\multimap f)\circ \iota_{A,K,A}\circ (f\multimapinv A) \circ \overline{\beta}_{A,A,K}\circ D(\mu),\label{def:f is comultiplicative}\\
    \gamma_K \circ D'(\eta)\circ f &\,=\, \higheroverline{\gamma}_K \circ D(\eta).\label{def:f is counital}
\end{align}
\end{remark}

\smallskip

Next, in preparation for applications of the lifting theorem (Section \ref{sec: Applications}), we investigate when, for a GV-algebra $A$ in \(\cC\), the GV-coalgebras \(D(A)\) and \(D'(A)\) admit \(A\)-bimodule structures. By Lemma~\ref{lemma: module structure on left internal hom}, the regular $A$-action on $A$ induces a left $A$-action on $D'(A)$ and, analogously, a right $A$-action on $D(A)$. The following result is proved in Appendix~\ref{sec:Bimodules-GV-categories}.
\begin{prop}\label{prop: bimodule structure on dual of algebra}
Any \(\parLL\)-comultiplicative isomorphism $f\colon D'(A)\xrightarrow{\simeq} D(A)$ equips $D(A)$ with an $A$-bimodule structure.
\end{prop}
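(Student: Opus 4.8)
The plan is to build the bimodule structure on $D(A)$ out of the right $A$-action $r^{D(A)}$ that is already available from Lemma~\ref{lemma: module structure on left internal hom} (in its right internal hom form, applied to the regular left $A$-module $A$) together with the left $A$-action obtained by transporting the canonical left $A$-action $l^{D'(A)}$ on $D'(A)=A\multimap K$ along the isomorphism $f$:
\[
l^{D(A)} \;:=\; f \circ l^{D'(A)} \circ (\mathrm{id}_A \otimes f^{-1})\;\colon\; A \otimes D(A) \longrightarrow D(A).
\]
Because $f$ is an isomorphism and $l^{D'(A)}$ is a unital, associative left $A$-action, so is $l^{D(A)}$; likewise $r^{D(A)}$ is a genuine right $A$-action by Lemma~\ref{lemma: module structure on left internal hom}. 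Thus the only thing left to prove is that $l^{D(A)}$ and $r^{D(A)}$ commute, i.e.\ that these two actions make $D(A)$ an $A$-bimodule. The hypothesis that $f$ is $\parLL$-comultiplicative will be used exactly here.

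The first step is to re-express both actions through the comultiplication $\Delta$ of the GV-coalgebra $D(A)$ from Remark~\ref{rem: coalgebra and in homs}. Using Lemma~\ref{lemma: rewrite inHom action} (and its right internal hom analogue), the $\Phi$-transpose of $l^{D'(A)}$ is $\beta_{A,A,K}\circ D'(\mu)$ and the $\higheroverline{\Phi}$-transpose of $r^{D(A)}$ is $\higheroverline{\beta}_{A,A,K}\circ D(\mu)$; under the canonical isomorphisms~\eqref{eq:leftInHom} and~\eqref{eq:rightInHom} these are precisely the comultiplications $\Delta_{D'(A)}$ and $\Delta_{D(A)}$, respectively. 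By naturality of $\Phi$, the $\Phi$-transpose of the transported action $l^{D(A)}$ equals $(A\multimap f)\circ\beta_{A,A,K}\circ D'(\mu)\circ f^{-1}$. Applying the $\parLL$-comultiplicativity of $f$ — that is, Equation~\eqref{def:f is comultiplicative} for $f^{-1}\colon D(A)\to D'(A)$ — this transpose simplifies to $\iota_{A,K,A}\circ(f^{-1}\multimapinv A)\circ\higheroverline{\beta}_{A,A,K}\circ D(\mu)$. Since by Remark~\ref{rem:closed mon in LD} the isomorphism $\iota$ is (an instance of) the $\parLL$-associator, this exhibits $l^{D(A)}$ — after transporting the first $\Delta$-leg back along $f^{-1}$ — as the ``second leg'' of $\Delta_{D(A)}$, parallel to the description of $r^{D(A)}$ as its ``first leg''.

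With both actions written in this form, the bimodule compatibility $l^{D(A)}\circ(\mathrm{id}_A\otimes r^{D(A)}) = r^{D(A)}\circ(l^{D(A)}\otimes \mathrm{id}_A)$ (up to the associator of $\otimes$) becomes, after transposing along the several tensor–hom adjunctions and cancelling the matching occurrences of $f$ and $f^{-1}$, exactly the coassociativity of $\Delta_{D(A)}$ — equivalently, the associativity of $\mu$ on $A$ transported through the strong monoidal duality functor $D$ (cf.\ Lemma~\ref{lemma: GV-algebras-GV-coalgebras} and \cite{fuchs2024grothendieckverdier}). The technical inputs to this last reduction are the compatibilities of $\iota$ with $\beta,\higheroverline{\beta},\gamma,\higheroverline{\gamma}$ and with (co)evaluations (Lemma~\ref{lemma: iota compatibility}), together with associativity of internal composition (Lemma~\ref{lemma: assoc of composition}). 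The main obstacle is bookkeeping: one must track the several adjunction transposes and the canonical isomorphisms $\beta,\higheroverline{\beta},\iota$ carefully enough to bring the two sides of the compatibility equation into a common normal form where coassociativity applies. The conceptual content — that a Frobenius-type comultiplicative isomorphism turns the dual coalgebra $D(A)$ into a bimodule over the algebra $A$ — is small, and the argument is most transparent as a string-diagram calculation in the associated LD-category (via Theorem~\ref{thm: GV equiv LDN}), in which $\mu$ becomes a comultiplication on $D(A)$ and $f$ an intertwiner of coalgebras; that is the form in which the proof is given in Appendix~\ref{sec:Bimodules-GV-categories}.
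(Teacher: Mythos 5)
Your proposal is correct and follows essentially the same route as the paper: transport $l^{D'(A)}$ along $f$, keep the canonical right action $r^{D(A)}$, rewrite both actions via Lemma~\ref{lemma: rewrite inHom action} as (transposes of) $\beta_{A,A,K}\circ D'(\mu)$ and $\higheroverline{\beta}_{A,A,K}\circ D(\mu)$, substitute the $\parLL$-comultiplicativity of $f$ (equivalently of $f^{-1}$), and finish with naturality, associativity of $\mu$, and the $\iota$-compatibilities of Lemma~\ref{lemma: iota compatibility}. The only discrepancies are cosmetic: the paper's Appendix~\ref{sec:Bimodules-GV-categories} proof is an equational computation rather than a string-diagram one, it does not invoke Lemma~\ref{lemma: assoc of composition}, and the residual occurrence of $f$ is carried along by naturality rather than cancelling outright before coassociativity is applied — none of which affects the correctness of your outline.
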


Let us give examples of such \(\parLL\)-comultiplicative isomorphisms. The following lemma follows directly from Remark~\ref{rem: equivalent def of pivotality}.

\begin{lemma}\label{lemma: coalgebra iso via pivotality}
Let $A$ be a GV-algebra in a GV-category $\cC$, and let $\pi\colon D\xrightarrow{\simeq} D'$ be a pivotal structure on \(\cC\). The component $\pi_A\colon D(A)\xrightarrow{\simeq} D'(A)$ is an isomorphism of GV-coalgebras.  
\end{lemma}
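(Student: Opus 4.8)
The plan is to use the $2$-equivalence between $\mathsf{GV}$ and $\mathsf{LDN}$ from Theorem~\ref{thm: GV equiv LDN}, together with the characterization of pivotal structures recalled in Remark~\ref{rem: equivalent def of pivotality}. First I would recall that, by definition, a pivotal structure $\pi\colon D \to D'$ is a morphism of GV-functors, where $D$ and $D'$ are the GV-functors $(\cC,\otimes,1,K) \to (\cC^{\operatorname{op}},\parLL^{\operatorname{rev}},K,1)$ described in Remark~\ref{ex: duality functors are GV}. Under the $2$-functor $\mathsf{GV}\to \mathsf{LDN}$, these become Frobenius LD-functors and $\pi$ becomes an isomorphism of Frobenius LD-functors (this is exactly the content of Remark~\ref{rem: equivalent def of pivotality}, invoking \cite[Thm.~4.2]{DeS}). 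The key observation is then that a Frobenius LD-functor from $\ast$ (or more generally, an oplax $\parLL$-monoidal structure entering a Frobenius LD-functor) encodes precisely a GV-coalgebra structure, and that a $2$-cell of Frobenius LD-functors is in particular $\parLL$-opmonoidal.

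The main steps, in order, are as follows. First, identify the GV-coalgebra structures on $D(A)$ and $D'(A)$: by Remark~\ref{rem: coalgebra and in homs}, the comultiplication and counit of $D(A)$ are $\higheroverline{\beta}_{A,A,K}\circ D(\mu)$ and $\higheroverline{\gamma}_K \circ D(\eta)$, and analogously for $D'(A)$ using $\beta$ and $\gamma$. Second, unwind what it means for $\pi_A$ to be a morphism of GV-coalgebras: by Remark~\ref{rem: coalgebra and in homs}, this is exactly Equations~\eqref{def:f is comultiplicative} and~\eqref{def:f is counital} with $f=\pi_A$. Third, observe that these two equations are precisely the defining relations~\eqref{eq: PivotalStructure1},~\eqref{eq: PivotalStructure2},~\eqref{eq: PivotalStructure3} of a pivotal structure, specialized and repackaged: indeed, \eqref{eq: PivotalStructure1} applied with the roles of $X,Y$ taken as $A,A$ and postcomposed with the appropriate canonical isomorphisms yields the comultiplicativity condition~\eqref{def:f is comultiplicative}, while~\eqref{eq: PivotalStructure2} gives counitality~\eqref{def:f is counital}. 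Alternatively, and more cleanly, one argues abstractly: $\pi$ is a $2$-cell in $\mathsf{LDN}$, hence $\parLL$-opmonoidal; precomposing (whiskering) with the GV-algebra $A\colon \ast \to \cC$ — viewed as a lax monoidal, hence via the $2$-equivalence a Frobenius LD-functor from the terminal category — yields a $\parLL$-opmonoidal natural transformation between the two GV-coalgebras, which is exactly a morphism of GV-coalgebras.

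I would carry this out via the abstract route: $D$ and $D'$ are Frobenius LD-functors $\cC^{\operatorname{rev}} \to \cC^{\operatorname{op}}$ (suitably interpreted) and $\pi$ is an isomorphism of such; a GV-algebra $A$ in $\cC$ is the image of $1\in \ast$ under a Frobenius LD-functor $\ast \to \cC$, equivalently a lax monoidal functor $\ast\to\cC$ with a Frobenius form, and then $D(A)$, $D'(A)$ with the structures of Remark~\ref{rem: coalgebra and in homs} are the composites $\ast \to \cC \xrightarrow{D,D'} \cC^{\operatorname{op}}$ as Frobenius LD-functors, i.e.\ GV-coalgebras (which live in the $\parLL$-monoidal structure, matching Lemma~\ref{lemma: GV-algebras-GV-coalgebras}). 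Horizontal composition (whiskering) of the $2$-cell $\pi$ with the $1$-cell $A$ produces a morphism of Frobenius LD-functors $\ast \to \cC^{\operatorname{op}}$ with component $\pi_A$; its $\parLL$-opmonoidality says precisely that $\pi_A$ intertwines the comultiplications and counits, i.e.\ it is a morphism of GV-coalgebras.

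The main obstacle I anticipate is bookkeeping rather than conceptual: one must be careful that the $\parLL^{\operatorname{rev}}$ appearing in the codomain $(\cC^{\operatorname{op}},\parLL^{\operatorname{rev}},K,1)$ of $D,D'$ interacts correctly with the convention that a GV-coalgebra is a coalgebra in $(\cC,\parLL,K)$ — i.e.\ that passing through $\cC^{\operatorname{op}}$ turns the lax $\otimes^{\operatorname{rev}}$-monoidal structure of $A$ into the correct $\parLL$-coalgebra structure on $D(A)$, and that the "reversal" does not secretly demand cocommutativity. Since the braided refinement (cocommutativity) is handled separately, this should come out consistently, but the direction of arrows must be tracked with care. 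If one prefers a hands-on verification instead, the obstacle becomes the direct check that~\eqref{eq: PivotalStructure1} and~\eqref{eq: PivotalStructure2} with $X=Y=A$, conjugated by the canonical isomorphisms $\beta,\higheroverline{\beta},\iota,\gamma,\higheroverline{\gamma}$ and using the naturality of $\pi$ together with the algebra axioms of $A$, reproduce~\eqref{def:f is comultiplicative} and~\eqref{def:f is counital} verbatim; this is a routine but somewhat lengthy diagram chase, best relegated to an appendix.
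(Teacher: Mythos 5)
Your overall strategy is the paper's: the paper proves this lemma simply by citing Remark~\ref{rem: equivalent def of pivotality}, i.e.\ the fact that a pivotal structure is a morphism of the GV-functors (equivalently, of the associated Frobenius LD-functors) $D,D'\colon(\cC,\otimes,1,K)\to(\cC^{\operatorname{op}},\parLL^{\operatorname{rev}},K,1)$ of Remark~\ref{ex: duality functors are GV}; since the GV-coalgebra structures of Remark~\ref{rem: coalgebra and in homs} on $D(A)$ and $D'(A)$ are exactly the transports of the algebra structure of $A$ along the strong monoidal structures of $D$ and $D'$, the monoidal naturality of $\pi$ makes $\pi_A$ a coalgebra morphism. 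Your fallback hands-on check is also correct, and is in fact much shorter than you anticipate: naturality of $\pi$ at $\mu$ and $\eta$ turns \eqref{eq: PivotalStructure1} with $X=Y=A$ and \eqref{eq: PivotalStructure2} directly into \eqref{def:f is comultiplicative} and \eqref{def:f is counital}; no lengthy diagram chase is required.

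One step of your preferred abstract route is wrong as stated: a GV-algebra $A$ is merely a lax monoidal functor $\ast\to\cC$ and carries no Frobenius form, so it is \emph{not} a $1$-cell of $\mathsf{GV}$, and hence does not correspond under Theorem~\ref{thm: GV equiv LDN} to a Frobenius LD-functor from the terminal category --- that would be an LD-Frobenius algebra, which is extra structure not assumed in the lemma. Consequently the whiskering of $\pi$ with $A$ cannot be performed inside $\mathsf{LDN}$ (or $\mathsf{GV}$). The repair is immediate: whisker in $\mathsf{MonCat}_l$ instead, noting that a morphism of GV-functors is in particular a monoidal natural transformation and that the composites $D\circ A,\,D'\circ A\colon \ast\to(\cC^{\operatorname{op}},\parLL^{\operatorname{rev}},K)$ are precisely the GV-coalgebras $D(A)$ and $D'(A)$; or skip the LD-translation altogether and use that a monoidal natural transformation between strong monoidal functors intertwines the induced algebra structures. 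With that correction your argument coincides with the paper's.
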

The following lemma is proved in Appendix~\ref{sec:Bimodules-GV-categories}.

\begin{lemma}\label{lemma: coalgebra iso via braiding}
Let \(A\) be a \emph{commutative} GV-algebra in a \emph{braided} GV-category \(\cC\). Recall the isomorphisms $\varphi^{\pm}_A\colon D'(A) \xrightarrow{\simeq} D(A)$ from Equation~\eqref{eq: identification of duals in braided GV-cats}. 
\begin{enumerate}[label=(\roman*)]
	\item $\varphi^{\pm}_A$ are isomorphisms of GV-coalgebras.
	\item $\varphi^{\pm}_A$ are isomorphisms of right $A$-modules. Here, \(D'(A)\) carries the right $A$-action $\newline l^{D'(A)}\circ c^{\pm}_{D'(A),A}$, where $l^{D'(A)}$ is the left $A$-action from Equation~\eqref{eq: left action on internal hom} in Lemma~\ref{lemma: module structure on left internal hom}, induced by the multiplication of $A$. The object \(D(A)\) carries the right $A$-action from the right internal hom version of Lemma~\ref{lemma: module structure on left internal hom}.
\end{enumerate}
\end{lemma}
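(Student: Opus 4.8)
The two claims are largely independent, and commutativity of $A$ will be used only in the first one.

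For part (i), the plan is to invoke Proposition~\ref{prop: Drinfeld iso is morphism of Frob LD}: it says that $\varphi^{\pm}$ is an isomorphism of Frobenius LD-functors $D'\circ J^{\pm 1}\to D$, hence in particular (Definition~\ref{def: morphism of Frob LD-functors}) a $\otimes$-monoidal natural transformation between the underlying strong $\otimes$-monoidal functors $(\cC,\otimes,1)\to(\cC^{\operatorname{op}},\parLL^{\operatorname{rev}},K)$ coming from Remark~\ref{ex: duality functors are GV} and Definition~\ref{def: Joyal-Street equivalence}. Since a $\otimes$-monoidal natural transformation carries an algebra to a morphism of algebras, $\varphi^{\pm}_A$ is a morphism of algebras $D'(J^{\pm 1}(A))\to D(A)$ in $(\cC^{\operatorname{op}},\parLL^{\operatorname{rev}},K)$, equivalently (Lemma~\ref{lemma: GV-algebras-GV-coalgebras}) a morphism of GV-coalgebras. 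Commutativity of $A$ enters through the identification $D'(J^{\pm 1}(A))=D'(A)$: the Joyal--Street equivalence $J^{\pm 1}$ is the identity on objects and morphisms, and as a strong $\otimes$-monoidal functor it transports $A$ to $A$ equipped with $\mu$ precomposed with a double braiding of $A$ with itself, which equals $\mu$ because $\mu\circ c^{\pm}_{A,A}=\mu$ for a commutative algebra. Being invertible, $\varphi^{\pm}_A$ is therefore an isomorphism of GV-coalgebras. (Alternatively, one can verify Equations~\eqref{def:f is comultiplicative} and~\eqref{def:f is counital} for $(\varphi^{\pm}_A)^{-1}$ directly from the explicit descriptions of $\Delta$ and $\epsilon$ in Remark~\ref{rem: coalgebra and in homs}, using Lemma~\ref{lemma: tildec ev and coev}, commutativity of $A$, and naturality of $c$; this may be the route taken in the appendix.)

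For part (ii), I would prove the following general statement and then specialize: for every right $A$-module $(M,r^M)\in\cC_A$ and every $N\in\cC$, the isomorphism
\[
	\widetilde{c}^{\,\pm}_{M,N}\colon\; M\multimap N \;\longrightarrow\; N\multimapinv M
\]
of Remark~\ref{rem:braided closed mon cats} is an isomorphism of right $A$-modules, where $M\multimap N$ carries the right action $l^{M\multimap N}\circ c^{\pm}_{M\multimap N,A}$ with $l^{M\multimap N}$ the left $A$-action of Lemma~\ref{lemma: module structure on left internal hom}(i), and $N\multimapinv M$ carries the right $A$-action from the right-internal-hom analog of Lemma~\ref{lemma: module structure on left internal hom}. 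Part (ii) is then the case $M=A$ (with its regular right $A$-action) and $N=K$, using $\varphi^{\pm}_A=\widetilde{c}^{\,\pm}_{A,K}$, $D'(A)=A\multimap K$, and $D(A)=K\multimapinv A$; commutativity of $A$ is not needed here. To prove the displayed statement I would rewrite $l^{M\multimap N}$ via Lemma~\ref{lemma: rewrite inHom action} as $\operatorname{ev}^A_{M\multimap N}\circ(A\otimes\beta_{M,A,N})\circ(A\otimes(r^M\multimap N))$, rewrite the right action on $N\multimapinv M$ by the parallel right-internal-hom formula, and then substitute the defining formulas for $\widetilde{c}^{\,\pm}_{M,N}$ and its inverse from Remark~\ref{rem:braided closed mon cats}. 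Both sides of the required module-morphism identity then become composites of braidings, associators, and internal (co)evaluations, and they are matched using Lemma~\ref{lemma: tildec ev and coev} (to move $\widetilde{c}^{\,\pm}$ past the (co)evaluations), naturality of $c$, and the hexagon axioms.

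The main obstacle is this last diagram chase in part (ii): keeping the interleaving of the braiding with the internal evaluations and the associator under control. As with the other appendix computations involving $\widetilde{c}^{\,\pm}$, a graphical presentation is probably cleanest. I do not expect part (ii) to follow formally from the Frobenius LD-functor machinery used in part (i), since it concerns module structures, rather than (co)algebra structures, on $D(A)$ and $D'(A)$.
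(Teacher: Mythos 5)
Your treatment of part (i) is correct and is essentially the paper's argument: the paper deduces (i) directly from Proposition~\ref{prop: Drinfeld iso is morphism of Frob LD}, and your account of where commutativity enters (to identify $J^{\pm 1}(A)$ with $A$ as algebras, hence $D'(J^{\pm 1}(A))$ with $D'(A)$ as GV-coalgebras) is the intended reading.

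Part (ii), however, has a genuine gap: your claim that commutativity of $A$ is not needed is false, and the auxiliary general statement you propose is not true in the form stated. The two right actions being compared genuinely differ for noncommutative $A$. Concretely, take $\cC$ to be finite-dimensional vector spaces with the symmetric braiding and $K=k$, and let $A$ be noncommutative. Then $D'(A)=A\multimap K$ carries the action $(f\cdot a)(b)=f(ba)$ (the left action $l^{D'(A)}$ of Lemma~\ref{lemma: module structure on left internal hom}, induced by $\mu$ as the right regular action, converted through the braiding), whereas $D(A)=K\multimapinv A$ carries $(f\cdot a)(b)=f(ab)$ (the right-internal-hom version, induced by $\mu$ as the left regular action), and $\varphi^{\pm}_A$ is the canonical identification $A^{\ast}\to A^{\ast}$; it intertwines these actions iff $f(ba)=f(ab)$ for all $f,a,b$, i.e.\ iff $A$ is commutative. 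For the same reason your general lemma fails as stated (note also that the right-internal-hom analogue of Lemma~\ref{lemma: module structure on left internal hom}(i) requires a \emph{left} action on the contravariant argument, so for a general right module $(M,r^M)$ you would first have to convert $r^M$ through the braiding — and then the specialization $M=A$ only recovers the action appearing in the lemma when $\mu\circ c^{\pm}_{A,A}=\mu$). The paper's own computation uses commutativity at exactly one crucial point: its step (6) reduces via Yoneda to Equation~\eqref{eq: cplusminus mu}, which needs $\mu\circ c^{\pm}_{A,A}=\mu$ in addition to the hexagon axioms and naturality. So the diagram chase you outline using only Lemma~\ref{lemma: tildec ev and coev}, naturality of $c$, and the hexagons cannot close; commutativity of $A$ must be invoked in part (ii) as well.
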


\subsection{Hopf monads and Hopf algebroids}\label{sec: Hopf monads and Hopf algebroids}

We now turn to another class of algebraic structures to which we will apply the lifting theorem: Hopf monads and Hopf algebroids. The following facts and examples are all well-known.
\subsubsection{\normalfont\textbf{Hopf monads}}
As in \cite{MoerMonads, BVHopf, BLV}, the notion of a bialgebra generalizes to monoidal categories that are not necessarily braided:

\begin{definition}
    A \emph{bimonad} on a monoidal category $\cC$ is a monad $(T,\mu,\eta)$ on $\cC$, together with an opmonoidal structure $(\upsilon^2,\upsilon^0)$ on the functor $T$, with respect to which the monad multiplication $\mu\colon T^2\rightarrow T$ and unit $\eta\colon \operatorname{id}_{\cC}\rightarrow T$ are opmonoidal natural transformations. 
\end{definition}

\begin{definition}(\cite[\S3]{MoerMonads}).
	A bimonad on a braided monoidal category $\cC$ is called \emph{cocommutative} if its underlying oplax monoidal functor is braided.
\end{definition}

Recall the category of modules over a monad (also known as the Eilenberg--Moore category):

\begin{definition}\label{def:eilenberg-moore category}
    Let $(T,\mu,\eta)$ be a monad on a category $\cC.$ A \emph{$T$-module} consists of an object $M\in \cC$ and a morphism $\omega\colon\, T(M) \to M$ such that $\omega\circ T(\omega)\,=\,\omega\circ\mu_M$ and $\omega\circ\eta_M\,=\,\operatorname{id}_M.$\linebreak A \emph{morphism of $T$-modules} $(M,\omega) \to (N,\gamma)$ is a morphism $f\in \operatorname{Hom}_{\cC}(M, N)$ such that\linebreak $f\circ \omega\,=\,\gamma\circ T(f).$ The resulting category of $T$-modules is denoted $\cC^T$.
\end{definition}

The following remark justifies the term ‘bimonad’:

\begin{remark}[Bimonads lift monoidal structures]\label{monoidal structure on CT}
	Let $T$ be a bimonad on a monoidal category $\cC=(\cC,\otimes,1)$. Bimonad structures on $T$ correspond bijectively to monoidal structures on $\cC^T$ making the forgetful functor $U_T\colon \cC^T\to \cC$ strict monoidal; see \cite[Thm. 7.1]{MoerMonads} or \cite[Thm. 2.3]{BVHopf}. For a bimonad $T$, the category $\cC^T$ is monoidal with
	\[
	(M,\omega)\otimes (N,\gamma) := (M\otimes N, (\omega\otimes\gamma)\circ \upsilon^{2,T}_{M,N})
	\qquad
	\text{and}
	\qquad 
	1_{\cC^T} := (1, \upsilon^{0,T}).
	\]
\end{remark}

\begin{definition}(\cite[\S2.6]{BLV}).
    Let $T$ be a bimonad on a monoidal category $(\cC,\otimes,1).$ The \emph{left fusion operator} of $T$ is the natural transformation 
    \begin{equation*}
    H^l\colon \;T\circ \otimes \circ ({T}\times {\operatorname{id}_{\cC}})\,\Longrightarrow \,\otimes \circ (T\times T),
	\end{equation*} 
    defined, for $X,Y\in \cC$, by \begin{equation*}
        H^l_{X,Y}\,:=\,(\mu_X \otimes T(Y))\circ \upsilon^2_{T(X),Y}.
    \end{equation*} The \emph{right fusion operator} is the left fusion operator in the reversed monoidal category $\cC^{\text{rev}}.$
\end{definition}

A bialgebra in a braided monoidal category admits an antipode (is a Hopf algebra) if and only if its left (or equivalently right) fusion morphism is invertible; see \cite[Prop. 10]{ToHeFlo} for a discussion using string diagrams. This characterization motivates the following terminology:
\begin{definition}\label{def: Hopf monad}(\cite[\S2.6]{BLV}).
    A bimonad is \emph{left} (respectively, \emph{right}) \emph{Hopf} if its left (respectively, right) fusion operator is invertible, and \emph{Hopf} if it is both left and right Hopf.
\end{definition}

\begin{remark}[Left/right distinction]\label{rem: Left vs. right Hopf}
	Since $\cC=(\cC,\otimes,1)$ and its reverse $\cC^{\text{rev}}=(\cC,\otimes^{\text{rev}},1)$ are a priori not identified in our setting, left and right fusion operators must be distinguished. Yet, for a cocommutative bimonad on a braided monoidal category, being left Hopf is clearly equivalent to being right Hopf.
\end{remark}

For later use, we recall a result on bimonads in closed monoidal categories.

\begin{theorem}\label{Thm: Hopf monad closed}\emph{(\cite[Thm. 3.6]{BLV}).}
Let $T$ be a bimonad on a left (resp. right) closed monoidal category $\cC$. The following assertions are equivalent:
\begin{enumerate}[label=(\roman*)]
    \item The bimonad $T$ is a left (resp. right) Hopf monad.
    \item The monoidal category $\cC^T$ of $T$-modules is left (resp. right) closed, and the forgetful functor $U_T\colon \cC^T \rightarrow \cC$ is left (resp. right) closed.
\end{enumerate}
\end{theorem}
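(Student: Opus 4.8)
This statement is Theorem~\ref{Thm: Hopf monad closed}, cited from \cite[Thm. 3.6]{BLV}, so the task is to reconstruct the proof of the equivalence $(i)\Leftrightarrow(ii)$ characterizing left (resp. right) Hopf monads on closed monoidal categories.

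\textbf{Overall strategy.} The plan is to work throughout in the left-closed case, the right-closed case following by passing to the reversed monoidal category $\cC^{\text{rev}}$. The key idea is that the forgetful functor $U_T\colon \cC^T \to \cC$ is monadic, so by the adjoint lifting / Beck-style theorems, a right adjoint to $(M,\omega)\mapsto (X\otimes M, \ldots)$ on $\cC^T$ exists and is computed by an equalizer, provided $\cC^T$ has the relevant equalizers and the free functor preserves them. Rather than chasing equalizers, though, I would follow the cleaner route of \cite{BLV}: directly exhibit a candidate internal hom on $\cC^T$ and show that giving it a well-defined $T$-action is \emph{equivalent} to invertibility of the left fusion operator $H^l$.

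\textbf{Step 1: the free-module case.} First I would analyze the internal hom of free $T$-modules. For $X\in \cC$, the free module is $(T(X),\mu_X)$. A natural candidate for $(T(X),\mu_X)\multimap(T(Y),\mu_Y)$ in $\cC^T$ is built on the underlying object $T(X)\multimap T(Y)$ of $\cC$, with a $T$-action that must be adjoint to the structure map $T(X)\otimes\big(T(X)\multimap T(Y)\big)\to T(Y)$ twisted through $T$. Writing out the adjunction isomorphism $\operatorname{Hom}_{\cC^T}\big((T(X),\mu_X)\otimes (M,\omega),(T(Y),\mu_Y)\big)\cong \operatorname{Hom}_{\cC^T}\big((M,\omega),?\big)$ and using that $U_T$ is strict monoidal, one sees that the needed $T$-module structure on $T(X)\multimap T(Y)$ exists precisely when one can solve a transposition problem whose obstruction is exactly the failure of $H^l_{X,T(Y)}$ (or a component thereof) to be invertible. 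This is the computational heart and the main obstacle: matching the abstract ``right adjoint exists'' condition with the concrete fusion-operator invertibility requires carefully juggling the opmonoidal structure $\upsilon^2$, the monad multiplication $\mu$, the tensor--hom adjunction counit $\operatorname{ev}$, and Lemma~\ref{lemma:extranaturality of eval and coeval}.

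\textbf{Step 2: from free to all modules, and back.} For $(i)\Rightarrow(ii)$: once internal homs of free modules are constructed, I would extend to arbitrary $(M,\omega),(N,\gamma)\in\cC^T$ by presenting $M$ and $N$ via the canonical (reflexive) coequalizer of free modules $T^2M\rightrightarrows TM\to M$ and using that $U_T$ creates such coequalizers and that $X\otimes(?)$ preserves them in a closed monoidal category; the right adjoint then also exists by the adjoint functor theorem argument, or by taking the corresponding equalizer. One checks $U_T$ is left closed, i.e.\ the comparator $\tau^{l,U_T}$ is invertible, by naturality from the free case. For the converse $(ii)\Rightarrow(i)$: assume $\cC^T$ is left closed and $U_T$ is left closed. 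Apply this to the free modules: the comparator isomorphism $\tau^{l,U_T}_{T(X),T(Y)}$ together with the adjunction says the transposition problem of Step 1 is solvable for all $X,Y$, and specializing $Y$ appropriately (e.g. $Y=\mathbf{1}$, or using the natural transformation directly) forces $H^l_{X,Z}$ to be invertible for all $X,Z\in\cC$; this is essentially reversing the equivalence established in Step 1. I expect Step 1 to be where all the real work lies — Steps 2 is then a routine "extend along colimits / invert along the equivalence" argument, modulo bookkeeping with coherence data. Since this is a verbatim citation, in the actual paper I would simply reference \cite[Thm. 3.6]{BLV} rather than reproduce this proof.
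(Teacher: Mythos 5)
The paper itself contains no proof of Theorem~\ref{Thm: Hopf monad closed}: it is quoted verbatim from \cite[Thm.~3.6]{BLV} and used only as an input to Propositions~\ref{hopf monad on gv} and~\ref{Hopf algebroids lifting}. So the paper's ``proof'' is exactly your closing sentence --- cite \cite{BLV} --- and in that respect your proposal matches what the manuscript actually does.

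Judged as a reconstruction of the cited argument, however, your sketch has a real gap: the whole content of the theorem sits in your Step~1, and Step~1 is asserted rather than carried out. The claim that a $T$-module structure on $T(X)\multimap T(Y)$ exists ``precisely when'' the fusion operator is invertible is not justified, and as stated it is not even the right dichotomy --- the issue is not bare existence of some action, but existence of an action for which the evaluation and coevaluation of $\cC$ lift to $\cC^T$ (so that the tensor--hom adjunction holds there) and for which the comparator $\tau^{l,U_T}$ is invertible. A complete argument in the style of \cite{BLV} does not need the free-module-plus-coequalizer detour at all: when $H^l$ is invertible one endows, for \emph{arbitrary} modules $(M,r)$ and $(N,s)$, the object $M\multimap N$ itself with a $T$-action obtained by transposing a composite built from $(H^l)^{-1}$, $T(\operatorname{ev})$ and the actions $r,s$, and then verifies the lifted adjunction directly; this also explains why no (co)limit hypotheses on $\cC$ appear in the statement, whereas your extension step tacitly needs the relevant equalizers in $\cC^T$ (salvageable, since the canonical presentation is $U_T$-split and internal homs turn split coequalizers into split equalizers, but that has to be argued). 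In the converse direction, ``specializing $Y$ appropriately (e.g.\ $Y=1$)'' does not produce an inverse of $H^l_{X,Y}$ for all $X,Y$; one must write down an explicit two-sided inverse from the unit and counit of the lifted adjunction evaluated at free modules. In short: as a stand-in for the citation your answer coincides with the paper; as a proof it correctly names the mechanism (the fusion operator) but leaves the crux unestablished.
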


\subsubsection{\normalfont\textbf{Hopf algebroids}}
In this subsection we discuss a particular class of Hopf monads. Throughout, let $R$ be an algebra over a commutative ring $k$. We follow \cite[\S7.1]{BLV} and adopt the following notation: 

\begin{remark}[Notation]\label{notation: enveloping algebra}
We denote by $_R {\operatorname{Mod}}_R$ the category of $R$-bimodules. This category can be identified with the category of left (respectively, right) $R^e$-modules $_{R^e} {\operatorname{Mod}}$ (respectively, ${\operatorname{Mod}}_{R^e}$), where ${R^e=R\otimes_k R^{\operatorname{op}}}$ is the enveloping algebra of $R$. The (right exact) tensor product of $R$-bimodules induces a monoidal product $\boxtimes$ on $_{R^e}{\operatorname{Mod}}$ (respectively, ${\operatorname{Mod}}_{R^e}$).  
\end{remark}

\begin{definition}\label{def:bialgebroid}
   A left (respectively, right) \emph{$R$-bialgebroid} is a $k$-linear bimonad on the monoidal category of left (respectively, right) $R^e$-modules that admits a right adjoint.
\end{definition}

\begin{remark}[Left/right distinction]
The monoidal categories of left and right $R^e$-modules are equivalent. Therefore, we restrict attention to left $R$-bialgebroids and refer to them simply as \emph{$R$-bialgebroids}.
\end{remark}

\begin{remark}[Commutative base algebra]\label{rem: Bialgebroid over commutative base}
If $R$ is commutative, a $k$-linear bimonad on the monoidal category of left $R$-modules that admits a right adjoint is also called an \emph{$R$-bialgebroid}. We will indicate when the term is used in this sense.
\end{remark}

\begin{definition}\label{def: cocommutative bialgebroid}
For commutative $R$, an $R$-bialgebroid as in Remark~\ref{rem: Bialgebroid over commutative base} is called \emph{cocommutative} if its underlying bimonad is cocommutative.
\end{definition}

\begin{definition}\label{def:hopf algebroid}
    An $R$-bialgebroid is called \emph{left} (respectively, \emph{right}) \emph{$R$-Hopf} if it is left (respectively, right) Hopf, and an \emph{$R$-Hopf algebroid} if it is both left and right Hopf.
\end{definition}

To recall a more explicit algebraic characterization of $R$-bialgebroids and $R$-Hopf algebroids, we need the following definitions:

\begin{definition}
An \emph{$R$-ring} is an algebra in the monoidal category $_{R}{\operatorname{Mod}}_{R}$ of $R$-bimodules. A \emph{morphism of $R$-rings} from $(A,\mu_A,\eta_A)$ to $(B,\mu_B,\eta_B)$ is a morphism $f\colon A\rightarrow B$ of $R$-bimodules such that ${\mu_B\circ(f\otimes_R f)=f\circ\mu_A}$ and $f\circ\eta_A=\eta_B$. This defines the category $\operatorname{Alg}(_R {\operatorname{Mod}}_R)$ of $R$-rings.
\end{definition}

\begin{definition}\label{module over r-ring}(\cite[Def. 2.3]{HandbookBoehm}).
The category of \emph{(left) modules} over an $R$-ring $A$ is defined as the category of modules over the monad $T_A:=\,A\otimes_{R}{?}$ on $_{R}{\operatorname{Mod}}$.
\end{definition}

\begin{remark}[Caveat]
Modules over an $R$-ring $A$ in the sense of Definition~\ref{module over r-ring} are left $R$-modules only. In particular, they are not defined as modules over the algebra $A$ in $_R {\operatorname{Mod}}_R$.
\end{remark}

The following lemma reshuffles algebraic data; the proof is left as an exercise.
\begin{lemma}\label{isoOverRing}\emph{(\cite[Lem. 2.2]{HandbookBoehm}).}
The category $\operatorname{Alg}(_R {\operatorname{Mod}}_R)$ of $R$-rings is isomorphic to the coslice category $R\downarrow \operatorname{Alg}(_{k}{\operatorname{Mod}})$ of the category of $k$-algebras $\operatorname{Alg}(_{k}{\operatorname{Mod}})$ under $R$.
\end{lemma}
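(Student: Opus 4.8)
The plan is to construct an isomorphism of categories directly by exhibiting mutually inverse functors. In one direction, given an $R$-ring $(A,\mu_A,\eta_A)$ — that is, an algebra in $_R{\operatorname{Mod}}_R$ — I would equip the underlying $k$-module $A$ with the $k$-algebra structure whose multiplication is $A\otimes_k A \twoheadrightarrow A\otimes_R A \xrightarrow{\mu_A} A$ and whose unit is $k \to R \xrightarrow{\eta_A} A$, and then record the structure morphism $\eta_A\colon R\to A$; the claim is that $\eta_A$ is a homomorphism of $k$-algebras, so $(A,\eta_A)$ is an object of $R\downarrow \operatorname{Alg}(_{k}{\operatorname{Mod}})$. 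In the other direction, given a $k$-algebra $A$ together with a $k$-algebra map $f\colon R\to A$, I would give $A$ the $R$-bimodule structure $r\cdot a\cdot r':= f(r)\,a\,f(r')$, observe that the multiplication of $A$ descends to a morphism $\mu\colon A\otimes_R A\to A$ of $R$-bimodules (well-definedness and bilinearity are immediate from associativity of $A$ and multiplicativity of $f$), and take $\eta:=f$ as unit; associativity and unitality of $(A,\mu,\eta)$ are inherited from $A$.

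The one point that genuinely requires care is multiplicativity of the unit $\eta_A$ of an $R$-ring. Inside the $R$-bimodule $R$ one has $r = r\cdot 1_R$ and $r' = 1_R\cdot r'$, so $R$-bilinearity of $\eta_A$ gives $\eta_A(r)=r\cdot\eta_A(1_R)$ and $\eta_A(r')=\eta_A(1_R)\cdot r'$; then, using that $\mu_A$ is a morphism of $R$-bimodules and that $\eta_A(1_R)$ is the internal unit, $\eta_A(r)\,\eta_A(r') = \mu_A\big((r\cdot\eta_A(1_R))\otimes_R(\eta_A(1_R)\cdot r')\big) = r\cdot\mu_A\big(\eta_A(1_R)\otimes_R\eta_A(1_R)\big)\cdot r' = r\cdot\eta_A(1_R)\cdot r' = \eta_A(r\cdot 1_R\cdot r') = \eta_A(rr')$, and $\eta_A(1_R) = 1_A$ by construction. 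Conversely one checks that $(A,f)\mapsto(A,\mu,\eta)$ does land among $R$-rings, which uses only associativity and unitality of $A$ together with the fact that $f$ preserves products and the unit.

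Finally I would verify that the two passages are mutually inverse and respect morphisms. Starting from $(A,\mu_A,\eta_A)$, unitality of $\mu_A$ yields $\mu_A(\eta_A(r)\otimes_R a)=r\cdot a$ and $\mu_A(a\otimes_R\eta_A(r))=a\cdot r$, so the $R$-bimodule and the product reconstructed from $(A,\eta_A)$ are the original ones; the reverse round trip is equally immediate. On arrows, a morphism of $R$-rings $g\colon A\to B$ is $R$-bilinear, multiplicative and unital, hence a $k$-algebra map with $g\circ\eta_A=\eta_B$, i.e.\ a morphism in the coslice category; conversely, a $k$-algebra map $g$ with $g\circ f_A = f_B$ is automatically $R$-bilinear because $g(f_A(r)\,a\,f_A(r'))=f_B(r)\,g(a)\,f_B(r')$. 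Since both assignments plainly preserve identities and composition, they constitute the claimed isomorphism of categories. There is no real obstacle here — the statement is a bookkeeping identity — and the only subtlety worth flagging is not to conflate the ambient $k$-algebra product of $A$ with the internal multiplication $\mu_A$ until the identification has been put in place.
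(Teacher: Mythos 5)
Your proof is correct, and it is exactly the straightforward data-reshuffling that the paper (which leaves this as an exercise, citing B\"ohm's handbook) has in mind: the only genuinely non-trivial point, multiplicativity of $\eta_A$, you handle properly via $R$-bilinearity of $\eta_A$ and $\mu_A$ together with unitality, and the round-trip and morphism checks are as they should be.
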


The next notion has a long history, e.g. \cite{SweedlerGroups,TakeuchiGroups,HandbookBoehm}. We follow \cite[\S7.1]{BLV} in our presentation of this notion:
\begin{definition}\label{takeuchi bialgebra}
Let $R$ be an algebra over a commutative ring $k$. A \emph{(left) $\times_R$-bialgebra} consists of the data $(B,s,t,\Delta,\epsilon)$, where:
\begin{itemize}
    \item $B=(B,\mu,\eta)$ is a $k$-algebra with multiplication $\mu\colon \,B\otimes_k B\rightarrow B$.
    \item The \emph{source} $s\colon R\rightarrow B$ and the \emph{target} $t\colon R^{\operatorname{op}}\rightarrow B$ are $k$-algebra morphisms whose images in $B$ commute. This yields a $k$-algebra morphism
\begin{equation*}
e:=\,\mu\circ (s \otimes_k t)\colon \,R^{e}\rightarrow B,
\end{equation*}
which with Lemma~\ref{isoOverRing} gives rise to an $R^e$-ring structure on $B$. We denote the underlying $R^{e}$-bimodule (resp. left $R^{e}$-module) of this $R^e$-ring by $_e{B}_e$ (resp. by $_e{B}$).
    \item $(_eB,\Delta,\epsilon)$ is a coalgebra in the monoidal category $(_{R^e}\operatorname{Mod},\boxtimes,R).$
\end{itemize}
In this situation, the \emph{(left) Takeuchi product} $B {\operatorname{\times}}_R B\subset {_e B \boxtimes {_e B}}$, defined by
\begin{equation*}
    B {\operatorname{\times}}_R B := \,\left\{ \sum a_i\otimes b_i \ \vert \ \sum a_it(r)\otimes b_i=\sum a_i \otimes b_i s(r) \text{ for all } r\in R\right\},
\end{equation*}
is a $k$-algebra with product given by factorwise multiplication on representatives.

We require:
\begin{enumerate}[label=(\roman*)]
    \item $\Delta(B)\subset B {\operatorname{\times}_R} B$.
    \item $\Delta\colon \,B \rightarrow B {\operatorname{\times}_R} B$ is a $k$-algebra morphism.
    \item $\epsilon(b s(\epsilon(b^{\prime})))\,=\,\epsilon(bb^{\prime})\,=\,\epsilon(b t(\epsilon(b^{\prime})))$ for all $b,b^{\prime}\in B$.
    \item $\epsilon (1_B)\,=\,1_R.$
\end{enumerate}
We call the underlying $k$-algebras $B$ and $R$ the \emph{total algebra} and the \emph{base algebra}.

Right $\times_R$-bialgebras are defined analogously.
\end{definition}

\begin{remark}[Equivalent definitions]
    The notions of $\times_R$-bialgebra, Lu's $R$-bialgebroid \cite{LuBialgebroids} and Xu's $R$-bialgebroid with an anchor \cite{XuQuantum} are all equivalent; see \cite{BrMiBialg}. 
\end{remark}

\begin{remark}[Commutative base algebra]\label{def: timesR-bialgebra for R commutative}
For commutative $R$, one often additionally requires that the source and target maps of a $\times_R$-bialgebra $B$ agree. In this case, $(_eB,\Delta,\epsilon)$ becomes a coalgebra in the symmetric monoidal category $_R{\operatorname{Mod}}$ of left $R$-modules.
\end{remark}

\begin{definition}\label{def: cocomm timesR-bialgebra}
 Let  $B$ be a $\times_R$-bialgebra with commutative base algebra $R$ in the sense of Remark~\ref{def: timesR-bialgebra for R commutative}. If the coalgebra $(_eB,\Delta,\epsilon)$ in $_R{\operatorname{Mod}}$ is cocommutative, we call $B$ \emph{cocommutative}.
\end{definition}

\begin{definition}\label{def:moduel over timesR-bialgebra} Let $B$ be a $\times_R$-bialgebra. The category $B\operatorname{-Mod}$ of \emph{(left) $B$-modules} is the category of left modules over the $R^{e}$-ring $_eB_e$ in the sense of Definition~\ref{module over r-ring}.
\end{definition}

The notions from Definitions~\ref{def:bialgebroid} and~\ref{takeuchi bialgebra} are indeed equivalent:

\begin{theorem}\label{Sz corr bialgebroied}\emph{(\cite{SzCorrespondence}).}
As in Definition~\ref{module over r-ring}, given an $R^e$-ring $B$, denote by $_B T$ the monad $B\otimes_{R^e}{?}$ on the monoidal category $(_{R^e} {\operatorname{Mod}},\boxtimes)$.

The assignment $B\mapsto {_BT}$ yields a correspondence between $\times_R$-bialgebras and $R$-bialgebroids.
\end{theorem}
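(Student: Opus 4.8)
The plan is to factor the asserted correspondence into two layers. The first layer identifies, among all $k$-linear monads on $(_{R^e}\operatorname{Mod},\boxtimes)$ that admit a right adjoint, exactly those of the form $B\otimes_{R^e}(-)$ for an $R^e$-ring $B$; the second layer matches the extra opmonoidal structure that promotes such a monad to a bimonad (in the sense of Definition~\ref{def:bialgebroid}) with the comultiplication, counit, and Takeuchi axioms of a $\times_R$-bialgebra (Definition~\ref{takeuchi bialgebra}).

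For the first layer I would invoke the Eilenberg--Watts theorem: a $k$-linear monad $T$ on $_{R^e}\operatorname{Mod}$ that admits a right adjoint preserves all colimits, hence is isomorphic to $B\otimes_{R^e}(-)$ for the $R^e$-bimodule $B:=T(R^e)$, and the monad structure transports to an $R^e$-ring (i.e.\ algebra in $R$-bimodules) structure on $B$. Conversely, for an $R^e$-ring $B$ the monad $_BT := B\otimes_{R^e}(-)$ is $k$-linear with right adjoint the internal-hom functor $\operatorname{Hom}_{R^e}(B,-)$ built from the right $R^e$-action on $B$, and these assignments are mutually inverse up to isomorphism. Via Lemma~\ref{isoOverRing}, an $R^e$-ring $B$ is the same datum as a $k$-algebra $B$ with a $k$-algebra map $R^e\to B$, i.e.\ a commuting pair $s\colon R\to B$, $t\colon R^{\operatorname{op}}\to B$ — the source, target, and total algebra of the $\times_R$-bialgebra.

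For the second layer I would use that, $T=B\otimes_{R^e}(-)$ being cocontinuous, an opmonoidal structure $(\upsilon^2,\upsilon^0)$ relative to $\boxtimes=\otimes_R$ is determined by a single $R^e$-bimodule map $\Delta\colon {_eB}\to {_eB}\otimes_R {_eB}$ (coming from $\upsilon^2$) and a single $R^e$-bimodule map $\epsilon\colon {_eB}\to R$ (coming from $\upsilon^0$, after identifying $T(R)=B\otimes_{R^e}R$ with a quotient of $B$; here $R$ is the $\boxtimes$-unit), via the Sweedler-type formula $b\otimes(m\otimes n)\mapsto \sum(b_{(1)}\otimes m)\otimes(b_{(2)}\otimes n)$. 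I would then verify that (i) well-definedness of this formula over the middle $\otimes_R$ both forces and is forced by $\Delta(B)\subseteq B\times_R B$; (ii) opmonoidal coassociativity and counitality of $(\upsilon^2,\upsilon^0)$ are precisely coassociativity and counitality of $(\Delta,\epsilon)$, so that $({_eB},\Delta,\epsilon)$ is a coalgebra in $(_{R^e}\operatorname{Mod},\boxtimes,R)$; and (iii) using Remark~\ref{monoidal structure on CT}, requiring the monad unit $\eta$ and multiplication $\mu$ to be opmonoidal natural transformations unwinds to $\epsilon(1_B)=1_R$ and the weak counitality of $\epsilon$ (axioms (iii)--(iv) of Definition~\ref{takeuchi bialgebra}) on the one hand, and to $\Delta$ being a $k$-algebra morphism $B\to B\times_R B$ on the other. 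Left and right Hopfness (Definition~\ref{def: Hopf monad}) are properties of the underlying bimonad, so they transport automatically; tracking morphisms through both layers upgrades the bijection to a functorial correspondence.

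The step I expect to be the main obstacle is the bookkeeping in layer two: one must keep precise control of the several $R^e$-module structures on $B$ (those obtained through $s$, through $t$, and the left/right structures they induce on $B$ and on $B\otimes_R B$), identify the $\boxtimes$-unit $R$ with the base algebra of $\epsilon$, and show that the Takeuchi product $B\times_R B$ is \emph{precisely} the subobject of $B\boxtimes B$ carrying the factorwise multiplication that encodes compatibility of $\upsilon^2$ with $\mu$. This is exactly the delicate part of \cite{SzCorrespondence}; the equivalence with Lu's and Xu's formulations \cite{LuBialgebroids, XuQuantum} recorded in \cite{BrMiBialg} can be used to cross-check the identifications.
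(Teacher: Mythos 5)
The paper offers no proof of this statement to compare against: Theorem~\ref{Sz corr bialgebroied} is imported as a black box from Szlach\'anyi~\cite{SzCorrespondence} (cf.\ also \cite[\S 7.1]{BLV}), and nothing in the text or appendices argues it. Your two-layer strategy --- Eilenberg--Watts to identify $k$-linear right-adjoint monads on $_{R^e}\operatorname{Mod}$ with $R^e$-rings, then matching the opmonoidal structure of a bimonad with a pair $(\Delta,\epsilon)$ constrained by the Takeuchi axioms --- is precisely the route of the cited source, so the approach is the expected one rather than a genuinely different one.

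What you have written is, however, a plan rather than a proof: steps (i)--(iii) of your second layer, which is where all the content sits, are only announced. Two points to watch when you execute them. First, $\Delta\colon {_eB}\to {_eB}\boxtimes{_eB}$ and $\epsilon\colon {_eB}\to R$ are morphisms of \emph{left} $R^e$-modules (equivalently $R$-bimodules via $s$ and $t$), not ``$R^e$-bimodule maps'' as you wrote; the target ${_eB}\boxtimes{_eB}$ carries no natural right $R^e$-action, which is exactly why the Takeuchi product has to enter before $\Delta$ can be asked to be multiplicative. Second, your claim that an opmonoidal structure on $B\otimes_{R^e}(-)$ is determined by the single pair $(\Delta,\epsilon)$ needs an argument: the $\boxtimes$-unit $R$ is not a generator of $_{R^e}\operatorname{Mod}$, so the determination comes from cocontinuity of both bifunctors in each variable together with naturality applied to the generator $R^e$, i.e.\ from the component $\upsilon^2_{R^e,R^e}$ and the identification $B\otimes_{R^e}R^e\cong B$ (and similarly $\epsilon$ is extracted from $\upsilon^0$ via $B\cong B\otimes_{R^e}R^e\to B\otimes_{R^e}R$). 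With those details supplied --- well-definedness of the Sweedler-type formula over the middle $\otimes_R$ being equivalent to $\Delta(B)\subseteq B\times_R B$, and opmonoidality of $\eta$ and $\mu$ unwinding to axioms (i)--(iv) of Definition~\ref{takeuchi bialgebra} --- your outline does reproduce the correspondence being quoted.
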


\begin{remark}[$\times_R$-Hopf algebras are left $R$-Hopf algebroids]\label{rem: timesR Hopf as left Hopf algebrd}
    Under the correspondence in Theorem~\ref{Sz corr bialgebroied}, Schauenburg's $\times_R$-Hopf algebras \cite[Def. 3.5]{HopfSchauen} (which are defined as $R$-bialgebroids where a so-called Hopf-Galois map is invertible) correspond to right $R$-Hopf algebroids; see \cite[\S7.2]{BLV}.
\end{remark}

\begin{remark}[Further correspondences]\label{rem: further correspondences}
	\begin{enumerate}[label=(\roman*)]
		\item By definition, modules over a $\times_R$-bialgebra coincide with modules over the associated $R$-bialgebroid.
		\item If \(R\) is commutative, Theorem~\ref{Sz corr bialgebroied} specializes to a correspondence between the class of $\times_R$-bialgebras (as in Remark~\ref{def: timesR-bialgebra for R commutative}) and that of $R$-bialgebroids (as in Remark~\ref{rem: Bialgebroid over commutative base}).
		\item This correspondence further restricts to one between cocommutative $\times_R$-bialgebras (Definition~\ref{def: cocomm timesR-bialgebra}) and cocommutative $R$-bialgebroids (Definition~\ref{def: cocommutative bialgebroid}).
	\end{enumerate}
\end{remark}

\begin{example}[Enveloping algebras of Lie--Rinehart algebras]\label{ex: envelopin algebras of lie-rinehart}
Let $R$ be a commutative $k$-algebra.
A \emph{Lie--Rinehart algebra} \cite{Hue, KrMah} $L$ over $R$ is a $k$-Lie algebra $L$ equipped with an $R$-module structure $R \otimes_k L \rightarrow L, \; r \otimes x \mapsto r \cdot x$, and an $R$-linear $k$-Lie algebra morphism $\omega \colon L \to \operatorname{Der}_k(R)$ satisfying the \emph{Leibniz rule}
\begin{align}
[x, r \cdot y] \;&=\; r\cdot[x, y]+\omega(x)(r)\cdot y,
\end{align}
for all $x, y \in L$ and $r \in R$. Lie--Rinehart algebras are also known as $(R,L)$-Lie algebras \cite{Rine} or Lie algebroids \cite{abedin2024quantumgroupoidsmodulispaces}. A standard geometric example is the pair $(C^{\infty}(M),\Gamma(TM))$ for a smooth manifold $M$, where $R=C^{\infty}(M)$ is the (generally infinite-dimensional) algebra of smooth functions and $L=\Gamma(TM)$ is the Lie algebra of smooth vector fields. In contrast, we restrict our attention in this paper to finite-dimensional 
$R$.

The \emph{universal enveloping algebra} $\mathscr{U}_R(L)$ of a Lie--Rinehart algebra $L$ over $R$ is the universal $k$-algebra equipped with a morphism $\iota_R\colon R \to \mathscr{U}_R(L)$ of $k$-algebras and another morphism $\iota_L\colon L \to \mathscr{U}_R(L)$ of $k$-Lie algebras satisfying
\begin{align}
	\iota_L(r\cdot x) \;=\; \iota_R(r)\iota_L(x) \qquad \text{and} \qquad \iota_L(x)\iota_R(r)-\iota_R(r)\iota_L(x) \;=\; \iota_R\big(\omega(x)(r)\big),
\end{align}
for all $r\in R$ and $x\in L$; see, e.g., \cite{Hue90} for details. It is well known \cite[Example~8]{KoKr} that $\mathscr{U}_R(L)$ is a cocommutative $\times_R$-Hopf algebra with source and target maps $s = t= \iota_R$ and comultiplication and counit determined by
	\begin{align}
	\Delta(\iota_L(x)) \;=\; 1_R \boxtimes \iota_L(x)+ \iota_L(x) \boxtimes 1_R 
	\qquad \text{and} \qquad
	\epsilon(\iota_L(x)) \;=\; 0,
\end{align}
for all $x\in L$. By Remark~\ref{rem: Left vs. right Hopf}, $\mathscr{U}_R(L)$ is thus an $R$-Hopf algebroid over the commutative base $R$, in the sense of Remarks~\ref{rem: further correspondences}.(ii) and~\ref{rem: GV for comm base}.
\end{example}

We also need a more restrictive notion of Hopf algebroid than that in Definition~\ref{def:hopf algebroid}:

\begin{definition}(\cite[Def. 4.1]{HopfAlBoehmSzl}).\label{def: full Hopf algebroids}
	An \emph{antipode} on a $\times_R$-bialgebra $(B,s,t,\Delta,\epsilon)$ is an invertible anti-algebra map $S\colon B \to B$, with inverse $S^{-1}\colon B \to B$, such that
	\begin{align}
		S \circ t & \;=\; s,
	\end{align}
	and, for all $b\in B$,
	\begin{align}
		\big(S(b_{(1)})_{(1)} \, b_{(2)}\big) \boxtimes S(b_{(1)})_{(2)} &\;=\; 1_B \boxtimes S(b),\\
		S^{-1}(b_{(2)})_{(1)} \boxtimes  \big(S^{-1}(b_{(2)})_{(2)} \, b_{(1)}\big) &\;=\; S^{-1}(b) \boxtimes 1_B.
	\end{align}
	Here, the multiplication of the $R^e$-ring $_e{B}_e$ is denoted by concatenation.
	
	A $\times_R$-bialgebra, together with the datum of an antipode, is called a \emph{full $R$-Hopf algebroid}.
\end{definition}

\begin{remark}[Comparing definitions]\label{rem: full vs Hopf algebroids}
\begin{enumerate}[label=(\roman*)]
	\item The underlying $R$-bialgebroid of a full $R$-Hopf algebroid is indeed an $R$-Hopf algebroid in the sense of Definition~\ref{def:hopf algebroid}; see \cite[Prop. 4.2]{HopfAlBoehmSzl} or \cite[Prop. 2.7]{DaLaZan}. 
	\item Conversely, an $R$-Hopf algebroid need not admit an antipode \cite[Rem. 3.12]{KoPost}, \cite{KrRo}, \cite{Woj}. Even when one exists, it may fail to be unique \cite{Boehm}.
\end{enumerate}
\end{remark}

\begin{example}[Smash product algebras]\label{ex: smash product algebras}
	Let $H$ be a Hopf algebra over $k$ with invertible antipode $S_H$, and let $(R,\tikztriangleright,\rho)$ be a commutative algebra in the braided monoidal category ${}_H\mathcal{YD}^{H}$ of left-right Yetter--Drinfeld modules over $H$. The \emph{smash product algebra} $R \# H$ is the vector space $R \otimes_k H$ with unit $1_R \# 1_H$ and multiplication
	\begin{equation}
		(x \# g) \, (y \# h) \;:=\; x \cdot (g_{(1)}\tikztriangleright y)\, \# \, g_{(2)}\cdot_H h, \qquad \qquad x,y\in R, \quad g,h\in H.
	\end{equation}
	
	The $k$-algebra $R \# H$ becomes an $R$-bialgebroid \cite[Thm. 4.1]{BrMiBialg} with structure maps:
	\begin{align}
		s(x) &\;:=\; x \# 1_H,\\
		t(x) &\;:=\; \rho(x)={x_{\langle 0\rangle}} \# {x_{\langle 1\rangle}},\\
		\Delta(x \# g) &\;:=\; (x \# g_{(1)}) \boxtimes (1_R \# g_{(2)}),\\
		\epsilon(x \# g) &\;:=\; \epsilon_{H}(g)x,\label{eq: counit smash product algebra}
	\end{align}
	for $x\in R$, $g\in H$. As claimed in \cite[Ex. 4.14]{HopfAlBoehmSzl}, the following endomorphism $S$ on $R \# H$
	\begin{equation}\label{eq: antipode smash product algebra}
		S(x \# g) \;:=\; \big(S_H(g_{(2)}) \cdot_H S_H^2(x_{\langle 1 \rangle})\big) \tikztriangleright {x_{\langle 0\rangle}} \, \# \, S_H(g_{(1)}) \cdot_H S_H^2(x_{\langle 2\rangle}),
	\end{equation}
	where $x\in R$ and $g\in H$, defines an antipode, making $R \# H$ a full $R$-Hopf algebroid.
\end{example}

\begin{example}[Skew group algebras]\label{ex: skew group algebras}
	Let $(G, \tikztriangleright)$ be a group acting by algebra automorphisms on a commutative $k$-algebra $R$. Equipped with the trivial right $G$-coaction $r\mapsto r\otimes e_G$, the algebra $R$ becomes a commutative algebra in ${}_{k[G]}\mathcal{YD}^{k[G]}$. The corresponding smash product algebra $R \# k[G]$, classically called the \emph{skew group algebra}, is a full $R$-Hopf algebroid by Example~\ref{ex: smash product algebras}. Specializing Equations~\eqref{eq: counit smash product algebra} and~\eqref{eq: antipode smash product algebra}, its counit and antipode are
	\begin{align}
		\epsilon(x \# g)\;=\;x, \qquad \qquad S(x \# g) \;=\; (g^{-1} \tikztriangleright x) \# g^{-1}, \qquad \qquad x\in R, \quad g \in G.
	\end{align}
\end{example}

\smallskip

This completes the list of Hopf algebroids to which we will return in Section~\ref{sec: Applications}.

\medskip

\section{Lifting theorem}\label{sec:liftingGVstructures}

We are now ready to state the last main result of this paper.
\begin{theorem}\label{main thm}
Let \(\cC\) be a closed monoidal category equipped with a distinguished object \(K\in \cC\), and let \(\cD\) be a GV-category with dualizing object \(k\in \cD\). Let $U\colon \cC \rightarrow \cD$ be a lax monoidal functor equipped with a Frobenius form $\upsilon^{0,U}\colon U(K) \to k$. If $U$ is conservative, then $\cC$ is a GV-category with dualizing object $K$. Moreover, the pair \((U,\upsilon^{0,U})\) defines a GV-functor.
\end{theorem}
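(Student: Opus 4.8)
The plan is to reduce the statement to Proposition~\ref{CompDualizing}. Since $\cC$ is closed by hypothesis, it suffices to prove that $K$ is a \emph{dualizer} in $\cC$, i.e.\ that the unit $d^K$ and counit $\widetilde{d}^K$ of the adjunction $D_K\dashv D'_K$ of Remark~\ref{rem: double dualization morphism} are invertible. As $U$ is conservative, this follows once we establish that $U(d^K_X)$ and $U(\widetilde{d}^K_X)$ are invertible in $\cD$ for every $X\in\cC$.

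For this I would appeal to Lemma~\ref{lemma:left-right duality transfos relations}, which applies here: $\cC$ is closed, $\cD$ is closed by Remark~\ref{GV implies closed}, and $U$ is lax monoidal with the form $\upsilon^{0,U}$, so the comparators $\tau^{l,U},\tau^{r,U}$ and the duality transformations $\xi^{l,U},\xi^{r,U}$ are defined. The lemma gives, for every $X\in\cC$,
\begin{align*}
\xi^{l,U}_{D(X)}\circ U(d^K_X) &\;=\; D'(\xi^{r,U}_X)\circ d^k_{U(X)},\\
\xi^{r,U}_{D'(X)}\circ U(\widetilde{d}^K_X) &\;=\; D(\xi^{l,U}_X)\circ \widetilde{d}^k_{U(X)},
\end{align*}
the right-hand sides being formed from the closed-monoidal data of $\cD$. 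Here $d^k_{U(X)}$ and $\widetilde{d}^k_{U(X)}$ are invertible because $k$ is a dualizer in the GV-category $\cD$ (Proposition~\ref{CompDualizing}), while $\xi^{l,U},\xi^{r,U}$ are invertible exactly because $\upsilon^{0,U}$ is a Frobenius form (Definition~\ref{def: Frobenius form}); the functors $D,D'$ preserve these isomorphisms. Thus both right-hand sides are isomorphisms, and cancelling the leading isomorphisms $\xi^{l,U}_{D(X)}$ and $\xi^{r,U}_{D'(X)}$ on the left shows that $U(d^K_X)$ and $U(\widetilde{d}^K_X)$ are invertible.

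Conservativity of $U$ then forces $d^K_X$ and $\widetilde{d}^K_X$ to be invertible for all $X$, so $K$ is a dualizer and, by Proposition~\ref{CompDualizing}, $\cC$ is a GV-category with dualizing object $K$ (with duality functors $D=(K\multimapinv{?})$ and $D'=({?}\multimap K)$). Finally, once $(\cC,K)$ and $(\cD,k)$ are both GV-categories, the pair $(U,\upsilon^{0,U})$ --- a lax monoidal functor together with a Frobenius form --- is a GV-functor directly from Definition~\ref{def: GV-functor}, so nothing further is needed.

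The genuinely technical content sits in Lemma~\ref{lemma:left-right duality transfos relations}; everything else is bookkeeping with isomorphisms. The hard part will therefore be its proof --- a diagram chase matching the comparators $\tau^{l,U},\tau^{r,U}$ and the form against the (co)evaluations that define $d^K,\widetilde{d}^K$ --- and, for this reduction to be legitimate, one should confirm that that lemma and the definitions of $\xi^{l,U},\xi^{r,U}$ never presuppose $K$ or $k$ to be dualizing (they are set up when $K\in\cC$, $k\in\cD$ are only distinguished objects), so that there is no circularity.
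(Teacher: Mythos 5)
Your proposal is correct and follows essentially the same route as the paper: reduce via Proposition~\ref{CompDualizing} and conservativity to the invertibility of $U(d^K_X)$ and $U(\widetilde{d}^K_X)$, and obtain these from the compatibility of $\xi^{l,U},\xi^{r,U}$ with the (co)units $d,\widetilde{d}$ together with the invertibility of $d^k_{U(X)},\widetilde{d}^k_{U(X)}$ and of the duality transformations. The only difference is that you invoke Lemma~\ref{lemma:left-right duality transfos relations} directly (correctly noting it is stated without assuming $K$ or $k$ dualizing, so there is no circularity), whereas the paper's proof re-derives exactly that identity by an inline diagram chase; this is a legitimate and slightly cleaner packaging of the same argument.
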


\begin{remark}[\(U\) as a Frobenius LD-functor]\label{rem: Liftingt thm functor ist Frob LD}
 The GV-functor \(U\) is equivalently a Frobenius LD-functor between the LD-categories with negation \(\cC\) and \(\cD\), by Theorem~\ref{thm: GV equiv LDN}. In particular, \(U\) preserves LD-Frobenius algebras; see \cite[Rem. 3.15]{DeS}.
\end{remark}

Specializing Theorem~\ref{main thm} to $k=U(K)$ yields:

\begin{corollary}\label{second main thm}
Let \(\cC\) and \(\cD\) be closed monoidal categories, and let $U\colon \cC\rightarrow \cD$ be a lax monoidal functor. Let $K\in \cC$ be an object such that $U(K)\in \cD$ is dualizing. If the functor $U$ is conservative and closed, then $K$ is dualizing.
\end{corollary}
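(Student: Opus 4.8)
The plan is to obtain the statement as an immediate specialization of the lifting theorem (Theorem~\ref{main thm}): take $k := U(K)$ as the dualizing object of $\cD$ and the identity morphism $\upsilon^{0,U} := \id_{U(K)} \in \operatorname{Hom}_{\cD}(U(K),k)$ as the form on $U$. Granting that this identity form is Frobenius, the lifting theorem applies (since $U$ is conservative) and will immediately yield that $\cC$ is a GV-category with dualizing object $K$; in particular $K$ is dualizing, as claimed. As a bonus it will also follow that $(U,\id_{U(K)})$ is a GV-functor, but this is not needed for the statement.

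The one thing to check is therefore that $\upsilon^{0,U} = \id_{U(K)}$ is a Frobenius form, i.e.\ that its associated duality transformations $\xi^{l,U}$ and $\xi^{r,U}$ of Definition~\ref{def: duality transformations} are invertible. I would verify this by simply unwinding that definition: with $k = U(K)$ and $\upsilon^{0,U} = \id_{U(K)}$, the second leg of each defining composite, namely $U(X)\multimap \upsilon^{0,U}$ and $\upsilon^{0,U}\multimapinv U(X)$, is an identity morphism, so that
\begin{gather*}
	\xi^{l,U}_X \;=\; \tau^{l,U}_{X,K}\colon\; U(X\multimap K)\;\longrightarrow\; U(X)\multimap U(K),\\
	\xi^{r,U}_X \;=\; \tau^{r,U}_{K,X}\colon\; U(K\multimapinv X)\;\longrightarrow\; U(K)\multimapinv U(X),
\end{gather*}
for every $X\in \cC$. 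Since $U$ is assumed closed, both comparators $\tau^{l,U}$ and $\tau^{r,U}$ are invertible by definition, hence so are $\xi^{l,U}$ and $\xi^{r,U}$; thus $\id_{U(K)}$ is indeed a Frobenius form, and Theorem~\ref{main thm} delivers the conclusion.

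I do not expect a genuine obstacle at the level of this corollary: the only step is the routine identification above of the duality transformations with the (by hypothesis invertible) internal-hom comparators, and all the real work is concentrated in the proof of Theorem~\ref{main thm}. An alternative, self-contained route would be to invoke Proposition~\ref{CompDualizing}, reducing the claim to the invertibility of the unit $d^K$ and counit $\widetilde{d}^K$ of the adjunction $D_K \dashv D'_K$ on $\cC$; since $U$ is conservative it would then suffice to show $U(d^K_X)$ and $U(\widetilde{d}^K_X)$ are invertible, which one would do by rewriting them through the invertible comparators $\tau^{l,U},\tau^{r,U}$ together with the invertible $d^k,\widetilde{d}^k$ for the dualizer $k = U(K)$ in $\cD$. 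This route, however, essentially reproves the relevant case of Theorem~\ref{main thm}, so the specialization argument is preferable.
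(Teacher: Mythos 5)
Your proposal is correct and matches the paper's own proof: the paper likewise observes that closedness of $U$ makes the identity form $\operatorname{id}_{U(K)}$ Frobenius (precisely because the duality transformations then reduce to the invertible comparators $\tau^{l,U}_{X,K}$ and $\tau^{r,U}_{K,X}$) and then applies Theorem~\ref{main thm}. Your extra unwinding of Definition~\ref{def: duality transformations} is just a more explicit version of the same one-line argument.
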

\begin{proof}
    Since \(U\) is closed, the form $\operatorname{id}_{U(K)}$ is Frobenius. We can thus apply Theorem~\ref{main thm}.
\end{proof}

The following result is an immediate consequence of Corollary~\ref{second main thm}:
\begin{corollary}\label{cor lifting thm}
Let $U\colon \cC\rightarrow \cD$ be a strong monoidal functor between closed monoidal categories. Assume that $\cD$ is an r-category in the sense of Definition~\ref{def:GV-category}. If the functor $U$ is conservative and closed, then the monoidal category $\cC$ is an r-category.  
\end{corollary}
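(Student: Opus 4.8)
The plan is to deduce Corollary~\ref{cor lifting thm} directly from Corollary~\ref{second main thm} by taking $K := 1_{\cC}$, the monoidal unit of $\cC$. The only thing that needs to be verified is that the hypotheses of Corollary~\ref{second main thm} are met for this choice of $K$, i.e., that $U(1_{\cC})$ is a dualizing object of $\cD$.

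First I would invoke the strong monoidality of $U$: its unit comparator $\varphi^{0,U}\colon 1_{\cD} \to U(1_{\cC})$ is invertible, so $U(1_{\cC}) \cong 1_{\cD}$ in $\cD$. Since $\cD$ is an r-category, its monoidal unit $1_{\cD}$ is dualizing by definition. It then remains to observe that the property of being a dualizing object (Definition~\ref{def:GV-category}) is invariant under isomorphism: if $k \cong k'$ and $k$ is dualizing, then for every object $Y$ the functors $\operatorname{Hom}_{\cD}(-\otimes Y, k)$ and $\operatorname{Hom}_{\cD}(-\otimes Y, k')$ are naturally isomorphic, so the latter is representable by the same object $DY$, and the induced contravariant functor on $\cD$ — being naturally isomorphic to the duality functor associated to $k$ — is again an antiequivalence. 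Applying this with $k = 1_{\cD}$ and $k' = U(1_{\cC})$ shows that $U(1_{\cC})$ is dualizing in $\cD$.

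With this in hand, Corollary~\ref{second main thm}, applied to the conservative and closed (strong, hence lax) monoidal functor $U$ together with the object $K = 1_{\cC}$, yields that $1_{\cC}$ is a dualizing object of $\cC$; that is, $\cC$ is an r-category. I do not expect any genuine obstacle here: the entire substance of the argument has already been carried by Corollary~\ref{second main thm} (and ultimately Theorem~\ref{main thm}), and the sole remaining point — iso-invariance of dualizing objects — is immediate from the representability formulation in Definition~\ref{def:GV-category}.
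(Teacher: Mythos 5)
Your proposal is correct and is exactly the argument the paper intends: Corollary~\ref{cor lifting thm} is stated there as an immediate consequence of Corollary~\ref{second main thm}, obtained by taking $K=1_{\cC}$ and using that the strong monoidal structure gives $U(1_{\cC})\cong 1_{\cD}$, which is dualizing since $\cD$ is an r-category and being dualizing is invariant under isomorphism. Your explicit check of this iso-invariance is a correct filling-in of the detail the paper leaves implicit.
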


\begin{remark}[The rigid case]
As mentioned in the introduction, in the setting of Corollary~\ref{cor lifting thm}, if the monoidal category $\cD$ is actually rigid, then so is $\cC$; see \cite[Lem. 3.4]{BLV}.
\end{remark}

\begin{remark}[\(U\) as a \emph{strong} Frobenius LD-functor]
	The \emph{closed strong monoidal} functors \(U\) of Corollaries~\ref{second main thm} and~\ref{cor lifting thm} are equivalently \emph{strong} Frobenius LD-functors between the LD-categories with negation \(\cC\) and \(\cD\). This follows from Theorem~\ref{thm: GV equiv LDN} and Proposition~\ref{cor:strong monoidal implies closed}.
\end{remark}

\begin{proof}[Proof of Theorem~\ref{main thm}]
  Denote by ${\varphi^{2,U}}\colon {\otimes} \circ {(U\times U)} \rightarrow {U}\circ {\otimes}$ the multiplication morphism of the lax monoidal functor $U$. By Proposition~\ref{CompDualizing}, it suffices to show that the unit $d^K=d$ and the counit $\widetilde{d}^K=\widetilde{d}$ from Equations~\eqref{DoubDualMor1} and~\eqref{DoubDualMor2} are both invertible. Fix an object $X\in \cC$. We only prove the invertibility of $\widetilde{d}_X$; the proof for $d_X$ is dual. Since the functor $U$ is conservative, it suffices to show that the morphism $U(\widetilde{d}_X)$ is invertible. To do so, consider the following outer diagram, whose top line is just $U(\widetilde{d}_X)$:
  \begin{equation*}
  \adjustbox{max width=\textwidth}{
  \begin{tikzcd}
	{U(X)} & {U\big(X (X\multimap K)\multimapinv (X\multimap K)\big)} & {U\Big(K\multimapinv (X\multimap K)\Big)} \\
	& {U\big( X (X\multimap K)\big) \multimapinv U\big(X\multimap K\big)} \\
	& {U(X) U(X\multimap K) \multimapinv U(X\multimap K)} & {k\multimapinv U(X\multimap K)}\\
    & {U(X) (U(X)\multimap k) \multimapinv U(X\multimap K)} \\
	{U(X)} & {U(X) \big(U(X)\multimap k\big)\multimapinv \big(U(X)\multimap k\big)} & {k\multimapinv \big(U(X)\multimap k\big).}
 \arrow["U(\higheroverline{\operatorname{coev}}^{X\multimap K}_X)"{yshift=6pt}, from=1-1, to=1-2]
 \arrow["U\big(\operatorname{ev}^X_K\,\multimapinv\,(X\,\multimap\, K)\big)"{yshift=6pt}, from=1-2, to=1-3]
 \arrow[equal, bend right=40, from=1-1, to=5-1]
 \arrow["\xi^{r,U}_{X\multimap K}", "\simeq"', bend left=30, from=1-3, to=3-3]
 \arrow["k\,\multimapinv\,\xi^{l,U}_X"', "\simeq", bend right=30, from=5-3, to=3-3]
 \arrow["\higheroverline{\operatorname{coev}}^{U(X)\,\multimap\, k}_{U(X)}"'{yshift=-5pt, xshift=10pt}, from=5-1, to=5-2]
 \arrow["\operatorname{ev}^{U(X)}_{k}\,\multimapinv\, \big(U(X)\,\multimap\, k\big)"'{yshift=-5pt}, pos=1, from=5-2, to=5-3]
 \arrow["{\tau^{r,U}}"', from=1-2, to=2-2]
 \arrow["\varphi^{2,U}\,\multimapinv\,U(X\,\multimap\, K)", from=3-2, to=2-2]
 \arrow["U(X) {\xi^{l,U}_X}\,\multimapinv\,U(X\multimap K)"', "\simeq", from=3-2, to=4-2]
 \arrow["U(X) \big(U(X)\multimap k\big)\,\multimapinv\,\xi^{l,U}_X", "\simeq"', from=5-2, to=4-2]
 \arrow["\big(\upsilon^{0,U} \,\circ\,{U(\operatorname{ev}^X_K)}\big)\,\multimapinv\,U(X\,\multimap\, K)"{description, yshift=-2pt}, bend left=10, from=2-2, to=3-3]
 \arrow["\higheroverline{\operatorname{coev}}^{U(X\multimap K)}_{U(X)}"{description}, bend right=30, from=1-1, to=3-2]
 \arrow["\operatorname{ev}^{U(X)}_{k}\multimapinv U(X\multimap K)"', bend right=2, from=4-2, to=3-3]
 \arrow[phantom,"\textup{(I)}"{description}, bend right=5, from=1-1, to=2-2]
 \arrow[phantom,"\textup{(II)}"{description, xshift=-25pt,yshift=7pt}, bend left=20, from=1-2, to=3-3]
\arrow[phantom,"\textup{(III)}"{xshift=30pt,yshift=-10pt}, bend right=10, from=2-2, to=3-3]
\arrow[phantom,"\textup{(IV)}"{xshift=0}, bend left=5, from=4-2, to=5-3]
\arrow[phantom,"\textup{(V)}"{xshift=-10pt}, bend left=40, from=5-1, to=3-2]
 \end{tikzcd}
}
\end{equation*}

For better readability, we have omitted some indices. Occasionally, we have also left out parentheses, avoiding ambiguous expressions by reading the monoidal product $\otimes$ before the internal homs. Finally, to fit the diagram onto the page, we have omitted the $\otimes$-symbol, e.g. we have written $XY$ instead of $X\otimes Y$.

Let us take a closer look at the above diagram: The bottom horizontal morphism is the unit morphism $\widetilde{d}_{U(X)}$ with respect to the object $k\in \cD$. It is invertible since $k$ is dualizing in $\cD$ by assumption. Additionally, since $\upsilon^{0,U}$ is a Frobenius form (Definition \ref{def: Frobenius form}), both rightmost vertical morphisms are invertible. To prove the invertibility of the topmost horizontal morphism $U(\widetilde{d}_X)$, it therefore suffices to show that the outer diagram commutes. 

The commutativity of the outer diagram follows from the commutativity of the inner diagrams labelled by Roman numerals. We show their commutativity next:

Diagram (I) commutes by Lemma~\ref{lemma: compatibility of multi and left internal hom transfo}; (II) by the definition of $\xi^{r,U}_X$ and the naturality of the comparator $\tau^{r,U}$; (IV) by the functoriality of right internal hom $\multimapinv$; and (V) by Lemma~\ref{lemma:extranaturality of eval and coeval}. By definition of the duality transformation $\xi^{l,U}_X$, the commutativity of diagram (III) amounts to the commutativity of the following outer diagram:

\begin{equation*}
\adjustbox{max width=1\textwidth, center}{
  \begin{tikzcd}
	{U\big( X (X\multimap K)\big) \multimapinv U\big(X\multimap K\big)} && {U(K)\multimapinv U(X\multimap K)}\\
    {U(X) U(X\multimap K) \multimapinv U\big(X\multimap K\big)} && {}\\
    {U(X) \big(U(X)\multimap U(K)\big) \multimapinv U\big(X\multimap K\big)}&& {}\\
    {U(X) \big(U(X)\multimap k \big) \multimapinv U\big(X\multimap K\big)} && {k\multimapinv U(X\multimap K).}
 \arrow["U(\operatorname{ev}^X_K)\,\multimapinv\, U(X\multimap K)"{yshift=4pt}, from=1-1, to=1-3]
 \arrow["\upsilon^{0,U}\,\multimapinv\, U(X\multimap K)"{xshift=4pt}, from=1-3, to=4-3]
 \arrow["\varphi^{2,U}\,\multimapinv\,U(X\multimap K)"{xshift=-2pt}, from=2-1, to=1-1]
 \arrow["U(X) \tau^{l,U} \,\multimapinv\, U(X\multimap K)"'{xshift=-2pt}, from=2-1, to=3-1]
 \arrow["U(X)\big(U(X)\,\multimap \,\upsilon^{0,U}\big)\,\multimapinv\, U(X\multimap K)"'{xshift=-2pt}, from=3-1, to=4-1]
 \arrow["\operatorname{ev}^{U(X)}_k\,\multimapinv\, U(X\multimap K)"'{yshift=-2pt}, from=4-1, to=4-3]
 \arrow["\operatorname{ev}^{U(X)}_{U(K)}\multimapinv\, U(X\multimap K)"'{description}, bend right=20,  from=3-1, to=1-3]
 \arrow[phantom,"\textup{(1)}"{xshift=0pt}, bend right=20, from=1-1, to=1-3]
 \arrow[phantom,"\textup{(2)}", bend left=15, from=3-1, to=4-3]
 \end{tikzcd}
}
\end{equation*}

Diagram (1) commutes by Lemma~\ref{lemma: compatibility of multi and left internal hom transfo}, while the commutativity of diagram (2) follows from the naturality of the evaluation $\operatorname{ev}^{U(X)}.$ This shows that diagram (III) commutes.
\end{proof}

\section{Applications}\label{sec: Applications}

We derive corollaries from the lifting theorem~\ref{main thm} applied to the algebraic structures in Subsections~\ref{sec: Hopf monads and Hopf algebroids} and~\ref{sec: algebras, bimodules, local modules}. Some are known; others, to the best of our knowledge, are new.

\begin{prop}\label{prop: category of bimodules is gv}
  Let $A=(A,\mu,\eta)$ be a GV-algebra in a GV-category $\cC$ admitting equalizers and coequalizers. Let $f\colon D(A)\xrightarrow{\simeq} D'(A)$ be an isomorphism of GV-coalgebras. Then the monoidal category $_A\cC_A$ of $A$-bimodules is a GV-category with dualizing object $D(A)\cong D'(A)$, whose \(A\)-bimodule structure is induced by $f$ via Proposition~\ref{prop: bimodule structure on dual of algebra}. Moreover, the forgetful functor \(U\colon {_A\cC_A} \to \cC\) is a GV-functor.
\end{prop}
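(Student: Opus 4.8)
The plan is to apply the lifting theorem (Theorem~\ref{main thm}) to the forgetful functor $U=U_A\colon {_A\cC_A}\to\cC$, taking as target the GV-category $\cD=\cC$ with dualizing object $k=K$, and as distinguished object of the source the bimodule $D(A)$, equipped with the $A$-bimodule structure furnished by Proposition~\ref{prop: bimodule structure on dual of algebra} (applied to the $\parLL$-comultiplicative isomorphism $f^{-1}\colon D'(A)\xrightarrow{\simeq}D(A)$). The category $_A\cC_A$ is closed by Proposition~\ref{lemma: bimodules closed}, since $\cC$ admits equalizers and coequalizers, and $U_A$ is lax monoidal by Remark~\ref{rem: monoidal category of bimodules}. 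Conservativity of $U_A$ is immediate: the inverse in $\cC$ of a morphism of $A$-bimodules automatically intertwines the two $A$-actions, hence is a morphism of $A$-bimodules. Note that, as an object of $\cC$, one has $U_A(D(A))=D(A)=K\multimapinv A$.

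For the Frobenius form I would take $\upsilon^{0,U}:=\higheroverline{\gamma}_K\circ D(\eta)\colon D(A)\to K$, where $\eta\colon 1\to A$ is the unit of $A$ and $\higheroverline{\gamma}_K\colon D(1)=K\multimapinv 1\xrightarrow{\simeq}K$ is the canonical isomorphism of Remark~\ref{rem:canonical isos}; by the counitality of $f$ (Equation~\eqref{def:f is counital}) this equals $\gamma_K\circ D'(\eta)\circ f$ as well. Granting that $\upsilon^{0,U}$ is a Frobenius form, Theorem~\ref{main thm} delivers both conclusions of the proposition at once: $_A\cC_A$ is a GV-category with dualizing object $D(A)$, and $(U_A,\upsilon^{0,U})$ is a GV-functor. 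Everything therefore reduces to checking that the duality transformations $\xi^{l,U}$ and $\xi^{r,U}$ of Definition~\ref{def: duality transformations} are invertible; equivalently, since $\cC$ is already a GV-category, that for every $M\in{_A\cC_A}$ the canonical morphisms
\[
U_A(M\multimap_A D(A))\;\longrightarrow\; U_A(M)\multimap K
\qquad\text{and}\qquad
U_A(D(A)\multimapinv_A M)\;\longrightarrow\; K\multimapinv U_A(M)
\]
are isomorphisms. This is the heart of the argument and would be deferred to the appendix.

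For the first of these I would argue as follows. By Remark~\ref{rem: comparator for U}, the left comparator $\tau^{l,U}_{M,D(A)}$ is the equalizer monomorphism $M\multimap_A D(A)\hookrightarrow M\multimap D(A)$ defining the internal hom of $_A\cC_A$ (Remark~\ref{rem: inhoms in abimod}), so that $\xi^{l,U}_M=\bigl(U_A(M)\multimap\upsilon^{0,U}\bigr)\circ\tau^{l,U}_{M,D(A)}$. Since $f$ is an isomorphism of $A$-bimodules from $D(A)$ (with the $f$-induced structure) to $D'(A)=A\multimap K$ (with its natural left $A$-action from Lemma~\ref{lemma: module structure on left internal hom}), the internal adjunction isomorphism $\beta$ identifies $M\multimap_A D(A)$ with the $D'$-image of the reflexive coequalizer computing $A\otimes_A M\cong M$; here one uses the compatibilities of $\beta$, $\iota$, $\gamma$, $\higheroverline{\gamma}$ from Lemmas~\ref{lemma: relation beta and gamma} and~\ref{lemma: iota compatibility}, and the hypothesis that $f$ respects comultiplications (Equation~\eqref{def:f is comultiplicative}) enters through the description of the comultiplication of $D(A)$ via $D$ and $\higheroverline{\beta}$ (Equation~\eqref{def:Comultiplication of DA}). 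That coequalizer is split by the unit $\eta$, hence absolute; applying the antiequivalence $D'$ turns it into an absolute equalizer, so $M\multimap_A D(A)$ collapses onto $D'(U_A(M))=U_A(M)\multimap K$, and tracing the identification back shows that the collapse map is precisely $\xi^{l,U}_M$. The technical bookkeeping is exactly what Lemmas~\ref{lemma: rewrite inHom action} and~\ref{lemma: rewrite coreflexive pair} are set up to handle. The second morphism is treated in the same way, using the right-internal-hom analogues of Remarks~\ref{rem: inhoms in abimod} and~\ref{rem: comparator for U}, the isomorphism $\higheroverline{\beta}$, and the identification $D(A)\multimapinv_A M\cong D(U_A(M))$ coming from $M\otimes_A A\cong M$. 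The main obstacle is precisely threading the coherence isomorphisms $\beta$, $\higheroverline{\beta}$, $\iota$, $\gamma$, $\higheroverline{\gamma}$ through the two equalizer legs and matching them against the split (co)equalizer $A\otimes_A M\cong M$; once this is done, Theorem~\ref{main thm} applies verbatim and the proof is complete.
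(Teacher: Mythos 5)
Your proposal is correct and follows essentially the same route as the paper: apply the lifting theorem to the conservative, lax monoidal forgetful functor $U_A$ (closedness of ${_A\cC_A}$ from Proposition~\ref{lemma: bimodules closed}), and verify the Frobenius form $\gamma_K\circ D'(\eta)$ (equivalently $\higheroverline{\gamma}_K\circ D(\eta)$, transported along $f$) by identifying $M\multimap_A D'(A)$ with $D'(M)$ through the equalizer description of the internal hom, Lemma~\ref{lemma: rewrite coreflexive pair}, and the canonical coequalizer $A\otimes A\otimes M\rightrightarrows A\otimes M\to M$ carried through the antiequivalence $D'$ — exactly the content of the paper's Lemma~\ref{lemma: equalizer of left module into dual coalgebra} and its appendix proof, including the identification of the resulting isomorphism with $\xi^{l,U_A}_M$ via Remark~\ref{rem: comparator for U}. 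The only cosmetic deviation is that the paper works with $D'(A)$ and then deduces invertibility of $\xi^{r,U_A}$ by a short computation using the counitality of $f$, rather than redoing the argument symmetrically for $D(A)\multimapinv_A M$.
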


See Appendix~\ref{app: Applications} for a proof.

\begin{remark}
An analogous result holds for the category of bicomodules over a GV-coalgebra.
\end{remark}

The next two corollaries follow immediately from Proposition~\ref{prop: category of bimodules is gv}.

\begin{corollary}\label{cor: pivotal GV bimodule}
    Let \(A\) be a GV-algebra in a \emph{pivotal} GV-category \(\cC\) admitting equalizers and coequalizers. Then the category $_A\cC_A$ is a GV-category with dualizing object \(D(A)\cong D'(A)\), whose \(A\)-bimodule structure is induced by the pivotal structure via Proposition~\ref{prop: bimodule structure on dual of algebra} and Lemma~\ref{lemma: coalgebra iso via pivotality}. Moreover, the forgetful functor \({_A\cC_A} \to \cC\) is a GV-functor.
\end{corollary}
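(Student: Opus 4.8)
The plan is to deduce the statement directly from Proposition~\ref{prop: category of bimodules is gv}, the only input being that a pivotal structure provides the required isomorphism of GV-coalgebras. First I would invoke that, since $\cC$ is pivotal, it carries a pivotal structure $\pi\colon D\to D'$ in the sense of Definition~\ref{def:pivotal structure}; being a morphism of GV-functors, $\pi$ is automatically invertible by Corollary~\ref{cor: morph of GV-functros compat with duality transfo}. Evaluating at the GV-algebra $A$, Lemma~\ref{lemma: coalgebra iso via pivotality} tells us that $\pi_A\colon D(A)\xrightarrow{\simeq} D'(A)$ is an isomorphism of GV-coalgebras.

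Next I would feed $f:=\pi_A$ into Proposition~\ref{prop: category of bimodules is gv}. That proposition applies verbatim and yields that $_A\cC_A$ is a GV-category with dualizing object $D(A)\cong D'(A)$, carrying the $A$-bimodule structure transported along $\pi_A$ via Proposition~\ref{prop: bimodule structure on dual of algebra}, and that the forgetful functor $U\colon {_A\cC_A}\to\cC$ is a GV-functor. The only bookkeeping point is the direction of the isomorphism: Proposition~\ref{prop: bimodule structure on dual of algebra} requires a $\parLL$-comultiplicative isomorphism $D'(A)\xrightarrow{\simeq} D(A)$, while Lemma~\ref{lemma: coalgebra iso via pivotality} delivers $\pi_A\colon D(A)\xrightarrow{\simeq} D'(A)$; since the inverse of an isomorphism of GV-coalgebras is again one, $\pi_A^{-1}$ supplies exactly the datum needed, and the induced bimodule structure on $D(A)$ is the one named in the statement.

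I expect no genuine obstacle here: the content has been absorbed into Proposition~\ref{prop: category of bimodules is gv}, Proposition~\ref{prop: bimodule structure on dual of algebra}, and Lemma~\ref{lemma: coalgebra iso via pivotality}, so the corollary is a one-step specialization, the only care being the matching of directions and of the transported $A$-actions described above.
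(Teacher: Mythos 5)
Your proposal is correct and matches the paper's own argument: the paper derives this corollary as an immediate specialization of Proposition~\ref{prop: category of bimodules is gv}, with the GV-coalgebra isomorphism supplied by $\pi_A$ via Lemma~\ref{lemma: coalgebra iso via pivotality}. Your remark on the direction mismatch between Proposition~\ref{prop: bimodule structure on dual of algebra} (which takes $D'(A)\xrightarrow{\simeq}D(A)$) and Lemma~\ref{lemma: coalgebra iso via pivotality} (which gives $D(A)\xrightarrow{\simeq}D'(A)$), resolved by passing to the inverse, is sound bookkeeping that the paper leaves implicit.
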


The following example illustrates, that although $_A\cC_A$ forms a GV-category by Corollary~\ref{cor: pivotal GV bimodule} when $\cC$ is pivotal rigid, $_A\cC_A$ need not be rigid:

\begin{example}[Finitely-generated projective bimodules]\label{ex: Finitely-generated projective bimodules}
Let \(k\) be a commutative ring, and let \(R\) be an algebra in the pivotal rigid monoidal category of finitely-generated projective \(k\)-modules. By Corollary~\ref{cor: pivotal GV bimodule}, the corresponding category of \(R\)-bimodules is a GV-category, and the associated forgetful functor is a GV-functor. More explicitly, the $k$-linear dual $k$-module $R^{\ast}=\operatorname{Hom}_{k}(R,k)$, equipped with the $R$-bimodule structure
\begin{equation}
	(z.f.x)(y)\,:=\,f(x\cdot y \cdot z), \qquad \qquad x,y,z\in R, \quad f\in R^{\ast},
\end{equation} 
is a dualizing object. For $k$ a field, this is the main example discussed in \cite{fuchs2024grothendieckverdierdualitycategoriesbimodules}.
\end{example}

\begin{example}[Suplattices]
The category $\mathsf{SupLat}$ of complete lattices and supremum-preserving maps is a complete and cocomplete r-category \cite[\S2]{JoTi}. Its duality functor, given by order reversal, is an involution. By Corollary~\ref{cor: pivotal GV bimodule}, for any algebra $A$ in the pivotal r-category $\mathsf{SupLat}$, the category of \(A\)-bimodules in $\mathsf{SupLat}$ is a GV-category with dualizing object the opposite poset \(A^{\operatorname{op}}\). Algebras in $\mathsf{SupLat}$ are known as \emph{(unital) quantales}. Examples include the lattice of ideals of a ring, the power set of a monoid, and any locale. Their categories of bimodules (and modules) have been extensively studied; see, e.g., \cite{Nie} and \cite{JoTi}.
\end{example}

\begin{corollary}\label{cor: A-bimod for A comm}
	   Let \(A\) be a \emph{commutative} GV-algebra in a \emph{braided} GV-category \(\cC\) admitting equalizers and coequalizers. The category $_A\cC_A$ of $A$-bimodules admits two GV-structures. In each, the dualizing object is \(D(A)\cong D'(A)\), which carries two 
	   $A$-bimodule structures induced by the braiding. Explicitly, these are realized via the isomorphisms $\varphi^{+}_A$ and $\varphi^{-}_A$, respectively, by Proposition~\ref{prop: bimodule structure on dual of algebra} and Lemma~\ref{lemma: coalgebra iso via braiding}.(i). Moreover, for both GV-structures, the forgetful functor \({_A\cC_A} \to \cC\) is a GV-functor.
\end{corollary}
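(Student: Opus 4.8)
The final statement, Corollary~\ref{cor: A-bimod for A comm}, is a direct specialisation of Proposition~\ref{prop: category of bimodules is gv} to the two GV-coalgebra isomorphisms furnished by the braiding, so the plan is short. First I would observe that the hypotheses of Proposition~\ref{prop: category of bimodules is gv} are in force: $\cC$ is a GV-category with equalizers and coequalizers, and $A$ is a GV-algebra; it therefore remains only to produce isomorphisms of GV-coalgebras between $D(A)$ and $D'(A)$. Lemma~\ref{lemma: coalgebra iso via braiding}.(i) supplies two such isomorphisms, namely $\varphi^{+}_A,\varphi^{-}_A\colon D'(A)\xrightarrow{\simeq} D(A)$ from Equation~\eqref{eq: identification of duals in braided GV-cats}; passing to inverses, $(\varphi^{\pm}_A)^{-1}\colon D(A)\xrightarrow{\simeq} D'(A)$ are again isomorphisms of GV-coalgebras. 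Applying Proposition~\ref{prop: category of bimodules is gv} once with $f=(\varphi^{+}_A)^{-1}$ and once with $f=(\varphi^{-}_A)^{-1}$ then yields two GV-structures on ${_A\cC_A}$, in each of which the dualizing object is $D(A)\cong D'(A)$ with an $A$-bimodule structure induced, via Proposition~\ref{prop: bimodule structure on dual of algebra}, by $\varphi^{+}_A$ respectively $\varphi^{-}_A$, and for which the forgetful functor ${_A\cC_A}\to \cC$ is a GV-functor.

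Second, I would check that these two $A$-bimodule structures on $D(A)$ coincide with the ones named in the statement. By construction, the structure attached to $\varphi^{\pm}_A$ in Proposition~\ref{prop: bimodule structure on dual of algebra} is the transport along $\varphi^{\pm}_A$ of the regular $A$-bimodule structure on $D'(A)$, whose left part is the action $l^{D'(A)}$ of Equation~\eqref{eq: left action on internal hom} induced by $\mu$. But Lemma~\ref{lemma: coalgebra iso via braiding}.(ii) asserts precisely that $\varphi^{\pm}_A$ is an isomorphism of right $A$-modules from $D'(A)$ equipped with the action $l^{D'(A)}\circ c^{\pm}_{D'(A),A}$ to $D(A)$ equipped with the right internal-hom action, so the two descriptions agree.

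I do not expect any genuine obstacle: the result is formal once Proposition~\ref{prop: category of bimodules is gv} and Lemma~\ref{lemma: coalgebra iso via braiding} are in hand. The only care required is bookkeeping, namely reconciling the direction of the GV-coalgebra isomorphism expected by Proposition~\ref{prop: bimodule structure on dual of algebra} ($D'(A)\to D(A)$) with that used in Proposition~\ref{prop: category of bimodules is gv} ($D(A)\to D'(A)$), which is handled by passing to inverses, and the identification of the transported bimodule actions with the advertised ones via Lemma~\ref{lemma: coalgebra iso via braiding}.(ii). (Note also that when the braiding is symmetric one has $\varphi^{+}_A=\varphi^{-}_A$, so the two GV-structures may coincide; the statement does not assert otherwise.)
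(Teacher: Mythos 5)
Your proposal is correct and matches the paper's intent: the corollary is stated there as an immediate consequence of Proposition~\ref{prop: category of bimodules is gv} combined with Lemma~\ref{lemma: coalgebra iso via braiding}.(i), which is exactly what you do, with the direction of the coalgebra isomorphism handled (as it must be) by passing to $(\varphi^{\pm}_A)^{-1}$. Your extra verification via Lemma~\ref{lemma: coalgebra iso via braiding}.(ii) is harmless but not needed here, since the bimodule structures in the statement are by definition those obtained from Proposition~\ref{prop: bimodule structure on dual of algebra} applied to $\varphi^{\pm}_A$.
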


\begin{prop}\label{prop: A-mod is GV}\emph{(Cf. \cite[Thm. 3.9]{Creutzig_2025}).}
 Let \(A\) be a \emph{commutative} GV-algebra in a \emph{braided} GV-category \(\cC\) admitting equalizers and coequalizers. The category $_A\cC$ of left \(A\)-modules is a GV-category with dualizing object \(D'(A)\). The left \(A\)-action on $D'(A)$ is induced by the multiplication of $A$, via Equation~\eqref{eq: left action on internal hom} in Lemma~\ref{lemma: module structure on left internal hom}. Moreover, the forgetful functor \({_A\cC} \to \cC\) is a GV-functor.
\end{prop}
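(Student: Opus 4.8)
The plan is to deduce this from the lifting theorem, Theorem~\ref{main thm}, applied to the forgetful functor $U\colon {}_A\cC \to \cC$. First I would assemble the hypotheses of that theorem. Since $A$ is commutative and $\cC$ is braided, closed, and has equalizers and coequalizers, Proposition~\ref{lemma: modules closed} shows that ${}_A\cC$ is a closed monoidal category. The forgetful functor $U$ is lax monoidal, with multiplication the canonical projections $M\otimes N\twoheadrightarrow M\otimes_A N$ and unit $\eta\colon 1\to A$, exactly as for the bimodule forgetful functor of Remark~\ref{rem: monoidal category of bimodules}; and $U$ is conservative, because a morphism of left $A$-modules is invertible precisely when its underlying morphism in $\cC$ is. For the distinguished object of ${}_A\cC$ I take $D'(A) = A\multimap K$, equipped with the left $A$-action $l^{D'(A)}$ from Lemma~\ref{lemma: module structure on left internal hom}(i) applied to $A$ regarded as a right $A$-module via $\mu$, and for the form I take
\[
\upsilon^{0,U}\colon\; U\big(D'(A)\big) \eqdef A\multimap K \;\xrightarrow{\;\eta\multimap K\;}\; 1\multimap K \;\xrightarrow{\;\gamma_K\;}\; K,
\]
the $D'$-image of the unit of $A$, normalised by the canonical isomorphism $\gamma_K$ of Remark~\ref{rem:canonical isos}. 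Granting that $\upsilon^{0,U}$ is a Frobenius form, Theorem~\ref{main thm} immediately yields that ${}_A\cC$ is a GV-category with dualizing object $D'(A)$ carrying the stated $A$-action and that $(U,\upsilon^{0,U})$ is a GV-functor, which is exactly the claim.

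\textbf{The crux: $\upsilon^{0,U}$ is Frobenius.} The real content is the invertibility of the duality transformations $\xi^{l,U}$ and $\xi^{r,U}$ of Definition~\ref{def: duality transformations}. For $\xi^{l,U}$, I would first note that, just as in Remark~\ref{rem: comparator for U}, the left comparator $\tau^{l,U}_{M,N}$ of the forgetful functor is the canonical monomorphism $i_{M,N}\colon M\multimap_A N\hookrightarrow M\multimap N$ of Remark~\ref{rem: inhoms in abimod}. Hence $\xi^{l,U}_M$ is the composite
\[
M\multimap_A D'(A) \;\hookrightarrow\; M\multimap(A\multimap K) \;\xrightarrow{\;M\,\multimap\,\upsilon^{0,U}\;}\; M\multimap K .
\]
I would then transport the equalizer defining $M\multimap_A D'(A)$ along the isomorphism $\beta_{A,M,K}\colon M\multimap(A\multimap K)\xrightarrow{\simeq}(A\otimes M)\multimap K$ of Remark~\ref{rem:canonical isos}: using Lemma~\ref{lemma: rewrite inHom action} to unfold the left action $l^{D'(A)}$ and Lemma~\ref{lemma: rewrite coreflexive pair} to rewrite the parallel pair, the equalizer becomes $D'_\cC$ applied to the reflexive pair $A\otimes A\otimes M\rightrightarrows A\otimes M$ with legs $\mu\otimes M$ and $A\otimes l^M$, whose coequalizer is the presentation of $A\otimes_A M$ from Remark~\ref{rem: monoidal category of bimodules}. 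Since $M$ is a left $A$-module, $A\otimes_A M\xrightarrow{\simeq}M$ is an isomorphism, and $D'_\cC$, being an antiequivalence, carries this presentation to an isomorphism $M\multimap_A D'(A)\xrightarrow{\simeq} D'_\cC(M) = M\multimap K$. A short diagram chase, tracking the factor $\upsilon^{0,U}=\gamma_K\circ(\eta\multimap K)$ with the help of Lemma~\ref{lemma: relation beta and gamma}, identifies this isomorphism with $\xi^{l,U}_M$. The argument for $\xi^{r,U}$ is the mirror image; alternatively, since ${}_A\cC$ is braided ($A$ being commutative) and its left and right internal homs are interchanged by the isomorphisms $\widetilde{c}^{\,\pm}$ of Remark~\ref{rem:braided closed mon cats} compatibly with those of $\cC$, the invertibility of $\xi^{r,U}$ follows from that of $\xi^{l,U}$.

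\textbf{Conclusion and main obstacle.} Once $\upsilon^{0,U}$ is Frobenius, Theorem~\ref{main thm} applies directly and produces the entire statement, including the description of the dualizing object and of the $A$-action on it, and the fact that $U$ is a GV-functor; no further argument is needed. The one genuinely laborious point is the identification, in the second paragraph, of the equalizer $M\multimap_A D'(A)$ with $D'_\cC$ of the coequalizer computing $A\otimes_A M$: this is a multi-step diagram chase combining $\beta$-naturality, Lemmas~\ref{lemma: rewrite inHom action} and~\ref{lemma: rewrite coreflexive pair}, and the coherence results of \S\ref{sec: closed monoidal categories}, and it requires keeping careful track of the normalisation of $\upsilon^{0,U}$ (i.e.\ which canonical isomorphism $D'(1)\cong K$ is inserted). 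This is where essentially all the work lies; everything else is either a citation or routine.
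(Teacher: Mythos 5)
Your overall route differs from the paper's: you apply Theorem~\ref{main thm} directly to the forgetful functor $U\colon {_A\cC}\to\cC$ with the form $\gamma_K\circ D'(\eta)$, whereas the paper factors $U$ through the bimodule category via the embeddings $B^{\pm}\colon {_A\cC}\hookrightarrow{_A\cC_A}$ (right action induced by the braiding) and then invokes Corollary~\ref{second main thm} together with Corollary~\ref{cor: A-bimod for A comm} and Lemma~\ref{lemma: coalgebra iso via braiding}(ii). Your route is viable in principle, and your treatment of $\xi^{l,U}$ is essentially the appendix argument for Proposition~\ref{prop: category of bimodules is gv} via Lemma~\ref{lemma: equalizer of left module into dual coalgebra}, which indeed only uses left actions.

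There is, however, a genuine gap on the right-hand side. Your fallback argument rests on the claim that ${_A\cC}$ is braided because $A$ is commutative; this is false in general --- the braiding of $\cC$ does not descend to $\otimes_A$ on all of ${_A\cC}$, and only the local modules ${_A\cC}^{\operatorname{loc}}$ carry a braiding (cf.\ Remark~\ref{rem: local modules} and Proposition~\ref{prop: local modules}), so the ``interchange of left and right internal homs via $\widetilde{c}^{\,\pm}$'' is unavailable. Your primary claim that $\xi^{r,U}$ is handled by a ``mirror image'' argument is also not a literal mirror: the right internal hom $D'(A)\multimapinv_A M$ in ${_A\cC}$ and the right $A$-action on the candidate dualizing object are manufactured from the left actions by twisting with the braiding, whereas the right-hand analogue of Lemma~\ref{lemma: equalizer of left module into dual coalgebra} concerns $D(A)=K\multimapinv A$ with its canonical right action. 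Bridging the two is exactly the content of Lemma~\ref{lemma: coalgebra iso via braiding} (that $\varphi^{\pm}_A\colon D'(A)\to D(A)$ is an isomorphism of GV-coalgebras and of right modules for the twisted action), combined with the counitality identity matching $\gamma_K\circ D'(\eta)$ with $\higheroverline{\gamma}_K\circ D(\eta)$, as in the last step of the appendix proof of Proposition~\ref{prop: category of bimodules is gv}; your proposal never invokes these facts, yet they are precisely where the commutativity of $A$ and the braiding of $\cC$ enter, so the invertibility of $\xi^{r,U}$ is left unproven. (Note also that one cannot deduce $\xi^{r,U}$ from $\xi^{l,U}$ via Lemma~\ref{lemma:left-right duality transfos relations}, since the unit and counit $d,\widetilde{d}$ of ${_A\cC}$ are not yet known to be invertible at that stage.)
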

\begin{proof}
	Consider the inclusions \(B^{\pm}\colon {_A\cC} \hookrightarrow {_A\cC_A}\) that equip a left $A$-module $M$ with a right $A$-action by precomposing its left action with the (inverse) braiding $c^{\pm}_{M,A}$. By Proposition~\ref{lemma: modules closed}, these functors are conservative, closed, and strict monoidal. By Lemma~\ref{lemma: coalgebra iso via braiding}.(ii), the right $A$-actions on $B^{\pm}D'(A)$ coincide with those from Corollary~\ref{cor: A-bimod for A comm}. Hence, by that corollary, $B^{\pm}D'(A)$ are dualizing objects in ${_A\cC_A}$. By Corollary~\ref{second main thm}, ${_A\cC}$ is thus a GV-category, and $B^{\pm}$ become GV-functors. The forgetful functor \({_A\cC} \to \cC\) factors through ${_A\cC_A}$ via either $B^+$ or $B^-$. As a composite of GV-functors, it is thus itself a GV-functor by Corollary~\ref{cor: A-bimod for A comm}.
\end{proof}

\begin{remark}
 An analogous result holds for the category of right $A$-modules.
\end{remark}

\begin{proposition}\label{prop: local modules}\emph{(Cf. \cite[Thm. 3.11]{Creutzig_2025}).}
Let \(A\) be a \emph{commutative} GV-algebra in a \emph{braided} GV-category \(\cC\) admitting equalizers and coequalizers. If the $A$-module $D'(A)$ from Equation~\eqref{eq: left action on internal hom} in Lemma~\ref{lemma: module structure on left internal hom} is local, then the category of local $A$-modules ${_A\cC}^{\operatorname{loc}}$ is a braided GV-category, and the forgetful functor \({_A\cC}^{\operatorname{loc}} \to \cC\) is a braided GV-functor.
\end{proposition}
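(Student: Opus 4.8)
The plan is to mirror the proof of Proposition~\ref{prop: A-mod is GV}, this time using the inclusion $\iota\colon {_A\cC}^{\operatorname{loc}} \hookrightarrow {_A\cC}$ of the full monoidal subcategory of local modules. First I would recall the classical fact (see, e.g., \cite[\S 3]{Creutzig_2025}) that for a commutative algebra $A$ in a braided monoidal category, ${_A\cC}^{\operatorname{loc}}$ inherits a braiding from $\cC$, uniquely characterized by the requirement that it make the forgetful functor $U\colon {_A\cC}^{\operatorname{loc}} \to \cC$ braided; in particular $U$ is then a braided lax monoidal functor, essentially by construction of this braiding, and $\iota$ is a strict monoidal, fully faithful (hence conservative) functor.

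Next I would observe that $\iota$ is closed. By Remark~\ref{rem: local modules} (which quotes \cite[Cor. 3.7]{ShiYa}), the left and right internal homs of ${_A\cC}^{\operatorname{loc}}$ are computed by the very formulas of Remark~\ref{rem: inhoms in abimod} that also compute the internal homs of ${_A\cC}$ (Proposition~\ref{lemma: modules closed}); moreover, since $M\otimes_A(-)$ preserves local modules, the relevant tensor-hom adjunctions restrict from ${_A\cC}$ to ${_A\cC}^{\operatorname{loc}}$, so the (co)evaluations agree. Hence $\iota$ preserves internal homs on the nose, its comparators $\tau^{l,\iota},\tau^{r,\iota}$ are identities, and $\iota$ is closed. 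Now by Proposition~\ref{prop: A-mod is GV}, $D'(A)$ is a dualizing object of ${_A\cC}$, and the standing hypothesis that this $A$-module is local says precisely that $D'(A)$ lies in ${_A\cC}^{\operatorname{loc}}$, i.e. $\iota(D'(A))=D'(A)$ is dualizing in the target of $\iota$. Applying the lifting theorem (Corollary~\ref{second main thm}; equivalently Theorem~\ref{main thm} applied to $(\iota,\operatorname{id})$, where $\operatorname{id}_{D'(A)}$ is a Frobenius form because $\iota$ is closed) to the conservative closed functor $\iota$, I conclude that $D'(A)$ is a dualizing object of ${_A\cC}^{\operatorname{loc}}$ and that $(\iota,\operatorname{id})$ is a GV-functor. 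Since a braided GV-category (Definition~\ref{def: braided GV}) is merely a GV-category whose underlying monoidal category carries a braiding, with no further compatibility imposed, it follows together with the first step that ${_A\cC}^{\operatorname{loc}}$ is a braided GV-category.

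Finally, the forgetful functor ${_A\cC}^{\operatorname{loc}} \to \cC$ factors as $\iota$ followed by the forgetful GV-functor ${_A\cC} \to \cC$ of Proposition~\ref{prop: A-mod is GV}; as a composite of $1$-cells in the $2$-category $\mathsf{GV}$ (Proposition~\ref{prop: 2Cat GV}) it is a GV-functor, and its underlying lax monoidal functor is the braided functor $U$ identified in the first step, so it is a braided GV-functor in the sense of Definition~\ref{def: braided GV functor}. This completes the argument.

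I do not anticipate a genuine obstacle. The one point requiring care is the bookkeeping ensuring that the internal homs — and hence the duality functor $D'$ and the dualizing object $D'(A)$ — of ${_A\cC}^{\operatorname{loc}}$ really are the restrictions of those of ${_A\cC}$, so that ``$D'(A)$ is local'' is exactly the hypothesis needed and so that $\iota$ is closed; this is precisely what \cite[Cor. 3.7]{ShiYa} and Remark~\ref{rem: local modules} supply. The remaining verifications — that $\iota$ is conservative and strict monoidal, and that the classical braiding on ${_A\cC}^{\operatorname{loc}}$ makes the forgetful functor braided — are immediate.
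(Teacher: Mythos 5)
Your argument is correct and is essentially the paper's own proof: the paper likewise notes that the braiding on ${_A\cC}^{\operatorname{loc}}$ is induced from $\cC$ (citing Pareigis rather than \cite{Creutzig_2025}) so the forgetful functor is braided, and then applies Corollary~\ref{second main thm} using Remark~\ref{rem: local modules} (same internal homs, hence the conservative inclusion is closed) together with Proposition~\ref{prop: A-mod is GV} (which supplies the dualizing object $D'(A)$ and the GV-structure on the composite forgetful functor). Your write-up just spells out these steps in more detail.
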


\begin{proof}
Since the braiding on ${_A\cC}^{\operatorname{loc}}$ is induced from the braiding on \(\cC\) \cite[Thm. 2.5]{Par95}, the lax monoidal forgetful functor is braided. The assumptions of Corollary~\ref{second main thm} are satisfied by Remark~\ref{rem: local modules} together with Proposition~\ref{prop: A-mod is GV}.
\end{proof}

\begin{remark}[The ribbon case]
	In the setting of Proposition~\ref{prop: local modules}, the $A$-module $D'(A)$ is local whenever $\cC$ is a ribbon GV-category; see \cite[Thm. 3.11]{Creutzig_2025}.
\end{remark}

Another corollary of the lifting theorem is due to Hasegawa and Lemay \cite[Thm. 5.9]{HaLe}, where it is discussed with different techniques.

\begin{prop}\label{hopf monad on gv}
    Let $T$ be a Hopf monad on a GV-category $(\cC,\otimes,1,K)$. Any $T$-module structure $\omega\colon \, T(K)\rightarrow K$ on the dualizing object $K$ yields a dualizing object $(K,\omega)$ in the monoidal category $\cC^T$ of $T$-modules.
    Moreover, this gives a bijective correspondence between:
    \begin{itemize}
        \item $T$-module structures on the dualizing object $K\in \cC$.
        \item Dualizing objects for the monoidal category of $T$-modules such that the forgetful functor $\cC^T\to \cC$ is a closed strict monoidal functor that strictly preserves the dualizing object.
    \end{itemize}
\end{prop}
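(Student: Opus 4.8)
The plan is to recognize this proposition as a direct application of the lifting theorem — specifically Corollary~\ref{second main thm} — to the forgetful functor $U_T\colon \cC^T \to \cC$. First I would record the three properties of $U_T$ that feed the lifting machinery. By Remark~\ref{monoidal structure on CT}, $U_T$ is strict monoidal. It is conservative: if $f$ is a morphism of $T$-modules whose image $U_T(f)$ is invertible in $\cC$, then a one-line computation shows $U_T(f)^{-1}$ again commutes with the $T$-actions, hence is a morphism of $T$-modules inverse to $f$. Finally, since $T$ is a \emph{Hopf} monad — in particular both left and right Hopf — Theorem~\ref{Thm: Hopf monad closed}, applied in both its left-closed and right-closed forms, shows that $\cC^T$ is closed and that $U_T$ is closed.

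With these in hand, the first assertion is immediate. Given a $T$-module structure $\omega\colon T(K)\to K$ on the dualizing object $K$, form the $T$-module $(K,\omega)\in\cC^T$. Then $U_T(K,\omega)=K$ is dualizing in $\cC$, the category $\cC^T$ is closed, and $U_T$ is conservative and closed, so Corollary~\ref{second main thm} yields that $(K,\omega)$ is a dualizing object of $\cC^T$. Exactly as in the proof of that corollary, the identity form $\operatorname{id}_K$ on $U_T$ is then Frobenius, so $(U_T,\operatorname{id}_K)$ is a GV-functor; being strict monoidal and sending $(K,\omega)$ literally to $K$, it is a closed strict monoidal functor that strictly preserves the dualizing object.

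For the bijective correspondence, I would show that $\omega \mapsto (K,\omega)$ is the desired bijection. By the previous paragraph it maps $T$-module structures on $K$ into the set of dualizing objects $L$ of $\cC^T$ for which $U_T$ (already known to be closed strict monoidal) strictly preserves the dualizing object, i.e.\ for which $U_T(L)=K$. It is injective because an object of $\cC^T$ is precisely a pair of an underlying object and a $T$-action, so $(K,\omega)$ determines $\omega$. It is surjective because any such $L$ has underlying object $U_T(L)=K$ and hence is of the form $(K,\omega)$ for a unique $T$-action $\omega$, which lies in the image.

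There is no serious obstacle here: the mathematical content is carried entirely by Corollary~\ref{second main thm} and Theorem~\ref{Thm: Hopf monad closed}, and what remains is bookkeeping about Eilenberg--Moore categories. The two points requiring a little care are (i) invoking Theorem~\ref{Thm: Hopf monad closed} in \emph{both} the left- and right-closed variants, which is precisely where the Hopf (rather than merely left- or right-Hopf) hypothesis on $T$ enters, so as to conclude that $U_T$ is closed in the full two-sided sense demanded by Corollary~\ref{second main thm}; and (ii) reading "strictly preserves the dualizing object" as the on-the-nose equality $U_T(L)=K$, rather than preservation only up to isomorphism, so that the correspondence is a genuine bijection and not merely a bijection on isomorphism classes. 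It is also worth noting in passing that this argument reproves the result of Hasegawa--Lemay \cite[Thm.~5.9]{HaLe}.
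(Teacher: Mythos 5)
Your proposal is correct and follows essentially the same route as the paper: establish that $U_T$ is conservative, strict monoidal (Remark~\ref{monoidal structure on CT}), and closed (Theorem~\ref{Thm: Hopf monad closed}, using that $T$ is Hopf), then invoke Corollary~\ref{second main thm}. Your extra bookkeeping on the bijective correspondence is consistent with what the paper leaves implicit.
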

\begin{proof}
    The forgetful functor $U_T\colon \cC^T\rightarrow \cC$ is conservative. It is a strict monoidal functor by definition of the monoidal structure on $\cC^T$ from Remark~\ref{monoidal structure on CT}. Since $T$ is a Hopf monad, we know by Theorem~\ref{Thm: Hopf monad closed} that the monoidal category $\cC^T$ is closed and that the forgetful functor $U_T$ is closed. The claim now follows from Corollary~\ref{second main thm}.
\end{proof}

Next, we apply Theorem~\ref{main thm} to Hopf algebroids. Connections between GV-categories and Hopf algebroids have been noted previously \cite{QuantumCatStreetDay,allen2024hopfalgebroidsgrothendieckverdierduality}. Throughout, let \(k\) be a field. Our results extend to commutative rings, provided finite-dimensionality is replaced by finite generation and projectivity (see Example~\ref{ex: Finitely-generated projective bimodules}); we restrict to the field case for readability.

\begin{prop}\label{Hopf algebroids lifting}
    Let $B$ be an $R$-Hopf algebroid with finite-dimensional base $k$-algebra $R$. Any $B$-module structure on the dual $R$-bimodule $R^{\ast}$ (Example~\ref{ex: Finitely-generated projective bimodules}) yields a dualizing object of the monoidal category $B{\operatorname{-mod}}^{\operatorname{fd}}$ of finite-dimensional $B$-modules. Also, the strict monoidal forgetful functor $B{\operatorname{-mod}}^{\operatorname{fd}}\rightarrow {_R {\operatorname{Mod}}}^{\operatorname{fd}}_R$ is closed and strictly preserves this dualizing object.
\end{prop}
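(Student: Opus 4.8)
The plan is to exhibit the forgetful functor $U\colon B{\operatorname{-mod}}^{\operatorname{fd}}\to {}_R{\operatorname{Mod}}_R^{\operatorname{fd}}$ as a conservative, closed, strict monoidal functor whose value on the object $(R^{\ast},\omega)$ is the dualizing object $R^{\ast}$, and then to invoke Corollary~\ref{second main thm}. For the codomain: since $k$ is a field and $R$ is finite-dimensional, $R$ is an algebra in the pivotal rigid monoidal category of finite-dimensional $k$-vector spaces, which admits equalizers and coequalizers; hence by Example~\ref{ex: Finitely-generated projective bimodules} (an instance of Corollary~\ref{cor: pivotal GV bimodule}) the category ${}_R{\operatorname{Mod}}_R^{\operatorname{fd}}={}_{R^e}{\operatorname{Mod}}^{\operatorname{fd}}$ of finite-dimensional $R$-bimodules is a GV-category whose dualizing object is the $k$-linear dual bimodule $R^{\ast}$; in particular it is closed monoidal, its internal homs being the left/right $R$-linear hom bimodules, which are finite-dimensional whenever both arguments are, since they sit inside $\operatorname{Hom}_k(-,-)$ of finite-dimensional spaces.

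For the domain and the functor: write $T_B=B\otimes_{R^e}{?}$ for the bimonad on $({}_{R^e}{\operatorname{Mod}},\boxtimes,R)={}_R{\operatorname{Mod}}_R$ attached to $B$ via Theorem~\ref{Sz corr bialgebroied}, so that $B{\operatorname{-mod}}$ is by definition its Eilenberg--Moore category $({}_{R^e}{\operatorname{Mod}})^{T_B}$. Because $B$ is an $R$-Hopf algebroid, i.e.\ both left and right Hopf (Definition~\ref{def:hopf algebroid}), the bimonad $T_B$ is a Hopf monad, so Theorem~\ref{Thm: Hopf monad closed} shows that $B{\operatorname{-mod}}$ is (left and right) closed and that the forgetful functor $U_{T_B}\colon B{\operatorname{-mod}}\to {}_R{\operatorname{Mod}}_R$ is closed. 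Closedness of $U_{T_B}$ means the underlying bimodule of an internal hom $M\multimap N$ in $B{\operatorname{-mod}}$ is the internal hom of the underlying bimodules, hence finite-dimensional when $M,N$ are; combined with the explicit monoidal product and unit of $B{\operatorname{-mod}}$ from Remark~\ref{monoidal structure on CT}, this shows $B{\operatorname{-mod}}^{\operatorname{fd}}$ is a full closed monoidal subcategory of $B{\operatorname{-mod}}$. The restricted functor $U\colon B{\operatorname{-mod}}^{\operatorname{fd}}\to {}_R{\operatorname{Mod}}_R^{\operatorname{fd}}$ is then strict monoidal (Remark~\ref{monoidal structure on CT}), conservative (Eilenberg--Moore forgetful functors are conservative), and closed (its left/right comparators are the restrictions of the invertible comparators of $U_{T_B}$).

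To conclude: a $B$-module structure $\omega\colon T_B(R^{\ast})\to R^{\ast}$ on the bimodule $R^{\ast}$ defines an object $K:=(R^{\ast},\omega)\in B{\operatorname{-mod}}^{\operatorname{fd}}$ with $U(K)=R^{\ast}$, which is dualizing in ${}_R{\operatorname{Mod}}_R^{\operatorname{fd}}$ by the first paragraph. Corollary~\ref{second main thm} then gives that $K$ is a dualizing object of $B{\operatorname{-mod}}^{\operatorname{fd}}$, and---via Theorem~\ref{main thm}---that $(U,\operatorname{id}_{R^{\ast}})$ is a GV-functor; in particular $U$ is closed, and since $U(K)=R^{\ast}$ on the nose it strictly preserves the dualizing object. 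I expect the only genuine work---beyond assembling the cited results---to be the finite-dimensionality bookkeeping in the second paragraph: checking that $R$ (hence $R^e$) being finite-dimensional forces internal homs, tensor products and units in $B{\operatorname{-mod}}$ to stay within the finite-dimensional subcategory, so that $B{\operatorname{-mod}}^{\operatorname{fd}}$ really is a closed monoidal category to which Corollary~\ref{second main thm} applies.
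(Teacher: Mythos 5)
Your proposal is correct and follows essentially the same route as the paper: invoke Theorem~\ref{Thm: Hopf monad closed} to get closedness of $B\operatorname{-mod}$ and of the forgetful functor, restrict to finite-dimensional objects, identify $R^{\ast}$ as the dualizing bimodule via Example~\ref{ex: Finitely-generated projective bimodules}, and conclude with Corollary~\ref{second main thm}. The only cosmetic difference is that the paper justifies closedness of $B\operatorname{-mod}^{\operatorname{fd}}$ by citing Proposition~\ref{lemma: bimodules closed} applied to finite-dimensional vector spaces, whereas you carry out the same finite-dimensionality bookkeeping directly.
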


\begin{proof}
By Theorem~\ref{Thm: Hopf monad closed}, the monoidal category $B{\operatorname{-mod}}$ is closed, and the strict monoidal forgetful functor $B{\operatorname{-mod}}\rightarrow {_{R} {\operatorname{Mod}}_{R}}$ is closed. Applying Proposition~\ref{lemma: bimodules closed} to finite-dimensional vector spaces, we deduce that $B{\operatorname{-mod}}^{\operatorname{fd}}$ is closed. Clearly, the restricted forgetful functor $B{\operatorname{-mod}}^{\operatorname{fd}}\rightarrow {_R {\operatorname{Mod}}}^{\operatorname{fd}}_R$ is also strict monoidal and closed. Using Example~\ref{ex: Finitely-generated projective bimodules}, the claim now follows from Theorem~\ref{second main thm}.
\end{proof}

\begin{remark}[Finite-dimensionality]
	We do not assume that $B$ is finite-dimensional; see Examples~\ref{ex: smash product algebras} and~\ref{ex: enveloping algebras}. Consequently, the endofunctor $B\otimes_{R^e}-$ on $R$-bimodules need not restrict to \emph{finite-dimensional} bimodules. Hence, Proposition~\ref{Hopf algebroids lifting} does not follow directly from Proposition~\ref{hopf monad on gv}.  
\end{remark}

\begin{remark}[Commutative base algebra]\label{rem: GV for comm base}
	Replacing Proposition~\ref{lemma: bimodules closed} and Example~\ref{ex: Finitely-generated projective bimodules} with Propositions~\ref{lemma: modules closed} and~\ref{prop: A-mod is GV}, gives an analogue of Proposition~\ref{Hopf algebroids lifting} for Hopf algebroids over a commutative base algebra in the sense of Remark~\ref{rem: further correspondences}.(ii).
\end{remark}

For a $k$-bialgebra $H$, restriction along the counit endows any vector space with an $H$-action. This generally fails for bialgebroids, as the counit need not be an algebra map; see Equation~(iii) of Definition~\ref{takeuchi bialgebra} and Example~\ref{ex: skew group algebras}. We now ask when the dual $R$-bimodule \(R^*\) still admits a $B$-module structure. The first such case occurs when an antipode is present:

\begin{remark}[Induced action on $R^{\ast}$]\label{rem: induced module structure on R*}
    Let $R$ be a finite-dimensional \(k\)-algebra, $B$ a\linebreak $\times_R$-bialgebra, and $S$ an anti-algebra morphism on $B$. We do not yet require that $S$ is an antipode in the sense of Definition~\ref{def: full Hopf algebroids}. The map $S$ induces a $B$-action via
    \begin{align}\label{eq: black action}
    \blacktriangleright_S\colon \,{B}\otimes_{k} R^{\ast} \, \longrightarrow\ , R^{\ast}, \qquad &(b\blacktriangleright_S f)(r)\,:=\,f(S(b)\triangleright r),
    \end{align} 
    where $\triangleright\colon B \otimes_k R\rightarrow R$ is the canonical $B$-action on the $k$-module $R$ defined by 
  	\begin{align}\label{eq: white action}
       b\triangleright r\,:=\,\epsilon(b\cdot s(r)), \qquad \qquad &b\in B, \quad r\in R.
    \end{align} Conditions (iii) and (iv) in Definition~\ref{takeuchi bialgebra} ensure that $\triangleright$ is indeed a left $B$-action, making the regular $R$-bimodule $R$ the monoidal unit of $B\operatorname{-mod}$.
\end{remark}

\begin{corollary}\emph{(\cite[Thm. 4.5]{allen2024hopfalgebroidsgrothendieckverdierduality}).}\label{cor: full Hopf gives GV}
Let $B$ be a \emph{full} $R$-Hopf algebroid with antipode $S$. The $B$-action $\blacktriangleright_S$ from Remark~\ref{rem: induced module structure on R*} makes the $R$-bimodule $R^{\ast}$ a dualizing object of $B{\operatorname{-mod}}^{\operatorname{fd}}$. Also, the strict monoidal forgetful functor $B{\operatorname{-mod}}^{\operatorname{fd}}\rightarrow {_R {\operatorname{Mod}}}^{\operatorname{fd}}_R$ is closed and strictly preserves this dualizing object.
\end{corollary}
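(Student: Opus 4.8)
The plan is to obtain this as a direct corollary of Proposition~\ref{Hopf algebroids lifting}, which already produces a dualizing object of $B{\operatorname{-mod}}^{\operatorname{fd}}$ — together with the stated properties of the forgetful functor — out of \emph{any} $B$-module structure on the dual $R$-bimodule $R^{\ast}$ of Example~\ref{ex: Finitely-generated projective bimodules}. Two inputs therefore need to be supplied: first, that Proposition~\ref{Hopf algebroids lifting} is applicable, i.e.\ that a full $R$-Hopf algebroid is in particular an $R$-Hopf algebroid in the sense of Definition~\ref{def:hopf algebroid}; and second, that the map $\blacktriangleright_S$ from Equation~\eqref{eq: black action} is a genuine $B$-module structure on $R^{\ast}$, equipped with the $R$-bimodule structure of Example~\ref{ex: Finitely-generated projective bimodules}.

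For the first input I would simply invoke Remark~\ref{rem: full vs Hopf algebroids}.(i): the underlying $R$-bialgebroid of a full $R$-Hopf algebroid is an $R$-Hopf algebroid. Hence Proposition~\ref{Hopf algebroids lifting} applies to $B$, and only the second input remains. For that, the candidate action is $\blacktriangleright_S$. That it is a left action of the $k$-algebra $B$ is immediate: since $S$ is an anti-algebra morphism and $\triangleright$ of Equation~\eqref{eq: white action} is a left $B$-action by conditions~(iii) and~(iv) of Definition~\ref{takeuchi bialgebra}, one reads off $(bb')\blacktriangleright_S f = b\blacktriangleright_S(b'\blacktriangleright_S f)$ and $1_B\blacktriangleright_S f = f$ directly from the formula $(b\blacktriangleright_S f)(r)=f(S(b)\triangleright r)$. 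It then has to be checked that this action descends along the relative tensor product $B\otimes_{R^e}R^{\ast}$ and that the resulting left $R^e$-action on $R^{\ast}$ coincides with the $R$-bimodule structure of Example~\ref{ex: Finitely-generated projective bimodules}; this uses the antipode relation $S\circ t = s$ together with $\epsilon\circ s=\operatorname{id}_R$ and the source/target and counit identities of Definition~\ref{takeuchi bialgebra}, and is precisely the content of Remark~\ref{rem: induced module structure on R*} (whose analogue for $R$ in place of $R^{\ast}$ records that $(R,\triangleright)$ is the monoidal unit of $B{\operatorname{-mod}}$).

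Granting these two inputs, Proposition~\ref{Hopf algebroids lifting} immediately gives that $(R^{\ast},\blacktriangleright_S)$ is a dualizing object of $B{\operatorname{-mod}}^{\operatorname{fd}}$ and that the strict monoidal forgetful functor $B{\operatorname{-mod}}^{\operatorname{fd}}\to {_R {\operatorname{Mod}}}^{\operatorname{fd}}_R$ is closed and strictly preserves it. I expect the only real work to lie in the last verification of the second input — matching the abstractly defined action $\blacktriangleright_S$ with the concrete $R$-bimodule $R^{\ast}$ — since everything else is a direct appeal to results already established in the paper; note in particular that the fullness of the antipode enters only through Remark~\ref{rem: full vs Hopf algebroids}.(i), whereas a bare anti-algebra morphism already suffices to define $\blacktriangleright_S$.
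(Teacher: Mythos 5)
Your proposal is correct and follows essentially the same route as the paper: the paper's proof likewise observes that the full $R$-Hopf algebroid axioms make $\blacktriangleright_S$ descend to ${_e B_e}\otimes_{R^e}R^{\ast}$ and then concludes by Remark~\ref{rem: full vs Hopf algebroids} and Proposition~\ref{Hopf algebroids lifting}. One small correction: the descent verification is not itself the content of Remark~\ref{rem: induced module structure on R*} (which only records the $k$-algebra action and the action $\triangleright$ on $R$), and — as your own appeal to $S\circ t=s$ shows — the antipode axioms enter precisely there, so fullness is used not only through Remark~\ref{rem: full vs Hopf algebroids}.(i).
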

\begin{proof}
The axioms of a full $R$-Hopf algebroid ensure that $\blacktriangleright_S$ descends to the quotient\linebreak $_e{B}_e\otimes_{R^e} R^{\ast}$. By Remark~\ref{rem: full vs Hopf algebroids}, the claim now follows from Proposition~\ref{Hopf algebroids lifting}.
\end{proof}

\begin{example}[Smash product algebras]\label{ex: GV smash product algebras}
	Let $H$ be a $k$-Hopf algebra with invertible antipode, and let $(R,\tikztriangleright,\rho)$ be a \emph{finite-dimensional} commutative algebra in the braided monoidal category ${}_H\mathcal{YD}^{H}$ of left-right Yetter--Drinfeld modules over $H$. Recall the full $R$-Hopf algebroid structure on the smash product algebra $R \# H$ from Example~\ref{ex: smash product algebras}. By Corollary~\ref{cor: full Hopf gives GV}, finite-dimensional $R \# H$-modules form a GV-category. The dualizing object is the $R$-bimodule $R^{\ast}$ from Example~\ref{ex: Finitely-generated projective bimodules}, with the $R\# H$-action induced by $\blacktriangleright_S$ from Equation~\eqref{eq: black action}. Here, $S$ is defined in Equation~\eqref{eq: antipode smash product algebra}.
	\end{example}
	
	\begin{example}[Skew group algebras]\label{ex: GV skew group}
Let $(G, \tikztriangleright)$ be a group acting by algebra automorphisms on a \emph{finite-dimensional} commutative $k$-algebra $R$. Recall the associated skew group algebra $R \# k[G]$ from Example~\ref{ex: skew group algebras}. By Corollary~\ref{cor: full Hopf gives GV}, the category of finite-dimensional \mbox{$R \# k[G]$-modules} (which is equivalent to the category of finite-dimensional $G$-equivariant $R$-modules) has a dualizing object $R^{\ast}$ with the $R\# k[G]$-action given explicitly by
\begin{align}
	\big((x \# g)\blacktriangleright_S f\big)(r)\,:=\,f(g^{-1}\tikztriangleright (x \cdot r)), \quad\quad\quad x,r\in R, \quad g\in G, \quad f\in R^{\ast}.
\end{align}
\end{example}

\smallskip

\begin{corollary}\label{ref: Hopf algebroids over sym Frob}
	Let $B$ be an $R$-Hopf algebroid over a symmetric Frobenius algebra $R$. The canonical $B$-module structure on the regular $R$-bimodule $R$ makes $B\operatorname{-mod}^{\operatorname{fd}}$ an $r$-category for which the strict monoidal forgetful functor $B{\operatorname{-mod}}^{\operatorname{fd}}\rightarrow {_R {\operatorname{Mod}}}^{\operatorname{fd}}_R$ is closed. 
	
	An analogous result holds for $R$-Hopf algebroids over a \emph{commutative} Frobenius algebra $R$ (in the sense of Remark ~\ref{rem: Bialgebroid over commutative base}).
\end{corollary}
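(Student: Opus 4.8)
The plan is to deduce this from Proposition~\ref{Hopf algebroids lifting} by exhibiting the monoidal unit of $B\operatorname{-mod}^{\operatorname{fd}}$ — the regular $R$-bimodule $R$ with its canonical $B$-action — as a dualizing object, using the Frobenius form on $R$ to identify $R$ with the $R$-bimodule $R^{\ast}$ of Example~\ref{ex: Finitely-generated projective bimodules}.

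First I would unpack the hypothesis. A symmetric Frobenius structure on the (automatically finite-dimensional) $k$-algebra $R$ is a $k$-linear functional $\lambda\colon R\to k$ for which the pairing $(x,y)\mapsto\lambda(xy)$ is nondegenerate and satisfies the trace identity $\lambda(xy)=\lambda(yx)$. Nondegeneracy and finite-dimensionality make $\Phi\colon R\to R^{\ast}$, $\Phi(a)(y):=\lambda(ya)$, a $k$-linear isomorphism. I would then check that $\Phi$ is in fact an isomorphism of $R$-bimodules, where $R^{\ast}$ carries the structure $(z.f.x)(y)=f(x\cdot y\cdot z)$: for $a,x,y,z\in R$ one computes $\Phi(zax)(y)=\lambda(yzax)=\lambda(xyza)=\Phi(a)(xyz)=(z.\Phi(a).x)(y)$, the middle equality being an instance of the trace identity. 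Equivalently, $\Phi$ is an isomorphism of $R^{e}$-modules.

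Next I would transport the canonical $B$-action $\triangleright$ on $R$ — the action $b\triangleright r=\epsilon(b\cdot s(r))$ of Remark~\ref{rem: induced module structure on R*}, which (by conditions (iii) and (iv) of Definition~\ref{takeuchi bialgebra}) makes $R$ the monoidal unit of $B\operatorname{-mod}$ — along $\Phi$ to a $B$-action on $R^{\ast}$, namely $b\cdot f:=\Phi\big(b\triangleright\Phi^{-1}(f)\big)$. Since $\Phi$ is $R^{e}$-linear and the original action descends to ${}_eB_e\otimes_{R^e}R$, the transported action descends to ${}_eB_e\otimes_{R^e}R^{\ast}$, so it is a legitimate $B$-module structure on the $R$-bimodule $R^{\ast}$ and may be fed into Proposition~\ref{Hopf algebroids lifting}. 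That proposition then gives that $R^{\ast}$, equipped with this $B$-action, is a dualizing object of $B\operatorname{-mod}^{\operatorname{fd}}$ and that the strict monoidal forgetful functor $B\operatorname{-mod}^{\operatorname{fd}}\to{}_R\operatorname{Mod}^{\operatorname{fd}}_R$ is closed and strictly preserves it. By construction $\Phi$ is now an isomorphism of $B$-modules between the monoidal unit $R$ and this dualizing object, so $R$ itself is a dualizing object; that is, $B\operatorname{-mod}^{\operatorname{fd}}$ is an $r$-category in the sense of Definition~\ref{def:GV-category}, with the forgetful functor closed, which is the first assertion.

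For the commutative-base version, $R$ is a commutative Frobenius $k$-algebra viewed as in Remark~\ref{rem: Bialgebroid over commutative base}; commutativity makes the trace identity automatic, so the same $\Phi$ is an isomorphism of $R$-modules $R\xrightarrow{\;\cong\;}R^{\ast}$. Replacing Proposition~\ref{Hopf algebroids lifting} by its commutative-base analogue from Remark~\ref{rem: GV for comm base} (which rests on Propositions~\ref{lemma: modules closed} and~\ref{prop: A-mod is GV} in place of Proposition~\ref{lemma: bimodules closed}), the identical argument applies. The only step that is not purely formal is the bimodule-compatibility of $\Phi$, which is exactly where the hypothesis "symmetric" (resp. "commutative") enters; beyond that, and the routine bookkeeping of the $R^{e}$-relative tensor needed to see that the transported action is well defined and descends, I do not anticipate any obstacle.
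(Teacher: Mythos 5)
Your proposal is correct and follows essentially the same route as the paper: the paper's proof simply notes that for a symmetric (resp.\ commutative) Frobenius algebra the regular $R$-bimodule $R$ is isomorphic to its $k$-linear dual $R^{\ast}$ and then invokes Proposition~\ref{Hopf algebroids lifting} (resp.\ its commutative-base analogue from Remark~\ref{rem: GV for comm base}). Your explicit construction of $\Phi$, the verification of bimodule-linearity via the trace identity, and the transport of the canonical $B$-action along $\Phi$ just spell out the details the paper leaves implicit.
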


\begin{proof}
	By assumption, $R$ is a finite-dimensional \(k\)-algebra such that the regular $R$-bimodule $R$ is isomorphic to its $k$-linear dual $R^{\ast}$. The claim follows directly from Proposition~\ref{Hopf algebroids lifting}.
\end{proof}

We apply Corollary~\ref{ref: Hopf algebroids over sym Frob} to Example~\ref{ex: envelopin algebras of lie-rinehart}:
\begin{example}[Enveloping algebras]\label{ex: enveloping algebras}
Let $L$ be a Lie--Rinehart algebra over a commutative Frobenius algebra $R$. Finite-dimensional modules over its universal enveloping algebra $\mathscr{U}_R(L)$ form an $r$-category. By Equation~\eqref{eq: white action}, the $\mathscr{U}_R(L)$-module structure on $R$ is induced by
\begin{equation}\label{eq: action universal enveloping on R}
	\mathscr{U}_R(L) \otimes_k R \;\xrightarrow{{\mathscr{U}_R(L)\,} \otimes_k {\,\iota_R}}\; \mathscr{U}_R(L) \otimes_k \mathscr{U}_R(L) \;\longrightarrow\; \mathscr{U}_R(L) \;\xlongrightarrow{\epsilon}\; R,
\end{equation}
where the second map is multiplication in $\mathscr{U}_R(L)$.
\end{example}

We specialize Example~\ref{ex: enveloping algebras} to a concrete Lie--Rinehart algebra.

\begin{example}[Truncated modular Weyl algebras]\label{ex: trunctated mod Weyl algebra}
	Let $k$ be a field of characteristic $p>0$, and consider the truncated polynomial algebra
	\begin{equation}
	R := k[x_1,\ldots,x_n]\big/(x_i^p).
	\end{equation}
	It is a commutative Frobenius algebra with Frobenius form $R\to k$ given by extracting the coefficient of the top-degree monomial $x_1^{p-1}x_2^{p-1}\cdots x_n^{p-1}$. Its $k$-Lie algebra of derivations
	\begin{equation}
	\operatorname{Der}_k(R) = \bigoplus_{i=1}^n R \cdot \partial_i, \qquad \partial_i(x_j) = \delta_{ij},
	\end{equation}
	forms a Lie--Rinehart algebra over $R$. The enveloping algebra $\mathscr{U}_R(\operatorname{Der}_k(R))$ is the \emph{truncated modular Weyl algebra} $A_n^{(p)}$; compare this terminology to \cite{PetersShi}. $A_n^{(p)}$ is the $k$-algebra generated by $x_i, \partial_i$, where $1 \leq i \leq n$,
	subject to the relations 
	\begin{align}
		x_i^p=0, \quad  x_i x_j=x_j x_i, \quad \partial_i\partial_j=\partial_j\partial_i, \quad \partial_ix_j-x_j\partial_i=\delta_{ij}.
	\end{align}
	By Example~\ref{ex: enveloping algebras}, the category of finite-dimensional $A_n^{(p)}$-modules is an $r$-category. 
	By Equation~\eqref{eq: action universal enveloping on R}, the $A_n^{(p)}$-module structure on $R$ is the standard action of differential operators, with $x_i$ acting by multiplication and $\partial_i$ by differentiation.
\end{example}

\begin{remark}[Antipodes on $A_n^{(p)}$]
Although universal enveloping algebras of Lie--Rinehart algebras need not admit an antipode \cite{KrRo}, the truncated modular Weyl algebra $A_n^{(p)}$ does: it carries a full $R$-Hopf algebroid structure with antipode given by $S(\partial_i) = -\partial_i$ and $S(x_i) = x_i$.
\end{remark}

\vspace{-0.4cm}
\appendix

\addtocontents{toc}{\protect\setcounter{tocdepth}{1}}
\section{Proofs}\label{sec:appendixproof}

\subsection{Grothendieck--Verdier functors}\label{sec:GV-functors-appendix}

\begin{proof}[Proof of Lemma~\ref{lemma:left-right duality transfos relations}]
For all \(X\in \cC\), we have:
\allowdisplaybreaks
\begin{align*}
	\xi^{l,F}_{D(X)} \circ F(d_X)
	&\;\eqabove{(1)}\;
	\big(FD(X)\multimap \upsilon^{0,F}\big) \circ \tau^{l,F}_{D(X),K}\circ {F\big(D(X)\multimap \higheroverline{\operatorname{ev}}^X_K\big)} \circ F\big(\operatorname{coev}^{D(X)}_{X}\big)\\[0.02em]
	&\;\eqabove{(2)}\;
	\big(FD(X)\multimap \upsilon^{0,F}\big) \circ {\big(FD(X)\multimap F(\higheroverline{\operatorname{ev}}^X_K)\big)}
	\\& \; \qquad \circ \tau^{l,F}_{D(X),D(X)\otimes X} \circ F\big(\operatorname{coev}^{D(X)}_{X}\big)\\[0.02em]
	&\;\eqabove{(3)}\;
	\big(FD(X)\multimap \upsilon^{0,F}\big) \circ {\big(FD(X)\multimap F(\higheroverline{\operatorname{ev}}^X_K)\big)} 
	\\& \; \qquad \circ \big(FD(X)\multimap \varphi^{2,F}_{D(X),X}\big) \circ \operatorname{coev}^{FD(X)}_{F(X)}\\[0.02em]
	&\;\eqabove{(4)}\;
	\big(FD(X)\multimap \upsilon^{0,F}\big) \circ {\big(FD(X)\multimap \higheroverline{\operatorname{ev}}^{F(X)}_{F(K)})\big)} 
	\\& \quad \qquad \circ \big(FD(X)\multimap (\tau^{r,F}_{K,X}\otimes F(X))\big) \circ \operatorname{coev}^{FD(X)}_{F(X)}\\[0.02em]
	&\;\eqabove{(5)}\;
	{\big(FD(X)\multimap \higheroverline{\operatorname{ev}}^{F(X)}_{K})\big)} 
	\circ \big(FD(X)\multimap (\xi^{r,F}_{X}\otimes F(X))\big) 
	\circ \operatorname{coev}^{FD(X)}_{F(X)}\\[0.02em]
	&\;\eqabove{(6)}\;
	 \big(\xi^{r,F}_{X}\multimap \higheroverline{\operatorname{ev}}^{F(X)}_{K}\big) 
	\circ \operatorname{coev}^{DF(X)}_{F(X)}\\[0.02em]
	&\;\eqabove{(7)}\;
	D'(\xi^{r,F}_X) \circ d_{F(X)}.
\end{align*}
Equation (1) holds by the definitions of \(\xi^{l,F}\) and $d$; (2) by the naturality of \(\tau^{l,F}\); (3) by Equation~\eqref{coeval lax functor compatibility} in Lemma~\ref{lemma: compatibility of multi and left internal hom transfo}; (4) by the right closed analogue of Equation~\eqref{eval lax functor compatibility} in the same lemma; (5) by the naturality of \(\higheroverline{\operatorname{ev}}^{F(X)}\); (6) by the extranaturality of \(\operatorname{coev}\) (Lemma~\ref{lemma:extranaturality of eval and coeval}) and the functoriality of the left internal hom; and (7) by the definition of \(d_{F(X)}\). 

This proves Equation~\eqref{eq:left-right duality transfos relations 1}, with Equation~\eqref{eq:left-right duality transfos relations 2} following by a similar argument.
\end{proof}
\vspace{-0.3cm}
\begin{proof}[Proof of Lemma~\ref{lemma:duality transfos relations}]	
	We prove only Equation~\eqref{eq: GV functor 1}. Functoriality of the left internal hom and naturality of \(\beta\) reduce the claim to verifying, for all \(X,Y \in \cC\),
	\begin{equation*}
		\beta_{F(X),F(Y),F(K)} \circ (\varphi^{2,F}_{X,Y} \multimap F(K)) \circ \tau^{l,F}_{X \otimes Y,K} 
		\;=\; (F(Y) \multimap \tau^{l,F}_{X,K}) \circ \tau^{l,F}_{Y,D'(X)} \circ F(\beta_{X,Y,K}).
	\end{equation*}  
This follows directly from Lemma~\ref{lemma: compatibility of multi and beta} (with $Z=K$). Equation~\eqref{eq: GV functor 2} is shown analogously.
\end{proof}
\vspace{-0.8cm}
\begin{proof}[Proof of Lemma~\ref{lemma:pre-GV-morph-duality-transfo}]
	For all \(X\in \cC\), we have:
	\allowdisplaybreaks
	\begin{align*}
		D(f_X) \circ \xi^{r,G}_X \circ f_{D(X)}
		&\;\eqabove{(1)}\;
		\big(\upsilon^{0,G}\multimapinv f_X\big)\circ \tau^{r,G}_{K,X} \circ f_{D(X)}\\
		&\;\eqabove{(2)}\;
		\big(\upsilon^{0,G}\multimapinv F(X)\big) \circ \big(f_K\multimapinv F(X)\big)\circ \tau^{r,F}_{K,X}\\
		&\;\eqabove{(3)}\;
		\big(\upsilon^{0,F}\multimapinv F(X)\big)\circ \tau^{r,F}_{K,X}\\
		&\;\eqabove{(4)}\;
		\xi^{r,F}_X.
	\end{align*}
	Equations (1) and (4) hold by the definition of the duality transformations \(\xi^{r,G}\) and \(\xi^{r,F}\); (2) follows from the right closed analogue of Lemma~\ref{lemma:taul and monoidal nat transfos}; and (3) holds because $f$ is a morphism of GV-functors (Equation~\eqref{eq: morphism of GV-functors}).
\end{proof}

\subsection{From \(\mathsf{GV}\) to \(\mathsf{LDN}\)}\label{sec:GV-cat-and-LD}
\begin{proof}[Proof of Lemma~\ref{lemma: F is Frob LD}]
We verify that \((\upsilon^{2,F},\upsilon^{0,F})\) defines an oplax \(\parLL\)-monoidal structure. For \(X,Y,Z \in \cC\), the naturality of \(D'D \simeq \id_{\cC}\) and \(D'D \simeq \id_{\cD}\), together with \cite[Lem. 2.56]{DeS}, reduces the coassociativity of \(\upsilon^{2,F}\) to the commutativity of the outer diagram below, with all indices omitted for readability:
\allowdisplaybreaks
\begin{equation}
  \adjustbox{max width=\textwidth}{
  \begin{tikzcd}[row sep=4ex,column sep=4ex]
	{FD\big((D'(Z)\ot D'(Y)) \ot D'(X)\big)} & {FD\big(D'(Z)\ot (D'(Y)\ot D'(X))\big)}\\
    {DF\big((D'(Z)\ot D'(Y)) \ot D'(X)\big)} & {DF\big(D'(Z)\ot (D'(Y)\ot D'(X))\big)}\\
    {D\big(F(D'(Z)\ot D'(Y)) \ot FD'(X)\big)} & {D\big(FD'(Z)\ot F(D'(Y)\ot D'(X))\big)}\\
    {D\big((FD'(Z)\ot FD'(Y)) \ot FD'(X)\big)} & {D\big(FD'(Z)\ot (FD'(Y)\ot FD'(X))\big)}\\
    {D\big((D'F(Z)\ot D'F(Y)) \ot D'F(X)\big)} & {D\big(D'F(Z)\ot (D'F(Y)\ot D'F(X))\big).}
    \arrow["FD(\ao)"{yshift=3pt}, from=1-1, to=1-2]
    \arrow["\xi^{r,F}"'{yshift=1pt, xshift=-3pt}, from=1-1, to=2-1]
    \arrow["\xi^{r,F}"{yshift=1pt, xshift=3pt}, from=1-2, to=2-2]
    \arrow["D(\varphi^{2,F})"'{yshift=1pt, xshift=-3pt}, from=2-1, to=3-1]
    \arrow["D(\varphi^{2,F})"{yshift=1pt, xshift=3pt}, from=2-2, to=3-2]
    \arrow["D\big(\varphi^{2,F}\,\ot\, FD'(X)\big)"'{yshift=1pt, xshift=-3pt}, from=3-1, to=4-1]
    \arrow["D\big(FD'(Z) \,\ot\, \varphi^{2,F}\big)"{yshift=1pt, xshift=3pt}, from=3-2, to=4-2]
    \arrow["D\big((\xi^{l,F} \,\ot\, \xi^{l,F})\,\ot\, \xi^{l,F}\big)^{-1}"'{yshift=1pt, xshift=-3pt}, from=4-1, to=5-1]
    \arrow["D\big({\xi^{l,F}} \,\ot\, ({\xi^{l,F}}\,\ot\, {\xi^{l,F}})\big)^{-1}"{yshift=1pt, xshift=3pt}, from=4-2, to=5-2]
    \arrow["DF(\ao)"{yshift=1pt}, from=2-1, to=2-2]
    \arrow["D(\ao)"{yshift=1pt}, from=4-1, to=4-2]
    \arrow["D(\ao)"'{yshift=-4pt}, from=5-1, to=5-2]
    \arrow[phantom,"\textup{(I)}"{yshift=2pt}, from=1-1, to=2-2]
    \arrow[phantom,"\textup{(II)}"{yshift=2pt}, from=2-1, to=4-2]
    \arrow[phantom,"\textup{(III)}"{yshift=2pt}, from=4-1, to=5-2]
 \end{tikzcd}
}
\end{equation}
Diagram (I) commutes by the naturality of \(\xi^{r,F}\); (II) by the associativity of \(\varphi^{2,F}\); and (III) by the naturality of \(\alpha\). The counitality of \((\upsilon^{2,F},\upsilon^{0,F})\) follows similarly from the unitality of \((\varphi^{2,F},\varphi^{0,F})\), together with Lemmas~\ref{lem:internal composition is extranatural} and~\ref{lemma:left-right duality transfos relations}.

Finally, we verify the two Frobenius relations. By the definition of \(\upsilon^{2,F}\), the Frobenius relation~\eqref{eq:F1} amounts to the commutativity of the following diagram for all \(X,Y,Z \in \cC\):
\begin{equation}\label{dgm: GV functor is LD1}
  \adjustbox{max width=\textwidth}{
  \begin{tikzcd}[row sep=3ex]
	{F(X)\ot FD\big(D'(Z)\ot D'(Y)\big)} & {F(X)\ot DF\big(D'(Z)\ot D'(Y)\big)}\\
    {F\Big(X \ot D\big(D'(Z)\ot D'(Y)\big)\Big)} & {F(X)\ot D(FD'(Z)\ot FD'(Y))}\\
    {FD\big(D'(Z)\ot D'(X \ot Y)\big)} & {F(X)\ot D\big(D'F(Z)\ot D'F(Y)\big)}\\
    {DF\big(D'(Z)\ot D'(X \ot Y)\big)} & {D\Big(D'F(Z)\ot D'\big(F(X)\ot F(Y)\big)\Big)}\\
    {D\big(FD'(Z)\ot FD'(X \ot Y)\big)} & {D\big(D'F(Z)\ot D'F(X\ot Y)\big).}
    \arrow["{F(X)}\,\ot\, {\xi^{r,F}}"{yshift=3pt, xshift=2pt}, from=1-1, to=1-2]
    \arrow["\varphi^{2,F}"'{yshift=1pt, xshift=-3pt}, from=1-1, to=2-1]
    \arrow["{F(X)}\,\ot\, {D(\varphi^{2,F})}"{yshift=1pt, xshift=3pt}, from=1-2, to=2-2]
    \arrow["F(\distl)"'{yshift=1pt, xshift=-3pt}, from=2-1, to=3-1]
    \arrow["F(X)\,\ot\, D(\xi^{l,F}\,\ot\, \xi^{l,F})^{-1}"{yshift=1pt, xshift=3pt}, from=2-2, to=3-2]
    \arrow["\xi^{r,F}"'{yshift=1pt, xshift=-3pt}, from=3-1, to=4-1]
    \arrow["\distl"{yshift=1pt, xshift=3pt}, from=3-2, to=4-2]
    \arrow["D(\varphi^{2,F})"'{yshift=1pt, xshift=-3pt}, from=4-1, to=5-1]
    \arrow["D\big(D'F(Z)\,\ot\, D'(\varphi^{2,F})\big)"{yshift=1pt, xshift=3pt}, from=4-2, to=5-2]
    \arrow["D(\xi^{l,F} \,\ot\, \xi^{l,F})^{-1}"'{yshift=-5pt,xshift=2pt}, from=5-1, to=5-2]
 \end{tikzcd}
}
\end{equation}

Using Equation~\eqref{eq: GV functor 2} from Lemma~\ref{lemma:duality transfos relations} twice and naturality repeatedly, one can show that the commutativity of diagram~\eqref{dgm: GV functor is LD1} is equivalent to that of the following diagram:

\begin{equation}\label{dgm: GV functor is LD2}
  \adjustbox{max width=\textwidth}{
  \begin{tikzcd}
	{F(X)\ot F(Y \multimapinv Z)} && {F(X)\ot \big(F(Y) \multimapinv F(Z)\big)}\\
    {F\big(X \ot (Y \multimapinv Z)\big)} && {\big(F(X)\ot F(Y)\big) \multimapinv F(Z)}\\
    {F\big((X \ot Y) \multimapinv Z\big)} && {F(X \ot Y) \multimapinv F(Z).}
    \arrow["{F(X)}\,\ot\, \tau^{r,F}_{Y,Z}"{yshift=3pt}, from=1-1, to=1-3]
    \arrow["\varphi^{2,F}_{X,Y\multimapinv Z}"'{xshift=-3pt,yshift=1pt}, from=1-1, to=2-1]
    \arrow["{\widetilde{\distl}_{F(X),F(Y),F(Z)}}"{xshift=3pt, yshift=1pt}, from=1-3, to=2-3]
    \arrow["F(\widetilde{\distl}_{X,Y,Z})"'{xshift=-3pt,yshift=1pt}, from=2-1, to=3-1]
    \arrow["\varphi^{2,F}_{X,Y}\,\multimapinv\, F(Z)"{yshift=1pt, xshift=3pt}, from=2-3, to=3-3]
    \arrow["\tau^{r,F}_{X\ot Y,Z}"'{yshift=-3pt}, from=3-1, to=3-3]
 \end{tikzcd}
}
\end{equation}
Here, \(\widetilde{\distl}\) denotes the natural transformation from Equation~\eqref{eq: distl via inhoms} in Remark~\ref{rem:InHoms and distributors}.

To establish the commutativity of diagram~\eqref{dgm: GV functor is LD2}, we rewrite the composite of the top horizontal and right vertical morphisms using the definitions of \(\tau^{r,F}\) and \(\widetilde{\distl}\) (Definition~\ref{def: closed monoidal functor} and Remark~\ref{rem:InHoms and distributors}), obtaining:
\allowdisplaybreaks
\begin{align*}
    & {\Big(\varphi^{2,F}_{X,Y}\multimapinv F(Z)\Big)} \circ {\Big(\big(F(X)\ot \higheroverline{\operatorname{ev}}^{F(Z)}_{F(Y)}\big)\multimapinv F(Z)\Big)}\circ {\Big(\aoi_{F(X),F(Y)\,\multimapinv\, F(Z),F(Z)} \multimapinv F(Z)\Big)}\\ 
    & \; \circ {\higheroverline{\operatorname{coev}}^{F(Z)}_{F(X)\,\ot\,(F(Y)\,\multimapinv\,F(Z))}} \circ {\Big(F(X)\ot \big(F(\higheroverline{\operatorname{ev}}^{Z}_{Y})\multimapinv F(Z)\big)\Big)}\\
    &\; \circ \Big(F(X)\ot \big(\varphi^{2,F}_{Y\multimapinv Z,Z} \multimapinv F(Z)\big)\Big) \circ {\Big(F(X)\ot \higheroverline{\operatorname{coev}}^{F(Z)}_{F(Y \multimapinv Z)}\Big)}\\[.4em]
    &\quad\eqabove{(1)}\;
    {\Big(F(X\ot \higheroverline{\operatorname{ev}}^Z_Y)\multimapinv F(Z)\Big)}\circ {\Big(\varphi^{2,F}_{X,(Y\,\multimapinv\, Z)\,\ot\,Z}\multimapinv F(Z)\Big)} \circ {\Big(\big(F(X)\ot \varphi^{2,F}_{Y\multimapinv Z,Z}\big)\multimapinv F(Z)\Big)}\\
    &\quad \qquad \circ {\Bigg(\bigg(F(X)\ot \Big(\higheroverline{\operatorname{ev}}^{F(Z)}_{F(Y\multimapinv Z)\,\ot\, F(Z)}\circ \big(\higheroverline{\operatorname{coev}}^{F(Z)}_{F(Y\multimapinv Z)}\ot F(Z)\big)\Big)\bigg)\multimapinv F(Z)\Bigg)}\\
    & \quad \qquad \circ {\Big(\aoi_{F(X),F(Y)\,\multimapinv\, F(Z),F(Z)} \multimapinv F(Z) \Big)} \circ {\higheroverline{\operatorname{coev}}^{F(Z)}_{F(X)\,\ot\, F(Y\multimapinv Z)}}\\[.4em]
    &\quad\eqabove{(2)}\; 
    {\Big(F(X\ot \higheroverline{\operatorname{ev}}^Z_Y)\multimapinv F(Z)\Big)}\circ {\Big(\varphi^{2,F}_{X,(Y\multimapinv Z)\,\ot\, Z}\multimapinv F(Z)\Big)} \circ {\Big(\big(F(X)\ot \varphi^{2,F}_{Y\multimapinv Z,Z}\big)\multimapinv F(Z)\Big)}
    \\& \quad \qquad \circ {\Big(\aoi_{F(X),F(Y)\,\multimapinv\, F(Z),F(Z)} \multimapinv F(Z) \Big)} \circ {\higheroverline{\operatorname{coev}}^{F(Z)}_{F(X)\,\ot\, F(Y\multimapinv Z)}}\\[.4em]
    &\quad\eqabove{(3)}\; 
    {\Big(F(X\ot \higheroverline{\operatorname{ev}}^Z_Y)\multimapinv F(Z)\Big)}\circ {\Big(F(\aoi_{X,Y\multimapinv Z,Z})\multimapinv F(Z)\Big)} \circ {\Big(\varphi^{2,F}_{X\,\ot\, (Y\,\multimapinv\, Z),Z}\multimapinv F(Z)\Big)}
    \\& \quad \qquad \circ {\Big(\big(\varphi^{2,F}_{X,Y\multimapinv Z} \ot F(Z)\big)\multimapinv F(Z)\Big)} \circ {\higheroverline{\operatorname{coev}}^{F(Z)}_{F(X)\,\ot\, F(Y\multimapinv Z)}}\\[.4em]
    &\quad\eqabove{(4)}\; 
    {\Big(F(X\ot \higheroverline{\operatorname{ev}}^Z_Y)\multimapinv F(Z)\Big)}\circ {\Big(F(\aoi_{X,Y\multimapinv Z,Z})\multimapinv F(Z)\Big)} \circ {\Big(\varphi^{2,F}_{X\,\ot\, (Y\multimapinv Z),Z}\multimapinv F(Z)\Big)}
    \\& \quad \qquad \circ {\higheroverline{\operatorname{coev}}^{F(Z)}_{F(X\,\ot\, (Y\,\multimapinv\, Z))}} \circ {\varphi^{2,F}_{X,Y\multimapinv Z}}\\[.4em]
    &\quad\eqabove{(5)}\; 
    {\Big(F(X\ot \higheroverline{\operatorname{ev}}^Z_Y)\multimapinv F(Z)\Big)}\circ {\Big(F(\aoi_{X,Y\multimapinv Z,Z})\multimapinv F(Z)\Big)}\circ {\tau^{r,F}_{(X\,\ot\, (Y\,\multimapinv\, Z)  )\,\ot\, Z,Z}}
    \\&\quad \qquad \circ {F(\higheroverline{\operatorname{coev}}^Z_{X\,\ot\, (Y\,\multimapinv\, Z)})} \circ {\varphi^{2,F}_{X,Y\multimapinv Z}}\\[.4em]
    &\quad\eqabove{(6)}\; 
    {\tau^{r,F}_{X\ot Y,Z}}\circ {F\big((X\ot \higheroverline{\operatorname{ev}}^{Z}_Y)\multimapinv Z\big)} \circ {F(\aoi_{X,Y\multimapinv Z,Z}\multimapinv Z)} \circ {F(\higheroverline{\operatorname{coev}}^Z_{X\,\ot\, (Y\multimapinv Z)})} \circ {\varphi^{2,F}_{X,Y\multimapinv Z}}.
\end{align*} 
Equation (1) follows from naturality; (2) from a snake equation for \(({?}\,\ot\, F(Z)) \,\dashv\, ({?}\multimapinv F(Z))\); (3) from associativity of \(\varphi^{2,F}\); (4) from naturality of \(\higheroverline{\operatorname{coev}}^{F(Z)}\); (5) from Lemma~\ref{lemma: compatibility of multi and left internal hom transfo}; and (6) from the naturality of \(\tau^{r,F}\). By definition of \(\widetilde{\distl}\), the last line equals the composite of the left vertical and lower horizontal morphisms in diagram~\eqref{dgm: GV functor is LD2}, proving Frobenius relation~\eqref{eq:F1}. Relation~\eqref{eq:F2} follows analogously using Equation~\eqref{eq: GV functor 1} from Lemma~\ref{lemma:duality transfos relations}.
\end{proof}

\smallskip

\begin{proof}[Proof of Lemma~\ref{lemma: morphism of GV is morphism of Frob}]
By the defining equation of \(\upsilon^{2,F}\) and \(\upsilon^{2,G}\) (see Equation~\eqref{eq: def of comultiplication morphism of GV functor} in Construction~\ref{constr: GV to LDN}), the compatibility of \(f\colon F \to G\) with the comultiplication morphisms of \(F\) and \(G\) amounts to the commutativity of the following outer diagram for all \(X,Y\in \cC\):
\begin{equation*}
  \adjustbox{max width=\textwidth}{
  \begin{tikzcd}
	{FD\big(D'(Y)\ot D'(X)\big)} &[8em] {GD\big(D'(Y)\ot D'(X)\big)}\\
    {DF\big(D'(Y)\ot D'(X)\big)} &[8em] {DG\big(D'(Y)\ot D'(X)\big)}\\
    {D\big(FD'(Y)\ot FD'(X)\big)} &[8em] {D\big(GD'(Y)\ot GD'(X)\big)}\\
    {D\big(D'F(Y)\ot D'F(X)\big)} &[8em] {D\big(D'G(Y)\ot D'G(X)\big).}
    \arrow["f"{yshift=1pt}, from=1-1, to=1-2]
    \arrow["\xi^{r,F}"'{yshift=1pt, xshift=-3pt}, from=1-1, to=2-1]
    \arrow["\xi^{r,G}"{yshift=1pt, xshift=3pt}, from=1-2, to=2-2]
    \arrow["D(\varphi^{2,F})"'{yshift=1pt, xshift=-3pt}, from=2-1, to=3-1]
    \arrow["D(\varphi^{2,G})"{yshift=1pt, xshift=3pt}, from=2-2, to=3-2]
    \arrow["D(\xi^{l,F}\ot \xi^{l,F})^{-1}"'{yshift=1pt, xshift=-3pt}, from=3-1, to=4-1]
    \arrow["D(\xi^{l,G}\ot \xi^{l,G})^{-1}"{yshift=1pt, xshift=3pt}, from=3-2, to=4-2]
    \arrow["D(f)"'{yshift=3pt}, from=2-2, to=2-1]
    \arrow["D(f \ot f)"'{yshift=1pt}, from=3-2, to=3-1]
    \arrow["D\big(D'(f)\ot D'(f)\big)"'{yshift=-4pt}, from=4-1, to=4-2]
    \arrow[phantom,"\textup{(I)}"{yshift=2pt}, from=1-1, to=2-2]
    \arrow[phantom,"\textup{(II)}"{yshift=1pt,xshift=2pt}, from=2-1, to=3-2]
    \arrow[phantom,"\textup{(III)}"{yshift=0pt}, from=3-1, to=4-2]
 \end{tikzcd}
}
\end{equation*}
Here, indices are omitted for improved readability. Diagrams (I) and (III) commute by Equation~\eqref{eq:xirF-relation-xirG} in Lemma~\ref{lemma:pre-GV-morph-duality-transfo}, and (II) commutes because \(f\) is monoidal. 

The compatibility of \(f\colon F \to G\) with the counit morphisms \(\upsilon^{0,F}\) and \(\upsilon^{0,G}\) holds by the defining equation of a morphism of GV-functors, namely Equation~\eqref{eq: morphism of GV-functors} of Definition~\ref{def: morphism of GV-functors}.
\end{proof}

\begin{proof}[Proof of Lemma~\ref{prop: GV to LDN is functorial}] The assignment \(\mathsf{GV}\to \mathsf{LDN}\) clearly preserves identity \(1\)-cells. To check that it strictly respects composition, let \(F\colon \cC \to \cD\) and \(G\colon \cD \to \cE\) be GV-functors, and let \(X,Y\in \cC\). By Equation~\eqref{eq: def of comultiplication morphism of GV functor} and the definition of composition of Frobenius LD-functors (see the remark preceding \cite[Prop. 2.23]{DeS}), the comultiplication morphism of the Frobenius LD-functor \(G \circ F\) equals the first morphism in the following calculation:
\allowdisplaybreaks
\begin{align*}
    & {D\big(\xi_{F(Y)}^{l,G}\ot \xi_{F(X)}^{l,G}\big)^{-1}} \circ {D\big(\varphi^{2,G}_{D'F(Y),D'F(X)}\big)}\circ {\xi^{r,G}_{D'F(Y)\ot D'F(X)}}\\ 
    & \; \circ {GD\big(\xi_{Y}^{l,F} \ot \xi_{X}^{l,F}\big)^{-1}} \circ {GD\big(\varphi^{2,F}_{D'(Y),D'(X)}\big)} \circ {G\big(\xi^{r,F}_{D(D'(Y)\ot D'(X))}\big)}\\[.4em]
    &\quad\eqabove{(1)}\;{D\big(\xi_{F(Y)}^{l,G} \ot \xi_{F(X)}^{l,G}\big)^{-1}} \circ {D\big(\varphi^{2,G}_{D'F(Y),D'F(X)}\big)}\circ {DG\big(\xi_{Y}^{l,F} \ot \xi_{X}^{l,F}\big)^{-1}}\\ 
    & \quad \qquad \circ {DG\big(\varphi^{2,F}_{D'(Y),D'(X)}\big)} \circ {\xi^{r,G}_{F(D'(Y)\ot D'(X))}} \circ {G\big(\xi^{r,F}_{D(D'(Y)\ot D'(X))}\big)}\\[.4em]
    &\quad\eqabove{(2)}\;{D\big(\xi_{F(Y)}^{l,G} \ot \xi_{F(X)}^{l,G}\big)^{-1}} \circ {D\big(G(\xi_{Y}^{l,F})\ot G(\xi_{X}^{l,F})\big)^{-1}} \circ  {D\big(\varphi^{2,G}_{FD'(Y),FD'(X)}\big)}\\ 
    & \quad \qquad \circ {DG\big(\varphi^{2,F}_{D'(Y),D'(X)}\big)} \circ {\xi^{r,G}_{F(D'(Y)\ot D'(X))}} \circ {G\big(\xi^{r,F}_{D(D'(Y)\ot D'(X))}\big)}.
\end{align*}
Equation~(1) follows from the naturality of \(\xi^{r,G}\), and Equation~(2) from the naturality of \(\varphi^{2,G}\). By Equation~\eqref{eq: def of comultiplication morphism of GV functor}, the final morphism coincides with the comultiplication of the composite \(G\circ F\) as GV-functors.
\end{proof}

\subsection{From \(\mathsf{LDN}\) to \(\mathsf{GV}\)}\label{sec:LD-cat-to-GV}

\begin{proof}[Proof of Lemma~\ref{prop:LD-categories closed}]
	Given \(X,Y\in \cC\), we define morphisms
	\allowdisplaybreaks
	\begin{align*}
		\operatorname{coev}^{X}_Y\colon& Y \xrightarrow{\lou^{-1}_Y} 1 \otimes Y \xrightarrow{\eta^X \otimes Y} \big(\rD (X) \parLL X\big) \otimes Y \xrightarrow{\distr_{\rD (X),X,Y}} \rD (X) \parLL {(X \otimes Y)} \!\eqdef\! X \multimap (X \otimes Y),\\
		\operatorname{ev}^{X}_Y\colon& X \otimes (X \multimap Y) \!\eqdef\! X \otimes \big(\rD (X) \parLL Y\big) \xrightarrow{\distl_{X,\rD (X),Y}} \big(X \otimes \rD (X)\big) \parLL Y \xrightarrow{\epsilon^X \parLL Y} K \parLL Y \xrightarrow{\lpu_Y} Y.
	\end{align*}
	These are natural in \(Y \in \cC\) and satisfy the snake equations, expressing that the functor \((X \otimes {?})\) is left adjoint to \((X \multimap {?})\). Left LD-duals are treated analogously.
\end{proof}

\vspace{0.1pt}

\begin{proof}[Proof of Proposition~\ref{prop:FrobLDfunctors are closed}]
	We freely use the surface-diagrammatic calculus from \cite{DeS} (see \cite[\S 2]{DeS}): The $\parLL$-monoidal structure is depicted in red, the $\otimes$-monoidal structure in black, opposite categories are shaded light blue, and the functors $D^{\prime}$ are drawn in dark blue. With these conventions, Equation~\eqref{eq: Upsilon=tau} follows from the following calculation:
	\begin{figure}[H]
		\centering
		\includegraphics[width=0.98\textwidth]{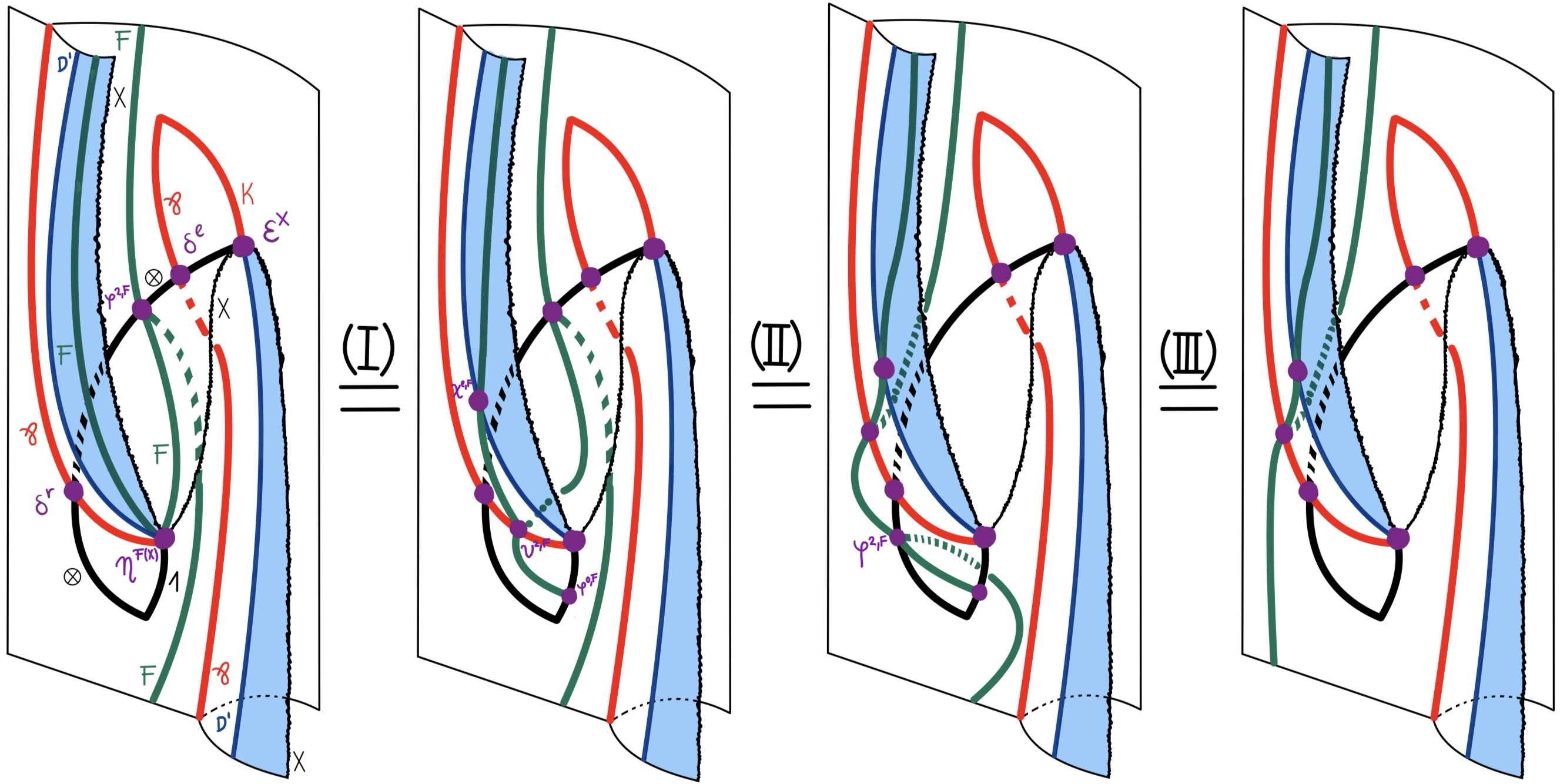}.
	\end{figure}
	The first diagram represents the natural transformation $\tau^{l,F}_{X,?}$, using the definitions of the unit and counit of the adjunction $({X}\otimes {?})\,\dashv\, ({X} \multimap {?})$ in the proof of Lemma~\ref{prop:LD-categories closed}. Equation (I) follows directly from the defining property~\eqref{charact property duality transfo} of $\chi^{l,F}$; (II) from the Frobenius relation~\eqref{eq:F2} for the Frobenius LD-functor $F$ (see Appendix~\ref{app: coherence diagrams} and \cite[Fig. 16]{DeS}); and (III) from the unitality of the lax $\otimes$-monoidal structure $(\varphi^{2,F},\varphi^{0,F})$. Finally, applying the snake equation~\eqref{secondzigzag} for the unit-counit pair $(\eta^X,\epsilon^X)$ (see Appendix~\ref{app: coherence diagrams} and and \cite[Fig. 17]{DeS}) to the rightmost surface diagram yields the natural transformation $\Upsilon^{l,F}_{X,?}$. 
\end{proof}

\vspace{0.1pt}

\begin{proof}[Proof of Lemma~\ref{lemma: LDN to LDN is equivalent to identity}]
	Let \(\cC = (\cC, \otimes, 1, \parLL, K)\) be an LD-category with negation.  
	Its image under the composite \(\mathsf{LDN} \to \mathsf{GV} \to \mathsf{LDN}\) is the LD-category with negation 
	\begin{equation}
		\widetilde{\cC} \;=\; (\cC, \otimes, 1, \widetilde{\parLL}, K),
	\end{equation}
	where, for all $X,Y\in \cC$, we define 
	\begin{equation}
		{X}\, \widetilde{\parLL}\, {Y} \;:=\; D \big(D'(Y)\otimes D'(X)\big),
	\end{equation}
	and the \(\otimes\)-monoidal structure and \(\parLL\)-unit remain unchanged.  
	The identity functor on the underlying category \(\cC\) carries a strong Frobenius LD-structure
	\begin{equation}\label{eq: Frob LD yields natural 2-isomorphism}
		\cC \;\longrightarrow\; \widetilde{\cC},
	\end{equation}
	for which all coherence morphisms are identities, except the \(\parLL\)-comultiplication morphism
	\begin{align}
		&&\upsilon^{\cC}_{X,Y}\colon\; X \parLL Y \;\xlongrightarrow{\;\simeq\;}\; {X} \,\widetilde{\parLL}\, {Y}. &&(X,Y\in \cC)
	\end{align}
	To define \(\upsilon^{\cC}_{X,Y}\), observe that \(D'(Y) \otimes D'(X)\) is the right LD-dual of \(X\parLL Y\) (see Definition~\ref{def:LD-with-negation}): More precisely, the evaluation $\epsilon^{X \parLL Y}\colon (X \parLL Y) \otimes (D'(Y)\otimes D'(X)) \,\longrightarrow\, K$ and the coevaluation \(\eta^{X \parLL Y}\colon 1 \,\longrightarrow\, \big(D'(Y)\otimes D'(X)\big) \parLL {(X \parLL Y)}\) are given by
	\begin{align*}
		\epsilon^{X \parLL Y} &=\, \epsilon^X \circ \Big(\big(\rpu_X \circ (X \parLL \epsilon^Y) \circ \distr \big) \otimes D'(X)\Big) \circ \alpha,\\
		\eta^{X \parLL Y} &=\, \api \circ\Big(\big(\distl \circ \big(D'(Y)\otimes \eta^X\big)\circ (\rho^{-1}_{D'(Y)})\big)\parLL Y\Big)\circ \eta^Y.
	\end{align*}
	Since LD-duals are unique up to unique isomorphism \cite[Lem. A.6]{LD-functors}, this yields an isomorphism \(X \parLL Y \xrightarrow{\simeq} {X} \,\widetilde{\parLL}\, {Y}\). Explicitly, we set
	\begin{align}\label{eq: comultiplication LDN-LDN}
		& \upsilon^{\cC}_{X,Y} \;:=\; \lpu_{X \,\widetilde{\parLL}\, Y} \circ \big(\epsilon^{X\parLL Y} \parLL {(X \, \widetilde{\parLL}\, Y)} \big) \circ \distl \circ \big((X\parLL Y)\otimes {\underline{\eta}}^{D'(Y)\otimes D'(X)}\big)\circ \rho^{-1}_{X\parLL Y}.
	\end{align}
	
	Consider the family of Frobenius LD-equivalences in~\eqref{eq: Frob LD yields natural 2-isomorphism}, indexed by LD-categories with negation $\cC$, equipped with the \(\parLL\)-comultiplications \(\upsilon^{\cC}\). To verify that it defines a strict \(2\)-natural transformation, it suffices to check that for every Frobenius LD-functor \(F\colon \cC \to \cD\) between LD-categories with negation $\cC$ and $\cD$, the comultiplication morphism \(\upsilon^{2,F}\) satisfies
	\begin{equation}\label{eq: Frob LD yields natural 2-isomorphism 2}
		\upsilon^{\cD}_{F(X),F(Y)} \circ \upsilon^{2,F}_{X,Y} \;=\; \widetilde{\upsilon}^{2,F}_{X,Y} \circ F(\upsilon^{\cC}_{X,Y}),
	\end{equation}
	where \(\widetilde{\upsilon}^{2,F}_{X,Y}\) is the comultiplication morphism of the GV-functor associated to $F$, as defined in Equation~\eqref{eq: GV to LDN} of Construction~\ref{constr: GV to LDN}.  The verification of Equation~\eqref{eq: Frob LD yields natural 2-isomorphism 2} relies on repeated use of the naturality and coherence axioms for the structure morphisms of \(F\).
\end{proof}

\smallskip

\subsection{Extension to the braided setting}\label{sec:Braided-GV-categories} 

\begin{proof}[Proof of Lemma~\ref{lemma: tildec ev and coev}]
  For Equation~\eqref{eq: ctilde compat coev}, we compute:
 \allowdisplaybreaks
\begin{align*}
	(\widetilde{c}^{\,\pm}_{Y,X\ot Y})^{-1}\circ \higheroverline{\operatorname{coev}}^Y_X 
	&\;\eqabove{(1)}\; \big(Y \multimap \higheroverline{\operatorname{ev}}^Y_{X \ot Y}\big)\circ \big(Y \multimap c^{\mp}_{Y,(X\ot Y)\multimapinv Y}\big)\circ {{\operatorname{coev}^Y_{(X\ot Y)\multimapinv Y}}\circ {\higheroverline{\operatorname{coev}}^Y_X}}\\
	&\;\eqabove{(2)}\; \big(Y \multimap \higheroverline{\operatorname{ev}}^Y_{X \ot Y}\big)\circ \big(Y \multimap c^{\mp}_{Y,(X\ot Y)\multimapinv Y}\big) \circ \big(Y \multimap (Y\ot \higheroverline{\operatorname{coev}}^Y_X)\big)\circ {\operatorname{coev}^Y_X}\\
	&\;\eqabove{(3)}\; \big(Y \multimap \higheroverline{\operatorname{ev}}^Y_{X \ot Y}\big) \circ \big(Y \multimap (\higheroverline{\operatorname{coev}}^Y_X \ot Y)\big)\circ \big(Y \multimap c^{\mp}_{Y,X}\big)\circ {\operatorname{coev}^Y_X}\\
	&\; \eqabove{(4)}\;  \big(Y \multimap c^{\mp}_{Y,X}\big)\circ \operatorname{coev}^Y_X,
\end{align*} 
where, (1) follows from the definition of \(\widetilde{c}^{\,\pm}\) (Equation~\eqref{def: c tilde}); (2) from naturality of \(\operatorname{coev}^Y\); (3) from naturality of \(c^{\mp}\); and (4) from a snake identity for the adjunction $({?} \otimes Y) \dashv ({?} \multimapinv {Y})$. Equation~\eqref{eq: ctilde compat ev} is proved analogously.
\end{proof}

\medskip

\begin{proof}[Proof of Lemma~\ref{lemma: relationship c overline and c tilde}]
	By Yoneda's Lemma, it suffices to verify that the following diagram
	\begin{equation}\label{dgm: relationship c overline and c tilde2}
		\begin{tikzcd}
			{\operatorname{Hom}_{\cC}\!\big(Z,\,D\big(D'(Y)\otimes D'(X)\big)\big)}&&{\operatorname{Hom}_{\cC}\!\big(Z,\,D\big(D'(X)\otimes D'(Y)\big)\big)}\\
			{\operatorname{Hom}_{\cC}\!\big(Z,\,D'\big(D(Y)\otimes D(X)\big)\big)}&&{\operatorname{Hom}_{\cC}\!\big(Z,\,D\big(D(X)\otimes D'(Y)\big)\big)}
			\arrow["\big(\higheroverline{c}^{\,\pm}_{X,Y}\big)_{\ast}"{yshift=3pt},"\simeq"'{yshift=-1pt}, from=1-1, to=1-3]
			\arrow["\big(\widetilde{c}^{\,\mp}_{D(X),Y}\big)_{\ast}"'{yshift=-3pt},"\simeq"{yshift=1pt}, from=2-1, to=2-3]
			\arrow["D\big(\widetilde{c}^{\,\pm}_{X,K}\otimes D'(Y)\big)_{\ast}"'{xshift=4pt},"\simeq"{xshift=-2pt}, from=2-3, to=1-3]
			\arrow[dash,"\eqref{eq:leftInHom}"'{xshift=-4pt},"\simeq"{xshift=2pt}, from=1-1, to=2-1]
		\end{tikzcd}
	\end{equation}
	 commutes for all \(X,Y,Z\in \cC\). Using the definition of \(\widetilde{c}^{\,\pm}\) (Equation~\eqref{def: c tilde}) and repeated applications of the natural isomorphisms \(\operatorname{Hom}_{\cC}(X \otimes Y,K)\cong \operatorname{Hom}_{\cC}(X,D(Y))\) from Definition~\ref{def:GV-category}, one checks that the commutativity of diagram~\eqref{dgm: relationship c overline and c tilde2} is equivalent to the equation
	 	\begin{equation}\label{dgm: relationship c overline and c tilde3}
	 		\big(c^{\pm}_{D(X),D(Y)}\otimes Z\big)^{\ast} \;=\; \big(c^{\pm}_{D(X),D(Y)\otimes Z}\big)^{\ast} \circ \big(D(Y)\otimes c^{\mp}_{Z,D(X)}\big)^{\ast}
	 \end{equation}
	 as maps 
	 \begin{equation}
	 {\operatorname{Hom}_{\cC}\!\big(D(Y)\otimes D(X)\otimes Z,K\big)}\;\longrightarrow\; {\operatorname{Hom}_{\cC}\!\big(D(X)\otimes D(Y) \otimes Z,K\big)}.
	\end{equation} 
	Associativity constraints are suppressed for readability. Equation~\eqref{dgm: relationship c overline and c tilde3} follows directly from one of the hexagon identities satisfied by the (inverse) braiding \(c^{\pm}\).
\end{proof}

\smallskip

\begin{proof}[Proof of Proposition~\ref{prop: braided GV is braided LDN}]
The two hexagon equations involving only the \(\parLL\)-monoidal structure (those ensuring that \(\higheroverline{c}\) is a braiding) follow directly from the hexagon equations for \(c\), together with the definition of the \(\parLL\)-associator given in Equation~\eqref{def: par associator}.

We now verify the hexagon relation~\eqref{eq: braiding and Frobenius relation 1}; the second relation~\eqref{eq: braiding and Frobenius relation 2} follows analogously. The two diagrams below commute by naturality, together with the definitions of $\distl$ and $\distr$ (Equations~\eqref{eq: def left distributor} and~\eqref{eq: def right distributor} in Remark~\ref{rem:InHoms and distributors}), for all $X,Y,Z\in \cC$: 
\begin{equation}\label{dgm: distr c}
		\begin{tikzcd}
			{X \ot \big(D(Y) \multimap Z\big)}&{\big(D(Y) \multimap Z\big) \ot X}&{D(Y) \multimap (Z \ot X)}\\
			{X \ot (Y \parLL Z)}&{(Y \parLL Z)\ot X}&{Y \parLL {(Z \ot X)},}
			\arrow["c_{X,D(Y)\multimap Z}"{yshift=4pt}, "\simeq"', from=1-1, to=1-2]
			\arrow["\widetilde{\distr}_{D(Y),Z,X}"{yshift=4pt}, from=1-2, to=1-3]
			\arrow["c_{X,Y\parLL Z}"'{yshift=-4pt}, "\simeq", from=2-1, to=2-2]
			\arrow["\distr_{Y,Z,X}"'{yshift=-4pt}, from=2-2, to=2-3]
			\arrow[dash,"\eqref{eq:leftInHom}"'{xshift=-2pt},"\simeq"{xshift=2pt}, from=1-1, to=2-1]
			\arrow[dash,"\eqref{eq:leftInHom}"'{xshift=-2pt},"\simeq"{xshift=2pt}, from=1-3, to=2-3]
			\arrow[dash,"\eqref{eq:leftInHom}"'{xshift=-2pt},"\simeq"{xshift=2pt}, from=1-2, to=2-2]
		\end{tikzcd}
\end{equation}
\begin{equation}\label{dgm: c distl}
		\begin{tikzcd}
			{X \ot (Z \parLL Y)}&{(X \ot Z) \parLL Y}&{(Z \ot X) \parLL Y}\\
			{X \ot \big(Z \multimapinv D'(Y)\big)}&{(X \ot Z)\multimapinv D'(Y)}&{(Z \ot X) \multimapinv D'(Y).}
			\arrow["\distl_{X,Z,Y}"{yshift=2pt}, from=1-1, to=1-2]
			\arrow["c_{X,Z} \parLL Y"{yshift=2pt}, "\simeq"'{yshift=-2pt}, from=1-2, to=1-3]
			\arrow["\widetilde{\distl}_{X,Z,D'(Y)}"'{yshift=-5pt}, from=2-1, to=2-2]
			\arrow["c_{X,Z} \multimapinv D'(Y)"'{yshift=-5pt}, "\simeq"{yshift=2pt}, from=2-2, to=2-3]
			\arrow[dash,"\eqref{eq:rightInHom}"'{xshift=-2pt},"\simeq"{xshift=2pt}, from=1-1, to=2-1]
			\arrow[dash,"\eqref{eq:rightInHom}"'{xshift=-2pt},"\simeq"{xshift=2pt}, from=1-3, to=2-3]
			\arrow[dash,"\eqref{eq:rightInHom}"'{xshift=-2pt},"\simeq"{xshift=2pt}, from=1-2, to=2-2]
		\end{tikzcd}
\end{equation}

Diagrams~\eqref{dgm: distr c} and~\eqref{dgm: c distl}, together with Lemma~\ref{lemma: relationship c overline and c tilde} and the naturality of \(\widetilde{\distl}\), imply that hexagon relation~\eqref{eq: braiding and Frobenius relation 1} is equivalent to the commutativity of the following diagram:

\begin{equation}\label{dgm: F1' InHom}
	\begin{tikzcd}
		{X \ot (Y \multimapinv Z)}&{(X \ot Y) \multimapinv Z}&{(Y \ot X) \multimapinv Z}\\
		{X \ot (Z \multimap Y)}&{(Z \multimap Y)\ot X}&{Z \multimap (Y \ot X),}
		\arrow["\widetilde{\distl}_{X,Y,Z}"{yshift=4pt}, from=1-1, to=1-2]
		\arrow["c_{X,Y} \multimapinv Z"{yshift=4pt}, "\simeq"'{yshift=-2pt}, from=1-2, to=1-3]
		\arrow["c_{X,Z\multimap Y}"'{yshift=-5pt},"\simeq", from=2-1, to=2-2]
		\arrow["\widetilde{\distr}_{Z,Y,X}"'{yshift=-5pt}, from=2-2, to=2-3]
		\arrow["{X} {\ot} {\widetilde{c}^{\,-1}_{Z,Y}}"'{xshift=-4pt},"\simeq"{xshift=2pt}, from=1-1, to=2-1]
		\arrow["\widetilde{c}^{\,-1}_{Z,Y\ot X}"{xshift=4pt},"\simeq"'{xshift=-2pt}, from=1-3, to=2-3]
	\end{tikzcd}
\end{equation}
where \(\widetilde{c}^{\,-1}_{X,Y}:=(\widetilde{c}^{+}_{X,Y})^{-1}\).
We show that Diagram~\eqref{dgm: F1' InHom} commutes by rewriting the composite of the top horizontal and right vertical arrows:
\allowdisplaybreaks
\begin{align*}
	& \widetilde{c}^{\,-1}_{Z,Y \ot X}\circ (c_{X,Y}\multimapinv Z)\circ \widetilde{\distl}_{X,Y,Z}\\[.4em]
	& \;\eqabove{(1)}\; \widetilde{c}^{\,-1}_{Z,Y \ot X}\circ (c_{X,Y}\multimapinv Z)\circ \big((X\ot \higheroverline{\operatorname{ev}}^Z_Y)\multimapinv Z\big)\circ \big(\aoi_{X,Y\multimapinv Z,Z}\multimapinv Z\big) \circ \higheroverline{\operatorname{coev}}^Z_{X\ot (Y \multimapinv Z)}\\[.4em]
	& \;\eqabove{(2)}\; \widetilde{c}^{\,-1}_{Z,Y \ot X}\circ {(c_{X,Y}\multimapinv Z)}\circ {\big((X\ot \operatorname{ev}^Z_Y)\multimapinv Z\big)}\\ 
	&\qquad \circ \big((X\ot (Z \ot \widetilde{c}^{\,-1}_{Z,Y}))\multimapinv Z\big) \circ \big((X\ot c_{Y \multimapinv Z,Z})\multimapinv Z\big)\circ \big(\aoi_{X,Y\multimapinv Z,Z}\multimapinv Z\big) \circ \higheroverline{\operatorname{coev}}^Z_{X\ot (Y \multimapinv Z)}\\[.4em]
	& \;\eqabove{(3)}\; \widetilde{c}^{\,-1}_{Z,Y \ot X}\circ {\big(({\operatorname{ev}^Z_Y}\ot X)\multimapinv Z\big)} \circ {(c_{X,Z\ot (Z \multimap Y)}\multimapinv Z)}\\ 
	&\qquad \circ \big((X\ot (Z \ot \widetilde{c}^{\,-1}_{Z,Y}))\multimapinv Z\big) \circ \big((X\ot c_{Y \multimapinv Z,Z})\multimapinv Z\big)\circ \big(\aoi_{X,Y\multimapinv Z,Z}\multimapinv Z\big) \circ \higheroverline{\operatorname{coev}}^Z_{X\ot (Y \multimapinv Z)}\\[.4em]
	& \;\eqabove{(4)}\;{\big(Z \multimap ({\operatorname{ev}^Z_Y}\ot X)\big)} \circ \widetilde{c}^{\,-1}_{Z,(Z\ot (Z \multimap Y)) \ot X} \circ {(c_{X,Z\ot (Z \multimap Y)}\multimapinv Z)}\\ 
	&\qquad \circ \big((X\ot (Z \ot \widetilde{c}^{\,-1}_{Z,Y}))\multimapinv Z\big) \circ \big((X\ot c_{Y \multimapinv Z,Z})\multimapinv Z\big)\circ \big(\aoi_{X,Y\multimapinv Z,Z}\multimapinv Z\big) \circ \higheroverline{\operatorname{coev}}^Z_{X\ot (Y \multimapinv Z)}\\[.4em]
	& \;\eqabove{(5)}\;{\big(Z \multimap ({\operatorname{ev}^Z_Y}\ot X)\big)} \circ {(Z \multimap c_{X,Z\ot (Z \multimap Y)})} \circ \widetilde{c}^{\,-1}_{Z,X \ot (Z\ot (Z \multimap Y))}\\ 
	&\qquad \circ \big((X\ot (Z \ot \widetilde{c}^{\,-1}_{Z,Y}))\multimapinv Z\big) \circ \big((X\ot c_{Y \multimapinv Z,Z})\multimapinv Z\big)\circ \big(\aoi_{X,Y\multimapinv Z,Z}\multimapinv Z\big) \circ \higheroverline{\operatorname{coev}}^Z_{X\ot (Y \multimapinv Z)}\\[.4em]
	& \;\eqabove{(6)}\;{\big(Z \multimap ({\operatorname{ev}^Z_Y}\ot X)\big)} \circ {(Z \multimap c_{X,Z\ot (Z \multimap Y)})} \circ \widetilde{c}^{\,-1}_{Z,X \ot (Z\ot (Z \multimap Y))}\\ 
	& \qquad \circ \big((X\ot c_{Z \multimap Y,Z})\multimapinv Z\big)\circ \big((X\ot (\widetilde{c}^{\,-1}_{Z,Y} \ot Z))\multimapinv Z\big) \circ \big(\aoi_{X,Y\multimapinv Z,Z}\multimapinv Z\big) \circ \higheroverline{\operatorname{coev}}^Z_{X\ot (Y \multimapinv Z)}\\[.4em]
	& \;\eqabove{(7)}\;{\big(Z \multimap ({\operatorname{ev}^Z_Y}\ot X)\big)} \circ {(Z \multimap c_{X,Z\ot (Z \multimap Y)})} \circ \big(Z\multimap (X\ot c_{Z \multimap Y,Z})\big)\\ 
	& \qquad \circ \widetilde{c}^{\,-1}_{Z,X \ot ((Z \multimap Y)\ot Z)} \circ \big((X\ot (\widetilde{c}^{\,-1}_{Z,Y} \ot Z))\multimapinv Z\big) \circ \big(\aoi_{X,Y\multimapinv Z,Z}\multimapinv Z\big) \circ \higheroverline{\operatorname{coev}}^Z_{X\ot (Y \multimapinv Z)}\\[.4em]
	& \;\eqabove{(8)}\;{\big(Z \multimap ({\operatorname{ev}^Z_Y}\ot X)\big)} \circ {(Z \multimap (c_{Z \multimap Y,Z}\ot X))} \circ (Z\multimap c_{X, (Z \multimap Y)\ot Z})\\ 
	& \qquad \circ \widetilde{c}^{\,-1}_{Z,X \ot ((Z \multimap Y)\ot Z)} \circ \big((X\ot (\widetilde{c}^{\,-1}_{Z,Y} \ot Z))\multimapinv Z\big) \circ \big(\aoi_{X,Y\multimapinv Z,Z}\multimapinv Z\big) \circ \higheroverline{\operatorname{coev}}^Z_{X\ot (Y \multimapinv Z)}\\[.4em]
	& \;\eqabove{(9)}\;{\big(Z \multimap ({\operatorname{ev}^Z_Y}\ot X)\big)} \circ {(Z \multimap (c_{Z \multimap Y,Z}\ot X))} \circ \widetilde{c}^{\,-1}_{Z,((Z \multimap Y)\ot Z)\ot X}\\ 
	& \qquad \circ (c_{X,(Z\multimap Y)\ot Z}\multimapinv Z)\circ \big((X\ot (\widetilde{c}^{\,-1}_{Z,Y} \ot Z))\multimapinv Z\big) \circ \big(\aoi_{X,Y\multimapinv Z,Z}\multimapinv Z\big) \circ \higheroverline{\operatorname{coev}}^Z_{X\ot (Y \multimapinv Z)}.
\end{align*} 
Equation (1) follows from Equation~\eqref{eq: distl via inhoms} in Remark~\ref{rem:InHoms and distributors}; (2) from Equation~\eqref{eq: ctilde compat ev} in Lemma~\ref{lemma: tildec ev and coev}; (3) from the naturality of the braiding \(c\); equations (4)--(7) and (9) from the naturality of \(\widetilde{c}^{\,-1}\); and (8) from the naturality of \(c\).

Next, we rewrite the composite of the left vertical and bottom horizontal arrows in Diagram~\eqref{dgm: F1' InHom} as follows:
\allowdisplaybreaks
\begin{align*}
	& \widetilde{\distr}_{Z,Y,X}\circ c_{X,Z\multimap Y}\circ (X \ot {\widetilde{c}^{\,-1}_{Z,Y}})\\[.4em]
	& \;\eqabove{(1)}\; \big(Z\multimap ({\operatorname{ev}^Z_Y} \ot X)\big)\circ \big(Z \multimap \ao_{Z,Z\multimap Y,X}\big) \circ {\operatorname{coev}^Z_{(Z \multimap Y)\ot X}} \circ {c_{X,Z\multimap Y}}\circ (X \ot {\widetilde{c}^{\,-1}_{Z,Y}})\\[.4em]
	& \;\eqabove{(2)}\; \big(Z\multimap ({\operatorname{ev}^Z_Y} \ot X)\big)\circ \big(Z \multimap \ao_{Z,Z\multimap Y,X}\big) \circ {(Z \multimap c_{(Z \multimap Y)\ot X,Z})}\\ 
	& \qquad \circ \widetilde{c}^{\,-1}_{Z,((Z \multimap Y)\ot X)\ot Z}\circ \higheroverline{\operatorname{coev}}^Z_{(Z \multimap Y)\ot X} \circ {c_{X,Z\multimap Y}}\circ (X \ot {\widetilde{c}^{\,-1}_{Z,Y}})\\[.4em]
	& \;\eqabove{(3)}\; \big(Z\multimap ({\operatorname{ev}^Z_Y} \ot X)\big)\circ \big(Z \multimap \ao_{Z,Z\multimap Y,X}\big) \circ {(Z \multimap c_{(Z \multimap Y)\ot X,Z})}\\ 
	& \qquad \circ \widetilde{c}^{\,-1}_{Z,((Z \multimap Y)\ot X)\ot Z}\circ \big((c_{X,Z\multimap Y} \ot Z)\multimapinv Z\big) \circ \big(((X\ot \widetilde{c}^{\,-1}_{Z,Y}) \ot Z)\multimapinv Z\big) \circ \higheroverline{\operatorname{coev}}^Z_{X \ot (Y \multimapinv Z)}
	\\[.4em]
	& \;\eqabove{(4)}\; \big(Z\multimap ({\operatorname{ev}^Z_Y} \ot X)\big)\circ (Z\multimap (c_{Z \multimap Y,Z}\ot X))\\
	& \qquad \circ {(Z\multimap \ao_{Z\multimap Y,Z,X})} \circ {\big(Z \multimap ((Z\multimap Y)\ot c_{X,Z})\big)} \circ {(Z\multimap \aoi_{Z\multimap Y,X,Z})}\\ 
	& \qquad \circ \widetilde{c}^{\,-1}_{Z,((Z \multimap Y)\ot X)\ot Z}\circ \big((c_{X,Z\multimap Y} \ot Z)\multimapinv Z\big) \circ \big(((X\ot \widetilde{c}^{\,-1}_{Z,Y}) \ot Z)\multimapinv Z\big) \circ \higheroverline{\operatorname{coev}}^Z_{X \ot (Y \multimapinv Z)}\\[.4em]
	& \;\eqabove{(5)}\; \big(Z\multimap ({\operatorname{ev}^Z_Y} \ot X)\big)\circ (Z\multimap (c_{Z \multimap Y,Z}\ot X))\\
	& \qquad \circ {\widetilde{c}^{\,-1}_{Z,((Z\multimap Y)\ot Z)\ot X}} \circ {(\ao_{Z\multimap Y,Z,X}\multimapinv Z)} \circ {\big(((Z\multimap Y)\ot c_{X,Z})\multimapinv Z \big)}\\ 
	& \qquad \circ {(\aoi_{Z\multimap Y,X,Z} \multimapinv Z)} \circ \big((c_{X,Z\multimap Y} \ot Z)\multimapinv Z\big) \circ \big(((X\ot \widetilde{c}^{\,-1}_{Z,Y}) \ot Z)\multimapinv Z\big) \circ \higheroverline{\operatorname{coev}}^Z_{X \ot (Y \multimapinv Z)}\\[.4em]
	& \;\eqabove{(6)}\; \big(Z\multimap ({\operatorname{ev}^Z_Y} \ot X)\big)\circ (Z\multimap (c_{Z \multimap Y,Z}\ot X))\circ \widetilde{c}^{\,-1}_{Z,((Z\multimap Y)\ot Z)\ot X} \circ {(c_{X,(Z\multimap Y)\ot Z} \multimapinv Z)}\\ 
	& \qquad \circ {(\aoi_{X,Z\multimap Y, Z} \multimapinv Z)} \circ \big(((X\ot \widetilde{c}^{\,-1}_{Z,Y}) \ot Z)\multimapinv Z\big) \circ \higheroverline{\operatorname{coev}}^Z_{X \ot (Y \multimapinv Z)}\\[.4em]
	& \;\eqabove{(7)}\;{\big(Z \multimap ({\operatorname{ev}^Z_Y}\ot X)\big)} \circ {(Z \multimap (c_{Z \multimap Y,Z}\ot X))} \circ \widetilde{c}^{\,-1}_{Z,((Z \multimap Y)\ot Z)\ot X}\\ 
	& \qquad \circ (c_{X,(Z\multimap Y)\ot Z}\multimapinv Z)\circ \big((X\ot (\tilde{c}^{-1}_{Z,Y} \ot Z))\multimapinv Z\big) \circ \big(\aoi_{X,Y\multimapinv Z,Z}\multimapinv Z\big) \circ \higheroverline{\operatorname{coev}}^Z_{X\ot (Y \multimapinv Z)}.
\end{align*}
Equation (1) follows from Equation~\eqref{eq: distr via inhoms} in Remark~\ref{rem:InHoms and distributors}; (2) from Equation~\eqref{eq: ctilde compat coev} in Lemma~\ref{lemma: tildec ev and coev}; (3) from the naturality of \(\higheroverline{\operatorname{coev}}^Z\); (4) from one of the hexagon axioms for the braiding \(c\); (5) from the naturality of \(\tilde{c}\); (6) from the other hexagon axiom for \(c\); and (7) from the naturality of the inverse associator \(\aoi\).

Comparing the final lines of both computations shows that Diagram~\eqref{dgm: F1' InHom} commutes.
\end{proof}

\begin{proof}[Proof of Proposition~\ref{prop: braided GVf is braided LDN f}]
	The claim follows from the commutativity of the diagram
	\begin{equation*}
		\adjustbox{max width=\textwidth}{
			\begin{tikzcd}
				{FD(D'(Y)\ot D'(X))} &[8em] {FD(D'(X)\ot D'(Y))}\\
				{DF(D'(Y)\ot D'(X))} &[8em] {DF(D'(X)\ot D'(Y))}\\
				{D(FD'(Y)\ot FD'(X))} &[8em] {D(FD'(X)\ot FD'(Y))}\\
				{D(D'F(Y)\ot D'F(X))} &[8em] {D(D'F(X)\ot D'F(Y)),}
				\arrow["FD(c)"{yshift=3pt}, from=1-1, to=1-2]
				\arrow["\xi^{r,F}"'{yshift=1pt, xshift=-4pt}, from=1-1, to=2-1]
				\arrow["\xi^{r,F}"{yshift=1pt, xshift=4pt}, from=1-2, to=2-2]
				\arrow["D(\varphi^{2,F})"'{yshift=1pt, xshift=-4pt}, from=2-1, to=3-1]
				\arrow["D(\varphi^{2,F})"{yshift=1pt, xshift=4pt}, from=2-2, to=3-2]
				\arrow["D(\xi^{l,F}\ot\, \xi^{l,F})^{-1}"'{yshift=1pt, xshift=-4pt}, from=3-1, to=4-1]
				\arrow["D(\xi^{l,F}\ot\, \xi^{l,F})^{-1}"{yshift=1pt, xshift=4pt}, from=3-2, to=4-2]
				\arrow["DF(c)"{yshift=0.2pt}, from=2-1, to=2-2]
				\arrow["D(c)"{yshift=0.2pt}, from=3-1, to=3-2]
				\arrow["D(c)"'{yshift=-4pt}, from=4-1, to=4-2]
				\arrow[phantom,"\textup{(I)}"{yshift=2pt}, from=1-1, to=2-2]
				\arrow[phantom,"\textup{(II)}"{yshift=1pt,xshift=2pt}, from=2-1, to=3-2]
				\arrow[phantom,"\textup{(III)}"{yshift=0pt}, from=3-1, to=4-2]
			\end{tikzcd}
		}
	\end{equation*}
	for all \(X,Y\in \cC\). Diagram (I) commutes by the naturality of \(\xi^{r,F}\); (II) by the naturality of the $\otimes$-braiding \(c\); and (III) because \(F\) is a braided lax \(\ot\)-monoidal functor.
\end{proof}

\begin{proof}[Proof of Lemma~\ref{lemma: BrLDN to BrLDN is equivalent to identity}]
	We adopt the notation from the proof of Lemma~\ref{lemma: LDN to LDN is equivalent to identity}. It remains to verify that each component of the strict \(2\)-natural isomorphism~\eqref{eq: Frob LD yields natural 2-isomorphism} constructed in that proof is a braided Frobenius LD-functor. By the definition of the Frobenius LD-structure on these components, this reduces to showing that, for any braided LD-category with negation \(\cC\),
	\begin{align}\label{eq: identity is braided}
	 \upsilon^{\cC}_{Y,X} \circ \higheroverline{c}_{X,Y} &= D\big(c_{D'(X),D'(Y)}\big)\circ \upsilon^{\cC}_{X,Y},
	\end{align}
	where \(\higheroverline{c}\) is the \(\parLL\)-braiding from Construction \ref{constr: BrGV to BrLDN}. Unwinding the definition of \(\upsilon^{\cC}\) (see Equation~\eqref{eq: comultiplication LDN-LDN}), Equation~\eqref{eq: identity is braided} follows directly from the naturality of the unitors \(\rou\), \(\lpu\), the naturality of the distributor \(\distl\), and the extranaturality of the (co)evaluations \(\underline{\eta}\) and \(\epsilon\).
\end{proof}

\begin{proof}[Proof of Proposition~\ref{prop: Drinfeld iso is morphism of Frob LD}]
	By Theorem~\ref{thm: GV equiv LDN}, it suffices to show that the transformations $\varphi^{\pm}$ are morphisms of GV-functors. According to Remark~\ref{rem: comparing duality transfos in braided GV} and \cite[Prop.~6.10]{BoDrinfeld}, they are monoidal, where \(D'\circ J^{\pm 1}\) and \(D\) are viewed as monoidal functors $(\cC,\otimes,1)\rightarrow (\cC^{\operatorname{op}},\parLL^{\operatorname{rev}},K).$
	
	Using the Frobenius forms on $D$ and $D'$ described in Example~\ref{ex: duality functors are GV}, the defining condition~\eqref{eq: morphism of GV-functors} from Definition~\ref{def: morphism of GV-functors} for $\varphi^{\pm}$ is equivalent to the identity
	\begin{align}\label{eq: morph GV-functors Drinfeld iso. counit condition}
		\varphi_K^{\pm} \circ (\higheroverline{\gamma}^{-1}_K\multimap K) \circ d_1 &\;=\; (K \multimapinv \gamma^{-1}_K) \circ \widetilde{d}_1,
	\end{align}
	where \(d\) and \(\widetilde{d}\) are the unit and counit defined in Remark~\ref{rem: double dualization morphism}. By the definitions of \(\gamma\) and \(\higheroverline{\gamma}\),
	\begin{align}\label{eq: morph GV-functors Drinfeld iso. counit condition3}
		\higheroverline{\operatorname{ev}}_K^1 \;=\; \higheroverline{\gamma}_K \circ \rho_{K\multimapinv 1}
		\qquad
		\text{and}
		\qquad
		\operatorname{ev}_K^1 \;=\; \gamma_K \circ \lambda_{1\multimap K}.
	\end{align}
	
	Unpacking the definitions of \(d\) and \(\widetilde{d}\), and using Equation~\eqref{eq: morph GV-functors Drinfeld iso. counit condition3} together with the extranaturality of \(\operatorname{coev}\) and \(\higheroverline{\operatorname{coev}}\), we see that Equation~\eqref{eq: morph GV-functors Drinfeld iso. counit condition} is equivalent to 
	\begin{align}\label{eq: morph GV-functors Drinfeld iso. counit condition2}
		\varphi_K^{\pm} \circ (K\multimap \rho_K) \circ \operatorname{coev}_1^K &\;=\; (\lambda_K \multimapinv K) \circ \higheroverline{\operatorname{coev}}_1^K.
	\end{align}
	
	Finally, by the definition of \(\varphi^{\pm}\) and the naturality of the adjunction isomorphisms~\eqref{AdjMor1} and~\eqref{AdjMor2}, Equation~\eqref{eq: morph GV-functors Drinfeld iso. counit condition2} follows from the compatibility of the braidings \(c^{\pm}\) with the unitors, namely from the equation \(\rho_K \circ c^{\pm}_{1,K}=\lambda_K.\)
\end{proof}

\subsection{Algebras, bimodules, and local modules}\label{sec:Bimodules-GV-categories} 

\begin{proof}[Proof of Lemma~\ref{lemma: rewrite inHom action}]
	Omitting associators, we compute:
	\allowdisplaybreaks
	\begin{align*}
		& {\operatorname{ev}^A_{M\multimap N}}\circ {\big(A\otimes \beta_{M,A,N}\big)}\circ {\big(A\otimes (r^M\multimap N)\big)}\\
		&\qquad\eqabove{(1)}\; {\operatorname{ev}^A_{M\multimap N}}\circ{\big(A\otimes\big(A\multimap (M\multimap \operatorname{ev}^{M\otimes A}_N)\big)\big)} \circ {\big(A\otimes \big(A\multimap \operatorname{coev}^M_{A\otimes((M\otimes A)\multimap N)}\big)\big)}
		\\& \qquad \qquad \circ {\big(A\otimes \operatorname{coev}^A_{(M\otimes A)\multimap N}\big)}\circ {\big(A\otimes (r^M\multimap N)\big)}\\
		&\qquad\eqabove{(2)}\; {\big(M\multimap \operatorname{ev}^{M\otimes A}_N\big)} \circ {\operatorname{coev}^M_{A\otimes ((M\otimes A)\multimap N)}} \circ {\operatorname{ev}^A_{A\otimes((M\otimes A)\multimap N)}} 
		\\& \qquad \qquad \circ {\big(A\otimes \operatorname{coev}^A_{(M\otimes A)\multimap N}\big)}\circ {\big(A\otimes (r^M\multimap N)\big)}\\
		&\qquad\eqabove{(3)}\; {\big(M\multimap \operatorname{ev}^{M\otimes A}_N\big)} \circ {\operatorname{coev}^M_{A\otimes ((M\otimes A)\multimap N)}} \circ {\big(A\otimes (r^M \multimap N)\big)}\\
		&\qquad\eqabove{(4)}\; {\operatorname{comp}^l_{M,M\otimes A,N}}\circ {\big({\operatorname{coev}^M_A} \otimes ((M\otimes A)\multimap N)\big)} \circ {\big(A\otimes (r^M\multimap N)\big)}\\
		&\qquad\eqabove{(5)}\; {\operatorname{comp}^l_{M,M\otimes A,N}} \circ {\big((M\multimap (M\otimes A))\otimes (r^M\multimap N)\big)} \circ {\big({\operatorname{coev}^M_A} \otimes (M\multimap N)\big)}\\
		&\qquad\eqabove{(6)}\; {\operatorname{comp}^l_{M,M,N}} \circ {\big((M\multimap r^M)\otimes (M\multimap N)\big)} \circ {\big({\operatorname{coev}^M_A} \otimes (M\multimap N)\big)}\\
		&\qquad\eqabove{(7)}\; l^{M\multimap N}.
	\end{align*} 
	Equation (1) follows from the definition of $\beta$ (Remark~\ref{rem:canonical isos}); (2) from the naturality of $\operatorname{ev}^A$; (3) from a snake equation for $\operatorname{coev}^A$ and $\operatorname{ev}^A$; (4) from the snake equation for $\operatorname{coev}^M$ and $\operatorname{ev}^M$, together with the definition of internal composition in Remark~\ref{rem:internal composition}; (5) from the functoriality of $\otimes$; (6) from Lemma~\ref{lem:internal composition is extranatural}; and (7) from the definition of $\underline{r}^M$ (Equation~\eqref{eq: r underline morphism}) and $l^{M\multimap N}$ (Equation~\eqref{eq: left action on internal hom}).
\end{proof}

\medskip

\begin{proof}[Proof of Lemma~\ref{lemma: rewrite coreflexive pair}]
Omitting associators, we compute:
\allowdisplaybreaks
\begin{align*}
	& {\big((A\otimes M)\multimap l^{L\multimap N}\big)} 
	\circ {\underline{A\otimes}_{M,L\multimap N}}
	\circ \beta_{L,M,N}
	\\
	&\qquad\eqabove{(1)}\; 
	{\big((A\otimes M)\multimap l^{L\multimap N}\big)}
	\circ {\big((A\otimes M)\multimap (A \otimes \operatorname{ev}^M_{L\multimap N})\big)} 
	\circ {\operatorname{coev}^{A\otimes M}_{M\multimap (L \multimap N)}}
	\circ \beta_{L,M,N}
	\\
	&\qquad\eqabove{(2)}\; 
	{\big((A\otimes M)\multimap ({\operatorname{ev}^A_{L\multimap N}}
	\circ {(A\otimes \beta_{L,A,N})})\big)} 
	\circ {\big((A\otimes M)\multimap (A\otimes (r^L\multimap N))\big)}
	\\
	&\qquad \qquad 
	\circ {\big((A\otimes M)\multimap (A \otimes \operatorname{ev}^M_{L\multimap N})\big)} 
	\circ {\operatorname{coev}^{A\otimes M}_{M\multimap (L \multimap N)}}\circ \beta_{L,M,N}
	\\
	&\qquad\eqabove{(3)}\;
	{\big((A\otimes M)\multimap {\operatorname{ev}^A_{L\multimap N}}\big)} 
	\circ {\big((A\otimes M)\multimap (A \otimes \operatorname{ev}^{M}_{A\multimap (L\multimap N)})\big)}\\
	&\qquad \qquad 
	\circ {\operatorname{coev}^{A\otimes M}_{M\multimap (A\multimap (L\multimap N))}} \circ {(M\multimap \beta_{L,A,N})} 
	\circ {\big(M \multimap (r^L\multimap N)\big)} 
	\circ \beta_{L,M,N}
	\\
	&\qquad\eqabove{(4)}\;
	{\big((A\otimes M)\multimap {\operatorname{ev}^A_{L\multimap N}}\big)} 
	\circ {\big((A\otimes M)\multimap (A \otimes \operatorname{ev}^{M}_{A\multimap (L\multimap N)})\big)}\\
	&\qquad \qquad 
	\circ {\operatorname{coev}^{A\otimes M}_{M\multimap (A\multimap (L\multimap N))}} 
	\circ {(M\multimap \beta_{L,A,N})} 
	\circ {\beta_{L\otimes A,M,N}} 
	\circ {\big((r^L\otimes M) \multimap N)\big)}
	\\
	&\qquad\eqabove{(5)}\; 
	{\big((A\otimes M)\multimap {\operatorname{ev}^A_{L\multimap N}}\big)} 
	\circ {\big((A\otimes M)\multimap (A \otimes \operatorname{ev}^{M}_{A\multimap (L\multimap N)})\big)}\\
	&\qquad \qquad 
	\circ {\operatorname{coev}^{A\otimes M}_{M\multimap (A\multimap (L\multimap N))}} 
	\circ {\beta_{A,M,L \multimap N}} 
	\circ {\beta_{L,A \otimes M,N}} 
	\circ {\big((r^L\otimes M) \multimap N)\big)}
	\\
	&\qquad\eqabove{(6)}\;
	{\big((A\otimes M)\multimap ({\operatorname{ev}^A_{L\multimap N}} 
	\circ (A \otimes \operatorname{ev}^{M}_{A\multimap (L\multimap N)}))\big)}
	\circ {\big((A \otimes M) \multimap (A\otimes M \otimes \beta_{A,M,L \multimap N})\big)}
	\\
	&\qquad \qquad \circ {\operatorname{coev}^{A\otimes M}_{(A\otimes M)\multimap (L\multimap N)}}  
	\circ {\beta_{L,A \otimes M,N}} 
	\circ {\big((r^L\otimes M) \multimap N)\big)}
	\\
	&\qquad\eqabove{(7)}\;
	{\big((A\otimes M)\multimap \operatorname{ev}^{A\otimes M}_{L\multimap N}\big)}
	\circ {\operatorname{coev}^{A\otimes M}_{(A\otimes M)\multimap (L\multimap N)}}  
	\circ {\beta_{L,A \otimes M,N}} 
	\circ {\big((r^L\otimes M) \multimap N)\big)}
	\\
	&\qquad\eqabove{(8)}\;
	{\beta_{L,A \otimes M,N}} 
	\circ {\big((r^L\otimes M) \multimap N)\big)}.
\end{align*} 
Equation (1) follows from the definition of $\underline{A\otimes}_{M,L\multimap N}$ (Remark~\ref{rem: internal hom tensorality}); (2) from Equation~\eqref{eq: rewrite inHom action} in Lemma~\ref{lemma: rewrite inHom action}; (3) from the naturality of $\operatorname{ev}^M$ and $\operatorname{coev}^{A \otimes M}$; (4) from the naturality of \(\beta\); (5) from the pentagon diagram for the associator $\alpha$, together with the definition of $\beta$ (Remark \ref{rem:canonical isos}); (6) from the naturality of \(\operatorname{coev}^{A \otimes M}\); (7) from Equation~\eqref{eq: ev compatible with beta} in Lemma~\ref{lemma: beta  compatibility}; and (8) from a snake equation for \(\operatorname{coev}^{A \otimes M}\) and \(\operatorname{ev}^{A\otimes M}\).
\end{proof}

\smallskip

\begin{proof}[Proof of Proposition~\ref{prop: bimodule structure on dual of algebra}]
	To simplify notation, we suppress associators. By Lemma~\ref{lemma: rewrite inHom action}, it suffices to show that the following two composites $A\otimes D'(A) \otimes A\to D'(A)$ coincide: 
	\begin{align}
		{\operatorname{ev}_{D'(A)}^A}\circ {\big(A\otimes \big(\beta_{A,A,K} \circ D'(\mu)\circ f^{-1}\circ \higheroverline{\operatorname{ev}}_{D(A)}^A\big)\big)}\circ {\big(A\otimes \big(\higheroverline{\beta}_{A,A,K}\circ D(\mu)\circ f\big) \otimes A\big)},\label{pf:bimodule LHS}\\
		f^{-1}\circ {\higheroverline{\operatorname{ev}}_{D(A)}^A}\circ {\big( \big(\higheroverline{\beta}_{A,A,K} \circ D(\mu)\circ f\circ \operatorname{ev}_{D'(A)}^A\big)\otimes A\big)}\circ {\big(A\otimes \big({\beta}_{A,A,K}\circ D'(\mu)\big) \otimes A\big)}.\label{pf:bimodule RHS}
	\end{align}
	By the fact that $f$ is $\parLL$-comultiplicative (Equation~\eqref{def:f is comultiplicative}) and the naturality of $\higheroverline{\operatorname{ev}}^A$, the composite~\eqref{pf:bimodule LHS} coincides with the following morphism, where indices are omitted for readability:
	\begin{align}
		\begin{split}
			&{\operatorname{ev}_{D'(A)}^A} \circ {\big(A\otimes \big(\beta \circ D'(\mu)\circ \higheroverline{\operatorname{ev}}^A_{D'(A)}\big)\big)}\circ {\big(A\otimes \big(\iota_{A,K,A}^{-1} \circ (A\multimap f)\circ {\beta}\circ D'(\mu)\big) \otimes A\big)}.\label{eq: bimod LHS2}
		\end{split}
	\end{align}
	
	A similar argument shows that the composite~\eqref{pf:bimodule RHS} agrees with
	\begin{align}
		\begin{split}
			&{\higheroverline{\operatorname{ev}}_{D'(A)}^A}\circ {\big(\big(\iota_{A,K,A}^{-1}\circ (A\otimes f)\circ \beta\circ D'(\mu)\circ \operatorname{ev}_{D'(A)}^A\big)\otimes A\big)} \circ {\big(A\otimes \big({\beta}\circ D'(\mu)\big) \otimes A\big)}.\label{eq: bimod RHS2}
		\end{split}
	\end{align}
	
	We now show that the morphisms~\eqref{eq: bimod LHS2} and~\eqref{eq: bimod RHS2} coincide. Repeated applications of the naturality of $\beta$ and $\operatorname{ev}^A$, together with the associativity of $\mu$, identify~\eqref{eq: bimod RHS2} with:
	\begin{align}
		\begin{split}
			&{\higheroverline{\operatorname{ev}}}\circ \Big(\!\operatorname{ev}\otimes A\Big) 
			\circ {\Big(A\otimes \Big(\big(A\multimap(\iota^{-1}\circ (A\multimap f)\circ \beta)\big)\circ {\beta}\circ D'(\mu\otimes A)\circ D'(\mu)\Big) \otimes A\Big)}.\label{eq: bimod RHS3}
		\end{split}
	\end{align}
	
	By naturality of $\beta$ and functoriality of the right internal hom $\multimap$, composite \eqref{eq: bimod RHS3} equals
	\begin{align}
		\begin{split}
			&{\higheroverline{\operatorname{ev}}}\circ \Big(\!\operatorname{ev}\otimes A\Big) 
			\circ\Big(A\otimes\big((A\multimap\iota^{-1})\circ \beta\circ (\mu\multimap f)\circ \beta\circ D'(\mu)\big) \otimes A\Big).\label{eq: bimod RHS4}
		\end{split}
	\end{align}
	
Using Equation~\eqref{eq: ev compatible with iota} from Lemma~\ref{lemma: iota compatibility}, the morphism~\eqref{eq: bimod RHS4} can be rewritten as
	
	\begin{align}
		\begin{split}
			&{\operatorname{ev}}\circ \Big(A\otimes \higheroverline{\operatorname{ev}}\Big) 
			\circ\Big(A\otimes\big(\iota^{-1}\circ(A\multimap\iota^{-1})\circ \beta\circ (\mu\multimap f)\circ \beta\circ D'(\mu)\big) \otimes A\Big).\label{eq: bimod RHS5}
		\end{split}
	\end{align}
	Finally, Equation~\eqref{eq: pentagon iota 1} of Lemma~\ref{lemma: iota compatibility}, together with the naturality of $\higheroverline{\operatorname{ev}}^A$ and $\iota^{-1}$, shows that \eqref{eq: bimod RHS5} (and thus \eqref{eq: bimod RHS2}) coincides with \eqref{eq: bimod LHS2}. This proves that the composites \eqref{pf:bimodule LHS} and \eqref{pf:bimodule RHS} are equal.
\end{proof}

\smallskip

\begin{proof}[Proof of Lemma~\ref{lemma: coalgebra iso via braiding}]
Part (i) follows directly from Proposition~\ref{prop: Drinfeld iso is morphism of Frob LD}.

\smallskip

For part (ii), we need to show that the following equality of morphisms holds:
\begin{align}
	{\varphi^{\pm}_A} \circ {l^{D'(A)}} \circ c^{\pm}_{D'(A),A} \;=\; {r^{D(A)}} \circ (\varphi^{\pm}_{A} \otimes A),
\end{align}
where $r^{D(A)}\colon D(A) \otimes A \,\to\, D(A)$ is the right $A$-action on $(K\multimapinv A)\,=\, D(A)$ in Lemma~\ref{lemma: module structure on left internal hom}, induced by the multiplication $\mu$ of $A$. Suppressing associators, we compute:
\allowdisplaybreaks
\begin{align*}
	& \varphi^{\pm}_A 
	\circ {l^{D'(A)}} 
	\circ c^{\pm}_{D'(A),A}
	\\
	&\quad\eqabove{(1)}\; 
	\varphi^{\pm}_A
	\circ {\operatorname{ev}^A_{D'(A)}}
	\circ {(A\otimes \beta_{A,A,K})} 
	\circ {\big(A\otimes D'(\mu)\big)} 
	\circ c^{\pm}_{D'(A),A}
	\\
	&\quad\eqabove{(2)}\; 
	\varphi^{\pm}_A
	\circ {\operatorname{ev}^A_{D'(A)}}
	\circ c^{\pm}_{A \multimap D'(A),A}
	\circ {\big(\beta_{A,A,K} \otimes A\big)} 
	\circ {\big(D'(\mu) \otimes A\big)} 
	\\
	&\quad\eqabove{(3)}\;
	\varphi^{\pm}_A
	\circ {\higheroverline{\operatorname{ev}}^A_{D'(A)}}
	\circ {c^{\mp}_{A,D'(A)\multimapinv A}}
	\circ {\big(A\otimes {\widetilde{c}^{\,\pm}_{A,D'(A)}}\big)}
	\circ c^{\pm}_{A \multimap D'(A),A}
	\circ {(\beta_{A,A,K} \otimes A)} 
	\circ {\big(D'(\mu) \otimes A\big)} 
	\\
	&\quad\eqabove{(4)}\;
	\varphi^{\pm}_A
	\circ {\higheroverline{\operatorname{ev}}^A_{D'(A)}}
	\circ {\big({\widetilde{c}^{\,\pm}_{A,D'(A)}} \otimes A\big)}
	\circ {(\beta_{A,A,K} \otimes A)} 
	\circ {\big(D'(\mu) \otimes A\big)} 
	\\
	&\quad\eqabove{(5)}\; 
	{\higheroverline{\operatorname{ev}}^A_{D(A)}}
	\circ \big((\varphi^{\pm}_A\multimapinv A) \otimes A\big)
	\circ {\big({\widetilde{c}^{\,\pm}_{A,D'(A)}} \otimes A\big)}
	\circ {(\beta_{A,A,K} \otimes A)} 
	\circ {\big(D'(\mu) \otimes A\big)} 
	\\
	&\quad\eqabove{(6)}\;
	{\higheroverline{\operatorname{ev}}^A_{D(A)}}
	\circ (\higheroverline{\beta}_{A,A,K}\otimes A)
	\circ \big(D(\mu)\otimes A\big)
	\circ (\varphi^{\pm}_{A}\otimes A)
	\\
	&\quad\eqabove{(7)}\;
	{r^{D(A)}} 
	\circ (\varphi^{\pm}_{A} \otimes A).
\end{align*} 
Equation (1) follows from Lemma~\ref{lemma: rewrite inHom action}; (2) and (4) follow from the naturality of $c^{\pm}$; (3) from Equation~\eqref{eq: ctilde compat ev} in Lemma~\ref{lemma: tildec ev and coev}; (5) from the naturality of $\higheroverline{\operatorname{ev}}^{A}$; and (7) from Lemma~\ref{lemma: rewrite inHom action}. 

\smallskip

It remains to establish Equation (6). By Yoneda's lemma, it suffices to show that for every $X\in \cC$, the two maps 
\(\operatorname{Hom}(X, D'(A) \otimes A) \to \operatorname{Hom}(X, D(A))\) obtained by applying \(\operatorname{Hom}_{\cC}(X,-)\) to both sides of Equation (6) coincide. 

Unwinding the definitions of \(\beta\), \(\higheroverline{\beta}\), and \(\widetilde{c}^{\,\pm}\), this reduces to verifying
\begin{align}\label{eq: cplusminus mu}
	(\mu \otimes X) \circ (A \otimes c^{\pm}_{X,A}) \circ c^{\pm}_{X \otimes A,A} \;=\; c^{\pm}_{X,A} \circ (X \otimes \mu).
\end{align}
Equation \eqref{eq: cplusminus mu} follows directly from the hexagon axioms and naturality of the braiding $c^{\pm}$, together with the commutativity of $\mu$.
\end{proof}

\medskip

\subsection{Applications}\label{app: Applications}\hfill\\[0.3cm]
For the proof of Proposition~\ref{prop: category of bimodules is gv}, we need the following result.
\begin{lemma}\label{lemma: equalizer of left module into dual coalgebra}
	Let \(A\) be a GV-algebra in a GV-category \(\cC\), and let \(M\in {_A\cC}\). The diagram
	\begin{equation}
		\begin{tikzcd}
			{D'(M)}&&&{M\multimap D'(A)} &&&& {(A\otimes M)\multimap D'(A)}
			\arrow[->, "{\beta_{A,M,K}} \,\circ\, {D'(l^M)}"{yshift=1.5pt}, from=1-1, to=1-4]
			\arrow[->,shift left=.7ex, "l^M\,\multimap\, D'(A)"{yshift=1.5pt}, from=1-4, to=1-8]
			\arrow[->,shift left=-.7ex, "{\big((A\,\otimes \,M)\,\multimap \,l^{D'(A)}\big)}\,\circ\,{\underline{A\otimes }_{M,D'(A)}}"'{yshift=-1.5pt}, from=1-4, to=1-8]
		\end{tikzcd}
	\end{equation}
	is an equalizer diagram in \(\cC\).
\end{lemma}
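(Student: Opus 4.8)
The plan is to identify the putative equalizer $D'(M)$ with the internal hom object $M \multimap_A D'(A)$ in $_A\cC_A$ and then invoke the known description of internal homs in the category of bimodules from Remark~\ref{rem: inhoms in abimod}, together with the preservation-of-duals properties established for GV-functors. More concretely: by Proposition~\ref{prop: category of bimodules is gv}, $_A\cC_A$ is a GV-category with dualizing object $D(A) \cong D'(A)$, whose $A$-bimodule structure comes from Proposition~\ref{prop: bimodule structure on dual of algebra}, and the forgetful functor $U\colon {_A\cC_A} \to \cC$ is a GV-functor. The left $A$-module $M$ can be regarded as a bimodule by equipping it with, say, the trivial-type right action, but the cleanest route is to work with the left internal hom $M \multimap_A (\text{dualizing object})$ directly inside $_A\cC$ using Proposition~\ref{lemma: modules closed} (or rather its bimodule analogue Proposition~\ref{lemma: bimodules closed}); since $D'(A)$ is the dualizing object, $M \multimap_A D'(A) = D'(M)$ is exactly the GV-dual of $M$ in the GV-category $_A\cC$ (resp. $_A\cC_A$).

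First I would make precise, via Remark~\ref{rem: inhoms in abimod}, that the internal hom $M \multimap_A N$ is computed as the equalizer of the pair
\[
l^M \multimap N \qquad \text{and} \qquad \big((A\otimes M)\multimap l^N\big)\circ \underline{A\otimes}_{M,N},
\]
and specialize to $N = D'(A)$ (the dualizing object, with its $A$-bimodule structure). Next I would use the characterization of $D'$ as the duality functor in the GV-category of (bi)modules: the object $M \multimap_A D'(A)$ is canonically the dual $D'(M)$ of $M$, with the canonical monomorphism $i_{M,D'(A)}\colon M\multimap_A D'(A) \hookrightarrow M \multimap D'(A)$ serving as the equalizing map. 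So the remaining task is to show that this monomorphism equals the morphism $\beta_{A,M,K}\circ D'(l^M)$ displayed in the statement — i.e., that under the identification of $\operatorname{Hom}_{\cC}(X \otimes M, K)$-representability with the object $D'(M)$, the inclusion into $M \multimap D'(A) = M \multimap (A \multimap K) \cong (M\otimes A)\multimap K$ is precisely the transpose of the left action $l^M\colon A\otimes M \to M$ followed by the appropriate instance of $\beta$. This is a Yoneda-lemma computation: I would compare the two maps
\[
\operatorname{Hom}_{\cC}\big(X, D'(M)\big) \longrightarrow \operatorname{Hom}_{\cC}\big(X, M \multimap D'(A)\big)
\]
by chasing through the adjunction isomorphisms~\eqref{AdjMor1}, the definition of $\beta$ in Remark~\ref{rem:canonical isos}, and the formula for the action $l^{D'(A)}$ supplied by Lemma~\ref{lemma: rewrite inHom action}, checking that both composites send a morphism $f\colon X\otimes M \to K$ (equivalently $X \to D'(M)$) to the transpose of $f \circ (X\otimes l^M)\colon X\otimes A\otimes M \to K$ up to the relevant associators. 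The fork condition — that the two parallel arrows $l^M \multimap D'(A)$ and $\big((A\otimes M)\multimap l^{D'(A)}\big)\circ\underline{A\otimes}_{M,D'(A)}$ agree after precomposition with $\beta_{A,M,K}\circ D'(l^M)$ — follows from associativity of the left action $l^M$ combined with Lemma~\ref{lemma: rewrite coreflexive pair}, which was tailored exactly to rewrite this kind of coreflexive pair.

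The main obstacle I anticipate is bookkeeping of the identifications: $D'(A)$ as an object of $\cC$ is $A \multimap K$, its $A$-bimodule structure in $_A\cC_A$ is the one coming from Proposition~\ref{prop: bimodule structure on dual of algebra} (induced by the isomorphism $f\colon D(A)\xrightarrow{\simeq}D'(A)$), and one must verify that the \emph{left} action relevant to forming $M\multimap_A D'(A)$ is the one induced by the multiplication of $A$ via Equation~\eqref{eq: left action on internal hom}, so that Lemma~\ref{lemma: rewrite inHom action} applies verbatim. Once the left action is correctly pinned down, the universal property of the equalizer $M\multimap_A D'(A)$ transfers along the equivalence $D'\colon \cC^{\operatorname{op}}\xrightarrow{\simeq}\cC$ (which sends the defining hom-set representability of $D'(M)$ to the equalizer presentation), and the identification of the equalizing map with $\beta_{A,M,K}\circ D'(l^M)$ completes the argument. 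I would present the hom-set comparison as the single displayed computation and relegate the associator coherence to a remark that it follows from Mac Lane coherence plus naturality of $\beta$.
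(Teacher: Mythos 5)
There is a genuine gap: your argument is circular. In the paper this lemma is a \emph{prerequisite} for Proposition~\ref{prop: category of bimodules is gv} — the proof of that proposition constructs the isomorphism $g\colon M\multimap_A D'(A)\xrightarrow{\simeq}D'(M)$ (equivalently, the invertibility of the duality transformation $\xi^{l,U_A}$, i.e.\ the statement that the forgetful functor preserves duals) precisely \emph{from} the equalizer property asserted here. Your proposal invokes that proposition, and in particular the identification ``$M\multimap_A D'(A)$ is canonically $D'(M)$'', to establish the lemma; but that identification of the dual in ${}_A\cC_A$ with the dual in $\cC$ is exactly the nontrivial content that the lemma is designed to supply, so you are assuming what is to be proved. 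There are also hypothesis mismatches: the lemma concerns an arbitrary GV-algebra $A$ and a mere \emph{left} module $M$ in a GV-category $\cC$ with no (co)completeness assumptions, whereas your route needs $\cC$ to admit equalizers and coequalizers (to form the closed monoidal category ${}_A\cC_A$), and either a bimodule structure on $M$ (there is no ``trivial-type'' right action in a general monoidal category) or commutativity of $A$ and a braiding on $\cC$ (for Proposition~\ref{lemma: modules closed}), none of which are assumed.

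The paper's proof is much more elementary and avoids the bimodule category altogether: specializing Lemma~\ref{lemma: rewrite coreflexive pair} to $L=A$, $N=K$ rewrites the second parallel arrow as $\beta_{A,A\otimes M,K}\circ D'(\mu\otimes M)\circ\beta^{-1}_{A,M,K}$, so that, after conjugating by the natural isomorphisms $\beta$, the displayed fork is the image under the antiequivalence $D'$ of the diagram
\begin{equation*}
	\begin{tikzcd}
		{A\otimes A \otimes M} & {A\otimes M} & {M,}
		\arrow[->,shift left=.7ex, "A\otimes l^M", from=1-1, to=1-2]
		\arrow[->,shift left=-.7ex, "{\mu \otimes M}"', from=1-1, to=1-2]
		\arrow[->, "l^M", from=1-2, to=1-3]
	\end{tikzcd}
\end{equation*}
which is a (split, hence absolute) coequalizer; since contravariant equivalences turn coequalizers into equalizers, the claim follows. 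Your observation that the fork condition follows from associativity of $l^M$ together with Lemma~\ref{lemma: rewrite coreflexive pair} is consistent with this, and a direct Yoneda/hom-set verification of the universal property could in principle be made to work — but as written your proof leans on Proposition~\ref{prop: category of bimodules is gv} for the universal property itself, which is not available at this point.
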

\begin{proof}
	By specializing Lemma~\ref{lemma: rewrite coreflexive pair} to the case \(L=A\) and \(N=K\), we obtain
	\begin{align}
		{\big((A\otimes M)\multimap l^{D'(A)}\big)} \circ {\underline{A\otimes}_{M,D'(A)}} &\,=\, \beta_{A,A\otimes M,K} \circ D'(\mu\otimes M) \circ \beta^{-1}_{A,M,K}.
	\end{align}
	Using the naturality of \(\beta\) and the fact that \(D'\) is an antiequivalence, the claim now follows from the observation that the following diagram is a coequalizer diagram:
	\begin{equation}
		\begin{tikzcd}
			{A\otimes A \otimes M} & {A\otimes M} & {M.}
			\arrow[->,shift left=.7ex, "A\otimes l^M"{yshift=1.5pt}, from=1-1, to=1-2]
			\arrow[->,shift left=-.7ex, "{\mu \otimes M}"'{yshift=-1.5pt}, from=1-1, to=1-2]
			\arrow[->, "l^M"{yshift=1.5pt}, from=1-2, to=1-3]
		\end{tikzcd}
		\vspace{-0.4cm}
	\end{equation}
\end{proof}

\begin{proof}[Proof of Proposition~\ref{prop: category of bimodules is gv}]
	We verify the hypotheses of Theorem~\ref{main thm}. By Proposition~\ref{lemma: bimodules closed}, the monoidal category ${_A}\cC_A$ is closed. The forgetful functor $U_A\colon {_A}\cC_A\rightarrow \cC$ is lax monoidal (see Remark~\ref{rem: monoidal category of bimodules}) and conservative. It thus suffices to show that the form
	\begin{equation}
		\upsilon^{0,U_A}\colon \; D'(A)\, \xrightarrow{D'(\eta)}\, D'(1) \xrightarrow{\gamma_K}\, K
	\end{equation}
	for \(U_A\) is Frobenius.
	
	Let $M \in {_A}\cC_A$.
	Consider the canonical monomorphism 
	\begin{equation}
		i_{M,D'(A)}\colon\, M \multimap_A D'(A)\;\hookrightarrow\; D \multimap D'(A).
	\end{equation}
	By Lemma~\ref{lemma: equalizer of left module into dual coalgebra}, there exists a unique isomorphism $g\colon\; M\multimap_A D'(A) \;\xlongrightarrow{\simeq}\; D'(M)$ satisfying
	\begin{align}\label{eq: A-bimod 1}
		D'(l^M)\circ g &\;=\; \beta^{-1}_{A,M,K} \circ i_{M,D'(A)}.
	\end{align}
	Postcomposing~\eqref{eq: A-bimod 1} with the morphism
	\begin{equation}
		D'(A\otimes M)\; \xrightarrow{D'(\eta \otimes M)}\; D'(1\otimes M)\; \xrightarrow{D'(\lambda^{-1}_M)}\; D'(M),
	\end{equation}
	and using the unitality of the left $A$-action \(l^M\colon A\otimes M \to M\), we find
	\begin{align}\label{eq: A-bimod 2}
		g &\;=\; D'(\lambda_M)^{-1}\circ D'(\eta \otimes M) \circ \beta^{-1}_{A,M,K} \circ i_{M,D'(A)}.
	\end{align}
	By naturality of \(\beta\) and Equation~\eqref{eq: relation beta and gamma1} in Lemma~\ref{lemma: relation beta and gamma}, the right-hand side of~\eqref{eq: A-bimod 2} becomes
	\begin{align}
		{(M\multimap \gamma_K)} \circ {(M\multimap D'(\eta))} \circ i_{M,D'(A)},
	\end{align}
	which by Remark~\ref{rem: comparator for U} is equal to
	\begin{align}
		{(M\multimap \gamma_K)} \circ {(M\multimap D'(\eta))} \circ \tau^{l,U_A}_{M,D'(A)}.
	\end{align}
	 Thus, \(g\) coincides with the duality transformation of Definition~\ref{def: duality transformations} associated with \(\upsilon^{0,U_A}\):
	\begin{equation}
		\xi^{l,U_A}_M \colon\;  U_A(M\multimap_A D'(A))\, \longrightarrow\, D'U_A(M)
	\end{equation}
	In particular, \(\xi^{l,U_A}_M\) is invertible.
	
	An analogous argument shows that the composite $\big((\higheroverline{\gamma}_K\circ D(\eta)) \multimapinv M\big)\circ \tau^{r,U_A}_{D(A),M}$ is also invertible. With the following computation,
	\allowdisplaybreaks
	\begin{align*}
		\big((\higheroverline{\gamma}_K\circ D(\eta)) \multimapinv M\big)\circ \tau^{r,U_A}_{D(A),M}
		&\quad \eqabove{(1)}\;
		\big((\higheroverline{\gamma}_K\circ D(\eta)) \multimapinv M\big)\circ (f^{-1} \multimapinv M) \circ \tau^{r,U_A}_{D'(A),M}\circ (f\multimap_A M)\\[0.4em]
		&\quad\eqabove{(2)}\;
		\big((\gamma_K\circ D'(\eta)) \multimapinv M\big) \circ \tau^{r,U_A}_{D'(A),M}\circ (f\multimap_A M)\\[0.4em]
		&\quad\eqabove{def}\;
		\xi^{r,U_A}_M \circ (f\multimap_A M),
	\end{align*}
	we conclude that the right duality transformation \(\xi^{r,U_A}_M\) associated to \(\upsilon^{0,U_A}\) is also invertible. Here, Equation (1) follows from the naturality of \(\tau^{r,U_A}\); and (2) from the counitality of \(f\) (see Equation~\eqref{def:f is counital} in Remark~\ref{rem: coalgebra and in homs}). All in all, \(\upsilon^{0,U_A}\) is a Frobenius form. 
\end{proof}

\section{Coherence axioms}\label{app: coherence diagrams}
\subsection{LD-categories}
The distributors are required to be compatible with the unitors 
\begin{align*}
	\lou\colon {\ot} \circ (1\tim \idC)&\sxlongrightarrow{\simeq} \idC,\\[0.3em]
	\rou \colon {\ot} \circ (\idC \tim 1)&\sxlongrightarrow{\simeq} \idC,\\[0.3em]
	\lpu\colon {\parLL} \circ (K\tim \idC)&\sxlongrightarrow{\simeq} \idC,\\[0.3em]
	\rpu \colon {\parLL} \circ (\idC \tim K)&\sxlongrightarrow{\simeq} \idC,
\end{align*}
in that, for all \(X,Y\in \cC\), the following four triangle diagrams have to commute:
\begin{align*}
	(\lou_X \parLL Y) \circ \distl_{1,X,Y} &\eq \lou_{X\parLL Y}.\label{eq:A1}\tag{A1}\\[0.4em]
	(X \parLL \rou_Y) \circ \distr_{X,Y,1} &\eq \rou_{X\parLL Y}.\label{eq:A2}\tag{A2}\\[0.4em]
	\lpu_{X \otimes Y} \circ \distr_{K,X,Y} &\eq \lpu_{X} \ot Y.\label{eq:A3}\tag{A3}\\[0.4em]
	\rpu_{X \otimes Y} \circ \distl_{X,Y,K} &\eq X \ot \rpu_{Y}.\label{eq:A4}\tag{A4}
\end{align*}
The distributors are required to be compatible with the associators 
\begin{align*}
	\ao \colon {\otimes} {\,\circ\,} {(\operatorname{id_\cC} \times \ot)} &\sxlongrightarrow{\simeq} {\ot} {\,\circ\,} {(\otimes \times \operatorname{id_\cC})},\\[0.3em]
	\ap \colon {\parLL} {\,\circ\,} {(\operatorname{id_\cC} \times \parLL)} &\sxlongrightarrow{\simeq} {\parLL} {\,\circ\,} {(\parLL \times \operatorname{id_\cC})},
\end{align*}
in that, for all \(W,X,Y,Z\in \cC\), the following six pentagon diagrams have to commute:
\begin{align}
	\distl_{W \ot X,Y,Z} \circ \ao_{W,X,Y \parLL Z} &\eq (\ao_{W,X,Y}\parLL Z)\circ (\distl_{W,X\ot Y,Z}) \circ (W \ot \distl_{X,Y,Z}). \label{eq:A5}\tag{A5}\\[0.4em]
	(W \parLL \ao_{X,Y,Z}) \circ \distr_{W,X,Y \ot Z} &\eq \distr_{W,X\ot Y,Z} \circ (\distr_{W,X,Y}\ot Z) \circ \ao_{W\parLL X,Y,Z}.\label{eq:A6}\tag{A6}\\[0.4em]
	\distr_{W \parLL X,Y,Z} \circ (\ap_{W,X,Y}\ot Z) &\eq \ap_{W,X,Y \ot Z} \circ (W \parLL \distr_{X,Y,Z}) \circ \distr_{W,X \parLL Y,Z}.\label{eq:A7}\tag{A7}\\[0.4em]
	\ap_{W\ot X,Y,Z} \circ \distl_{W,X,Y\parLL Z} &\eq (\distl_{W,X,Y}\parLL Z) \circ \distl_{W,X \parLL Y,Z} \circ (W \ot \ap_{X,Y,Z}). \label{eq:A8}\tag{A8}\\[0.4em]
	\distl_{W,X,Y \ot Z} \circ (W\ot \distr_{X,Y,Z}) &\eq \distr_{W \ot X,Y,Z}\circ (\distl_{W,X,Y} \ot Z) \circ \ao_{W,X \parLL Y,Z}.\label{eq:A9}\tag{A9}\\[0.4em]
	(\distr_{W,X,Y} \parLL Z) \circ \distl_{W\parLL X,Y,Z} &\eq \ap_{W,X\ot Y,Z} \circ (W \parLL \distl_{X,Y,Z}) \circ \distr_{W, X,Y \parLL Z}.\label{eq:A10}\tag{A10}
\end{align}

\bigskip

\subsection{Right LD-dualizability}
The following \emph{snake equations}
\begin{align}\label{firstzigzag}\tag{S1}
	({\epsilon^X} \parLL \Eev\, X) \circ \distl_{{\,\Eev\, X},X,{\Eev\, X}} \circ (\Eev\, X \otimes {\eta^X}) & \;=\; {(\lpu_{\Eev\, X})^{-1}}\circ {\rou_{\,\Eev\, X}},\\[0.4em] \label{secondzigzag}\tag{S2}
	({X} \parLL {\epsilon^X})\circ\distr_{\,X,{\Eev\, X},X}\circ({\eta^X}\otimes {X}) & \;=\;{(\rpu_X)^{-1}}\circ {\lou_X},
\end{align}
are required to hold for $X\in \cC$.

\bigskip

\subsection{Frobenius LD-functors}
The following \emph{Frobenius relations}
\begin{align}
	\upsilon^{2,F}_{X\otimes Y,Z}\circ F(\distl_{X,Y,Z})\circ \varphi^{2,F}_{X,Y\parLL Z}\:=\:\big(\varphi^{2,F}_{X,Y}\parLL F(Z)\big)\circ \distl_{F(X),F(Y),F(Z)}\circ \big(F(X)\otimes \upsilon^{2,F}_{Y,Z}\big),\label{eq:F1}\tag{F1}\\[0.4em]
	\upsilon^{2,F}_{X,Y\otimes Z}\circ F(\distr_{X,Y,Z})\circ \varphi^{2,F}_{X \parLL Y,Z}\:=\:\big(F(X)\parLL{\varphi^{2,F}_{Y,Z}}\big)\circ \distr_{F(X),F(Y),F(Z)}\circ \big(\upsilon^{2,F}_{X,Y}\otimes F(Z)\big),\label{eq:F2}\tag{F2}
\end{align}
are required to hold for all \(X,Y,Z\in \cC\).

\vfill

{\centering\textsc{Acknowledgments}\\[0.6em]}
I am grateful to the anonymous referee for many valuable comments and suggestions, in particular for proposing the new title of the paper and for suggesting Examples~\ref{ex: non-con GV} and \ref{ex: fun wo Frob}. I thank my advisor, Christoph Schweigert, for his insightful comments and careful reading of much of this manuscript. I am grateful to Raschid Abedin, Sam Bauer, Gabriella Böhm, Aaron Hofer, Ulrich Krähmer, JS Lemay, \mbox{Paul-André Melliès}, Catherine Meusburger, Chris Raymond, Matti Stroiński, and Tony Zorman for correspondence and valuable discussions.

The author is funded by the DFG through the CRC 1624 \emph{Higher Structures, Moduli Spaces and Integrability}, project number 506632645. He also acknowledges support from the DFG under Germany’s Excellence Strategy, EXC 2121 \emph{Quantum Universe}, project number 390833306.

\bibliographystyle{alphaurl}
\bibliography{references}

\vspace{0.8cm}

\nid\footnotesize\textsc{Fachbereich Mathematik, Universität Hamburg, Bundesstraße 55, 20146 Hamburg, Germany}

\nid\textit{Email address:} \texttt{max.demirdilek@uni-hamburg.de}
\end{document}